\newcommand{\ac}[1]{\textcolor{red}{add citation}}
\pgfplotsset{compat=1.7}
\renewcommand{\cite}{\citet}
\renewcommand{\d}{\,\mathrm{d}}
\newcommand{\dd}{\overset{\mathrm{law}}{=}}
\newcommand{\p}{\mathbb{P}}
\newcommand{\q}{\mathbb{Q}}
\newcommand{\E}{\mathbb{E}}    
\newcommand{\R}{\mathbb{R}}    
\newcommand{\N}{\mathbb{N}}    
\theoremstyle{plain}
\newtheorem{theorem}{Theorem}
\newtheorem{corollary}[theorem]{Corollary}
\newtheorem{lemma}[theorem]{Lemma}
\newtheorem{proposition}[theorem]{Proposition}
\theoremstyle{definition}
\newtheorem{conjecture}{Conjecture}
\theoremstyle{remark}
\newtheorem{remark}{Remark}
\newcommand{\ee}{\varepsilon}
\newcommand{\n}[1]{\left\lVert#1\right\rVert}
\newcommand{\bz}{{\mathbf{z}}}
\newcommand{\bxi}{{\boldsymbol{\xi}}}
\newcommand{\bzeta}{{\boldsymbol{\zeta}}}
\newcommand{\wx}{\widetilde{x}}
\newcommand{\wtx}{\widetilde{t}_x}
\newcommand{\bv}{\mathbf{v}}
\newcommand{\bu}{\mathbf{u}}
\newcommand{\bS}{\mathbb{S}}
\newcommand{\bx}{\mathbf{x}}
\newcommand{\by}{\mathbf{y}}
\newcommand{\bX}{\mathbf{X}}
\newcommand{\bl}{c_2}
\newcommand{\bc}{\mathbf{c}}
\newcommand{\bla}{\boldsymbol\lambda}
\newcommand{\bet}{\boldsymbol{\eta}}
\def\d{\mathrm{d}}
\newcommand{\bone}{ {\mathbbm{1}} }
\renewcommand{\S}{\mathbb{S}}
\renewcommand{\bS}{\mathbf S}
\newcommand{\sA}{\mathscr {A}}
\newcommand{\sB}{\mathscr {B}}
\newcommand{\sC}{\mathscr {C}}
\newcommand{\sD}{\mathscr {D}}
\newcommand{\sE}{\mathscr {E}}
\newcommand{\sH}{\mathscr {H}}
\newcommand{\sI}{\mathscr {I}}
\newcommand{\sJ}{\mathscr {J}}
\newcommand{\sK}{\mathscr {K}}
\newcommand{\sG}{\mathscr {G}}
\newcommand{\sF}{\mathscr {F}}
\newcommand{\cF}{\mathcal {F}}
\renewcommand{\H}{\mathbb{H}}
\newcommand{\be}{\mathbf{e}}
\newcommand{\z}{\mathbf{0}}
\newcommand{\lst}{\preceq_{\mathrm{st}}}
\newcommand{\gst}{\succeq_{\mathrm{st}}}
\renewcommand{\geq}{\geqslant}
\renewcommand{\leq}{\leqslant}
\renewcommand{\epsilon}{\varepsilon}
\newcommand{\bZ}{{\mathbb{Z}}}
\title{Tightness Analysis of First Passage Times of $d$-Dimensional Branching Random Walk}
\author{Jose Blanchet\thanks{Department of Management Science and Engineering, Stanford University. Email: jose.blanchet@stanford.edu} \and Zhenyuan Zhang\thanks{Department of Mathematics, Stanford University. Email: zzy@stanford.edu}}
\begin{document}
\maketitle

\begin{abstract}
Given a discrete-time non-lattice supercritical branching random walk in $\mathbb{R}^d$, we investigate its first passage time to a shifted unit ball of a distance $x$ from the origin, conditioned upon survival. We provide precise asymptotics up to $O(1)$ (tightness) for the first passage time as a function of $x$ as $x\to\infty$, thus resolving a conjecture in Blanchet--Cai--Mohanty--Zhang (2024). Our proof builds on the previous analysis of Blanchet--Cai--Mohanty--Zhang (2024) and employs a careful multi-scale analysis on the genealogy of particles within a distance of $\asymp \log x$  near extrema of a one-dimensional branching random walk, where the cluster structure plays a crucial role.
\end{abstract}


\tableofcontents

\section{Introduction and main contribution}
We examine the first passage times of discrete-time non-lattice branching random walks in $\R^d$. In our setting, a \textit{branching random walk} (BRW) is initiated by a single particle located at the origin $\z\in\R^d$ at time $n=0$. At each time $n+1$, each particle at time $n$ dies and independently reproduces its descendants according to some probability law on the non-negative integers $\N_0$ with a mean greater than one. Each descendant then independently performs a random walk step, collectively forming the set of particles at time $n+1$. In particular, the underlying genealogy of the particles resembles a supercritical Galton--Watson process.
The  \textit{first passage time} (FPT) is by definition the first time some particle is present in a prescribed subset of $\R^d$. We refer readers to a more formal definition of BRW in Section 2.2 of \citep{zeitouni2016branching}. In this paper, we focus on the first passage times to $B_x$, the unit ball centered at $(x,0,\dots,0)\in\R^d$. 

In recent years, numerous studies have focused on BRW in dimension one. Let $M_n$ denote the maximum position of the particles in generation $n$ (or equivalently, at time $n$). The precise asymptotics of $M_n$ and the limit behavior near the frontier have been well-studied by \citep{addario2009minima,aidekon2013convergence,bramson2016convergence,bramson2009tightness,hu2016big,hu2009minimal,madaule2017convergence}, along with the references therein. In particular, under mild conditions, it is shown that there exist constants $c_1,c_2>0$ such that $M_n=c_1n-\frac{3}{2c_2}\log n+O(1)$, where the $O(1)$ term converges in law to a randomly-shifted Gumbel distribution. We refer to \citep{arguin2016extrema,shi2015branching,zeitouni2016branching} for notes on BRW and related topics, and Section \ref{sec:one-dim results} for some selected results useful for our purpose.
In terms of FPT, the only works we are aware of are \citep{buraczewski2019large,jelenkovic2015maximums}, which characterized the law of large numbers and large deviation behavior for the FPT of a one-dimensional BRW with a negative drift.

On the other hand, multi-dimensional BRW have been less studied. However, some of this body of work is reported in \citep{bezborodov2023maximal,blanchet2024first,uchiyama1982spatial,zhang2024large}. In particular, \citep{bezborodov2023maximal} investigated the asymptotic behavior of the maximum distance from the origin for a spherically-symmetric BRW in $\R^d$ and established a precise asymptotic (up to an $O(1)$ factor). In this direction, we also mention recent but earlier studies on the maximum norm of branching Brownian motion (BBM) in $\R^d$ by \citep{berestycki2024extremal,kim2023maximum,kim2024shape,mallein2015maximal}.

Spatial branching processes, including both BRW and BBM, have a wide spectrum of applications ranging from ecology to modeling epidemics (\citep{fisher1937wave,kolmogorov1937etude,konig2020branching,kot2004stochasticity}, among many others).
More recently, motivated by problems from polymer physics, the work \citep{zhang2024modeling} initiated the study of the first passage times of spatial branching processes. In particular, it was noted therein that the precise asymptotic for the FPT of BBM follows from established results on multi-dimensional Fisher--KPP equations with boundary value conditions (\citep{ducrot2015large,gartner1982location,roquejoffre2019sharp}; see also Remark \ref{rem:gartner} below). The follow-up work \citep{blanchet2024first} further investigated the FPT of BRW and obtained partial results. For the spherically-symmetric case, they provide asymptotics up to $O(\log\log x)$ using a particle genealogy approach. For the general case, they achieve asymptotics up to $O(\log x)$ by analyzing random walks in cones. It was conjectured therein that the asymptotic should be precise up to a tight $O(1)$ factor, based on numerics and the following two pieces of theoretical evidence. First, the FPT is exponentially concentrated around its median (Theorem 2 of \citep{blanchet2024first}; see also Lemma \ref{lemma:concentration} below). Second, the analogous asymptotic holds for the BBM (Theorem 1 of \citep{zhang2024modeling}; see also Remark \ref{rem:gartner} below). 

In this paper, we fully resolve the aforementioned conjecture on the FPT of BRW in $\R^d$ by obtaining the precise asymptotic. Our most general result (Theorem \ref{thm:main2} below) does not require radial symmetry of the process --- while the main body of this work is devoted to the radially-symmetric case, we identify the necessary changes for the radially-asymmetric case in Section \ref{sec:non-sphere}. 
Our strategy builds in part on the particle genealogy approach in \citep{blanchet2024first} but requires new probabilistic ideas and a finer multi-scale investigation of the genealogy of the one-dimensional BRW. 
In the subsections below, we state precisely our main result and explain the main ideas underlying our proof. 

\subsection{Statement of the main results}
Consider a discrete-time BRW model with offspring distribution $\{p_i\}_{i\geq 0}$, whose mean is denoted by $\rho=\sum_i i\,p_i$ and we assume that $\rho>1$ (the supercritical case).
Recall that for $x\in\R$, $B_x$ denotes the ball of radius one centered at $(x,0,\dots,0)$ in $\R^d$. We let $V_n$ denote the collection (i.e.~set) of particles at time step $n$, and  $\{\bet_{v,n}(k)\}_{0\leq k\leq n}$ denote the $d$-dimensional random walk that leads to $v\in V_n$.
Denote by $\eta_{v,n}(k)\in\R$ (resp.~$\widehat{\bet}_{v,n}(k)\in\R^{d-1}$) the first coordinate (resp.~last $d-1$ coordinates) of ${\bet}_{v,n}(k)$. 
We define the FPT  $\tau_x$  of the BRW to $B_x$, that is,
$$\tau_x:=\min\{n\geq 0 : \exists\, v\in V_n,~ \bet_{v,n}(n)\in B_x\}.$$
Let $\bxi$ be an $\R^d$-valued random variable representing the increment distribution of the BRW. 
Denote the first coordinate of $\bxi$ by $\xi$, which is a real-valued random variable. We  introduce the large deviation rate function
\begin{align}
    I(x):=\sup_{\lambda>0}\Big(\lambda x-\log\phi_\xi(\lambda)\Big),\label{eq:ratef}
\end{align} 
where $\phi_\xi(\lambda):=\E[e^{\lambda\xi}]$ is the moment generating function for $\xi$. Consider the following assumptions:\footnote{ These assumptions are the same as (A1)--(A4) in \citep{blanchet2024first}.}
\begin{itemize}
     \item [(A1)] the offspring distribution has a finite second moment, i.e., $\sum_j j^2\,p_j<\infty$;\label{A1}
     \item [(A2)] the law of $\bxi$ is integrable and centered, i.e., $\E[\bxi]=\z$;
      \item [(A3)] the law of $\bxi$ is spherically-symmetric in $\R^d$,\footnote{This means that the law of the jump $\bxi$ is invariant under any orthonormal transformation in $\R^d$.} and $\p(\bxi=\z)<1$;
    \item [(A4)] $\log\rho\in(\mathrm{ran}I)^\circ$, where $(\mathrm{ran}I)^\circ$ is the interior of the range of $I$.  In other words, there exists $c_1 > 0$ such that $I(c_1) = \log \rho$.  It can be shown that $c_1\in (\mathrm{ran}(\log\phi_\xi)')^\circ$. Let $c_2=I'(c_1)$. (The constants $c_1, c_2$ arise naturally from the large deviations analysis of the random walk generated by the first-coordinate increment distribution.)
 \end{itemize}


\begin{theorem}\label{thm:main}
     Assume (A1)--(A4). Conditioned upon survival, the first passage time for BRW in dimension $d$ to $B_x$ satisfies
\begin{align}
    \tau_x=\frac{x}{{c}_1}+\frac{d+2}{2c_1\bl}\,\log x+O_\p(1),\label{eq:taux asymp}
\end{align}where the $O_\p(1)$ is tight. 
\end{theorem}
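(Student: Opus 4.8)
The plan is to reduce the $d$-dimensional first passage problem to a one-dimensional extremal problem, and then carry out a precise multi-scale analysis of the one-dimensional BRW genealogy near its frontier. The starting point is the observation, already exploited in \citep{blanchet2024first}, that a particle reaches $B_x$ roughly when the first coordinate of its trajectory reaches $x$ while the last $d-1$ coordinates stay of order $O(1)$; by spherical symmetry and a Gaussian/local-CLT heuristic, the transverse coordinates of a particle whose first coordinate has traveled distance $x$ in time $\asymp x/c_1$ have spread $\asymp \sqrt{x}$, so the probability that a given frontier particle in the first coordinate also lies in the ball is $\asymp x^{-(d-1)/2}$. This is precisely the source of the $(d-1)/2$ part of the $(d+2)/2$ coefficient in \eqref{eq:taux asymp}; the remaining $3/2$ is the classical Bramson logarithmic correction for the one-dimensional maximum $M_n = c_1 n - \frac{3}{2c_2}\log n + O(1)$.

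First I would set $n = x/c_1 + a\log x/(c_1 c_2) + t$ for the target value $a = (d+2)/2$ and a free parameter $t$, and seek to show that $\p(\tau_x \le n \mid \text{survival}) \to F(t)$ for some proper distribution function $F$, or at least that the family $\{\tau_x - x/c_1 - \frac{d+2}{2c_1 c_2}\log x\}$ is tight. For the upper bound on $\tau_x$ (lower bound on the probability), I would use a second-moment / truncated-moment argument on the number of particles that (i) have first coordinate near $x$ at time $n$, having followed a path with the right large-deviation slope $c_1$ and the correct barrier/entropic repulsion behavior near the end, and (ii) have transverse coordinate in the unit ball. The key structural input — and this is where the cluster structure flagged in the abstract enters — is that near-extremal particles of the one-dimensional BRW come in $\asymp \log x$-scale clusters (descendants of a common ancestor that itself was near the frontier); conditionally on the cluster skeleton one has enough independence to control the second moment of the count of particles landing in $B_x$, after discounting by the $\asymp x^{-(d-1)/2}$ transverse factor via a local CLT for the $(d-1)$-dimensional walk conditioned on its first-coordinate path. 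For the lower bound on $\tau_x$ (upper bound on the probability) I would use a first-moment bound combined with the known right-tail estimates for $M_n$ (Bramson-type, from the one-dimensional results cited in Section \ref{sec:one-dim results}) and the concentration estimate (Lemma \ref{lemma:concentration}) to rule out $\tau_x$ being too small.

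The main obstacle I expect is the interplay between the one-dimensional barrier analysis and the transverse local CLT: one needs the local CLT for $\widehat{\bet}_{v,n}$ to hold \emph{uniformly} over the relevant event that $\eta_{v,n}(\cdot)$ follows a tilted path staying below the Bramson barrier and ending within $O(1)$ of $x$, and this event forces the increments $\xi_k$ to be effectively sampled from the exponentially tilted law (with parameter $c_1$), under which the transverse increments $\widehat\bxi$ are no longer the original centered law but a size-biased/reweighted version — still centered by spherical symmetry, but with a modified covariance that must be tracked to get the constant (not just the exponent) in the $x^{-(d-1)/2}$ factor right, and to ensure the resulting $F(t)$ is a genuine (non-degenerate, proper) limit law rather than something that escapes to $0$ or $1$. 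A secondary difficulty is handling particles that reach $B_x$ via ``atypical'' routes — e.g.\ overshooting in the first coordinate and then drifting back, or having transverse displacement of non-typical order — which must be shown to contribute negligibly; here the exponential concentration of $\tau_x$ (Theorem 2 of \citep{blanchet2024first}) and careful union bounds over the $O(x)$ possible arrival times should suffice. Finally, I would need to upgrade the finite-second-moment hypothesis of \citep{blanchet2024first} to the finite-third-moment assumption (A1) precisely to make the comparison between the BRW and a pair of coupled BRWs (Appendix \ref{sec:two BRw}) quantitative enough for the $O(1)$-level control.
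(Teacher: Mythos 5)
Your overall architecture (reduce to the one-dimensional frontier, pay a transverse factor $x^{-\frac{d-1}{2}}$ via a conditional local CLT, get the $3/2$ from Bramson's correction and the $(d-1)/2$ from the transverse dimensions) matches the paper, and your upper-bound plan --- a conditional second-moment argument over near-independent clusters, upgraded to high probability by the concentration estimate of Lemma \ref{lemma:concentration} --- is essentially what the paper does via Proposition \ref{lemma:transition}(ii) and Lemma \ref{lemma:conditioned local CLT}. The genuine gap is in your plan for the lower bound on $\tau_x$. A ``first-moment bound combined with the known right-tail estimates for $M_n$'' cannot yield $\p(\tau_x\le t_x-K)<\ee$: at time $t_x$ the number of particles with first coordinate beyond $x$ is $\asymp (\log x)\,x^{\frac{d-1}{2}}$ (Lemma \ref{prop:number of particles} applied at depth $\frac{d-1}{2c_2}\log x$ below $m_{t_x}$), each carrying transverse probability $\asymp x^{-\frac{d-1}{2}}$, so the expected number of particles in $B_x$ at time $t_x-K$ is of order $(\log x)e^{-cK}$, which does \emph{not} tend to $0$ for fixed $K$. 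Markov's inequality loses exactly a factor $\log x$ --- this is why \citep{blanchet2024first} stopped at an $O(\log x)$-level error, and it is precisely the obstruction this paper is built to remove.

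The missing idea is that the cluster structure must do its work in the lower bound, not only in the second moment for the upper bound. One has to show that the $\asymp(\log x)x^{\frac{d-1}{2}}$ frontier particles organize into $\asymp x^{\frac{d-1}{2}}$ clusters (descendants of ancestors at time $\wtx=t_x-(\log x)^2$ whose offspring reaching $\H_x$ separate either in the first or the last $O(1)$ generations), and that each cluster contributes only $\ll x^{-\frac{d-1}{2}}$ to the hitting probability of $B_x$, the contribution being controlled either by the cluster's cardinality (union bound, \eqref{eq:bound1}) or by its transverse dispersion via a conditional local CLT applied once per cluster (\eqref{eq:bound2}). Making this rigorous is the content of Theorem \ref{thm:particle probability} and the multi-scale conditioning on the triple $(\ell,h,g)$ --- position at time $\wtx$, age of the latest common ancestor, and its location --- together with the local barrier events of Sections \ref{sec:Local barrier events} and \ref{sec:Excluding unlikely events}; this occupies essentially all of Section \ref{sec:LB}, and it is where assumption (A1) (third moments, needed for the two-descendant estimates of Appendix \ref{sec:two BRw}) is actually consumed. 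As written, your lower-bound argument reproduces the known $O(\log x)$-level bound rather than the $O(1)$-level one.
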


As a consequence of Theorem \ref{thm:main}, the first passage times are tight around the deterministic asymptotics $\frac{x}{{c}_1}+\frac{d+2}{2c_1\bl}\,\log x$. {{A modified second‑moment method could potentially yield tightness but loses the genealogy needed to identify inter‑cluster independence and the decorations surrounding frontier particles. This type of analysis is key in the convergence in law results of the re-centered one-dimensional maxima of spatial branching processes in the work of, for example, \citep{aidekon2013convergence,aidekon2013branching}, which describe the decorations. These results and observations naturally invite the question:}} does the aforementioned $O_\p(1)$ term converge in law? If the answer is affirmative, how can we characterize the limit law? Typically, answering these types of questions requires a detailed analysis of the cluster structure formed by the extremal particles (as we do here). Thus, the results in this paper may pave the way towards these answers through a careful multi-scale analysis of those clusters. We leave these challenging questions for future investigation.

Let us now proceed to the non-spherically-symmetric case. Let
\begin{align}
    \widehat{I}(\bx):=\sup_{\bla\in\R^d}\Big(\bla\cdot\bx-\log\phi_\bxi(\bla)\Big)=\sup_{\bla\in\R^d}\Big(\bla\cdot\bx-\log\E[e^{\bla\cdot\bxi}]\Big)\label{eq:I long}
\end{align}
denote the large deviation rate function for $\bxi$. We impose the following assumptions:
\begin{itemize}
    \item [(A5)] the law of $\bxi$ is non-lattice in the sense that for all $\bx\in\R^{d}\setminus\{\z\}$, $|\E[e^{\mathrm{i}\bx\cdot\bxi}]|< 1$; 
    \item [(A6)] $\log\rho\in(\mathrm{ran}\widehat{I}(\cdot,\z))^\circ$, where $\widehat{I}(\cdot,\z)$ refers to the function $\widehat{I}$ with the last $d-1$ variables fixed at zero; let $\widehat{c}_1>0$ satisfy $\widehat{I}(\widehat{c}_1,\z)=\log\rho$, it holds $(\widehat{c}_1,\z)\in (\mathrm{ran}\nabla\log\phi_\bxi)^\circ$.
   \end{itemize}   
   Denote by $\bc_2=\nabla \widehat{I}((\widehat{c}_1,\z))$, which is the value of $\bla$ where the supremum \eqref{eq:I long} is attained at $\bx=(\widehat{c}_1,\z)$. 
The assumption (A5) is crucial for the non-degeneracy of the dimension $d$ of the jumps.
More precisely, (A5) implies that any projection of $\bxi$ (for instance, $\bxi\cdot\bc_2$) must also be non-lattice. 
Both conditions are useful for guaranteeing that the BRW reaches the target ball $B_x$. Another consequence of (A6) is that $\phi_\bxi$ is well-defined in a neighborhood of $\bc_2$.

\begin{theorem}\label{thm:main2}
     Assume (A1), (A2), (A5), and (A6). Conditioned upon survival, the first passage time for BRW in dimension $d$ to $B_x$ satisfies
\begin{align}
    \tau_x=\frac{x}{\widehat{c}_1}+\frac{d+2}{2\,\widehat{c}_1\be_1\cdot\bc_2}\,\log x+O_\p(1),\label{eq:fpt2}
\end{align}
where $\be_1:=(1,0,\dots,0)\in\R^d$ and the $O_\p(1)$ term is tight. 
\end{theorem}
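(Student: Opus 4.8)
The plan is to follow the architecture of the proof of Theorem~\ref{thm:main}, reducing the $d$-dimensional passage problem to a one-dimensional branching random walk plus a diffusive transverse correction; the only genuinely new features are that the relevant ``longitudinal'' direction is now the tilting direction $\bc_2$ (which coincides with $\be_1$ only under spherical symmetry) and that the decoupling of the longitudinal and transverse coordinates must be replaced by an anisotropic analysis, with the non-lattice hypothesis (A5) taking over the role played by (A3). By the concentration estimate (Lemma~\ref{lemma:concentration}), tightness of the $O_\p(1)$ term reduces to locating the median of $\tau_x$ up to $O(1)$, i.e.\ to matching upper and lower bounds on $\tau_x$ at the claimed scale. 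First I would perform a Cramér change of measure, tilting the increment law by $\bc_2$, $\d\widetilde{\p}/\d\p \propto e^{\bc_2\cdot\bxi}$, equivalently tilting the branching random walk by the additive martingale $W_n = (\rho\,\phi_\bxi(\bc_2))^{-n}\sum_{v\in V_n} e^{\bc_2\cdot\bet_{v,n}(n)}$ and invoking the spine decomposition. Assumption (A6) and convex duality give $\nabla\log\phi_\bxi(\bc_2) = (\widehat{c}_1,\z)$, so under the tilt the contributing particles drift at speed exactly $\widehat{c}_1$ along $\be_1$ with vanishing transverse drift and covariance $\Sigma := \nabla^2\log\phi_\bxi(\bc_2)$; whitening by $\Sigma^{-1/2}$ then aligns the drift and the (image of the) target ball along the single axis $\Sigma^{-1/2}\be_1$, placing us in the isotropic configuration treated for Theorem~\ref{thm:main}, up to the fact that the increments remain non-Gaussian.

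Second, I would run the longitudinal analysis through the one-dimensional branching random walk obtained by projecting the increments onto $\bc_2$, i.e.\ with step law $Y=\bc_2\cdot\bxi$. A short computation from (A6) shows this one-dimensional walk has velocity constant $\widehat{c}_1(\be_1\cdot\bc_2)$ and tilting constant equal to $1$ (the supremum defining its rate function at the critical velocity is attained at exponent $1$), so by the one-dimensional results recalled in Section~\ref{sec:one-dim results} its maximum at time $n$ is $\widehat{c}_1(\be_1\cdot\bc_2)\,n - \tfrac32\log n + O_\p(1)$. Since every particle in $B_x$ has $\bc_2$-coordinate $(\be_1\cdot\bc_2)x + O(1)$, inverting this relation contributes $\tfrac{x}{\widehat{c}_1} + \tfrac{3}{2\widehat{c}_1(\be_1\cdot\bc_2)}\log x + O_\p(1)$ to $\tau_x$.

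Third comes the transverse correction. Conditionally on a near-extremal longitudinal trajectory, the $d-1$ coordinates orthogonal to $\bc_2$ behave, after whitening, like a mean-zero walk of variance $\asymp n \asymp x$, and a local central limit theorem --- this is exactly where (A5), forcing every projection of $\bxi$ to be non-lattice, enters --- shows that a candidate particle with the correct longitudinal position lands in the unit ball $B_x$ with probability $\asymp n^{-(d-1)/2}$. Demanding that the first-moment count of such candidates be of order one costs an extra $\tfrac{d-1}{2\widehat{c}_1(\be_1\cdot\bc_2)}\log x$ units of time, and adding this to the longitudinal term gives the announced coefficient $\tfrac{d+2}{2\,\widehat{c}_1\,\be_1\cdot\bc_2}$. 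Upgrading these first-moment heuristics to the sharp two-sided bounds is precisely the multi-scale/second-moment part of Theorem~\ref{thm:main}: one restricts to the $\asymp\log x$ generations near the longitudinal frontier, controls the genealogical correlations of the candidate particles through the cluster structure there, and checks that transverse increments accumulated along shared ancestry contribute only $O(1)$.

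The main obstacle I anticipate is the loss of spherical symmetry in this last step. In the symmetric case the transverse coordinates are independent of the longitudinal one and isotropic, which is what makes the cluster computation manageable; here the tilted increment has a general covariance $\Sigma$ coupling the $\bc_2$-direction to the orthogonal ones, so the decoupling must be replaced by a conditional Gaussian analysis tracked jointly across every scale of the genealogy. Moreover, singling out $\bc_2$ as the unique optimal direction and discarding all competing trajectory families must now be done at the $O(1)$ scale rather than the $O(\log x)$ scale of \citep{blanchet2024first}, which requires upgrading their random-walk-in-a-cone bounds to sharp local limit theorems with the anisotropic corrections built in; the finite third moment assumed in (A1) is what makes the refined barrier (ballot-type) estimates underlying this available.
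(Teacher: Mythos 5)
Your proposal follows essentially the same route as the paper: project the BRW onto the tilting direction $\bc_2$, check via convex duality that the projected one-dimensional walk has speed $\widehat{c}_1\,\be_1\cdot\bc_2$ and tilting constant $1$ (so the one-dimensional machinery applies verbatim with $c_2$ replaced by $1$), and convert transverse displacements into the extra $\frac{d-1}{2\widehat{c}_1\be_1\cdot\bc_2}\log x$ via a conditional local CLT under the $\bc_2$-tilted measure, reusing the cluster/multi-scale argument of Theorem \ref{thm:main} for the sharp bounds. The only cosmetic difference is your whitening by $\Sigma^{-1/2}$: the paper instead decomposes $\bet_{v,n}$ into its $\bc_2$-component and the orthogonal complement and lets the multidimensional local CLT under the tilted measure absorb the anisotropic covariance directly, which avoids the conditional Gaussian bookkeeping you flag as the main obstacle.
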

The assumptions (A5) and (A6) are weaker than (A3) and (A4). Indeed, it is easy to see that  (A3) implies (A5) for $d\geq 2$, as a consequence of Lemma 24 of \citep{blanchet2024first}; Proposition 21 therein also shows that (A3) and (A4) together imply (A6). 
Consequently, Theorem \ref{thm:main2} is more general than Theorem \ref{thm:main}. However, in this paper, we will mainly focus on the proof of Theorem \ref{thm:main}. The proof of Theorem \ref{thm:main2} is mostly verbatim, where the major changes will be pointed out in Section \ref{sec:non-sphere}.

\begin{remark}\label{rem:gartner}
    There is a relatively simple way to infer the asymptotics \eqref{eq:taux asymp} and \eqref{eq:fpt2} from known results in the PDE literature, as noted in \citep{blanchet2024first,zhang2024modeling}. We briefly summarize the main arguments here for completeness. 
    
    First, it is well-known (see e.g., \citep{mallein2015maximal}) that if $v(t,\bx)$ denotes the probability that there exists a particle of BBM located in $B_\bx$ (the unit ball centered at $\bx\in\R^d$) at time $t$, then $v(t,\bx)$ solves the multi-dimensional Fisher-KPP equation 
\begin{align}
    \begin{cases}
        v_t=\frac{1}{2}\Delta v+v-v^2,~t>0,~\bx\in \R^d,\\
        v(0,\bx)=\bone_{\{\bx\in B_{\z}(1)\}}.
    \end{cases}\label{eq:fkpp2}
\end{align} 
The large-time behavior of \eqref{eq:fkpp2} has been well-studied in the literature. For instance, \citep{gartner1982location} showed that for every fixed $\ee>0$, elements in the set $\{(t,\bx):v(t,\bx)\in(\ee,1-\ee)\}$ satisfy
\begin{align}
    \n{\bx}= \sqrt{2}\,t-\frac{(d+2)\log t}{2\sqrt{2}}+O(1)\label{eq:gartner asymp}
\end{align}
uniformly as $t\to\infty$. See also \citep{roquejoffre2019sharp} for finer estimates. 
If $\bx=(x,\z)$, the probability $v(t,\bx)$ is almost equivalent to $\p(\tau_x\leq t)$, modulo certain events that can be absorbed into the $O(1)$ term (e.g., a particle may enter $B_x$ before $t$ but all its descendants conspiratorially stay outside $B_x$ at time $t$). From here, it is not hard to conclude that the right-hand side of \eqref{eq:gartner asymp} provides the correct asymptotics of $\tau_x$ for BBM; see Appendix A.1 of \citep{zhang2024modeling} for further details. 

\sloppy Next, observe that inserting $c_1=c_2=\sqrt{2}$ in the asymptotics $M_n=c_1n-\frac{3}{2c_2}\log n+O_\p(1)$ for the maximum of BRW yields the asymptotics $M_t=\sqrt{2}t-(3\log t)/(2\sqrt{2})+O_\p(1)$ of the maximum $M_t$ of BBM at time $t$. Along with the right-hand side of \eqref{eq:gartner asymp}, this heuristically leads to \eqref{eq:taux asymp}. 

To infer the form of \eqref{eq:fpt2}, one way is to assume that the jump law $\bxi$ satisfies $\bxi\dd T(\bzeta)$ for some invertible transformation $T$ and a spherically symmetric law $\bzeta$. The first passage problem then reduces to the first passage problem to $T^{-1}(B_x)$ (which is equivalent to $B_{T^{-1}(\bx)}$ without sacrificing the asymptotics) for the BRW with jump law $\bzeta$. A standard analysis of the large deviation rate function $\widehat{I}$ under a linear transform then leads to \eqref{eq:fpt2}. We refer to Section 3.2 of \citep{blanchet2024first} for details.

\end{remark}

\begin{remark}
    Our first passage time is defined in terms of a shifted ball of \textit{radius one}. We expect that the same proof techniques apply to targets of varying shapes. We refer to Section \ref{sec:targets} for a more detailed discussion. 
\end{remark}

\begin{remark}
    We also expect that the same technique applies to the branching Brownian motion, and hence leads to a probabilistic proof of the asymptotic behavior of the multi-dimensional Fisher--KPP equation with boundary value conditions (as a complement of the works \citep{ducrot2015large,gartner1982location,roquejoffre2019sharp}). Compared to the probabilistic proof of \citep{gartner1982location}, our approach does not require that the underlying process is Gaussian.
\end{remark}

Theorem \ref{thm:main2} also has an immediate consequence on the tightness of the maximum along the first coordinate, among particles whose locations in the remaining $d-1$ dimensions lie within unit distance from the origin. We formulate it as the next corollary, whose proof is deferred until Section \ref{sec:proof of coro}.

\begin{corollary}\label{coro}
   Assume (A1), (A2), (A5), and (A6).  Define 
    $$\widetilde{M}_n:=\max\{\eta_{v,n}(n):v\in V_n,\,\|\widehat{\bet}_{v,n}(n)\|\leq 1\}.$$
    Then conditioned upon survival, 
    \begin{align}
        \widetilde{M}_n=\widehat{c}_1n-\frac{d+2}{2\,\be_1\cdot\bc_2}\,\log n+O_\p(1),\label{eq:wMn}
    \end{align}
    where the $O_\p(1)$ term is tight. 
\end{corollary}

The proofs of Theorems \ref{thm:main} and \ref{thm:main2} are based on a novel result (Proposition \ref{lemma:transition} below) on the genealogical structure of the frontier particles for the one-dimensional BRW. Roughly speaking, it states that particles located beyond level $M_n-\log n$ at time $n$ are likely separated either very early (in time $[0,O((\log n)^2)]$) or very late (in time $[n-O((\log n)^2),n]$) in the underlying genealogy. The result extends previous works of \citep{arguin2011genealogy,mallein2016asymptotic} and is easily generalizable --- $\log n$ can be replaced by functions that are $o(\sqrt{n})$. In the next section, we demonstrate the relevance of this result in our analysis.

\subsection{Outline of the proof}
\label{sec:outline proof}
In the following, we write $A\ll B$ if there exists a constant $C>0$ possibly depending on the law of the BRW such that $A\leq CB$, and $A\asymp B$ if $A\ll B\ll A$.

\textbf{Setup and main intuition.}
 Let us assume that the BRW is spherically-symmetric, and we condition upon survival. 
In the spherically-symmetric case, let us define
\begin{align}
    t_x:=\frac{x}{c_1}+\frac{d+2}{2\bl c_1}\,\log x=\Big(\frac{x}{c_1}+\frac{3}{2\bl c_1}\,\log x\Big)+\frac{d-1}{2\bl c_1}\,\log x,\label{eq:txdef}
\end{align}
which is the anticipated asymptote of the FPT $\tau_x$ (c.f.~\eqref{eq:taux asymp}), where we recall that $I(c_1)=\log\rho$, $I'(c_1)=c_2$, and $I$ is the rate function for the first coordinate of $\bxi$.  Denote by $M_n$ the maximum at level $n$ of a BRW with jump $\xi$, and its asymptote $m_n:=c_1n-\frac{3}{2c_2}\log n,\,n\geq 1$ (with $m_0:=0$).

We first explain the intuition behind the decomposition \eqref{eq:txdef} and the main difficulties behind the proof of Theorem \ref{thm:main}. By inverting the expression for $m_n$, one expects that the FPT to $\H_x:=[x,\infty)\times\R^{d-1}$ is around 
$$t^{(1)}_x:=\frac{x}{c_1}+\frac{3}{2\bl c_1}\,\log x,$$
because $m_{t^{(1)}_x}=x+O(1)$. The other term $\frac{d-1}{2\bl c_1}\,\log x$ can be understood as the extra waiting time for more particles to enter $\H_x$ so that one of them would enter $B_x$. More precisely, we note the following.
\begin{itemize}
    \item As a result of the local CLT,\footnote{Strictly speaking, we will apply the local CLT under a change of measure and possibly under barrier constraints; see Lemmas \ref{lemma:conditioned local CLT} and \ref{lemma:uniform local CLT}.} each particle that arrives in $\H_x$ has a chance around $x^{-\frac{d-1}{2}}$ of landing in $B_x$.
    \item If we wait until time $t_x$, then there are $\asymp (\log x)x^{\frac{d-1}{2}}$ particles hitting $\H_x$, a fact justified by Lemma \ref{prop:number of particles} below.
\end{itemize}
 Therefore, on average, there would be $\asymp \log x$ particles found in $B_x$ at time $t_x$. Meanwhile, it is certainly not the case that the displacements in the last $d-1$ coordinates are almost independent among the ($(\log x)x^{\frac{d-1}{2}}$ many) \textit{frontier particles} in the first coordinate, i.e., those that travel fast in the first dimension. 
For example, given two frontier particles,
\begin{itemize}
    \item if they are separated very early genealogically, then the events that they lie in $B_x$ at time $t_x$ are close to being independent;
    \item if they are separated very late genealogically, then the events that they lie in $B_x$ at time $t_x$ are strongly dependent.
\end{itemize}
In other words, to obtain bounds of $\tau_x$, one needs to understand the genealogy (or the dependence structure) of the (roughly $(\log x)x^{\frac{d-1}{2}}$ many) frontier particles that are the fastest in the first dimension around time $t_x$. 

The decomposition \eqref{eq:txdef} is also identified in Section 1.3 of \citep{blanchet2024first}, where weaker versions of Theorems \ref{thm:main} and \ref{thm:main2} are proved. The techniques in \citep{blanchet2024first} are confined to the modified second moment method, without an in-depth study of the underlying genealogy of the frontier particles, which we explain next.

\textbf{Prelude: introducing the role of clusters in the genealogy of the frontier particles.} 
The goal of the paragraphs in this subsection is to expose a cluster structure that is useful in our analysis. We do this by introducing a conditional probability (see \eqref{eq:sbp model} below) that captures key elements in our analysis, although we do not directly study this probability in our future development, it is useful as a device to quickly see the main ingredients that will come into play. As we shall discuss later, the (non-trivial) genealogy of the frontier particles is reasonably well understood. 
Suppose that we condition on both the genealogy of the BRW up to time $t_x$ and the BRW but only its projection onto the first coordinate. Consider the set $T=\{v\in V_{t_x}:\,\bet_{v,t_x}(t_x)\in\H_x\}$, which is measurable and, by the above discussion, it has cardinality $\# T\asymp (\log x)x^{\frac{d-1}{2}}$ with high probability. Next, define the (conditioned) locations of the particles $v\in T$ in the remaining $d-1$ dimensions, written as $\{\bX_v\}_{v\in T}:=\{\widehat{\bet}_{v,t_x}(t_x)\}_{v\in T}$. 
Recall that our goal is to find a particle in $B_x$ at time $t_x$, which is, roughly speaking, equivalent to finding a particle $v\in T$ such that $\n{\bX_v}\leq 1$.\footnote{This is because a non-trivial proportion of particles in $T$ will be located in $[x,x+1/2]$ in the first coordinate, so we may simply look at particles in $\H_x$ at time $t_x$.} Here and later, we use $\n{\cdot}$ to denote the Euclidean norm. 
We are then reduced to the following problem (after conditioning on the genealogy and the first BRW coordinate of the frontier particles): given an $\R^{d-1}$-valued stochastic process $\bX=\{\bX_v\}_{v\in T}$ where $T$ is a finite set, 
how to characterize the probability 
\begin{align}
    \p(\exists\, v\in T,\,\n{\bX_v}\leq 1 )\label{eq:sbp model}
\end{align}
up to multiplicative constants, in terms of the dependence structure of $\{\bX_v\}_{v\in T}$ (keep in mind that the probability in \eqref{eq:sbp model} is conditional, as stated earlier, so bounds on \eqref{eq:sbp model} are to be understood as high-probability bounds).  Intuitively, $\bX_v$ and $\bX_w$ are strongly dependent if $v$ and $w$ are separated very late in the underlying genealogy, and vice versa. 

While the problem \eqref{eq:sbp model} appears fundamental, we are unaware of a general solution.\footnote{A particularly interesting problem would be, for instance, assuming $\bX$ is one-dimensional centered Gaussian (and hence written as $X$), characterize \eqref{eq:sbp model} in terms of the distance $d_X(v,w)=\sqrt{\E[(X_v-X_w)^2]}$. Relevant studies include \citep{BiermeLacauxXiao2009,NualartViens2014}, which provide upper and lower bounds in special cases using capacity and Hausdorff measure.} There are a few natural ideas for upper and lower bounding the quantity \eqref{eq:sbp model}:
\begin{itemize}
    \item[A)] To bound $\p(\exists\, v\in T,\,\n{\bX_v}\leq 1 )$ from below, pick a subset $T'\subseteq T$ such that $\bX_v$ and $\bX_w$ are approximately independent for all $v,w\in T',\,v\neq w$, and use $\p(\exists\, v\in T',\,\n{\bX_v}\leq 1 )$ as a lower bound.
    \item[B)] To bound $\p(\exists\, v\in T,\,\n{\bX_v}\leq 1 )$ from above, partition $T$ into ``well-separated blocks'' $T_1,\dots,T_m$ that are approximately independent (i.e., for any $v\in T_i,\,w\in T_j,\,i\neq j$, $\bX_v$ and $\bX_w$ are roughly independent) and such that the ``size'' of each block is small (and hence for each $1\leq j\leq m$,$~\p(\exists\, v\in T_j,\,\n{\bX_v}\leq 1 )$ is small).\footnote{This can be viewed as a simplified version of the generic chaining technique \citep{talagrand2022upper}.}
\end{itemize}
In practice, these bounds work well if the set $T$ has the following \textit{cluster structure}: $T$ can be partitioned into well-separated blocks/clusters with very small sizes (for instance, consider the extreme case where $\{\bX_v\}_{v\in T}$ forms an i.i.d.~sequence, or an identical sequence). Obtaining a characterization of \eqref{eq:sbp model} for a general process $\{\bX_v\}_{v\in T}$ is difficult, but is not necessary for our purpose since we are interested in the special class of processes $\bX$ that describes the genealogy of the extremal particles for a one-dimensional BRW. This highlights the importance of understanding the cluster structure of extremal particles.

\textbf{The cluster structure of extremal particles.}
Conditioning on the particle genealogy of those in $\H_x$ at time $t_x$, we effectively obtain a stochastic process $\{\bX_v\}_{v\in T}$ describing the displacements in the last $d-1$ dimensions of the frontier particles. 
In this case, we give upper and lower bounds for \eqref{eq:sbp model} that may not match in general, but surprisingly, they coincide (up to multiplicative constants) with high probability.  
This is because of the following nice feature of the \textit{one-dimensional} BRW: 
\begin{align}
    \begin{split}
        &\text{the frontier particles are most likely to be separated either}\\
        &\hspace{0.63cm}\text{\textit{very early} or \textit{very late} in the underlying genealogy}.
    \end{split}
\label{eq:quote}
\end{align}
 A precise formulation concerning a number of $O(1)$ many frontier particles can be found in Theorem 4.5 of \citep{mallein2016asymptotic} in the context of BRW, and Theorem 2.1 of \citep{arguin2011genealogy} in the context of BBM. Loosely speaking, as $n\to\infty$, the particles beyond $m_n$ at time $n$ are separated in either the first $O(1)$ steps or the last $O(1)$ steps with high probability. This observation can be generalized to the study of around $(\log x)x^{\frac{d-1}{2}}$ many frontier particles (or particles beyond $x$ at time $t_x$), using an extension of Proposition 8 of \citep{blanchet2024first}. For $0\leq n\leq t_x$, we define the \textit{production number} $P_n$ as the number of particles at level $n$ that allow a descendant beyond $x$ in the first coordinate, at time $t_x$. For instance, $P_0=1$ and $P_{t_x}\asymp (\log x)x^{\frac{d-1}{2}}$ with high probability. 
 We will prove in Proposition \ref{lemma:transition} that $P_n$ only has non-trivial increase on the intervals $n\in[0,O((\log x)^2)]\cup [t_x-O((\log x)^2),t_x]$; see Figure \ref{fig:Pn} for an illustration. In other words, the \textit{majority} of the (roughly $(\log x)x^{\frac{d-1}{2}}$ many) particles beyond $x$ at time $t_x$ are either separated in the first $O((\log x)^2)$ steps or the last $O((\log x)^2)$ steps with a non-trivial probability.\footnote{Due to the nature of the second moment method we apply, we cannot conclude a with-high-probability statement. While we resolve this issue by using the exponential concentration of the FPT (Lemma \ref{lemma:concentration} below), we conjecture that such a property holds with high probability. }
\begin{figure}[h!]
    \centering
    \includegraphics[width=0.6\textwidth]{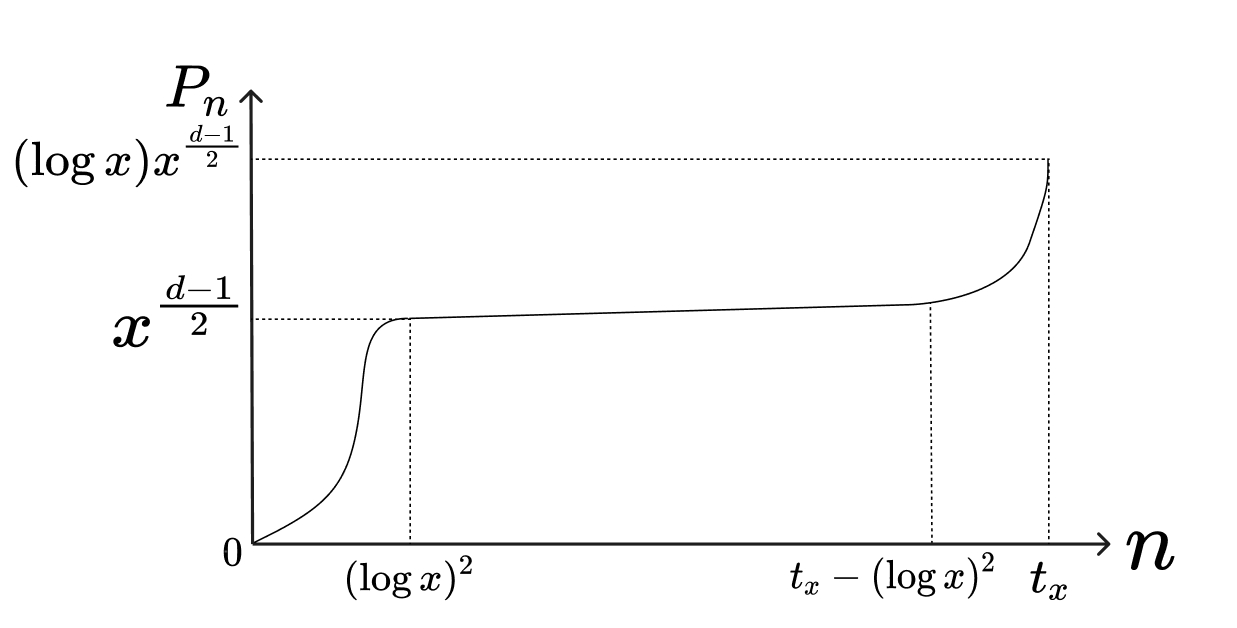}
    \caption{Typical growth pattern of the production number $P_n$. The plot in the interval $[O((\log x)^2),t_x-O((\log x)^2)]$ is almost flat, reflecting the fact that the only non-trivial increase arises from $n\ll (\log x)^2$ or $t_x-n\ll (\log x)^2$.}
    \label{fig:Pn}
\end{figure}

The effect \eqref{eq:quote} is related to \textit{entropic repulsion}, which describes the phenomenon that a typical path leading to maximum lies well below the interpolating line in most intermediate times. For this reason, those locations well below the interpolating line are not favorable as a branching location that leads to another extremal particle. Thus, the overwhelming majority of the leading frontier particles will exhibit the entropic repulsion phenomenon. This means that early on in the history of the BRW, leading particles' induced ``clusters'' in the genealogy start being formed early on (within $(\log x)^2$ time), and by time $t_x$ there are roughly $x^{(d-1)/2}$ many clusters\footnote{Note that this is a rather coarse approximation, each leading particle could form early on itself a random number of clusters, but the expectation of this number is finite.} that are well separated in the metric of the tree generated by the genealogy.  
More comprehensive discussions of the leading particles' genealogy can be found in \citep{arguin2011genealogy,cortines2019structure,cortines2021more,hartung2024growth}. However, a major difference is that these works focused on $O(1)$ many frontier particles of the BBM, instead of $\asymp (\log x)x^{\frac{d-1}{2}}$ many frontier particles of the BRW. Another technical difference is that their {clusters} classify \textit{all} particles in the branching genealogy by the genealogical distance and are re-centered by the maximum location in each cluster; in our case, we focus only on particular particles lying in $\H_x$ at time $t_x$ (instead of collecting all of them).

Turning to the picture of the process $\{\bX_v\}_{v\in T}$, this suggests that the set $T$ enjoys the cluster structure suggested earlier. Figure \ref{fig:cluster} illustrates this phenomenon. Roughly speaking, each element in the partition of $T$ into blocks then corresponds to a collection of particles that do not separate until time $t_x-O((\log x)^2)$, or equivalently until $O((\log x)^2)$ (since $P_n$ barely increases in the time interval $[O((\log x)^2),t_x-O((\log x)^2)]$), meaning that these blocks are well-separated.  As a consequence of Proposition \ref{lemma:transition}, the number of such blocks is around $x^{\frac{d-1}{2}}$. By a conditional local CLT we establish below (Lemma \ref{lemma:conditioned local CLT}), each block has a chance of around $x^{-\frac{d-1}{2}}$ of having a particle located in $B_x$, and these events for each block are approximately independent {(a key ingredient in A))}. The upper bound for $\tau_x$ then follows, which we elaborate on in Section \ref{sec:UB}.

On the other hand, {turning to the block size mentioned in B)}, it is nontrivial to show that the sizes of these individual clusters are small. One can show that each cluster has size $\ll \log x$, most have size $\ll 1$, and on average has cardinality $\asymp\log x$, but these pieces of information are not sufficient to conclude a matching upper bound for \eqref{eq:sbp model}. To proceed further, one needs the following crucial observation. There are {two} fundamentally different ways to upper bound the ``sizes'' of individual clusters: \begin{itemize}
    \item The size of a cluster $T_j$ is small if it contains very few elements (i.e., its cardinality is small). In this case, we use the union bound to obtain
    \begin{align}
        \p(\exists\, v\in T_j,\,\n{\bX_v}\leq 1 )\ll x^{-\frac{d-1}{2}}\#T_j.\label{eq:bound1}
    \end{align}
    \item \sloppy The size of a cluster $T_j$ is small if its ``dispersion'' is small, precisely, if $\E[\sup_{v,w\in T_j}\n{\bX_v-\bX_w}^{d-1}]$ is small. In this case, we expect that 
    \begin{align}
        \p(\exists\, v\in T_j,\,\n{\bX_v}\leq 1 )\ll x^{-\frac{d-1}{2}}\E\Big[\sup_{v,w\in T_j}\n{\bX_v-\bX_w}^{d-1}\Big].\label{eq:bound2}
    \end{align}
\end{itemize}

\begin{figure}[h!]
    \centering  \includegraphics[width=0.7\textwidth]{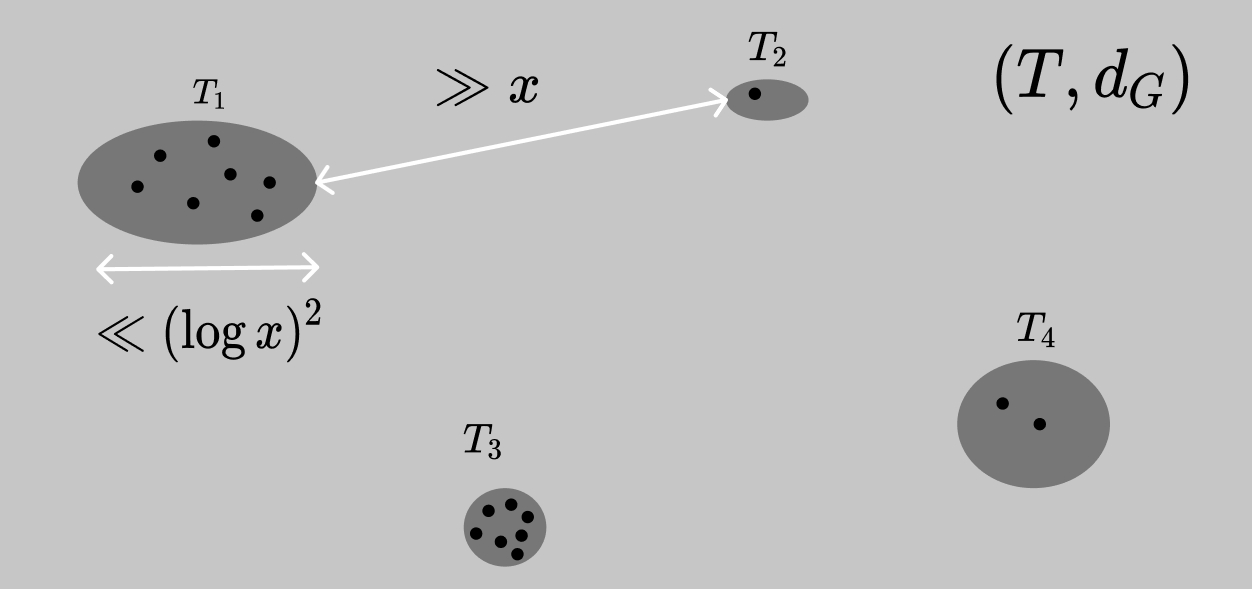}
    \caption{A typical cluster structure of the (random) set $T$. Consider the metric $d_G$  on $T$ defined as the genealogical distance of two particles $v,w\in T$ (i.e., if $u\in V_\ell$ is the latest common ancestor of $v,w\in V_{t_x}$, then $d_G(v,w)=t_x-\ell$).  Solid ellipses indicate the clusters. The set $T$ consists of $\asymp x^{\frac{d-1}{2}}$ clusters that are well-separated by distances of order $\gg x$; each of the clusters has diameter $\ll (\log x)^2$, in the metric space $(T,d_G)$. The dispersion of a cluster $T_i$ can be measured as its radius in the metric $d_G$. The set $T_1$ has a large cardinality and a large dispersion; $T_2$ has a small cardinality and a small dispersion; etc. }
    \label{fig:cluster}
\end{figure}

In summary, our goal is to show that, with high probability, the random set $T$ exhibits the cluster structure explained, and most of the $\asymp x^{\frac{d-1}{2}}$ many clusters satisfy the following: either its cardinality is small, or its dispersion is small. Equivalently, consider the collection $\mathcal P$ of particles at time 
\begin{align}
    \wtx:=t_x-(\log x)^2\label{eq:wtx def 1}
\end{align}
that lead to a descendant beyond $x$ at time $t_x$ (in the first coordinate). We need to show that with high probability, for \textit{most} particles in $\mathcal P$, either each of these has very few descendants reaching $x$ at time $t_x$, or all of its descendants that reach $x$ have a very young common ancestor (so the dispersion is controlled with the help of a suitable conditional local CLT). Achieving this goal is the most technical part of this paper. In the following, we attempt to sketch the intuition without going into too many details. 

\textbf{Bounding the size of the clusters.}
Recall \eqref{eq:wtx def 1}. 
The plan is to condition on an ancestor at time $\wtx $ (as well as the first coordinate of its location), discretize the space, and perform the following multi-step conditioning analysis of the BRW in time $[\wtx ,t_x]$:
\begin{itemize}
    \item Look at a particle $v\in V_{\wtx }$ that is near the location $x-m_{(\log x)^2}-\ell,\,\ell\in\bZ$.
    \item Consider the BRW process initiated at $v$. Condition on the \textit{heterogeneity index} $h$ of $v$, defined as the age of the \textit{latest common ancestor} of all particles present in $[x,\infty)$ at time $t_x$, in the sub-tree initiated at $v$. For example, if only one descendant of $v$ reaches $[x,\infty)$ at time $t_x$, then $h=1$. If none reaches, then we set $h=0$. Obviously, $h\in[0,(\log x)^2]\cap\bZ$.
     \item Condition on the event that the {location} of the latest common ancestor at time $t_x-h$ is near $x-m_h+g,\,g\in\bZ$. 
\end{itemize}
Figure \ref{fig:lhg} below illustrates the three parameters $\ell,h,g$.

\begin{figure}[h!]
    \centering
    \includegraphics[width=0.7\textwidth]{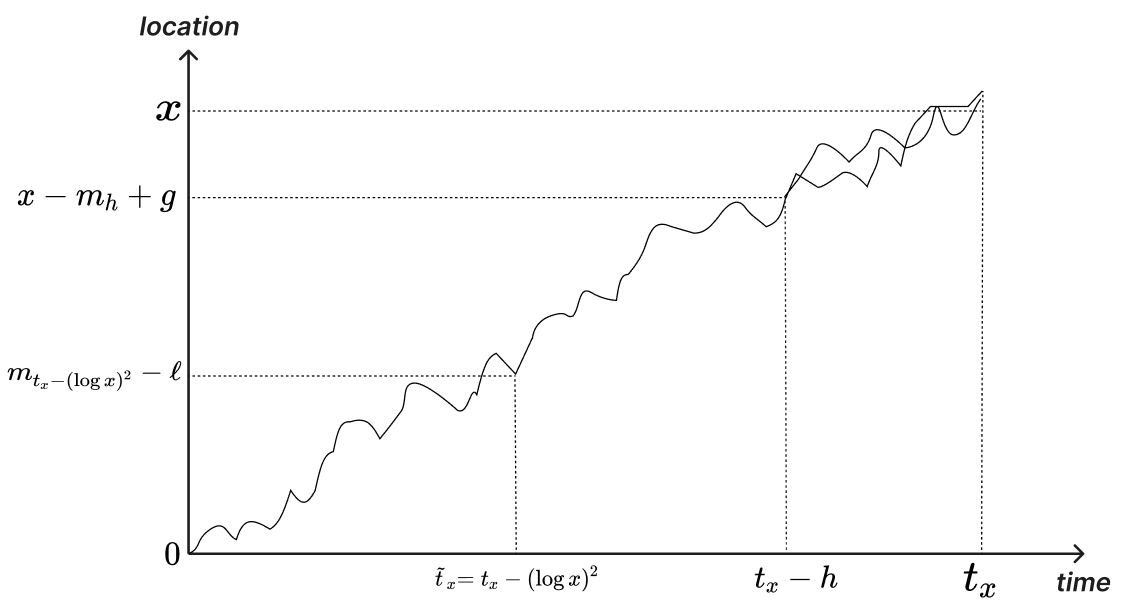}
    \caption{Illustration of the parameters $\ell,h,g$. Solid curves indicate the trajectories of the BRW.}
    \label{fig:lhg}
\end{figure}

More precisely, we will apply a first moment method conditionally on $\ell$, and apply a union bound on $h,g$. 
In this way, all particles $v\in V_{\wtx }$ have been classified by the indices $(\ell,h,g)$. Following the above discussion, we first consider the following three cases.
\begin{enumerate}[(a)]
 \item If $\ell$ is small,\footnote{Here and below, the \textit{smallness} of $\ell,g$ refers to having a (very) negative value, instead of having a small absolute value.} the number of such particles $v$ will be small. This will be shown in Proposition \ref{prop:particle density}.
    \item If $h$ is small, the size of the cluster corresponding to the particle $v$ is small in the set $T$. This corresponds to the case \eqref{eq:bound2} and will be proved in Lemma \ref{lemma:uniform bd}.
    \item If $g$ is small, the number of descendants of $v$ reaching $x$ at time $t_x$ is small (note that we condition on two particles separated at time $t_x-h$ that reach $x$ at time $t_x$, so such a number must be positive), meaning that the cardinality of the cluster corresponding to $v$ is small. This corresponds to the case \eqref{eq:bound1} and will be proved in Lemma \ref{lemma:2prob2case}. 
   
\end{enumerate}
It remains to consider case (d): $\ell,h,g$ are all large. There are two sub-cases.
\begin{enumerate}
    \item [(d1)] If $\ell,g$ are large and $h\approx (\log x)^2$, this means that in the small time period $[\wtx ,t_x-h]$, the trajectory travels a distance of $\ell+g+m_{(\log x)^2}-m_h$ which is significantly larger than $m_{(\log x)^2-h}$, and thus happens with a tiny probability. This is the goal of Lemma \ref{lemma:E_1}. 
    \item [(d2)] If $\ell,g$ are large and $h$ is close to neither $0$ nor $(\log x)^2$, we are in a situation where the BRW initiated by $v$ satisfies that two descendants of $v$ located beyond $m_{(\log x)^2}+\ell$ in time $(\log x)^2$ have a common ancestor that is neither too early nor too late. This must happen rarely, since it contradicts the philosophy \eqref{eq:quote}. The analysis is hidden in the computation of sums over $h$ in the proofs of Lemmas \ref{lemma:ell<loglog} and \ref{lemma:ell>loglog}.
\end{enumerate}
Therefore, in all cases, the size of the cluster corresponding to the particle $v$ can be controlled with high probability. 
One extra technicality comes into play since the statement \eqref{eq:quote} requires removing ballot-type events where the random walks cross a certain barrier. When applying \eqref{eq:quote} in case (d2), one needs to remove the barrier events for each $v\in V_{\wtx }$ within our consideration. Clearly, removing the events for all $v\in V_{\wtx }$ is extremely costly because there are exponentially many such particles. To overcome this issue, we remove barrier events only for those \textit{relevant} $v\in V_{\wtx }$. These are the particles $v$ where the last $(d-1)$-dimensional location of the latest common ancestor at time $t_x-h$ is close enough to the origin (say within a distance of $\asymp h$, so that it has a sufficient chance to reach $B_x$), in addition to satisfying the prescribed events. 
This finishes the upper bound for \eqref{eq:sbp model} and consequently the desired lower bound of $\tau_x$.

Finally, we remark that in addition to nailing down the precise asymptotic of the FPT, our approach naturally leads to high probability properties of the trajectory that first realizes the FPT. We may identify the main contribution to the total size of the clusters emanating from distinct values of $(\ell,h,g)$---it will become apparent from our proof that the main contribution stems from $\ell\asymp\log x, h=O(1),$ and $g=O(1)$. In other words, one can show that with high probability, the trajectory that realizes the FPT satisfies:
\begin{itemize}
    \item its location at time $\wtx$ belongs to $[m_{t_x-(\log x)^2}-(\log x)/\ee,m_{t_x-(\log x)^2}-\ee\log x]$ for some small $\ee>0$;
    \item the collection of descendants of its ancestor at time $\wtx$ that reach $\H_x$ at time $t_x$ has a latest common ancestor at time $\wtx-O(1)$;
    \item if $h$ denotes the age of that latest common ancestor, then its location at time $t_x-h$ is around $x-m_h+O(1)$.
\end{itemize}



\sloppy\textbf{Notation.} We typically use (possibly with subscripts) $u,v,w$ to denote particles; $P,V,W$ to denote collections of particles; $E,G,H,\sA,\sB,\sC,\dots,\sK$ to denote events; $t,\tau,h$ to denote time; $x,g,\ell,\bx,\bu$ to denote locations or distances in Euclidean spaces; $\psi,\widehat{\psi},\widetilde{\psi},\overline{\psi}$ to denote barrier functions. Vectors are typically denoted by bold symbols. The notation $\delta>0$ (resp.~$L,C>0$) typically refers to a small (resp.~large) constant depending on the law of the BRW that may vary from line to line; $K_1,K_2,\dots,K_{12}$ denote large constants that may depend on each other (in a permissible order) and the law of the BRW (including the underlying dimension $d$). Denote by $\bone_A$ the indicator of an event $A$. The first time a definition appears is always followed by the ``$:=$'' sign. We refer to Appendix \ref{appendix:notation} for a glossary of frequently used notation and definitions throughout this paper.

\textbf{Outline of the paper.} Section \ref{sec:prelim} collects a few useful results on one-dimensional BRW and applies them to study the transition in the production number $P_n$, concluded by Section \ref{sec:UB} that proves the desired upper bound of the FPT in Theorem \ref{thm:main}. The proof of the corresponding lower bound takes up the entire Section \ref{sec:LB}, where we gradually carry out the multi-step conditioning plan outlined above. Section \ref{sec:discussions} contains a sketch of the extra arguments required for the proof of Theorem \ref{thm:main2} and discussions on varying targets. Appendices \ref{sec:escape}--\ref{sec:cond local CLT proof} are devoted to several preliminary tools involving the escape probability of BRW, ballot theorems, and a conditional local CLT.

\section{Preliminary results}\label{sec:prelim}

\subsection{Useful results for the extremal behavior of one-dimensional BRW}\label{sec:one-dim results}
This section contains a few useful lemmas that are \textit{established} results for one-dimensional BRW. A few other results that need further verification will be collected in Appendix \ref{sec:upper ballot}.
We assume throughout this section that the BRW satisfies assumptions (A1)--(A4) with $d=1$, except for Lemma \ref{lemma:concentration}. Let $S$ denote the event that the underlying BRW survives at all times. In all events considered below, we omit the conditioning on the survival event $S$ for brevity.



For $\beta>0$ and $n\in\N$, we define the barrier event
\begin{align}
    \sG_{n,\beta}:=\bigcup_{v\in V_n}\bigcup_{0\leq k\leq n}\left\{\eta_{v,n}(k)\geq \frac{km_n}{n}+\beta+\frac{6}{\bl}(\log\min\{k,n-k\})_+\right\},\label{eq:gnb}
\end{align}
where $(\cdot)_+$ denotes the positive part of an extended real number and by definition $(\log 0)_+=(-\infty)_+=0$. Let us also define 
\begin{align}
    \varphi_{n,\delta}(i):=e^{-\delta|i|\min(\frac{|i|}{n},1)}.\label{eq:phind}
\end{align}

\begin{lemma}[Lemma 2.4 of \citep{bramson2016convergence}]\label{lemma:barrier}
There exists $\delta>0$ such that 
\begin{align}
    \p(\sG_{n,\beta})\ll \beta e^{-c_2\beta}\varphi_{n,\delta}(\beta).\label{eq:barrier}
\end{align}
Moreover, if $\tau_{n,\beta}$ denotes the smallest $k$ such that the event $\sG_{n,\beta}$ occurs, we have
$$\p(\tau_{n,\beta}=j)\ll \min\{j,n+1-j\}^{-3}\beta e^{-c_2\beta}\varphi_{n,\delta}(\beta).$$
\end{lemma}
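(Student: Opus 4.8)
The plan is to reduce the bound to a single random-walk estimate by a first-moment (many-to-one) argument and then invoke a ballot/barrier theorem for that walk; the harder of the two displays is the pointwise estimate on $\p(\tau_{n,\beta}=j)$, from which the bound on $\p(\sG_{n,\beta})$ follows by a harmless summation. For the first-moment reduction, write $B_k:=\frac{km_n}{n}+\beta+\frac{6}{c_2}(\log\min\{k,n-k\})_+$ for the barrier at level $k$. Since $B_k$ is deterministic and $\sG_{n,\beta}$ is measured on the genealogy up to time $n$, on $\{\tau_{n,\beta}=j\}$ every particle at levels $<j$ lies strictly below the barrier while some particle at level $j$ lies on or above it; that level-$j$ particle then has all its ancestors below the barrier. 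Markov's inequality and the many-to-one lemma therefore give
\[
\p(\tau_{n,\beta}=j)\ \le\ \rho^{\,j}\,\p\big(S_k<B_k\ \text{for all}\ 0\le k<j,\ S_j\ge B_j\big),
\]
where $(S_k)_{k\ge 0}$ is the random walk with increments distributed as $\xi$ and $S_0=0$.

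Next I would apply an exponential tilt. Let $\tilde\p$ be the law obtained by tilting each increment of $S$ by $c_2$. Since $c_2=I'(c_1)$ is the Legendre-dual point, under $\tilde\p$ the increments have mean exactly $c_1$, and by (A4) ($\phi_\xi$ finite in a neighborhood of $c_2$) they have finite exponential moments near $0$; set $\tilde S_k:=S_k-c_1k$, a centered light-tailed walk under $\tilde\p$. Using $\log\phi_\xi(c_2)=c_1c_2-\log\rho$ (which follows from (A4)), the factor $\rho^j$ is exactly cancelled and
\[
\rho^{\,j}\,\p\big(S_k<B_k\ \forall k<j,\ S_j\ge B_j\big)\ =\ \E_{\tilde\p}\Big[e^{-c_2\tilde S_j}\,\bone\{\tilde S_k<b_k\ \forall k<j,\ \tilde S_j\ge b_j\}\Big],
\]
with $b_k:=B_k-c_1k=-\frac{3k}{2nc_2}\log n+\beta+\frac{6}{c_2}(\log\min\{k,n-k\})_+$. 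The key geometric feature is that $b_k$ is an \emph{upward bump}: $b_0=\beta$, $b_k$ rises to height $\asymp\beta+\log n$ near $k\approx n/2$, and falls back to $\beta-\frac{3}{2c_2}\log n$ at $k=n$.

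The crux is then the random-walk ballot estimate: one must show
\[
\E_{\tilde\p}\Big[e^{-c_2\tilde S_j}\,\bone\{\tilde S_k<b_k\ \forall k<j,\ \tilde S_j\ge b_j\}\Big]\ \ll\ \min\{j,n+1-j\}^{-3}\,\beta\,e^{-c_2\beta}\,\varphi_{n,\delta}(\beta).
\]
Three effects combine. (i) The local CLT controls the weight $e^{-c_2\tilde S_j}$: its mass sits where $\tilde S_j$ is as small as allowed, i.e.\ near $b_j$, contributing a density factor together with $e^{-c_2 b_j}$; one splits into $b_j\ge 0$ and $b_j<0$, using in the latter case that landing well above a negative $b_j$ at the last step costs a polynomially small (in $n$) probability by the exponential tails. (ii) A ballot/barrier estimate for the centered walk below the bump $b_k$: the barrier only bites near its two ends, the polynomial factors being governed by $b_0=\beta$ and by $b_j$ (resp.\ by $b_n$ when $j$ is close to $n$); the $\frac{6}{c_2}\log\min\{k,n-k\}$ buffer is precisely what forces the power of $\min\{j,n+1-j\}^{-1}$ up to (at least) $3$, while the $\beta$-offset at $k=0$ supplies the ballot prefactor $\beta$. (iii) A Gaussian deviation bound over $\Theta(n)$ steps handles $\beta$ comparable to or larger than $\sqrt n$, producing $\varphi_{n,\delta}(\beta)=e^{-\delta|\beta|\min(|\beta|/n,1)}$ (for $\beta\lesssim\sqrt n$ this is $\asymp 1$ and plays no role). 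The natural tools are Bramson-style curved-barrier comparisons with Brownian motion and the classical random-walk ballot theorems; the delicate part — and the main obstacle of the whole proof — is to run all of this \emph{uniformly} in $n$, $j$, and $\beta$, cleanly separating the regimes $\beta\lesssim\sqrt n$ and $\beta\gtrsim\sqrt n$ and identifying which endpoint of the barrier dominates for each range of $j$.

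Finally, the above is the second display of the lemma, and summing over $j$ with $\sum_{j=0}^{n}\min\{j,n+1-j\}^{-3}\ll 1$ yields $\p(\sG_{n,\beta})=\sum_{j}\p(\tau_{n,\beta}=j)\ll\beta e^{-c_2\beta}\varphi_{n,\delta}(\beta)$, the first display. (As is typical of first-moment bounds, the right-hand side should really be read with $1\vee\beta$ in place of $\beta$: for $\beta\lesssim 1$ the barrier is already crossed at small times with probability of order one.) Steps 1, 2, and 4 are routine; all the work is in the curved-barrier ballot estimate of Step 3.
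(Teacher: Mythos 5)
The paper does not reprove this lemma: its ``proof'' is a two-sentence citation. The first display is taken verbatim from Lemma 2.4 of Bramson--Ding--Zeitouni, and the second is their equation (23), with the single remark that choosing the coefficient $\frac{6}{c_2}$ in front of the logarithm in \eqref{eq:gnb} (rather than their smaller coefficient) upgrades the power of $\min\{j,n+1-j\}$ from $-2$ to $-3$. Your proposal instead reconstructs the proof of the cited result from scratch, and the skeleton you give --- first-moment/many-to-one reduction to a single walk, exponential tilt at $c_2$ using $I(c_1)=c_1c_2-\log\phi_\xi(c_2)=\log\rho$ to cancel $\rho^j$, and then a ballot estimate for the centered walk below the ``bump'' $b_k$ --- is exactly the mechanism behind the reference. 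Your algebra for the tilt and your identification of the barrier profile $b_k$ are correct, and the final summation $\sum_j\min\{j,n+1-j\}^{-3}\ll 1$ recovering the first display from the second is fine.

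The gap is that your Step~3 --- the uniform curved-barrier ballot estimate producing precisely the factors $\min\{j,n+1-j\}^{-3}$, $\beta e^{-c_2\beta}$, and $\varphi_{n,\delta}(\beta)$ --- is asserted rather than proved, and it is the entire content of the lemma; everything you do carry out is the routine wrapping around it. In particular, the one quantitative point that actually distinguishes this statement from the literal statement in the reference is the exponent $-3$, and your parenthetical that the $\frac{6}{c_2}\log\min\{k,n-k\}$ buffer ``forces the power up to (at least) $3$'' is exactly the claim that needs justification: one must trace how the coefficient of the logarithmic tilt enters the exponent of $\min\{j,n+1-j\}$ in the ballot computation (with the reference's coefficient the same argument only yields $-2$). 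Without that computation, or at minimum a precise pointer to where in the source the dependence of the exponent on the log-coefficient is made explicit, the second display is not established. So: right strategy, correct reductions, but the decisive estimate is left as a black box.
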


\begin{proof}
    The first claim is precisely Lemma 2.4 of \citep{bramson2016convergence}. The second claim is a restatement of equation (23) therein, where the power is $-3$ instead of $-2$ because we changed the coefficient of the logarithm term in the definition \eqref{eq:gnb} of $\sG_{n,\beta}$.
\end{proof}

We recall that $M_n$ stands for the maximum of the BRW in generation $n$.

\begin{lemma}[Corollary 2.5 and Lemma 2.7 of \citep{bramson2016convergence}]\label{lemma:brw small deviation prob}
There exist $\delta,C>0$ such that for  $z\geq 0$,
\begin{align}
    \p(M_n>m_n+z)\leq C(z+1)e^{-c_2z}\varphi_{n,\delta}(z).\label{eq:BRW max tail}
\end{align}
Moreover, for $z\leq \sqrt{n}$,
\begin{align}
    \p(M_n>m_n+z)\geq \frac{1}{C}ze^{-c_2z}.\label{eq:BRW max tail2}
\end{align}
In other words, \eqref{eq:BRW max tail} is tight up to constants for $z\leq \sqrt{n}$.
    \end{lemma}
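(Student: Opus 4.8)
The two displayed estimates are by now classical right-tail bounds for the maximum of a one-dimensional non-lattice BRW, and the plan is to deduce \eqref{eq:BRW max tail} from Corollary 2.5 of \citep{bramson2016convergence} and \eqref{eq:BRW max tail2} from Lemma 2.7 of \citep{bramson2016convergence}, after checking that (A1)--(A4) with $d=1$ imply the (slightly weaker) moment and non-lattice hypotheses used there; the only bookkeeping point is that our barrier $\sG_{n,\beta}$ in \eqref{eq:gnb} uses the coefficient $6/c_2$ in front of the logarithmic correction rather than the coefficient appearing in \citep{bramson2016convergence}, which is harmless for the tail statements (it only affects the exponent of the polynomial factor in Lemma \ref{lemma:barrier}, as noted in its proof). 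For completeness I indicate the shape of the argument one would reconstruct.

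For the upper bound \eqref{eq:BRW max tail}, split on the barrier event: $\p(M_n>m_n+z)\le \p(\sG_{n,z/2})+\p(M_n>m_n+z,\ \sG_{n,z/2}^\complement)$. By Lemma \ref{lemma:barrier} the first term is $\ll z\,e^{-c_2z/2}\varphi_{n,\delta}(z/2)$, which is of the asserted order after adjusting $\delta$ and absorbing the polynomial factor. For the second term, apply the many-to-one lemma to bound $\E[\#\{v\in V_n:\eta_{v,n}(n)\ge m_n+z,\ \text{the ancestral path of }v\text{ stays below the barrier}\}]$ by $\rho^n$ times a single-random-walk probability $\p(S_n\ge m_n+z,\ S_k\le \tfrac{k}{n}m_n+\tfrac{z}{2}+\tfrac{6}{c_2}(\log\min\{k,n-k\})_+\ \forall k)$. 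Tilting the walk by $e^{c_2 s}$ makes it centered, and the factor $\rho^n e^{-c_2 m_n}=\rho^n e^{-c_2 c_1 n}\, n^{3/2}$ reduces (using $I(c_1)=\log\rho$) to the polynomial correction $n^{3/2}$; a ballot/local-CLT estimate for the centered tilted walk, conditioned to end near $m_n+z$ and to stay below a line, then produces a factor $\asymp (z+1)\, n^{-3/2}\, e^{-c_2 z}\,\varphi_{n,\delta}(z)$, the $n^{3/2}$'s cancel, and Markov's inequality gives \eqref{eq:BRW max tail}.

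For the lower bound \eqref{eq:BRW max tail2}, use a truncated second-moment (Paley--Zygmund) argument: let $Z$ count particles $v\in V_n$ with $\eta_{v,n}(n)\in[m_n+z,m_n+z+1]$ whose ancestral path stays below $\tfrac{k}{n}(m_n+z)+C$ for a suitable constant $C$. Since $z\le\sqrt n$ forces $\varphi_{n,\delta}(z)\ge e^{-\delta}$, the many-to-one computation gives $\E[Z]\asymp z\,e^{-c_2 z}$. Decomposing $\E[Z^2]$ over the branching time $k$ of an ordered pair of distinct contributing particles, the barrier truncation forces the common ancestor at level $k$ to sit low, which makes the sum $\sum_k \rho^{-k}(\cdots)$ converge and dominated by $k=O(1)$; this yields $\E[Z^2]\ll (\E[Z])^2+\E[Z]\ll z\,e^{-c_2 z}$ (using that $z\,e^{-c_2 z}$ is bounded). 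Hence $\p(M_n>m_n+z)\ge\p(Z>0)\ge (\E[Z])^2/\E[Z^2]\gg z\,e^{-c_2 z}$. The main obstacle in carrying this out from scratch -- and the reason to cite \citep{bramson2016convergence} rather than reprove -- is the delicate ballot/barrier bookkeeping: making the $n^{3/2}$ polynomial factors cancel exactly and the Gaussian factor $\varphi_{n,\delta}$ emerge with the correct exponent in the upper bound, and tuning the truncation level so that the branching-time sum in the second moment converges uniformly over $z\le\sqrt n$.
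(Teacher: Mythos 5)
Your route is the same as the paper's: the lemma is taken from \citep{bramson2016convergence}, and your reconstruction sketches (first moment with a barrier, many-to-one, tilting by $e^{c_2 s}$ and a ballot estimate for the upper bound; truncated second moment with Paley--Zygmund for the lower bound) are exactly the standard arguments behind Corollary 2.5 and Lemma 2.7 there. The second-moment bookkeeping you describe is also fine, since $(\E[Z])^2/\E[Z^2]\gg \E[Z]\asymp z e^{-c_2 z}$ once $\E[Z^2]\ll \E[Z]$.

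The one point you gloss over is that Corollary 2.5 of \citep{bramson2016convergence} is \emph{not} literally stated in the form \eqref{eq:BRW max tail}: the Gaussian-type factor there is $e^{-\delta|z|\min(|z|/(n\log n),1)}$, with an $n\log n$ in the denominator rather than the $n$ appearing in $\varphi_{n,\delta}$. The paper devotes a remark to closing exactly this gap: one must check the relevant estimate (equation (20) of \citep{bramson2016convergence}) uniformly down to $i<\beta-C\sqrt{n}$ rather than $i<\beta-C\sqrt{n\log n}$, which the paper does via a Skorohod embedding to get $\p(S_k<i)\ll e^{-\delta|i|^2/n}$ uniformly in the intermediate range $i\in[\beta-C\sqrt{n\log n},\beta-C\sqrt{n}]$ and $1\le k\le n$. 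Your from-scratch sketch of the upper bound would produce the correct factor if the moderate-deviation step for the tilted walk is carried out at scale $n$, but as written your claim that \eqref{eq:BRW max tail} ``follows from Corollary 2.5'' is slightly too quick; this strengthening is the only nontrivial content the paper adds beyond the citation, and it should be acknowledged.
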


    \begin{remark}
        The proof of \eqref{eq:BRW max tail2} proceeds by first applying the simple inequality $$ \p(M_n>m_n+z)\geq \p(\exists v\in V_n:\,\eta_{v,n}(n)\in[m_n+z,m_n+z+1)).$$ 
As a consequence, by slightly modifying the proof in \citep{bramson2016convergence}, it holds that for $1\leq z\leq \sqrt{n}$,
\begin{align}
   \p\Big(\exists v\in V_n,\,\Big|\eta_{v,n}(n)-(m_n+z)\Big|\leq \frac{1}{4}\Big) \geq \frac{1}{C}ze^{-c_2z}.\label{eq:BRW max tail3}
\end{align}
        
    \end{remark}
    
\begin{remark}
    The estimates \eqref{eq:barrier} and \eqref{eq:BRW max tail} were stated in \citep{bramson2016convergence} in the form 
    $$\p(M_n>m_n+z)\leq C(z+1)e^{-c_2z}e^{-\delta|z|\min(\frac{|z|}{n\log n},1)}.$$
    On the other hand, the authors of \citep{bramson2016convergence} remarked below the statement of Lemma 2.4 therein that \eqref{eq:BRW max tail} holds with a slightly modified argument. We sketch the missing argument below for completeness. The only missing piece therein is the validity of equation (20) in \citep{bramson2016convergence}, uniformly in $i<\beta-C\sqrt{n}$ instead of $i<\beta-C\sqrt{n\log n}$. After a proper change of measure using Lemma 2.2 therein, it suffices to show that for some $\delta>0$,
    $\p(S_k<i)\ll e^{-\frac{\delta |i|^2}{n}}$
    uniformly in $i\in [\beta-C\sqrt{n\log n},\beta-C\sqrt{n}]$ and $1\leq k\leq n$, where $S_k=\sum_{j=1}^k\xi_j$ is partial sum of an i.i.d.~sequence with law given by the jump of the BRW. Using the Skorohod embedding theorem, we may write $S_k\dd B_{\tau_k}$ where $B$ is Brownian motion and $\tau_k$ is a sum of $k$ i.i.d.~nonnegative random variables with a finite second moment. It then follows that uniformly for $1\leq k\leq n$,
\begin{align*}
    \begin{split}
        \p(S_k<i)=\p(B_{\tau_k}<i)&\leq \p\Big(\tau_k\geq \frac{n}{4\delta}\Big)+\p\Big(\sup_{0\leq s\leq n/(4\delta)}B_s>|i|\Big)\\
        &\ll n^{-2}+e^{-\frac{|i|^24\delta}{2n}}\ll e^{-\frac{\delta|i|^2}{n}},
    \end{split}
\end{align*}
    where we have used Remark 8.3 of \citep{karatzas2014brownian} in the second inequality and the $\ll $ may depend on $\beta,\delta$.
    
\end{remark}

\begin{remark}
    Note also that the paper \citep{bramson2016convergence} assumed that the underlying BRW cannot terminate, i.e., $p_0=0$. However, as long as $\rho>1$, the general case follows from the case $p_0=0$ by the same proof therein (which only relies on the first and second moments of the number of particles) and a decomposition theorem of supercritical branching processes (Theorem 1 of Section 12 of \citep{athreya2004branching}), which states that a supercritical branching process conditioned on $S$ has the same finite-dimensional distributions (in terms of particle counts) as another supercritical branching process satisfying $p_0=0$ and having the same mean offspring distribution.
\end{remark}

Define the collection of particles
$$Q_{n,\beta}:=\left\{v\in V_n : \text{ for any }0\leq k\leq n,~\eta_{v,n}(k)<\frac{km_n}{n}+\beta+\frac{6}{\bl}(\log\min\{k,n-k\})_+\right\}.$$
\begin{lemma}[Proposition 9 of \citep{blanchet2024first}]\label{lemma:gnb}Uniformly in $y\in[2,\sqrt{n}]$, 
    $$\E[\#\{v\in Q_{n,\beta} : \eta_{v,n}(n)\geq m_n-y\}]\ll \beta (y+\beta)e^{\bl y}.$$
\end{lemma}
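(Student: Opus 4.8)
The plan is to reduce the first-moment count to a one-dimensional random-walk estimate via the many-to-one lemma, to center the walk by the exponential tilt of rate $c_2$, and to feed the result into a ballot estimate for a walk below a log-corrected barrier. Let $\psi_n(k):=\frac{km_n}{n}+\beta+\frac{6}{c_2}(\log\min\{k,n-k\})_+$ be the barrier from the definition of $Q_{n,\beta}$, let $(S_k)_{0\le k\le n}$ be a random walk with $S_0=0$ and i.i.d.~increments distributed as the (here one-dimensional) displacement $\xi$, and write $\mathcal E_n:=\{S_k<\psi_n(k)\text{ for all }0\le k\le n\}$. By the many-to-one lemma (see, e.g., \citep{shi2015branching}),
\begin{align*}
\E\big[\#\{v\in Q_{n,\beta}:\eta_{v,n}(n)\ge m_n-x\}\big]=\rho^n\,\p\big(\{S_n\ge m_n-x\}\cap\mathcal E_n\big),
\end{align*}
and since $\psi_n(n)=m_n+\beta$ this event forces $S_n\in[m_n-x,m_n+\beta)$. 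Next I would tilt by $c_2$: under $\hat\p$ with $\mathrm d\hat\p/\mathrm d\p=e^{c_2S_k}/\phi_\xi(c_2)^k$ on $\sigma(\xi_1,\dots,\xi_k)$, the increments are i.i.d.~with mean $(\log\phi_\xi)'(c_2)=c_1$ (by (A4), as $I'(c_1)=c_2$) and all moments finite ($c_2$ is interior to the domain of $\phi_\xi$). From $\log\rho=I(c_1)=c_1c_2-\log\phi_\xi(c_2)$ one gets $\rho^n\phi_\xi(c_2)^n=e^{c_1c_2n}$ and $e^{c_1c_2n-c_2m_n}=n^{3/2}$; splitting $\{S_n\ge m_n-x\}$ into unit windows $\{S_n\in[m_n-x+j,m_n-x+j+1)\}$, $j\ge0$, and bounding $e^{-c_2S_n}\le e^{-c_2(m_n-x+j)}$ on the $j$-th one,
\begin{align*}
\E\big[\#\{v\in Q_{n,\beta}:\eta_{v,n}(n)\ge m_n-x\}\big]&=e^{c_1c_2n}\,\hat\E\big[e^{-c_2S_n}\bone_{\{S_n\ge m_n-x\}\cap\mathcal E_n}\big]\\
&\le n^{3/2}e^{c_2x}\sum_{j=0}^{\lceil x+\beta\rceil}e^{-c_2j}\,\hat\p\big(S_n\in[m_n-x+j,m_n-x+j+1),\,\mathcal E_n\big),
\end{align*}
the sum truncating at $\lceil x+\beta\rceil$ because $S_n<m_n+\beta$.

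The remaining input is a ballot estimate for each term. Under $\hat\p$ the centered walk $\widetilde S_k:=S_k-c_1k$ is mean-zero with all moments finite, and $\mathcal E_n$ reads $\{\widetilde S_k<\overline\psi_n(k)\ \forall k\}$ with $\overline\psi_n(k):=\beta-\frac{3k\log n}{2c_2n}+\frac{6}{c_2}(\log\min\{k,n-k\})_+$; thus $\widetilde S$ must stay below a barrier of clearance $\beta$ at time $0$ and end, within a unit window at time $n$, at clearance $x+\beta-j\,(\ge0)$ below $\overline\psi_n(n)=\beta-\frac3{2c_2}\log n$. The correction $\frac{6}{c_2}(\log\min\{k,n-k\})_+$ is exactly of Bramson's type, so the barrier/ballot machinery underlying Lemma \ref{lemma:barrier} — in its single-walk, two-endpoint form, as set up in Appendix \ref{sec:upper ballot}; cf.~also \citep{bramson2016convergence} — yields the flat-ceiling order
\begin{align*}
\hat\p\big(S_n\in[m_n-x+j,m_n-x+j+1),\,\mathcal E_n\big)\ll\frac{\beta\,(1+x+\beta)}{n^{3/2}},
\end{align*}
uniformly in $0\le j\le\lceil x+\beta\rceil$, $2\le x\le\sqrt n$ and $\beta\ge1$ (for $\beta<1$ read $\beta$ as $\beta+1$; when $\beta$ or $x$ exceeds $\sqrt n$ the right-hand side exceeds $1$ and the bound is vacuous). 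Summing $\sum_{j\ge0}e^{-c_2j}=O(1)$ and using $x\ge2$, the previous display is $\ll n^{3/2}e^{c_2x}\cdot n^{-3/2}\beta(x+\beta)=\beta(x+\beta)e^{c_2x}$, as asserted. Keeping the weight $e^{-c_2j}$, rather than the cruder $e^{-c_2S_n}\le e^{-c_2(m_n-x)}$, is essential: it localizes $j$ near $0$, replacing an a priori window length $x+\beta$ by an effective length $O(1)$ and thus avoiding a spurious extra factor $x+\beta$.

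The main obstacle is the uniform ballot estimate in the last display: for the walk below the non-flat, log-corrected barrier $\overline\psi_n$ one needs the correct \emph{linear} dependence on both clearances — the start clearance $\beta$ and the end clearance $\asymp x+\beta-j\le x+\beta$ — with both parameters ranging up to $\asymp\sqrt n$. This is exactly the regime of Bramson's barrier lemmas; the mechanism is that the $\frac{6}{c_2}\log\min\{k,n-k\}$ term sits precisely at the threshold where it changes the flat-ceiling ballot probability $\Theta(st/n^{3/2})$ only by a bounded factor, while the $c_2$-tilt supplies the increment moment bounds needed for the local-CLT ingredient in that estimate. The remaining ingredients — the many-to-one reduction, the change-of-measure bookkeeping, and the geometric summation — are routine.
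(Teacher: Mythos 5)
The paper does not actually prove this lemma --- it imports it verbatim as Proposition~9 of \citep{blanchet2024first} and only remarks that the proof is a ``ballot theorem under a change of measure'' argument. Your proposal is precisely that argument (many-to-one reduction, exponential tilt by $c_2$ so that $\rho^n\phi_\xi(c_2)^n e^{-c_2 m_n}=n^{3/2}$, unit-window decomposition with the geometric weight $e^{-c_2 j}$, and the Bramson-type two-clearance ballot bound $\ll \beta(x+\beta)n^{-3/2}$ below the log-corrected barrier), and it is correct.
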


    \begin{lemma}[Proposition 8 of \citep{blanchet2024first}]
        \label{prop:number of particles} There exists $L>0$ depending only on the law of the BRW such that the following holds conditioned upon survival. Given any $\ee>0$, there exists $C>0$ independent from $n$ and $y$   such that  uniformly for $n$ large enough and for $y\in[2,\sqrt{n}]$, 
    \begin{align}
        \p\left(\#\{v\in V_n : \eta_{v,n}(n)\geq m_n-y\}>Cye^{\bl y}\right)<\ee\label{eq:ub number}
    \end{align}
    and 
    \begin{align}
        \p\left(\#\{v\in V_n : \eta_{v,n}(n)\geq m_n-y\}>\frac{1}{C}ye^{\bl y}\right)>\frac{1}{L}.\label{eq:lb number}
    \end{align}
    \end{lemma}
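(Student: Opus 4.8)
This is essentially Proposition~8 of \citep{blanchet2024first}; the plan is to establish it by a pair of complementary moment computations, using the barrier event $\sG_{n,\beta}$ of Lemma~\ref{lemma:barrier} to pass between the raw count $N_x:=\#\{v\in V_n:\eta_{v,n}(n)\ge m_n-x\}$ and its barrier-restricted version $\#\{v\in Q_{n,\beta}:\eta_{v,n}(n)\ge m_n-x\}$, whose moments are controlled by Lemma~\ref{lemma:gnb} and its counterparts. Since $\p(\,\cdot\mid S)\le\p(\,\cdot\,)/\p(S)$ with $\p(S)>0$ a fixed constant, it suffices to prove unconditional bounds and correct for the conditioning at the end, using that on $S^{\complement}$ one has $V_n=\emptyset$, hence $N_x=0$, once $n$ is large.

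\emph{Upper bound \eqref{eq:ub number}.} Given $\ee>0$, first pick a constant $\beta=\beta(\ee)$ large enough that $\p(\sG_{n,\beta})\ll\beta e^{-c_2\beta}<\tfrac12\ee\,\p(S)$ for every $n$, by Lemma~\ref{lemma:barrier}. On $\sG_{n,\beta}^{\complement}$ every particle of $V_n$ lies in $Q_{n,\beta}$, so $N_x=\#\{v\in Q_{n,\beta}:\eta_{v,n}(n)\ge m_n-x\}$ there, and by Lemma~\ref{lemma:gnb} the latter has expectation $\ll\beta(x+\beta)e^{c_2x}$, hence $\ll xe^{c_2x}$ (with a $\beta$-dependent constant) uniformly for $x\in[2,\sqrt n]$. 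Markov's inequality then gives $\p(N_x>Cxe^{c_2x})\le\p(\sG_{n,\beta})+O_\beta(1/C)<\ee\,\p(S)$ once $C=C(\ee)$ is large, and dividing by $\p(S)$ yields \eqref{eq:ub number}.

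\emph{Lower bound \eqref{eq:lb number}.} Here I would fix $\beta$ to be a large \emph{absolute} constant and apply the Paley--Zygmund inequality to $N:=\#\{v\in Q_{n,\beta}:\eta_{v,n}(n)\ge m_n-x\}\le N_x$. The two inputs, both needed uniformly for $n$ large and $x\in[2,\sqrt n]$, are $\E[N]\gg xe^{c_2x}$ and $\E[N^2]\ll(\E[N])^2$. The first is the lower-bound counterpart of Lemma~\ref{lemma:gnb}: by a spinal change of measure (many-to-one lemma, tilting the first-coordinate increment by $c_2$) it reduces to a ballot estimate lower bounding the probability that the tilted walk stays under the barrier defining $Q_{n,\beta}$ and ends in $[m_n-x,\infty)$; the $n^{3/2}$ produced by the tilt cancels the $n^{-3/2}$ of the ballot estimate, while summing the end-window contributions supplies the factor $x$. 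For the second moment one expands $\E[N^2]=\E[N]+2\sum_{v\ne w}\p(v,w\text{ both counted})$ and conditions on the most recent common ancestor $u$ of the pair, say at level $k$ and height $\eta_u(k)$, so that the subtrees below the two relevant children of $u$ are conditionally independent; the many-to-one lemma (with (A1) bounding the offspring second factorial moment) bounds the contribution of splitting time $k$ by a constant times $\rho^{k}\,\E\big[\bone\{\text{spine under the barrier on }[0,k]\}\,f_k(\eta_u(k))^2\big]$, where $f_k(y)$ is the many-to-one expectation of the number of descendants from height $y$ at level $k$ that stay under the barrier and land in $[m_n-x,\infty)$. Estimating the ballot probabilities $f_k$ via Lemma~\ref{lemma:barrier} (including its hitting-time refinement), one checks that the sum over $k$ is dominated by $k=O(1)$ and $n-k=O(1)$, each contributing $O((\E[N])^2)$, while the bulk $k\in[C,n-C]$ contributes a remainder summable in $(k,n-k)$ of the same order --- the analytic incarnation of the ``separate early or late'' principle \eqref{eq:quote}. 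Paley--Zygmund then gives $\p(N\ge\tfrac12\E[N])\ge\tfrac14(\E[N])^2/\E[N^2]$, a constant; since $\E[N]\gg xe^{c_2x}$ and $N\le N_x$, this yields $\p(N_x>\tfrac1Cxe^{c_2x})\ge1/L$ for suitable constants $C,L$, and correcting for the conditioning on $S$ (using $\p(N_x\ge1,\,S^{\complement})\to0$) gives \eqref{eq:lb number} for $n$ large; the final $C$ is taken to be the maximum of this constant and the $C(\ee)$ from the upper bound, and $L$ absorbs the factor $1/\p(S)$.

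\emph{Main obstacle.} The delicate step is $\E[N^2]\ll(\E[N])^2$: two near-extremal particles are a priori strongly correlated, and without the barrier restriction the near-maximal count has too heavy a tail for a second moment bound to succeed. It is precisely the restriction to $Q_{n,\beta}$ --- which discards the rare configurations responsible for that heavy tail --- together with the entropic-repulsion/``early or late splitting'' geometry of the one-dimensional BRW, quantified through the ballot bounds of Lemma~\ref{lemma:barrier}, that forces the bulk splitting times to be negligible and pins $\E[N^2]$ at the scale of $(\E[N])^2$. Carrying this out with constants uniform over $x\in[2,\sqrt n]$ and $n$ large relies on the ballot and local-CLT inputs holding with $x$- and $n$-independent constants, which they do.
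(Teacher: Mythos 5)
Your proposal matches the route the paper itself indicates: it notes that \eqref{eq:ub number} follows from combining Lemma \ref{lemma:barrier} with the first-moment bound of Lemma \ref{lemma:gnb} (since $\beta(x+\beta)e^{c_2x}\asymp_\beta xe^{c_2x}$), and that the lower bound \eqref{eq:lb number} comes from a ballot-theorem-under-change-of-measure argument plus the second moment method (which is exactly why the paper only gets the constant $1/L$ rather than $1-\ee$), deferring details to Proposition 8 of the cited reference. Your outline, including the Paley--Zygmund step and the decomposition of $\E[N^2]$ over the splitting time of the most recent common ancestor, is the standard and correct implementation of that argument.
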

    The above results are closely related. For instance, Lemmas \ref{lemma:barrier} and \ref{lemma:gnb} together yield \eqref{eq:ub number} as $\beta (y+\beta)e^{\bl y}\asymp_\beta ye^{\bl y}$. 
The proofs of Lemmas \ref{lemma:gnb} and \ref{prop:number of particles} are based on a ``ballot theorem under a change of measure'' argument, which is standard for the study of extrema of spatial branching processes and will be frequently used in this work.  We refer to \citep{blanchet2024first,bramson2016convergence} for further details.  

The lower bound $1/L$ (instead of the anticipated stronger lower bound $1-\ee$) of the probability in \eqref{eq:lb number} is an artifact of the second moment method. This bound alone does not suffice for proving high-probability upper bounds of $\tau_x$. To resolve this issue, the work \citep{blanchet2024first} established a concentration bound for $\tau_x$ around its median. 
Let $\mathrm{Med}(\cdot)$ denote the median of a random variable.
\begin{lemma}[Theorem 2 of \citep{blanchet2024first}]\label{lemma:concentration}Let $\tau_x$ be the first passage time to $B_x$ for a $d$-dimensional BRW satisfying conditions (A1), (A2), and (A6). There exist constants $C,c>0$ independent of $x$ such that for each $y\in[0,x]$,
    \begin{align*}
        \p\left(|\tau_x-\mathrm{Med}(\tau_x\mid S)|>y\right)\leq Ce^{-cy}.
    \end{align*}    
\end{lemma}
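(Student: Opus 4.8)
\emph{Proof strategy.} Write $m:=\mathrm{Med}(\tau_x\mid S)$. Since $Ce^{-cy}\ge1$ whenever $y$ lies below a law-dependent constant, it suffices to treat $y$ large and to bound the two probabilities $\p(\tau_x\le m-y\mid S)$ and $\p(\tau_x\ge m+y\mid S)$ separately. Both arguments take as input the weaker a priori localization $m=x/c_1+\frac{d+2}{2c_1c_2}\log x+O(\log x)$ established in \citep{blanchet2024first}.

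\textbf{Lower tail.} Since $\{\tau_x\le m-y\}$ forces some particle into $B_x$ at a time $n'\le m-y$, a union bound and the many-to-one formula give
\[
\p(\tau_x\le m-y\mid S)\ \le\ \frac1{\p(S)}\sum_{n'\le m-y}\E\big[\#\{v\in V_{n'}:\bet_{v,n'}(n')\in B_x\}\big].
\]
The raw first moment is inflated by atypical high-multiplicity events, so I would intersect with the complement of the barrier event $\sG_{n',\beta}$ of \eqref{eq:gnb} for $\beta\asymp y$: by Lemma \ref{lemma:barrier} the barrier contributes $\ll\beta e^{-c_2\beta}\ll e^{-cy}$, while on its complement a change-of-measure estimate in the first coordinate (as in Lemma \ref{lemma:gnb}) together with the transverse local CLT (Lemma \ref{lemma:conditioned local CLT}, contributing a factor $\asymp n'^{-(d-1)/2}$) bounds the truncated expectation by $\ll\mathrm{poly}(y)\,e^{c_2(m_{n'}-x)}n'^{-(d-1)/2}$. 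This is geometric in $m-n'$ with ratio $e^{-c_1c_2}<1$; summing it, using the a priori location of $m$ to pin the prefactor, and relaxing the exponent slightly to absorb the polynomial factors yields $\p(\tau_x\le m-y\mid S)\ll e^{-cy}$.

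\textbf{Upper tail.} Here I would run a renewal/restart argument based on the branching property: conditionally on the history $\mathcal F_n$ up to time $n$, the subtrees rooted at the particles $v\in V_n$ are independent BRWs translated to $\bet_{v,n}(n)$, so the subtree of $v$ reaches $B_x$ by time $n+a$ precisely when an independent BRW from the origin reaches the shifted ball $B_x-\bet_{v,n}(n)$ in at most $a$ steps. The key local step is: there exist law-dependent constants $a,p,C_0>0$ such that if at time $n$ some particle has first coordinate in $[x,x+C_0]$ and transverse part of norm $\le1$, then with probability at least $p$ one of its descendants lands in $B_x$ within $a$ further steps (this uses only spherical symmetry, $\p(\bxi=\z)<1$, and supercriticality). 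One then verifies that on $\{\tau_x>n\}\cap S$ this triggering configuration is present with probability bounded below for each $n\in[m,m+y]$: past the median and with $B_x$ not yet reached, the one-dimensional front satisfies $M_n\approx m_n\gg x$, and Lemma \ref{prop:number of particles} combined with the transverse local CLT produces $\gtrsim1$ suitably separated near-axis particles just beyond $x$. Iterating the restart over $j=1,\dots,\lfloor y/a\rfloor$ and controlling the growth of $\#V_{ja}$ via standard supercritical Galton--Watson estimates (where assumption (A1) enters) gives $\p(\tau_x\ge m+y\mid S)\ll e^{-cy}$.

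\textbf{Main obstacle.} The delicate point in both directions is that the estimates a priori carry a polynomial-in-$x$ prefactor coming from the $O(\log x)$ uncertainty in the location of $m$; to make the argument self-consistent one must run it in tandem with the sharper asymptotic of \citep{blanchet2024first} (which in the spherical case improves the error to $O(\log\log x)$) and, for the upper tail, show the restart configuration is reliably available throughout $[m,m+y]$. Equivalently, one has to exclude $\tau_x$ spreading over a window of width $\gg1$ around $m$, which is the genuine content of the lemma and is where a second-moment/genealogy input cannot be avoided; accordingly the final constant $c$ is smaller than the naive value $c_1c_2$.
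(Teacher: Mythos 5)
First, note that the paper does not prove this statement at all: it is imported verbatim as Theorem~2 of \citep{blanchet2024first}, so there is no internal proof to compare against. Judged as a standalone argument, your sketch circles the right ideas (first moment plus barrier for the lower tail, branching/restart for the upper tail) and you correctly flag the main obstacle, but as written both halves have genuine gaps that are not merely technical. For the lower tail, the truncated first moment at time $n'\le \mathrm{Med}(\tau_x\mid S)-y$ is governed by $e^{c_2(m_{n'}-x)}$, and since the a priori localization of the median is only $O(\log x)$ (and only $O(\log\log x)$ even in the spherical case), this leaves an $x^{O(1)}$ (resp.\ $(\log x)^{O(1)}$) prefactor that cannot be absorbed into $e^{-cy}$ for $y$ of constant or slowly growing size; repairing this by invoking "the sharper asymptotic" is circular, because in this paper (and in \citep{blanchet2024first}) the sharp asymptotics are \emph{derived from} Lemma~\ref{lemma:concentration}, which is exactly the device used to upgrade the second-moment lower bound \eqref{eq:lb number} from probability $1/L$ to high probability.

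The upper tail has the same circularity in a sharper form. Your restart step needs the "triggering configuration" (a particle just beyond $x$, near the axis) to be available, with failure probabilities decoupling across the window $[m,m+y]$, \emph{conditionally on} $\{\tau_x>n\}\cap S$. But the only available input, Lemma~\ref{prop:number of particles}(ii) plus the transverse local CLT, yields such a configuration only with probability $\ge 1/L$ --- the paper explicitly says this "solely does not suffice for proving high-probability upper bounds of $\tau_x$" and cites the concentration lemma to fix it. Moreover the conditioning on $\{\tau_x>n\}$ biases against exactly this configuration, and your local step invokes spherical symmetry, which is not among the hypotheses (A1), (A2), (A6). The proof that actually works never locates the median: it establishes a median-free bootstrap inequality of the schematic form $\p(\tau_x>t\mid S)\le \bigl(1-\delta\,\p(\tau_x\le t-Cy\mid S)\bigr)^{e^{\delta y}}+e^{-cy}$, obtained by conditioning on the (with high probability $\ge e^{\delta y}$ many) independent subtrees alive at time $\asymp y$ and steering each root into a favourable starting position at an $e^{-O(y)}$ cost; applying this at $t=\mathrm{Med}(\tau_x\mid S)$ in both directions gives both tails simultaneously. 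Without some such self-improving functional inequality, the argument you propose cannot close.
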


   

\subsection{Transition in the production number \texorpdfstring{$P_n$}{}}
  In this subsection, we formulate the quote \eqref{eq:quote} in the form we need using the notion of production numbers, keeping in mind that we look at a neighborhood of length $\log x$ near extrema.
Recall \eqref{eq:txdef} and that the {production number} $P_n$ is defined as
$$P_n:=\#\{v\in V_n:\,\exists w\in V_{t_x},\,w\succ v,\,\eta_{w,t_x}(t_x)\geq x\},~0\leq n\leq t_x,$$
where $w\succ v$ means that particle $w$ is a descendant of $v$ (including the case $w=v$). 
For our purpose, it is also useful to bound from below a similar quantity $P_n'$ as $P_n$, defined as 
$$P_n':=\#\Big\{v\in V_n:\,\exists w\in V_{t_x},\,w\succ v,\,\eta_{w,t_x}(t_x)\in[x-\frac{1}{2},x+\frac{1}{2}]\Big\},~0\leq n\leq t_x.$$
Here and later, the upper and lower limits of a sum are always interpreted as integers, without loss of generality. The main result in this subsection is the following extension of Lemma \ref{prop:number of particles}.

\begin{proposition}[Transition in $P_n$]\label{lemma:transition}
  (i)  For any $\ee>0$, there is $C>0$ such that uniformly for $x$ large enough, $$\p(P_{\wtx }\geq Cx^{(d-1)/2}\mid S)<\ee.$$
  (ii)  There are $L,C>0$ such that uniformly for $x$ large enough,
  $$\p\Big(P'_{(\log x)^2}\geq \frac{1}{C} x^{(d-1)/2}\mid S\Big)>\frac{1}{L}.$$
\end{proposition}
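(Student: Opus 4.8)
\textbf{Proof plan for Proposition \ref{lemma:transition}.}

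The plan is to reduce both statements to the already-established estimates on a single one-dimensional BRW (Lemmas \ref{lemma:gnb} and \ref{prop:number of particles}), combined with the branching property applied at the relevant intermediate time. The key observation is that $P_n$ counts particles $v\in V_n$ whose sub-tree reaches the half-space $\H_x$ by time $t_x$; conditionally on the location $\eta_{v,n}(n)$, whether this happens is governed by the event that the maximum of a BRW run for $t_x - n$ steps exceeds $x - \eta_{v,n}(n)$. I would first record the deterministic calibration $m_{t_x} = x + \frac{d-1}{2c_2}\log x + O(1)$ and $m_{\wtx} = x + \frac{d-3}{2c_2}\log x + O(1)$, obtained by plugging \eqref{eq:txdef} into the definition of $m_n$; thus reaching $\H_x$ from time $\wtx$ means traveling a distance $\approx m_{(\log x)^2} + \frac{d-1}{2c_2}\log x + O(1)$ in $(\log x)^2$ steps, i.e.~overshooting the typical maximum $m_{(\log x)^2}$ by $z \asymp \log x$.

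\emph{Part (i) (upper bound).} Conditioning on $V_{\wtx}$ and the genealogy up to $\wtx$, the branching property gives
\[
\E[P_{\wtx} \mid \F_{\wtx}] = \sum_{v\in V_{\wtx}} \p\big(M^{(v)}_{(\log x)^2} \geq x - \eta_{v,\wtx}(\wtx)\big),
\]
where $M^{(v)}$ is the maximum of the independent BRW emanating from $v$. Writing $z_v := x - \eta_{v,\wtx}(\wtx) - m_{(\log x)^2}$, the tail bound \eqref{eq:BRW max tail} of Lemma \ref{lemma:brw small deviation prob} controls each summand by $\ll (z_v+1)e^{-c_2 z_v}$ when $z_v \geq 0$ (and by $1$ otherwise, but by the maximum calibration this case is negligible on the survival event since $\eta_{v,\wtx}(\wtx)$ rarely exceeds $m_{\wtx} + O(\sqrt{\wtx})$; more carefully, one restricts to the barrier-good particles $Q_{\wtx,\beta}$ via Lemma \ref{lemma:barrier} and handles the exceptional set separately). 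Then $\E[P_{\wtx}\mid\F_{\wtx}] \ll \sum_{v} (z_v+1)e^{-c_2 z_v}$, and the right-hand side is exactly the type of weighted particle count estimated in Lemma \ref{lemma:gnb}: it is $\ll x^{(d-1)/2}$ in expectation (after conditioning on survival), because $e^{-c_2 z_v} \asymp x^{-(d-1)/2} e^{c_2(m_{(\log x)^2} + \eta_{v,\wtx}(\wtx) - x)}$ relates to $\E[\#\{v : \eta_{v,\wtx}(\wtx) \geq m_{\wtx} - y\}] \ll y \beta e^{c_2 y}$. A Markov inequality conditionally on survival then yields $\p(P_{\wtx} \geq C x^{(d-1)/2}\mid S) < \ee$ for $C = C(\ee)$ large. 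One subtlety: the expectation must be taken on the survival event, so I would use that $\p(S) > 0$ and that Lemma \ref{lemma:gnb}'s bound is for the unconditional expectation, dividing by $\p(S)$.

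\emph{Part (ii) (lower bound).} Here I want to show $P'_{(\log x)^2} \gtrsim x^{(d-1)/2}$ with probability bounded below. Apply the branching property at time $(\log x)^2$: conditionally on survival, with probability $\geq 1/L$ there are $\geq \delta\rho^{(\log x)^2}$ particles at time $(\log x)^2$ (standard Galton--Watson lower bound, using (A1)), located near the bulk $\asymp c_1 (\log x)^2$. Each such particle $v$ independently launches a BRW of length $t_x - (\log x)^2 = x/c_1 + O(\log x)$; the target is that some descendant lands in $[x-\tfrac12,x+\tfrac12]$, which by the maximum calibration $m_{t_x - (\log x)^2} = x - \eta_{v}((\log x)^2) + \frac{d-1}{2c_2}\log x + O(1)$ means overshooting the typical maximum by $z\asymp \log x$, i.e.~by $\tfrac{d-1}{2c_2}\log x + O(1)$. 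By the refined lower bound \eqref{eq:BRW max tail3}, each such $v$ has a descendant within distance $\tfrac14$ of $x$ with probability $\gtrsim z e^{-c_2 z} \asymp (\log x) x^{-(d-1)/2}$. These events are conditionally independent across the $\gtrsim \rho^{(\log x)^2}$ particles $v$, and $\rho^{(\log x)^2} \cdot (\log x) x^{-(d-1)/2} \to \infty$, so a second-moment (Paley--Zygmund) argument on the count $P'_{(\log x)^2}$ shows it is $\gtrsim$ its mean $\asymp (\log x) x^{(d-1)/2} \geq \tfrac1C x^{(d-1)/2}$ with probability bounded below; combined with the $\geq 1/L$ for having enough particles, this gives the claim. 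To make the second moment work I would either appeal directly to Lemma \ref{prop:number of particles} applied to the sub-BRW from a single typical particle (which already packages the second-moment computation), or note that correlations only come from the genealogy within $[(\log x)^2$-level$]$, which is controlled.

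\emph{Main obstacle.} The delicate point is the interface between ``conditioned on survival'' and the moment estimates, which are naturally stated unconditionally (Lemmas \ref{lemma:gnb}, \ref{lemma:brw small deviation prob}) or already conditionally (Lemma \ref{prop:number of particles}); for part (i) one must ensure the contribution of atypically high particles at time $\wtx$ (those violating the barrier $\sG_{\wtx,\beta}$, which could each reach $\H_x$ with probability close to $1$) is negligible, which requires invoking Lemma \ref{lemma:barrier} to show such particles are absent with high probability, or that their expected weighted count is still $O(x^{(d-1)/2})$. For part (ii), the main work is verifying the near-independence needed for the second-moment bound; this is where Lemma \ref{prop:number of particles}, applied to the BRW from a single ancestor well into the bulk, does the heavy lifting, essentially reducing Proposition \ref{lemma:transition}(ii) to \eqref{eq:lb number} with $n = t_x - (\log x)^2$ and $x$ replaced by $\tfrac{d-1}{2c_2}\log x + O(1)$.
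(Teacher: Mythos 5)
Your part (i) follows essentially the same route as the paper: exclude the barrier event via Lemma \ref{lemma:barrier}, write the first moment of $P_{\wtx}$ as a sum over locations of (particle density at time $\wtx$) $\times$ (tail of $M_{(\log x)^2}$), control it with Lemmas \ref{lemma:gnb} and \ref{lemma:brw small deviation prob}, and finish with Markov. The only missing piece is that Lemma \ref{lemma:gnb} is stated only for depths in $[2,\sqrt{n}]$, so the far ranges $j\gg \log x$ (and in particular $j\geq\sqrt{x}$, where one must fall back on Cram\'er's upper bound and the decay of $\varphi_{(\log x)^2,\delta}$) need a separate, if routine, treatment; the paper splits the sum into four ranges for exactly this reason.

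Part (ii) has a genuine gap. You place the $\gtrsim\rho^{(\log x)^2}$ particles of time $(\log x)^2$ "near the bulk $\asymp c_1(\log x)^2$" and assign each a success probability $\asymp(\log x)x^{-(d-1)/2}$; both halves of this are inconsistent. Since the increments are centered, the bulk of the $\rho^{(\log x)^2}$ particles sits at $O(\log x)$, and for such a particle the sub-BRW of length $\wtx$ must overshoot its own asymptote $m_{\wtx}$ by $\asymp c_1(\log x)^2$, an event of probability $e^{-\Theta((\log x)^2)}$; conversely, near the frontier $c_1(\log x)^2$ there are only $O(1)$ particles, not $\rho^{(\log x)^2}$. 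The particles that actually drive $P'_{(\log x)^2}$ are those at depth $j\asymp\log x$ below $m_{(\log x)^2}$ (specifically $j\in[\frac{d-1}{2c_2}\log x,\frac{d-1}{c_2}\log x]$ in the paper), of which there are $\asymp je^{c_2 j}$, each succeeding with probability $\asymp ze^{-c_2z}$ for the overshoot $z=j-\frac{d-1}{2c_2}\log x+\frac{3}{c_2}\log\log x$; the product, summed over the window, is $\asymp x^{(d-1)/2}$. Moreover, your fallback of "appealing directly to Lemma \ref{prop:number of particles} applied to the sub-BRW from a single typical particle" cannot work: \eqref{eq:lb number} applied to one ancestor lower-bounds the number of its \emph{descendants} beyond a level, but all of those collapse to a single ancestor at time $(\log x)^2$ and hence contribute exactly $1$ to $P'_{(\log x)^2}$. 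What is needed --- and what the paper supplies --- is a second-moment (Paley--Zygmund) computation on the count of time-$(\log x)^2$ particles at depth $j$ (restricted below the line via $U_n$), weighted by independent success indicators $\delta'_{v,y}$ for their sub-BRWs, with the second moment controlling the genealogical correlations among those ancestors via the pair bound from \citep{blanchet2024first}.
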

\begin{remark}\label{rem:no S}
    A well-known fact of supercritical branching processes is that conditioned on extinction, the lifespan has an exponential tail. In particular, $\p(\#V_n>0\mid S^c)=o(1)$ (see Theorem 13.3 of \citep{athreya2004branching}). It follows that the statement of Proposition \ref{lemma:transition} is essentially equivalent to the same statement without conditioning upon survival. The general idea behind proving Proposition \ref{lemma:transition} is to first condition on the configuration at the time of interest (say, $(\log x)^2$), classify the particles at such a time according to their locations (while discretizing the space), and finally evolve these particles independently until time $t_x$. 
\end{remark}

\begin{remark}\label{rem:bootstrap}
    The $1/L$ lower bound of the probability in Proposition \ref{lemma:transition}(ii) can be improved to a $1-\ee$ lower bound by a bootstrapping argument: first run the BRW process until a large time $T$ to obtain sufficiently many (say $N$) alive particles near the origin,\footnote{This can be seen from a simple adaptation of the proof of Lemma \ref{lemma:escape} below.} and then independently evolve them to boost the lower bound $1/L$ to $1-(1-1/L)^N$, while applying a slightly stronger form than Proposition \ref{lemma:transition}(ii): there are $L,C>0$ such that for every fixed $T>0$, uniformly for $x$ large enough and $t\in\{0,1,\dots,T\}$,
  $$\p\Big(P'_{(\log x)^2-t}\geq \frac{1}{C} x^{(d-1)/2}\mid S\Big)>\frac{1}{L},$$which can be verified using essentially the same proof. We omit the details.
\end{remark}

\begin{proof}In the following, we use frequently the fact that
\begin{align}
    \begin{split}
        m_{\wtx}&=x+\frac{d-1}{2c_2}\log x-c_1(\log x)^2+o(1)\\
    &=x-\Big(m_{(\log x)^2}-\frac{d-1}{2c_2}\log x+\frac{3}{c_2}\log\log x\Big)+o(1),
    \end{split}\label{eq:mwtx}
\end{align}
which follows from a direct computation.

(i) Denote by $n=\wtx$. We first exclude a barrier event of arbitrarily small probability. Let $\ee>0$.
 By Lemma \ref{lemma:barrier}, $\p(Q_{n,\beta}\neq V_n)<\ee/2$ for some $\beta$ large enough.  
Therefore, we may without loss of generality assume that the event $\{Q_{n,\beta}=V_n\}$ holds.
 Define a collection of independent $\{0,1\}$-valued random variables $\{\delta_{v,y}\}_{v\in V_{\wtx},\,y\in(-\infty,x]\cap\bZ}$, independent from everything else, and such that
$$\p(\delta_{v,y}=1)=\p(M_{(\log x)^2}\geq x-y).$$
These random variables indicate whether a particle located within the interval $[y,y+1]$ at time $\wtx $ will have a descendant beyond $x$ at time $t_x$ (note that the evolution of the particles in time $[\wtx ,t_x]$, given the configuration at time $\wtx $, are independent).
For $u\in[2,\sqrt{n}]$ and $v\in V_n$, define the event
$$H_{v,n}(u):=\{\eta_{v,n}(n)\in[m_n-u,m_n-u+1)\}.$$
Now on the event $\{Q_{n,\beta}=V_n\}$,
$$P_{\wtx }\lst \sum_{y=-\infty}^{m_{\wtx }+\beta}\sum_{v\in V_{\wtx }}\delta_{v,y}\bone_{H_{v,\wtx }(m_{\wtx }-y)},$$
where $\lst$ denotes stochastic dominance. 
We then compute 
\begin{align*}
    &\hspace{0.5cm}\E\bigg[\bone_{\{Q_{n,\beta}=V_n\}}\sum_{y=-\infty}^{m_{\wtx }+\beta}\sum_{v\in V_{\wtx }}\delta_{v,y}\bone_{H_{v,\wtx }(m_{\wtx }-y)}\bigg]\\
    &=\rho^{\wtx }\sum_{y=-\infty}^{m_{\wtx }+\beta}\p(M_{(\log x)^2}>x-y)\,\p(H_{v,\wtx }(m_{\wtx }-y)\cap\{Q_{n,\beta}=V_n\}).
    \end{align*}
Using the change of variable $j=m_{\wtx }-y$ and \eqref{eq:mwtx}, the above is equal to
\begin{align}
    \begin{split}
       & \sum_{j=-\beta}^{\infty}\p\Big(M_{(\log x)^2}>m_{(\log x)^2}+(-\frac{d-1}{2c_2}\log x+j+\frac{3}{c_2}\log\log x)\Big)\\
       &\hspace{4cm}\times\rho^{\wtx }\p(H_{v,\wtx }(j)\cap\{Q_{n,\beta}=V_n\}).
    \end{split}\label{eq:summand}
\end{align}
Let $C>0$ be a large constant.  We divide the sum over $j$ in \eqref{eq:summand} into various ranges:
\begin{itemize}
    \item $-\beta\leq j\leq \frac{d-1}{2c_2}\log x-\frac{3}{c_2}\log\log x$. By a union bound and a large deviation estimate, the total contribution is controlled by
    $$\sum_{j=-\beta}^{\frac{d-1}{2c_2}\log x-\frac{3}{c_2}\log\log x}\rho^{\wtx }\p(H_{v,\wtx }(j))\ll \sum_{j=-\beta}^{\frac{d-1}{2c_2}\log x-\frac{3}{c_2}\log\log x}(\log x)^3e^{c_2j}\ll x^{\frac{d-1}{2}}.$$
    \item $\frac{d-1}{2c_2}\log x-\frac{3}{c_2}\log\log x\leq j\leq C\log x$. By Lemmas \ref{lemma:brw small deviation prob} and \ref{lemma:gnb}, this part contributes at most
    \begin{align*}
        &\hspace{0.5cm}\sum_{j=\frac{d-1}{2c_2}\log x-\frac{3}{c_2}\log\log x}^{C\log x} \Big(-\frac{d-1}{2c_2}\log x+j+\frac{3}{c_2}\log\log x\Big)e^{-c_2(-\frac{d-1}{2c_2}\log x+j+\frac{3}{c_2}\log\log x)}\\
        &\hspace{10cm}\times je^{c_2j}\\
        &\ll x^{\frac{d-1}{2}}(\log x)^{-3}\sum_{j=\frac{d-1}{2c_2}\log x-\frac{3}{c_2}\log\log x}^{C\log x}j\Big(-\frac{d-1}{2c_2}\log x+j+\frac{3}{c_2}\log\log x\Big)\\
        &\ll x^{\frac{d-1}{2}}.
    \end{align*}
    \item $C\log x\leq j\leq \sqrt{x}$. Again applying Lemmas \ref{lemma:brw small deviation prob} and \ref{lemma:gnb} leads to an upper bound of
    $$x^{\frac{d-1}{2}}(\log x)^{-3}\sum_{j=C\log x}^\infty je^{-c_2j-\frac{\delta j^2}{(\log x)^2} }je^{c_2j}.$$
    The sum can be bounded using an integral approximation:
\begin{align*}
    \sum_{j=C\log x}^\infty j\varphi_{(\log x)^2,\delta}(j)j&\ll \int_{C\log x}^{(\log x)^2} y^2e^{-\frac{\delta y^2}{(\log x)^2}}\d y+\int_{(\log x)^2}^\infty y^2e^{-\delta y}\d y\\
    &\ll (\log x)^3\int_C^\infty z^2e^{-\delta z}\d z\ll (\log x)^3.
\end{align*}
Therefore, this part of the contribution gives $\ll x^{\frac{d-1}{2}}$.

\item $j\geq \sqrt{x}$.  We directly apply the upper bound part of Cram\'{e}r's theorem along with the first moment method. Using convexity of $I$, we obtain
\begin{align*}
    &\hspace{0.5cm}\sum_{j=\sqrt{x}}^\infty \p\Big(M_{(\log x)^2}>m_{(\log x)^2}+j-\frac{d-1}{2c_2}\log x+\frac{3}{c_2}\log\log x\Big)\\
    &\hspace{5cm}\times\rho^{\wtx }\p(H_{v,\wtx }(j)\cap\{Q_{n,\beta}=V_n\})\\
    &\ll \sum_{j=\sqrt{x}}^\infty \rho^{t_x}e^{-(\log x)^2I(\frac{m_{(\log x)^2}+j-\frac{d-1}{2c_2}\log x+\frac{3}{c_2}\log\log x}{(\log x)^2})}e^{-\wtx I(\frac{m_{\wtx }-j}{\wtx })}\\
    &\ll \sum_{j=\sqrt{x}}^\infty x^{\frac{d-1}{2}}\rho^{(\log x)^2}e^{-(\log x)^2I(\frac{m_{(\log x)^2}+j-\frac{d-1}{2c_2}\log x+\frac{3}{c_2}\log\log x}{(\log x)^2})}e^{c_2j}.
\end{align*}
     If $x$ is large enough, then for some $\delta,\delta'>0$,
 $$\frac{m_{(\log x)^2}+j-\frac{d-1}{2c_2}\log x+\frac{3}{c_2}\log\log x}{(\log x)^2}\geq c_1+\delta'+\frac{(c_2+\delta)j}{I'(c_1+\delta')(\log x)^2},$$where we have used the fact that $\phi_\xi$ is well-defined in a neighborhood of $c_2$. This means 
\begin{align*}
    &\hspace{0.5cm}\sum_{j=\sqrt{x}}^\infty x^{\frac{d-1}{2}}\rho^{(\log x)^2}e^{-(\log x)^2I(\frac{m_{(\log x)^2}+j-\frac{d-1}{2c_2}\log x+\frac{3}{c_2}\log\log x}{(\log x)^2})}e^{c_2j}\\
    &\ll x^{\frac{d-1}{2}}e^{(\log x)^2(I(c_1)-I(c_1+\delta'))}\sum_{j=\sqrt{x}}^\infty e^{-\delta j}\ll x^{\frac{d-1}{2}}.
\end{align*}
\end{itemize}
Combining the above four cases with \eqref{eq:summand}, we conclude that
    $$\E\bigg[\bone_{\{Q_{n,\beta}=V_n\}}\sum_{y=0}^{m_{\wtx }+\beta}\sum_{v\in V_{\wtx }}\delta_{v,y}\bone_{H_{v,\wtx }(m_{\wtx }-y)}\bigg]\ll x^{\frac{d-1}{2}}.$$
    The remaining follows from Markov's inequality and Remark \ref{rem:no S}.

    (ii) 
 Similarly as in (i), we define a collection of independent $\{0,1\}$-valued random variables $\{\delta'_{v,y}\}_{v\in V_{(\log x)^2},\,y\in[0,x]\cap\N}$, independent from everything else, and such that
$$\p(\delta'_{v,y}=1)=\p\Big(\exists v\in V_{\wtx},\,\eta_{v,\wtx}(\wtx)\in[x-y-\frac{1}{4},x-y+\frac{1}{4}]\Big).$$
These random variables describe whether a particle located inside $[y-\frac{1}{4},y+\frac{1}{4}]$ at time $(\log x)^2$ will end up with a descendant in $[x-\frac{1}{2},x+\frac{1}{2}]$ at time $t_x$. 
Define$$U_n:=\left\{v\in V_n : \text{ for any }0\leq k\leq n,~\eta_{v,n}(k)<\frac{km_n}{n}\right\}.$$
For $u\in[2,\sqrt{n}]$ and $v\in V_n$, define the event
$$H'_{v,n}(u):=\Big\{v\in U_n,~\eta_{v,n}(n)\in[m_n-u-\frac{1}{4},m_n-u+\frac{1}{4})\Big\}.$$ It follows that, by considering particles located in $[y-\frac{1}{4},y+\frac{1}{4}]$ for $m_{(\log x)^2}-\log x\leq y\leq m_{(\log x)^2},~y\in\bZ$ at time $(\log x)^2$, 
\begin{align}
    P'_{(\log x)^2}\gst \sum_{y=m_{(\log x)^2}-\frac{d-1}{c_2}\log x}^{m_{(\log x)^2}-\frac{d-1}{2c_2}\log x}\sum_{v\in V_{(\log x)^2}}\delta'_{v,y} \bone_{H'_{v,(\log x)^2}(m_{(\log x)^2}-y)}.\label{eq:gst1}
\end{align}
We apply the second moment method to give a lower bound of the right-hand side of \eqref{eq:gst1}. Let us emphasize that the events $H'_{v,n}(u)$ and the random variables $\delta'_{v,y}$ are independent. We have
\begin{align}\begin{split}
   &\hspace{0.5cm} \E\bigg[\sum_{y=m_{(\log x)^2}-\frac{d-1}{c_2}\log x}^{m_{(\log x)^2}-\frac{d-1}{2c_2}\log x}\sum_{v\in V_{(\log x)^2}}\delta'_{v,y} \bone_{H'_{v,(\log x)^2}(m_{(\log x)^2}-y)}\bigg]\\
   &=\rho^{(\log x)^2}\sum_{y=m_{(\log x)^2}-\frac{d-1}{c_2}\log x}^{m_{(\log x)^2}-\frac{d-1}{2c_2}\log x}\p\Big(\exists v\in V_{\wtx},\,\eta_{v,\wtx}(\wtx)\in[x-y-\frac{1}{4},x-y+\frac{1}{4}]\Big)\\
   &\hspace{4cm}\times\p(H'_{v,(\log x)^2}(m_{(\log x)^2}-y)).
\end{split}\label{eq:t1}
    \end{align}
    It follows from the same argument leading to (17) in \citep{blanchet2024first} that for $u\in[2,\sqrt{n}]$, $\rho^n\p(H'_{v,n}(u))\gg ue^{\bl u}$. 
    With a change of variable $j=m_{(\log x)^2}-y$ and applying \eqref{eq:BRW max tail3} of Lemma \ref{lemma:brw small deviation prob} and \eqref{eq:mwtx}, the quantity in \eqref{eq:t1} is at least
  \begin{align*}
      &\hspace{0.5cm} \sum_{j=\frac{d-1}{2c_2}\log x}^{\frac{d-1}{c_2}\log x}\p\Big(\Big|M_{\wtx}-(m_{\wtx}+(j+\frac{3}{c_2}\log\log x-\frac{d-1}{2c_2}\log x))\Big|\leq\frac{1}{4}\Big)\\
      &\hspace{7cm}\times\rho^{(\log x)^2}\p(H'_{v,(\log x)^2}(j))\\
    &\gg \sum_{j=\frac{d-1}{2c_2}\log x}^{\frac{d-1}{c_2}\log x}\Big((j+\frac{3}{c_2}\log\log x-\frac{d-1}{2c_2}\log x)e^{-c_2(j+\frac{3}{c_2}\log\log x-\frac{d-1}{2c_2}\log x)}\Big)  (je^{c_2j})\\
    &\gg (\log x)^{-3}x^{\frac{d-1}{2}}\sum_{j=\frac{d-1}{2c_2}\log x}^{\frac{d-1}{c_2}\log x}(j-\frac{d-1}{2c_2}\log x)j\\
    &\gg x^{\frac{d-1}{2}}.
\end{align*}
We next compute the second moment of the right-hand side of \eqref{eq:gst1}. Expanding the square leads to
\begin{align*}
     &\hspace{0.5cm} \E\bigg[\bigg(\sum_{y=m_{(\log x)^2}-\frac{d-1}{c_2}\log x}^{m_{(\log x)^2}-\frac{d-1}{2c_2}\log x}\sum_{v\in V_{(\log x)^2}}\delta'_{v,y} \bone_{H'_{v,(\log x)^2}(m_{(\log x)^2}-y)}\bigg)^2\bigg]\\
     &=\sum_{s=0}^{(\log x)^2} \rho^{(\log x)^2+s}\sum_{y=m_{(\log x)^2}-\frac{d-1}{c_2}\log x}^{m_{(\log x)^2}-\frac{d-1}{2c_2}\log x} \sum_{y'=m_{(\log x)^2}-\frac{d-1}{c_2}\log x}^{m_{(\log x)^2}-\frac{d-1}{2c_2}\log x}\E[\delta'_{v,y}\delta'_{w,y'}]\\
     &\hspace{4cm}\times\p(H'_{v,(\log x)^2}(m_{(\log x)^2}-y)\cap H'_{w,(\log x)^2}(m_{(\log x)^2}-y')),
     \end{align*}
     where $v,w$ have genealogical distance equal to $2s$. Meanwhile, applying the same argument leading to (19) in \citep{blanchet2024first} gives that
     \begin{align*}
         &\sum_{s=0}^{(\log x)^2} \rho^{(\log x)^2+s}\p(H'_{v,(\log x)^2}(m_{(\log x)^2}-y)\cap H'_{w,(\log x)^2}(m_{(\log x)^2}-y'))\\
         &\hspace{3cm}\ll (m_{(\log x)^2}-y)(m_{(\log x)^2}-y')e^{c_2((m_{(\log x)^2}-y)+(m_{(\log x)^2}-y'))}.
     \end{align*}
     Combining the above steps and applying \eqref{eq:BRW max tail} of Lemma \ref{lemma:brw small deviation prob} with the change of variables $j=m_{(\log x)^2}-y,~j'=m_{(\log x)^2}-y'$, we have
     \begin{align*}
     &\hspace{0.5cm} \E\bigg[\bigg(\sum_{y=m_{(\log x)^2}-\frac{d-1}{c_2}\log x}^{m_{(\log x)^2}-\frac{d-1}{2c_2}\log x}\sum_{v\in V_{(\log x)^2}}\delta'_{v,y} \bone_{H'_{v,(\log x)^2}(m_{(\log x)^2}-y)}\bigg)^2\bigg]\\
         &\ll\sum_{j=\frac{d-1}{2c_2}\log x}^{\frac{d-1}{c_2}\log x}\sum_{j'=\frac{d-1}{2c_2}\log x}^{\frac{d-1}{c_2}\log x}\p(M_{\wtx}>m_{\wtx}+(j+\frac{3}{c_2}\log\log x-\frac{d-1}{2c_2}\log x))\\
     &\hspace{3cm}\times\p(M_{\wtx}>m_{\wtx}+(j'+\frac{3}{c_2}\log\log x-\frac{d-1}{2c_2}\log x))jj'e^{c_2(j+j')}\\
     &\ll (\log x)^{-6}x^{{d-1}}\sum_{j=\frac{d-1}{2c_2}\log x}^{\frac{d-1}{c_2}\log x}\sum_{j'=\frac{d-1}{2c_2}\log x}^{\frac{d-1}{c_2}\log x}(j+\frac{3}{c_2}\log\log x-\frac{d-1}{2c_2}\log x)\\
     &\hspace{6cm}\times(j'+\frac{3}{c_2}\log\log x-\frac{d-1}{2c_2}\log x)jj'\\
     &\ll x^{{d-1}}.
\end{align*}
    We conclude with the Paley--Zygmund inequality that there exist $C,L>0$ such that
    $$\p\Big(P'_{(\log x)^2}\geq \frac{1}{C} x^{(d-1)/2}\Big)>\frac{1}{L}.$$
    The proof is then complete in view of Remark \ref{rem:no S}.
\end{proof}

\section{Proof of the upper bound of FPT}\label{sec:UB}
The idea is rather simple: given Proposition \ref{lemma:transition}, we obtain $x^{\frac{d-1}{2}}/C$ many independent trajectories in the time period $[(\log x)^2,t_x]$ that lead to $[x-\frac{1}{2},x+\frac{1}{2}]$ at time $t_x$. It remains to argue that each of them has roughly a chance of $x^{-\frac{d-1}{2}}$ to reach $B_\z(\frac{1}{2})$ in the last $d-1$ coordinates.
To justify this claim, we need a conditional local central limit theorem, as we have already conditioned on the displacement in the first dimension of the trajectories. The proof will be deferred to Appendix \ref{sec:cond local CLT proof}. 
Let $\{\bxi_i\}_{i\in\N}$ be an i.i.d.~sequence of random vectors with the same law as $\bxi$ and consider its partial sum $\bS_n=\sum_{i=1}^n\bxi_i$. Let $\lambda(x)$ and $S_n$ be the first coordinates of $\bla(x)$ and $\bS_n$ respectively.

\begin{lemma}[conditional local CLT]\label{lemma:conditioned local CLT}Fix a large constant $L>0$. Uniformly for $\bla(x)=O((\log x)^L)$,
$$\p\Big(\bla(x)+\bS_{\wtx }\in [x-\frac{1}{2},x+\frac{1}{2}]\times B_\z(\frac{1}{2})~\Big|~ \lambda(x)+S_{\wtx }\in [x-\frac{1}{2},x+\frac{1}{2}]\Big)\asymp x^{-\frac{d-1}{2}},$$
where $B_\z(\frac{1}{2})$ is the ball of radius $1/2$ centered at $\z\in\R^{d-1}$.
\end{lemma}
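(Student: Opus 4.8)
The plan is to decouple the first coordinate from the last $d-1$ coordinates and run a local CLT on each, handling the conditioning carefully. Write $\bS_{\wtx} = (S_{\wtx}, \widehat{\bS}_{\wtx})$ and $\bla(x) = (\lambda(x), \widehat{\bla}(x))$. The key structural fact we would exploit is spherical symmetry (A3): conditionally on the first coordinate $\xi_i$ of each increment, the last $d-1$ coordinates $\widehat{\bxi}_i$ have a rotationally invariant law in $\R^{d-1}$, and in particular are conditionally centered. However, the conditional covariance of $\widehat{\bxi}_i$ given $\xi_i$ is \emph{not} constant in $\xi_i$, so the conditioned walk $\widehat{\bS}_{\wtx}$ (conditioned on $S_{\wtx}$ landing near $x$) is a sum of independent but non-identically-distributed mean-zero vectors whose total covariance is $\asymp \wtx \asymp x/c_1$ times the identity (up to a scalar depending on the typical tilt of the increments needed to push the first coordinate to $x$). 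The target event then asks $\widehat{\bla}(x) + \widehat{\bS}_{\wtx}$ to land in a fixed ball $B_{\z}(1/2)$; since $\widehat{\bla}(x) = O((\log x)^L)$ is much smaller than $\sqrt{x}$, this is a genuine local-CLT regime, and the probability should be $\asymp x^{-(d-1)/2}$.

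The steps I would carry out: (1) First condition on the entire first-coordinate increment sequence $(\xi_1,\dots,\xi_{\wtx})$, restricting to the (overwhelmingly likely, under an exponential tilt by $c_2$) event that the empirical profile of the $\xi_i$ is "typical" — i.e. the partial sums stay within $O(\sqrt{x}\,\mathrm{polylog})$ of the straight line to $x$ and no single increment is too large. On this event the conditional covariances $\Sigma_i := \mathrm{Cov}(\widehat{\bxi}_i \mid \xi_i)$ sum to $\Sigma := \sum_i \Sigma_i$ with $\Sigma \asymp x \cdot \mathrm{Id}_{d-1}$, using (A3) to guarantee $\Sigma_i$ is a scalar multiple of the identity and that the scalar is bounded above and below on the typical set (here integrability of $\bxi$ from (A2), together with the tilt being by the fixed amount $c_2$, is what controls these scalars). (2) Apply a local CLT for triangular arrays of independent non-identically-distributed random vectors — e.g. via the classical Edgeworth/characteristic-function approach, bounding $\prod_i \E[e^{\ui \bt\cdot\widehat{\bxi}_i}\mid \xi_i]$ away from the origin and Fourier-inverting — to get that, uniformly over the typical first-coordinate configurations and uniformly over shifts $\widehat{\bla}(x) = O((\log x)^L)$,
\[
\p\big(\widehat{\bla}(x) + \widehat{\bS}_{\wtx} \in B_{\z}(\tfrac12) \mid \xi_1,\dots,\xi_{\wtx}\big) \asymp (\det \Sigma)^{-1/2} \asymp x^{-\frac{d-1}{2}}.
\]
(3) Divide by $\p(\lambda(x) + S_{\wtx}\in[x-\frac12,x+\frac12])$, which is $\asymp x^{-1/2}$ by the one-dimensional (tilted) local CLT, and observe that the conditioning event in the denominator is measurable with respect to $(\xi_1,\dots,\xi_{\wtx})$; averaging the display in (2) over the first-coordinate configurations weighted by this event, and discarding the atypical configurations (whose contribution is negligible by the exponential tilt estimate), yields the claimed two-sided bound.

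The main obstacle is controlling the non-identically-distributed nature of $\widehat{\bS}_{\wtx}$ after conditioning on the first coordinate: one must verify both a uniform lower bound on the conditional covariance scalars $\Sigma_i$ (ruling out degeneracy — this is where the non-lattice / non-degeneracy hypothesis and $\p(\bxi=\z)<1$ matter, since otherwise a positive fraction of increments could contribute nothing in the last $d-1$ coordinates) and a Cramér-type condition so that the characteristic-function tails are uniformly integrable. A clean way to sidestep delicate anti-concentration arguments is to first pass to a \emph{smoothed} version of the walk — replace $\wtx$ increments by $\wtx - K$ of them plus $K$ fresh copies of $\bxi$ for a large constant $K$, so the last-$(d-1)$-coordinate part automatically has a bounded density on the scale of the ball $B_{\z}(1/2)$ — and then the local CLT reduces to a quantitative CLT for the mean (a Berry–Esseen bound for independent triangular arrays), for which the finite third moment is not needed but finite second moments plus uniform non-degeneracy suffice. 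I would carry the details of this smoothing and the triangular-array Berry–Esseen estimate in Appendix \ref{sec:cond local CLT proof}.
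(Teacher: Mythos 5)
Your heuristic and the final answer are right, but the route you take has a genuine gap, and it is worth contrasting it with the much shorter argument the paper uses. The paper does \emph{not} condition on the first-coordinate increments. It tilts the full $d$-dimensional increment law by $\bc_2$ (here $c_2\be_1$), i.e.\ $\frac{\d\q}{\d\p}(\bx)=e^{\bc_2\cdot\bx-\log\phi_\bxi(\bc_2)}$, under which the increments remain i.i.d.\ and become centered after subtracting the drift $(\widehat{c}_1,\z)$. The Radon--Nikodym derivative is constant up to a factor $\asymp 1$ on both the numerator event and the denominator event (each pins $S_{\wtx}$ to a unit window), so the exponential prefactors cancel in the ratio and one only needs the standard non-lattice i.i.d.\ local CLT: in dimension $d$ for the numerator ($\asymp x^{-d/2}$) and in dimension $1$ for the denominator ($\asymp x^{-1/2}$). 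No triangular arrays, no conditional covariances, no uniformity over configurations.

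The gap in your version is in step (2). After conditioning on $(\xi_1,\dots,\xi_{\wtx})$, the conditional law of $\widehat{\bxi}_i$ given $\xi_i$ need not be absolutely continuous: under (A3) it is rotationally invariant in $\R^{d-1}$ but can be singular (e.g.\ if $\bxi$ is uniform on a sphere it is uniform on a sphere of radius depending on $\xi_i$, which for $d=2$ degenerates to a symmetric two-point mass), so the conditional walk $\widehat{\bS}_{\wtx}$ can even be purely atomic. A local limit theorem at unit scale for such non-identically-distributed, possibly singular summands is not an off-the-shelf triangular-array result, and your smoothing device does not repair this: the $K$ ``fresh'' increments are still entangled with the conditioning on $S_{\wtx}$, so their last $d-1$ coordinates are again only available conditionally on their first coordinates. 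More importantly, the final reduction to a Berry--Esseen bound cannot work even in principle: the target probability is $\asymp x^{-(d-1)/2}$, while a Berry--Esseen or Wasserstein-type error for a walk of covariance $\asymp x\,\mathrm{Id}_{d-1}$ tested against a fixed unit-scale set is at best $O(x^{-1/2})$, which dominates the main term for every $d\ge 3$ and matches it for $d=2$; to obtain a two-sided bound on hitting a fixed ball one needs a genuine local limit theorem, i.e.\ control of the characteristic function over the whole frequency range, which is precisely what conditioning on the first coordinate destroys and what the paper's $d$-dimensional tilt preserves. If you insist on your decomposition you would need both a uniform Littlewood--Offord-type anticoncentration upper bound and a matching lower bound for the conditional walk over typical configurations; the proof in Appendix \ref{sec:cond local CLT proof} avoids all of this.
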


\begin{proof}[Proof of the upper bound of Theorem \ref{thm:main}]
On the event $\{P'_{(\log x)^2}\geq \frac{1}{C} x^{(d-1)/2}\}\cap S$, we may label particles $\{v_j\}_{1\leq j\leq x^{(d-1)/2}/C}$ at time $(\log x)^2$ that allow for descendants $\{w_j\}_{1\leq j\leq x^{(d-1)/2}/C}$ in $[x-\frac{1}{2},x+\frac{1}{2}]$ at time $t_x$.
By a union bound and a rough large deviation estimate (recalling that $\widehat{\bet}_{v,n}(k)\in\R^{d-1}$ is the last $d-1$ coordinates of $\bet_{v,n}(k)$), 
\begin{align*}
    \p(\exists j\in\{1,\dots,x^{(d-1)/2}/C\},~\n{\widehat{\bet}_{v_j,(\log x)^2}((\log x)^2)}\geq (\log x)^3)&\ll \rho^{(\log x)^2}e^{-\delta'(\log x)^3}=o(1)
\end{align*}for some $\delta'>0$, 
and hence we may without loss of generality assume that $$\n{\widehat{\bet}_{w_j,t_x}((\log x)^2)}=\n{\widehat{\bet}_{v_j,(\log x)^2}((\log x)^2)}\leq (\log x)^3$$ for all $j$.
Let $\widetilde{\p}$ be the conditional law upon the above setting (i.e.,~on the event $\{P'_{(\log x)^2}\geq \frac{1}{C} x^{(d-1)/2}\}\cap S$, the configuration up to time $(\log x)^2$, and the event that $\eta_{w_j,t_x}(t_x)\in [x-\frac{1}{2},x+\frac{1}{2}]$ and $\n{\widehat{\bet}_{w_j,t_x}((\log x)^2)}\leq (\log x)^3$ for all $j$). 
By Lemma \ref{lemma:conditioned local CLT}, uniformly in $j$, 
\begin{align}
    \widetilde{\p}\Big(\widehat{\bet}_{w_j,t_x}(t_x)-\widehat{\bet}_{w_j,t_x}((\log x)^2)\in B_\z(\frac{1}{2})-\widehat{\bet}_{w_j,t_x}((\log x)^2)\Big)\gg x^{-\frac{d-1}{2}}.\label{eq:j}
\end{align}
 
To prove the upper bound of $\tau_x$, we show that one of the descendants of these particles $\{v_j\}$ realizes the FPT with an asymptotically positive probability. It suffices then to consider the sub-event that for some $j$, $\bet_{w_j,t_x}(t_x)\in[x-\frac{1}{2},x+\frac{1}{2}]\times B_\z(\frac{1}{2}) $.  Note that under the law $\widetilde{\p}$, the random variables $\{\widehat{\bet}_{w_j,t_x}(t_x)-\widehat{\bet}_{w_j,t_x}((\log x)^2)\}_{1\leq j\leq x^{(d-1)/2}/C}$ are independent. Therefore, by \eqref{eq:j},
\begin{align*}
    &\hspace{0.5cm}\p\Big(\tau_x\leq t_x\mid \{P'_{(\log x)^2}\geq \frac{1}{C} x^{(d-1)/2}\} \cap S\Big)\\
    &\geq \widetilde{\p}\Big(\exists j,~\widehat{\bet}_{w_j,t_x}(t_x)-\widehat{\bet}_{w_j,t_x}((\log x)^2)\in B_\z(\frac{1}{2})-\widehat{\bet}_{w_j,t_x}((\log x)^2)\Big)\\
    &=1-\prod_{1\leq j\leq x^{(d-1)/2}/C}\bigg(1-\widetilde{\p}\Big(\widehat{\bet}_{w_j,t_x}(t_x)-\widehat{\bet}_{w_j,t_x}((\log x)^2)\in B_\z(\frac{1}{2})-\widehat{\bet}_{w_j,t_x}((\log x)^2)\Big)\bigg)\\
    &\gg 1- \prod_{1\leq j\leq x^{(d-1)/2}/C}(1-x^{-\frac{d-1}{2}})\gg \frac{1}{C}.
\end{align*}
By Proposition \ref{lemma:transition},
$$\p(\tau_x\leq t_x\mid S)\gg \frac{1}{LC}.$$
Using Lemma \ref{lemma:concentration}, we conclude that for any $\ee>0$, there exists $K>0$ such that
    $$\p(\tau_x\geq t_x+K\mid S)<\ee.$$
    This completes the proof.
\end{proof}

\section{Proof of the lower bound of FPT}\label{sec:LB}
Recall \eqref{eq:txdef} and $\widetilde{t}_x=t_x-(\log x)^2$. Let us define also $\widetilde{x}:=x-m_{(\log x)^2}$. This section aims to prove that $\tau_x\geq t_x-O_\p(1)$. That is, for a fixed $\ee>0$, we find a lag time $K>0$ such that for $x$ large enough,
\begin{align}
    \p(\tau_x\leq t_x-K)<\ee.\label{eq:toshow lb}
\end{align}
In all asymptotic upper bounds below, the asymptotic constant does not depend on $K$. We use the short-hand notation $t_{x,K}:=t_x-K$. We omit the conditioning on the survival event $S$ for notational brevity in all probabilities and expectations below.

\subsection{Reducing the proof to the analysis of particles with a fixed ancestor at time \texorpdfstring{$\tilde{t}_x$}{}}\label{sec:reducing proof}

\subsubsection{The key conditioning step}
To prove the lower bound of $\tau_x$, we follow the strategy outlined in Section \ref{sec:outline proof}: classify the particles near frontier at time $\wtx$ according to the locations in the first dimension, and then analyze the chances that their descendants reach $B_x$ at time $t_{x,K}=t_x-K$ (\textit{local hitting probabilities}). Consequently, the total hitting probability of $B_x$ can be bounded from above using a first moment method, by weighting the local hitting probabilities by the density of the particles at a location near the frontier. These two key ingredients are summarized by the following two results in this subsection. 
\begin{itemize}
    \item Proposition \ref{prop:particle density} controls the desired weights (or the density of the particles near frontier).
    \item Theorem \ref{thm:particle probability} controls the local hitting probabilities.
\end{itemize}
Let $K_2$ be a large constant such that $\p(\sG_{\wtx,K_2})<\ee/2$, by Lemma \ref{lemma:barrier}. 

\begin{proposition}[density of particles at time $\wtx$]\label{prop:particle density}
    It holds that for $\ell\leq x/\log x$,
    $$\E\Big[\#\{v\in V_{\widetilde{t}_x}:\eta_{v,\widetilde{t}_x}(\widetilde{t}_x)\in[\wx-\ell-1,\wx-\ell)\}\bone_{\sG_{\wtx,K_2}^{\mathrm{c}}}\Big]\ll e^{c_2\ell}(\log x+\ell_+)x^{\frac{d-1}{2}}(\log x)^{-3}. $$
    For $\ell\geq x/\log x$, there exists $L>0$ such that
    \begin{align*}
        \E\Big[\#\{v\in V_{\widetilde{t}_x}:\eta_{v,\widetilde{t}_x}(\widetilde{t}_x)\in[\wx-\ell-1,\wx-\ell)\}&\bone_{\sG_{\wtx,K_2}^{\mathrm{c}}}\Big]\\
        &\ll e^{c_2\ell}(\log x+\ell_+)x^{\frac{d-1}{2}}(\log x)^{-3}e^{L\ell(\log x)/x}. 
    \end{align*}
    Here, we allow the constant in $\ll$ to depend on $K_2$.
\end{proposition}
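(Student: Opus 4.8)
The plan is to turn the estimate into a many-to-one count of particles at a prescribed distance below $m_{\wtx}$ subject to the linear-plus-logarithmic barrier defining $Q_{\wtx,K_2}$, and then to invoke, or suitably extend, the ballot-type estimates of Section~\ref{sec:one-dim results}. First note that $\sG_{\wtx,K_2}^{\mathrm c}=\{Q_{\wtx,K_2}=V_{\wtx}\}$ by definition, so on that event the set being counted coincides with $\{v\in Q_{\wtx,K_2}:\eta_{v,\wtx}(\wtx)\in[\wx-\ell-1,\wx-\ell)\}$, and it is enough to bound $\E[\#\{v\in Q_{\wtx,K_2}:\eta_{v,\wtx}(\wtx)\in[\wx-\ell-1,\wx-\ell)\}]$. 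Using \eqref{eq:mwtx} I would write $\wx-\ell=m_{\wtx}-u$ with
\begin{align*}
u=u(\ell,x):=\ell+\frac{d-1}{2c_2}\log x-\frac{3}{c_2}\log\log x+o(1),
\end{align*}
so that $e^{c_2u}\asymp e^{c_2\ell}\,x^{(d-1)/2}(\log x)^{-3}$ and, whenever $u>0$, $u+1\asymp\log x+\ell_+$ (while $u+1\ll\log x$ for $\ell\le0$). The statement then amounts to bounding the expected number of $Q_{\wtx,K_2}$-particles in a unit window a distance $u$ below $m_{\wtx}$ by $\ll(\log x+\ell_+)e^{c_2u}$ when $\ell\le x/\log x$, and by $\ll(\log x+\ell_+)e^{c_2u}e^{L\ell(\log x)/x}$ when $\ell\ge x/\log x$.

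\textbf{Small displacement.} For $u\le\sqrt{\wtx}$ --- which includes all $\ell$ up to order $\sqrt{\wtx}\asymp\sqrt x$ --- this follows directly from Lemma~\ref{lemma:gnb} with $n=\wtx$, $\beta=K_2$ and displacement $\max\{u+1,2\}$, which bounds the count by $\ll_{K_2}(1+u_+)e^{c_2u_+}$; one checks this is $\ll(\log x+\ell_+)e^{c_2u}$ in the corresponding $\ell$-range, using that the target is $\gg1$ once $u\lesssim0$ and that the count vanishes for $u\le-K_2$.

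\textbf{Moderate displacement.} The substantive step is the range $\sqrt{\wtx}<u\le\delta_0\wtx$ for a small constant $\delta_0>0$, which captures the remaining $\ell\in(\sqrt{\wtx},x/\log x]$ as well as the part $x/\log x\le\ell\le\delta_0\wtx$ of the second regime. Here the ``$u\le\sqrt n$'' restriction in Lemma~\ref{lemma:gnb} is exceeded and the Gaussian correction $\varphi_{\wtx,\delta}(u)=e^{-\delta u^2/\wtx}$ of Lemmas~\ref{lemma:barrier}--\ref{lemma:brw small deviation prob} is no longer negligible. Rerunning the change-of-measure and ballot argument behind those lemmas (as in the Remark following Lemma~\ref{lemma:brw small deviation prob} and the proof of Lemma~2.4 in \citep{bramson2016convergence}), now keeping that factor, should yield
\begin{align*}
\E\big[\#\{v\in Q_{\wtx,K_2}:\eta_{v,\wtx}(\wtx)\in[m_{\wtx}-u-1,m_{\wtx}-u)\}\big]\ll_{K_2}(u+1)e^{c_2u}e^{-\delta u^2/\wtx}.
\end{align*}
Since $u\asymp\ell$ here and $e^{-\delta u^2/\wtx}\le1$, this is $\ll\ell\,e^{c_2\ell}x^{(d-1)/2}(\log x)^{-3}$, which is precisely the first-regime claim, and a fortiori it is at most that times $e^{L\ell(\log x)/x}\ge1$, settling the stated part of the second regime.

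\textbf{Large displacement.} Finally, for $u>\delta_0\wtx$ --- then $\ell\gtrsim\delta_0x$, so only the second regime occurs --- the Taylor expansion underlying the factor $e^{-\delta u^2/\wtx}$ degenerates, so I would discard the barrier and bound the count by $\rho^{\wtx}\,\p(S_{\wtx}\in[m_{\wtx}-u-1,m_{\wtx}-u))$ via many-to-one. Cram\'er's exponential-Markov inequality for the right tail $\p(S_{\wtx}\ge m_{\wtx}-u-2)$ --- whose optimal tilt lies in $[0,c_2)$, where $\phi_\xi$ is finite by (A4) --- combined with the convexity bound $I(c_1)-I(y)\le c_2(c_1-y)$ gives $\ll\wtx^{3/2}e^{c_2u}\asymp x^{3/2}e^{c_2\ell}x^{(d-1)/2}(\log x)^{-3}$; in the negligible sub-range where $m_{\wtx}-u-2<0$ the cruder bound $\rho^{\wtx}\ll e^{c_2u}e^{-\Theta(x)}$ suffices, since $\log\rho<c_1c_2$ ($\xi$ not being a.s.\ constant). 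In all cases the count is $\ll x^{3/2}e^{c_2\ell}x^{(d-1)/2}(\log x)^{-3}$, and the spurious $x^{3/2}$ is absorbed into $e^{L\ell(\log x)/x}\ge x^{L\delta_0}$ by choosing $L>1/(2\delta_0)$. I expect the only genuinely new ingredient to be the moderate-displacement estimate above --- upgrading Lemma~\ref{lemma:gnb} from displacements $\le\sqrt{\wtx}$ to displacements $\le\delta_0\wtx$ while carrying the $e^{-\delta u^2/\wtx}$ factor; everything else is substitution via \eqref{eq:mwtx} and routine first-moment bounds.
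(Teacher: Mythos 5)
Your proposal is correct and reaches the stated bounds, but it takes a genuinely different route from the paper. The paper handles every $\ell$ in a single stroke: it tilts by $\widehat{\lambda}=I'(m_{\wtx}/\wtx)$ (so the recentred first-coordinate walk is exactly mean zero under $\q$), applies the ballot estimate of Lemma 2.3 of \citep{bramson2016convergence} uniformly in the endpoint depth $u=m_{\wtx}-(\wx-\ell)$, and obtains $\ll K_2(1+u_+)e^{\widehat{\lambda}u}$ directly; the factor $e^{L\ell(\log x)/x}$ is then just the price of replacing $\widehat{\lambda}$ by $c_2$, via $0\le c_2-\widehat{\lambda}\ll(\log x)/x$. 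Your three-regime split instead invokes Lemma \ref{lemma:gnb} for $u\le\sqrt{\wtx}$ and a bare many-to-one/Cram\'er bound for $u>\delta_0\wtx$, absorbing the resulting $\wtx^{3/2}$ loss into $e^{L\ell(\log x)/x}$ --- a clean way of explaining exactly where that factor is permitted to help, and one that avoids relying on the ballot estimate at depths $u\gg\sqrt{n}$ where its sharpness is moot. Two remarks on your middle regime. First, the Gaussian refinement $e^{-\delta u^2/\wtx}$ you request is neither proved nor needed: you discard it in the very next line, and the bound you actually use, $(u+1)e^{c_2u}$, is precisely what the change-of-measure-plus-ballot computation already gives (since $e^{\widehat{\lambda}u}\le e^{c_2u}$ for $u\ge0$), so the ``genuinely new ingredient'' you flag is in fact the argument the paper runs for all $\ell$ simultaneously, not an upgrade of Lemma \ref{lemma:gnb}. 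Second, when carrying out that computation you should tilt by $\widehat{\lambda}$ rather than by $c_2$ and note explicitly that the sign of $c_2-\widehat{\lambda}$ makes the replacement harmless in this regime. The remaining steps --- the identification $u=\ell+\tfrac{d-1}{2c_2}\log x-\tfrac{3}{c_2}\log\log x+o(1)$ via \eqref{eq:mwtx}, the treatment of $u\le0$, and the strict inequality $\log\rho<c_1c_2$ at the very deep end --- all check out.
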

\begin{proof}
    We apply the ``ballot theorem under a change of measure'' argument similarly as done in the proof of \eqref{eq:ub number} (see Proposition 8 of \citep{blanchet2024first}). 
    Before performing the change of measure, we introduce $\widehat{\lambda}=I'(m_{\wtx}/\wtx)$, which is the value of $\lambda$ where the supremum of \eqref{eq:ratef} is attained with $x=m_{\wtx}/\wtx$. By (15) of \citep{bramson2016convergence}, it holds $0\leq \bl-\widehat{\lambda}\ll (\log x)/x$. 
Let $\q$ be defined by
\begin{align}
    \frac{\d\p}{\d\q}:=e^{-\widehat{\lambda}(\eta_{v,{\wtx}}({\wtx})-m_{\wtx})-{\wtx}I(m_{\wtx}/{\wtx})}\asymp (\wtx)^{3/2}\rho^{-{\wtx}}e^{-\widehat{\lambda}(\eta_{v,{\wtx}}({\wtx})-m_{\wtx})}.\label{eq:dpdqn}
\end{align}
  It follows that under $\q$, $\{\eta_{v,{\wtx}}(k)-km_{\wtx}/{\wtx}\}_{k=0,\dots,{\wtx}}$
is a mean zero random walk. The ending location of the random walk is around
    $\wx-\ell$, which is at a distance 
    \begin{align}
        m_{\wtx}+K_2-(\wx-\ell)=\frac{d-1}{2c_2}\log x-\frac{3}{c_2}\log\log x+K_2+\ell\label{eq:distance}
    \end{align}
    below the barrier.

    We therefore have, using Lemma 2.3 of \citep{bramson2016convergence}, \eqref{eq:dpdqn}, and \eqref{eq:distance},
    \begin{align*}
        &\hspace{0.5cm}\E\big[\#\{v\in V_{\widetilde{t}_x}:\eta_{v,\widetilde{t}_x}(\widetilde{t}_x)\in[\wx-\ell-1,\wx-\ell)\}\bone_{\sG_{\wtx,K_2}^{\mathrm{c}}}\big]\\
        &\ll \rho^{\wtx}\p(\eta_{v,\widetilde{t}_x}(\widetilde{t}_x)\in[\wx-\ell-1,\wx-\ell)\cap{\sG_{\wtx,K_2}^{\mathrm{c}}})\\
        &\ll (\wtx)^{3/2}e^{-\widehat{\lambda}(\wx-\ell-m_{\wtx})}\q(\eta_{v,\widetilde{t}_x}(\widetilde{t}_x)\in[\wx-\ell-1,\wx-\ell)\cap{\sG_{\wtx,K_2}^{\mathrm{c}}})\\
        &\ll K_2\Big(1+\Big(\frac{d-1}{2c_2}\log x-\frac{3}{c_2}\log\log x+K_2+\ell\Big)_+ \Big)e^{-\bl(\wx-\ell-m_{\wtx})}e^{L\ell(\log x)/x}\\
        &\ll e^{c_2\ell}(\log x+\ell_+)x^{\frac{d-1}{2}}(\log x)^{-3}e^{L\ell(\log x)/x}.
    \end{align*}
   The last term $e^{L\ell(\log x)/x}\ll 1$ if $\ell\leq x/\log x$.  This proves the claim.
\end{proof}

It remains to fix a particle $v\in V_{\widetilde{t}_x}$ such that $\eta_{v,\widetilde{t}_x}(\widetilde{t}_x)\in[\wx-\ell-1,\wx-\ell)$ and $\sG_{\wtx,K_2}^{\mathrm{c}}$ hold (recall \eqref{eq:barrier}), and bound the probability of finding a descendant $w\in V_{t_{x,K}}$ of $v$ with $\bet_{w,t_{x,K}}(t_{x,K})\in B_x$, which is the task of the next theorem. We need a few preparations before its statement. 

In the following, we use 
$\q^{\ell,v}$ to denote the probability measure on the BRW restricted to the descendants of $v$ (i.e., the sub-tree with root $v$ and we implicitly recognize $v$ as the common ancestor), conditioning on $\eta_{v,\widetilde{t}_x}(\widetilde{t}_x)\in[\wx-\ell-1,\wx-\ell)$ and $\sG_{\wtx,K_2}^{\mathrm{c}}$.\footnote{Note that the location $\bet_{v,\wtx}(\wtx)$ is not independent from the event $\sG_{\wtx,K_2}^{\mathrm{c}}$.} Let $K_3,K_6,K_8$ be large constants to be determined later in the proof. Recall \eqref{eq:phind}. 
 For future use, we consider a large constant $L$ to be determined and define the auxiliary function
\begin{align}
    \Psi_{(\log x)^2,\delta}(\ell):=\begin{cases}
        \ell&\text{ if }\ell<\frac{\log x}{L};\\
        (\log x)^{1/3}+\ell e^{-\frac{\delta\ell^2}{(\log x)^2}}&\text{ if }\frac{\log x}{L}\leq\ell\leq L\log x\log\log x;\\
        \varphi_{(\log x)^2,\delta}(\ell)&\text{ if }\ell> L\log x\log\log x.
    \end{cases}\label{eq:Psi}
\end{align}

Next, we define the key quantity $I_{\ell,x}$, which serves as an upper bound of the local hitting probabilities. We define $I_{\ell,x}$ according to different ranges of $\ell$ as follows.
\begin{itemize}
    \item If $\ell< -K_3\log\log x$,
$$I_{\ell,x}:= (\log x)^{2(d-1)}x^{-\frac{d-1}{2}}.$$
\item If $-K_3{\log\log x}\leq\ell\leq K_6\log\log x$,
$$I_{\ell,x}:= C(\ee,K) (\log\log x)^{K_7}(\log x)x^{-\frac{d-1}{2}} e^{-c_2\ell}.$$
\item If $\ell> K_6\log\log x$, 
\begin{align*}
   & I_{\ell,x}:= \ee x^{-\frac{d-1}{2}}e^{-c_2\ell}\ell \varphi_{(\log x)^2,\delta}(\ell)\\
   &\hspace{2cm}+ x^{-\frac{d-1}{2}}\ell e^{-c_2\ell}\varphi_{(\log x)^2,\delta}(\ell)(K^{d-1-K_8}+\ell^{-K_8/2}(\log x)^{2d})\\
   &\hspace{2cm}+C(\ee,K)e^{-(c_2+\delta/4)\ell}(\log x)^{3d}x^{-\frac{d-1}{2}}\\
    &\hspace{2cm}+C(\ee)e^{-c_1c_2K/4} x^{-\frac{d-1}{2}} e^{-c_2\ell}\Psi_{(\log x)^2,\delta}(\ell)\\
    &\hspace{5cm}\times\big(e^{2K_{10}\frac{\ell\log\log x}{(\log x)^2}}+(\log\log x)^{d+3}\ell^{-1/8}e^{\frac{\log\ell\log\log x}{8\ell}}\big).
\end{align*}
Here, $C(\ee),\,C(\ee,K)$ are constants to be determined, which may also depend on the constants $K_3,K_6,K_8$ but not on $x,\ell$. 
\end{itemize}

\begin{theorem}[first passage contributions of particles located in $[\wx-\ell-1,\wx-\ell)\times\R^{d-1}$ at time $\wtx$]\label{thm:particle probability}
Let $\ee>0$. Then there exist $C(\ee),C(\ee,K)>0$ such that for all $\ell\in\bZ$ and $x$ large enough,
\begin{align*}\q^{\ell,v}(\exists\, w\in V_{t_{x,K}},\,w\succ v,\,\bet_{w,t_{x,K}}(t_{x,K})\in B_x)\ll I_{\ell,x}.
\end{align*}
Here, the implicit constant in $\ll$ may depend on $K_3,K_6,K_8$ but does not depend on $\ee$ or $K$.
\end{theorem}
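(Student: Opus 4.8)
The plan is to prove Theorem~\ref{thm:particle probability} by decomposing the event $\{\exists\, w\succ v,\ w\in V_{t_{x,K}},\ \bet_{w,t_{x,K}}(t_{x,K})\in B_x\}$ according to the genealogical structure of the descendants of $v$ that reach the frontier, following the three‑parameter $(\ell,h,g)$ scheme of Section~\ref{sec:outline proof}. The value of $\ell$ is already fixed by the conditioning built into $\q^\ell$, so I would first stratify over the \emph{heterogeneity index} $h\in\{0,1,\dots,(\log x)^2-K\}$ — the age of the latest common ancestor $u$ (within the subtree of $v$) of all particles present in $[x-1,\infty)\times\R^{d-1}$ at time $t_{x,K}$ — and then over an integer $g$ recording that the first coordinate of $u$ lies in $[x-m_h+g-1,\,x-m_h+g)$. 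Since membership in $B_x$ forces at least one descendant of $v$ to have first coordinate $\ge x-1$ at time $t_{x,K}$, it forces $h\ge1$, and a union bound over $(h,g)$ reduces the theorem to bounding, for each admissible pair, the $\q^\ell$‑probability of the event $\mathcal E_{h,g}$ that the latest common ancestor has age $h$ and location parameter $g$ \emph{and} some descendant of $v$ lies in $B_x$. The summation over $(h,g)$ is then organized so that the three displayed regimes of $\ell$ in the definition of $I_{\ell,x}$ emerge from the matching regimes of $h$ and $g$.

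For a fixed admissible $(h,g)$, the bound on $\q^\ell(\mathcal E_{h,g})$ factors through the path of $v$ over $[\widetilde t_x,\,t_{x,K}-h]$ leading to $u$ and the subtree rooted at $u$ over $[t_{x,K}-h,\,t_{x,K}]$, treated independently given $u$. I would estimate the path of $v$ by a ballot‑theorem‑under‑change‑of‑measure computation as in Proposition~\ref{prop:particle density}: it must travel a distance $\asymp m_{(\log x)^2}+\ell+g-m_h$ in time $(\log x)^2-K-h$, so when $h\approx(\log x)^2$ and $\ell,g$ are both large this is a large deviation of negligible probability (case~(d1), Lemma~\ref{lemma:E_1}), while otherwise its cost is captured by a factor $\asymp e^{-c_2(\ell+g)}\varphi_{(\log x)^2,\delta}(\cdot)$ coming from the barrier and maximal‑displacement tails of Lemmas~\ref{lemma:barrier}--\ref{lemma:brw small deviation prob}. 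The subtree rooted at $u$ is handled by one of the two complementary mechanisms A)--B) of Section~\ref{sec:outline proof}: when $h$ is small the cluster of candidate particles has genealogical diameter $\le h$, so their last $d-1$ coordinates differ by $O(\sqrt h)$ and a barrier‑constrained conditional local CLT (the content of Lemma~\ref{lemma:uniform bd}, building on Lemma~\ref{lemma:conditioned local CLT}) bounds the chance that one of them lands in $B_\z(1)$ in those coordinates by $\ll x^{-(d-1)/2}\cdot(\text{dispersion})$; when $g$ is small, $u$ sits far below the frontier at time $t_{x,K}-h$, so a first‑moment estimate (Lemma~\ref{lemma:2prob2case}) shows very few descendants of $u$ reach $x$, and a union bound over them costs $\ll x^{-(d-1)/2}\cdot\#$.

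The genuinely delicate regime — $\ell,h,g$ all large with $h$ bounded away from both $0$ and $(\log x)^2$ — is where I expect the main obstacle to lie, since neither a crude union bound nor a single first‑moment estimate suffices and one must invoke quantitatively the "early‑or‑late separation" principle \eqref{eq:quote}: two lineages of $v$ both landing near $m_{(\log x)^2}+\ell$ after $(\log x)^2$ steps while sharing a common ancestor at an intermediate time is atypical. This is exactly the estimate absorbed into the sums $\sum_h(\cdots)$ appearing in Lemmas~\ref{lemma:ell<loglog} (for $\ell\le K_6\log\log x$, including the crude sub‑case $\ell<-K_3\log\log x$) and \ref{lemma:ell>loglog} (for $\ell>K_6\log\log x$); it is the near‑geometric decay in $h$ produced there that makes $\sum_{h,g}\q^\ell(\mathcal E_{h,g})$ converge to the claimed $I_{\ell,x}$. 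A recurring technical point I would dispatch once and reuse: applying \eqref{eq:quote} requires first removing ballot‑type barrier events for the relevant ancestor, and doing so for \emph{every} $v\in V_{\widetilde t_x}$ would be prohibitively expensive because there are exponentially many such particles; the remedy is to remove barrier events only for the \emph{relevant} $v$, namely those for which the last $d-1$ coordinates of $u$ lie within distance $\asymp h$ of the origin (so $u$ has a genuine chance of seeding a particle in $B_x$), which is affordable. Finally I would collect the per‑$(h,g)$ bounds, carry out the sums over $h$ and $g$, and check that they reproduce each of the three cases in the definition of $I_{\ell,x}$.
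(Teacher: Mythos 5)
Your proposal is correct and follows essentially the same route as the paper: the stratification over the heterogeneity index $h$ and location $g$ of the latest common ancestor, the factorization at the lca into a ballot-under-change-of-measure estimate for the path of $v$ and a conditional analysis of the subtree rooted at the lca, the cardinality-versus-dispersion dichotomy for bounding the subtree's contribution, the restriction of barrier removal to the relevant ancestors with transverse displacement $\ll h$, and the three regimes of $\ell$ matching the three cases in the definition of $I_{\ell,x}$ are all exactly the paper's Lemmas \ref{lemma:hg|l}, \ref{lemma:uniform bd}, \ref{lemma:E_1}, \ref{lemma:ell<loglog}, and \ref{lemma:ell>loglog} assembled in Section \ref{sec:Combining everything above}. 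What remains to be supplied is the quantitative content of those lemmas (in particular the refined ballot bounds of Appendix \ref{sec:ballot ub} and the control of the change-of-measure correction $e^{(c_2-\hat\lambda)(\cdot)}$), but your outline is the correct skeleton.
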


The proof of Theorem \ref{thm:particle probability} is deferred till later and takes up the majority of the remaining of this paper. In the next subsection, we finish the proof of the lower bound of the first passage time $\tau_x$ assuming Theorem \ref{thm:particle probability} holds.

\subsubsection{Proof of the lower bound of FPT}\label{sec:412}

Before proceeding to the proof, we need Lemma \ref{lemma:escape} below, which asserts that there exists $K_1>0$ such that
    $$\p(\n{\bet_{v,n}(n)}\geq 1\text{ for all }v\in V_n)\leq K_1e^{-\sqrt{n}/K_1}.$$
In other words, it is unlikely that all particles at generation $n$ appear outside the unit ball for $n$ large. In this way, we reduce the proof to finding an upper bound on $\p(\exists\, w\in V_{t_{x,K}},\,\bet_{w,t_{x,K}}(t_{x,K})\in B_x)$.


\begin{proof}[Proof of the lower bound of Theorem \ref{thm:main}]
With Lemma \ref{lemma:escape}, it suffices to bound from above the probability of finding a particle in $B_x$ at time $t_{x,K}$ (because on the complement of such an event, Lemma \ref{lemma:escape} shows that the probability that the FPT is smaller decays at least quasi-exponentially; see e.g.~the proof of Theorem 1 of \citep{zhang2024modeling}). The first step is to impose the global barrier constraint $\sG_{\wtx,K_2}^{\mathrm{c}}$(see \eqref{eq:gnb} for its definition). We have
\begin{align}\begin{split}
    &\p(\exists\, w\in V_{t_{x,K}},\,\bet_{w,t_{x,K}}(t_{x,K})\in B_x)\\
    &\hspace{3cm}\leq \p(\sG_{\wtx,K_2})+\p(\exists\, w\in V_{t_{x,K}},\,\bet_{w,t_{x,K}}(t_{x,K})\in B_x,\,\sG_{\wtx,K_2}^{\mathrm{c}}).
\end{split}\label{eq:3probs}
\end{align}
In the following, we use $w\succ v$ to denote that the particle $w$ is a descendant of $v$. 
 Using the first moment method, we have
\begin{align*}
    &\hspace{0.5cm}\p(\exists\, w\in V_{t_{x,K}},\,\bet_{w,t_{x,K}}(t_{x,K})\in B_x,\,\sG_{\wtx,K_2}^{\mathrm{c}})\\
    &= \p(\exists\, v\in V_{\wtx},w\in V_{t_{x,K}},w\succ v,\,\bet_{w,t_{x,K}}(t_{x,K})\in B_x,\,\sG_{\wtx,K_2}^{\mathrm{c}})\\
    &\leq \E\bigg[\bone_{\sG_{\wtx,K_2}^{\mathrm{c}}}\sum_{v\in V_{\wtx}}\bone_{\{\exists\, w\in V_{t_{x,K}},w\succ v,\,\bet_{w,t_{x,K}}(t_{x,K})\in B_x\}}\bigg]\\
&=\sum_{\ell\in\bZ}\E\bigg[\E\Big[\bone_{\sG_{\wtx,K_2}^{\mathrm{c}}}\sum_{v\in V_{\wtx}}\bone_{\{\eta_{v,\widetilde{t}_x}(\widetilde{t}_x)\in[\wx-\ell-1,\wx-\ell)\}}\bone_{\{\exists\, w\in V_{t_{x,K}},w\succ v,\,\bet_{w,t_{x,K}}(t_{x,K})\in B_x\}}\\
&\hspace{5cm}\mid \sG_{\wtx,K_2}^{\mathrm{c}},\,\{\eta_{v,\widetilde{t}_x}(\widetilde{t}_x)\in[\wx-\ell-1,\wx-\ell)\}\Big]\bigg]\\
    &= \sum_{\ell\in\bZ}\E\bigg[\#\{v\in V_{\widetilde{t}_x}:\eta_{v,\widetilde{t}_x}(\widetilde{t}_x)\in[\wx-\ell-1,\wx-\ell)\}\bone_{\sG_{\wtx,K_2}^{\mathrm{c}}}\\
    &\hspace{3cm}\times\q^{\ell,v}(\exists\, w\in V_{t_{x,K}},\,w\succ v,\,\bet_{w,t_{x,K}}(t_{x,K})\in B_x)\bigg].
    \end{align*}
Note that the inner probability is bounded by the deterministic term $I_{\ell,x}$ by Theorem \ref{thm:particle probability}. Applying also Proposition \ref{prop:particle density}, we get
    \begin{align*}
    &\hspace{0.5cm}\p(\exists\, w\in V_{t_{x,K}},\,\bet_{w,t_{x,K}}(t_{x,K})\in B_x,\,\sG_{\wtx,K_2}^{\mathrm{c}})\\
    &\ll \sum_{\ell\in\bZ}\E\Big[\#\{v\in V_{\widetilde{t}_x}:\eta_{v,\widetilde{t}_x}(\widetilde{t}_x)\in[\wx-\ell-1,\wx-\ell)\}\bone_{\sG_{\wtx,K_2}^{\mathrm{c}}}\Big]I_{\ell,x}\\
    &\ll \sum_{\ell\in\bZ}(e^{c_2\ell}(\log x+\ell_+)x^{\frac{d-1}{2}}(\log x)^{-3}e^{L\ell(\log x)/x})I_{\ell,x}.
\end{align*} Inserting the definition of $I_{\ell,x}$ yields the following upper bound on the above quantity:
\begin{align*}
  &\hspace{0.5cm}  \sum_{\ell<-K_3\log\log x}(e^{c_2\ell}(\log x+\ell_+)x^{\frac{d-1}{2}}(\log x)^{-3})((\log x)^{2(d-1)}x^{-\frac{d-1}{2}})\\
  &+\sum_{-K_3\log\log x\leq \ell\leq K_6\log\log x}(e^{c_2\ell}(\log x+\ell_+)x^{\frac{d-1}{2}}(\log x)^{-3})\\
  &\hspace{3cm}\times(C(\ee,K)(\log\log x)^{K_7}(\log x)x^{-\frac{d-1}{2}} e^{-c_2\ell})\\
  &+\sum_{\ell> K_6\log\log x}(e^{c_2\ell}(\log x+\ell)x^{\frac{d-1}{2}}(\log x)^{-3}e^{L\ell(\log x)/x})\\
  &\hspace{0.4cm}\times\Big(\ee x^{-\frac{d-1}{2}}e^{-c_2\ell}\ell\varphi_{(\log x)^2,\delta}(\ell)+ x^{-\frac{d-1}{2}}\ell e^{-c_2\ell}\varphi_{(\log x)^2,\delta}(\ell)\big(K^{d-1-K_8}+\ell^{-K_8/2}(\log x)^{2d}\big)\\
   &\hspace{1.2cm}+C(\ee,K)e^{-(c_2+\delta/4)\ell}(\log x)^{3d}x^{-\frac{d-1}{2}}\\
    &\hspace{1.2cm}+C(\ee)e^{-c_1c_2K/4} x^{-\frac{d-1}{2}} e^{-c_2\ell}\Psi_{(\log x)^2,\delta}(\ell)\\
    &\hspace{3cm}\times\big(e^{2K_{10}\frac{\ell\log\log x}{(\log x)^2}}+(\log\log x)^{d+3}\ell^{-1/8}e^{\frac{\log\ell\log\log x}{8\ell}}\big)\Big)\\
  &=:I_1+I_2+I_3.
\end{align*}
Our goal is to show that $I_1+I_2+I_3$ can be made arbitrarily small as $\ee\to 0,\,K\to\infty,$ and $x\to\infty$ (in order).

We next estimate the sums $I_1,I_2,I_3$. With $K_3>0$ picked large enough, the term $I_1$ can be controlled using
 $$I_1\ll \sum_{\ell<-K_3\log\log x}e^{c_2\ell}(\log x)^{2(d-2)}=o(1).$$
For $I_2$, we have
\begin{align*}
    I_2
    &\ll C(\ee,K)(\log\log x)^{K_7+1}(\log x)^{-2}= o(1),
\end{align*}where by convention, the $o(1)$ term converges to $0$ as $x\to\infty$, with rate possibly depending on $\ee,K$. 

For $I_3$, we further decompose into two parts. First,
\begin{align*}
   &\hspace{0.5cm} \sum_{\ell> K_6\log\log x}(e^{c_2\ell}(\log x+\ell)x^{\frac{d-1}{2}}(\log x)^{-3}e^{L\ell(\log x)/x})\\
  &\hspace{0.6cm}\times\Big(\ee x^{-\frac{d-1}{2}}e^{-c_2\ell}\ell\varphi_{(\log x)^2,\delta}(\ell)+ x^{-\frac{d-1}{2}}\ell e^{-c_2\ell}\varphi_{(\log x)^2,\delta}(\ell)\big(K^{d-1-K_8}+\ell^{-K_8/2}(\log x)^{2d}\big)\Big)\\
  &= (\log x)^{-3}\sum_{\ell> K_6\log\log x}\ell(\log x+\ell)\varphi_{(\log x)^2,\delta}(\ell)(\ee+K^{d-1-K_8}+\ell^{-K_8/2}(\log x)^{2d})e^{L\ell(\log x)/x}.
\end{align*}
With $K_8$ picked large enough, the above is $\ll \ee+o(1)$. Second, we have (the case $\ell\geq x/\log x$ being almost identical as above, we remove the term $e^{L\ell(\log x)/x}$ for brevity)
\begin{align*}
    &\hspace{0.5cm}\sum_{\ell> K_6\log\log x}(e^{c_2\ell}(\log x+\ell)x^{\frac{d-1}{2}}(\log x)^{-3})\times \Big(C(\ee,K)e^{-(c_2+\delta/4)\ell}(\log x)^{3d}x^{-\frac{d-1}{2}}\\
  &\hspace{0.4cm}+C(\ee)e^{-c_1c_2K/4} x^{-\frac{d-1}{2}} e^{-c_2\ell}\Psi_{(\log x)^2,\delta}(\ell)\big(e^{2K_{10}\frac{\ell\log\log x}{(\log x)^2}}+(\log\log x)^{d+3}\ell^{-1/8}e^{\frac{\log\ell\log\log x}{8\ell}}\big)\Big)\\
    &\ll C(\ee,K)(\log x)^{3d-3}\sum_{\ell> K_6\log\log x}e^{-\delta\ell/4}(\log x+\ell)\\
    &\hspace{1cm}+C(\ee)(\log x)^{-3}e^{-c_1c_2K/4}\sum_{\ell> K_6\log\log x}(\log x+\ell)\\
    &\hspace{4cm}\times\big(e^{2K_{10}\frac{\ell\log\log x}{(\log x)^2}}+(\log\log x)^{d+3}\ell^{-1/8}e^{\frac{\log\ell\log\log x}{8\ell}}\big)\Psi_{(\log x)^2,\delta}(\ell)\\
    &\ll C(\ee,K)(\log x)^{3d-2-K_6\delta/5}\\
    &\hspace{0.3cm} +C(\ee)(\log x)^{-3}e^{-c_1c_2K/4}\bigg(\sum_{\ell> K_6\log\log x}(\log x+\ell)\Psi_{(\log x)^2,\delta}(\ell)e^{2K_{10}\frac{\ell\log\log x}{(\log x)^2}}\\
    &\hspace{2.5cm}+(\log\log x)^{d+3}\sum_{\ell> K_6\log\log x}\ell^{-1/8}(\log x+\ell)\Psi_{(\log x)^2,\delta}(\ell)e^{\frac{\log\ell\log\log x}{8\ell}}\bigg).
\end{align*}
If $K_6$ is large enough, the first term can be controlled by $o(1)$. On the other hand, for the quantity inside the last bracket, we apply \eqref{eq:Psi} and observe that $e^{2K_{10}\frac{\ell\log\log x}{(\log x)^2}}\ll 1$ for $\ell\ll \log x\log\log x$,  $e^{\frac{\log\ell\log\log x}{8\ell}}\ll 1$ for $\ell\gg \log x$, and $e^{\frac{\log\ell\log\log x}{8\ell}}\ll \ell^{1/16}$ for $\ell> K_6\log\log x$ and $K_6$ picked large enough. Using standard integral approximations and changes of variables, we have the following upper bound:
\begin{align*}
    &\hspace{0.5cm}\sum_{\ell> K_6\log\log x}(\log x+\ell)\Psi_{(\log x)^2,\delta}(\ell)e^{2K_{10}\frac{\ell\log\log x}{(\log x)^2}}\\
    &\hspace{3cm}+(\log\log x)^{d+3}\sum_{\ell> K_6\log\log x}\ell^{-1/8}(\log x+\ell)\Psi_{(\log x)^2,\delta}(\ell)e^{\frac{\log\ell\log\log x}{8\ell}}\\
    &\ll \int_{K_6\log\log x}^{(\log x)/L} (\log x+y)y\d y\\
    &\hspace{3cm}+\int_{(\log x)/L}^{L\log x\log\log x} \Big((\log x+y)(\log x)^{1/3}+y(\log x+y)e^{-\frac{\delta y^2}{(\log x)^2}}\Big)\,\d y\\
    &\hspace{3cm}+\int_{L\log x\log\log x}^\infty \varphi_{(\log x)^2,\delta}(y)e^{2K_{10}\frac{y\log\log x}{(\log x)^2}}\,\d y\\
    &\hspace{0.8cm}+(\log\log x)^{d+3}\bigg(\int_{K_6\log\log x}^{(\log x)/L} (\log x+y)y^{7/8}e^{\frac{\log y\log\log x}{8y}}\d y\\
    &\hspace{1.5cm}+\int_{L\log x\log\log x}^\infty y^{-1/8}\varphi_{(\log x)^2,\delta}(y)\,\d y\\
    &\hspace{1.5cm}+\int_{(\log x)/L}^{L\log x\log\log x} \Big((\log x+y)(\log x)^{1/3}y^{-1/8}+y^{7/8}(\log x+y)e^{-\frac{\delta y^2}{(\log x)^2}}\Big)\,\d y\bigg)\\
    &\ll (\log x)^3+(\log x)^{8/3}+(\log\log x)^{d+3}(\log x)^{47/16}\\
    &\ll (\log x)^3.
\end{align*}
Combining the estimates above, we arrive at
$$I_3\ll o(1)+\ee+C(\ee)e^{-c_1c_2K/4}.$$
Altogether, we have
\begin{align}
    \begin{split}
         \p(\exists\, w\in V_{t_{x,K}},\,\bet_{w,t_{x,K}}(t_{x,K})\in B_x,\,\sG_{\wtx,K_2}^{\mathrm{c}})\ll I_1+I_2+I_3\ll o(1)+\ee+C(\ee)e^{-c_1c_2K/4}.
    \end{split}\label{eq:2nd prob to show}
\end{align}Recall that the $o(1)$ term may depend on $\ee,K$ but the implicit constant in $\ll$ does not depend on $\ee,K,x$. Therefore, \eqref{eq:2nd prob to show} can be made arbitrarily small by picking in order $\ee$ small enough, $K$ large enough, and then $x\to\infty$. 
On the other hand, $\p(\sG_{\wtx,K_2})$ can be bounded using Lemma \ref{lemma:barrier}. Combining with \eqref{eq:3probs} shows the desired lower bound \eqref{eq:toshow lb} of $\tau_x$.
\end{proof}

\subsection{Proof of Theorem \texorpdfstring{\ref{thm:particle probability}}{}}

The goal of this section is to prove Theorem \ref{thm:particle probability}. We start with Section \ref{sec:setting stages} that introduces the setup. In the subsections below, we discuss separately the three cases:
\begin{itemize}
    \item $\ell<-K_3{\log\log x}$ (Section \ref{sec:422});
    \item $-K_3{\log\log x}\leq\ell\leq K_6\log\log x$ (Sections \ref{sec:Local barrier events} and \ref{sec:prelim bound});
    \item $\ell> K_6\log\log x$ (Sections \ref{sec:Excluding unlikely events} and \ref{sec:426}).
\end{itemize}
   Although the three cases are disjoint, the different subsections are not --- the latter cases may occasionally apply the estimates from the former ones that may deal with more general cases.

\subsubsection{Setting up stages}\label{sec:setting stages}

Recall that the law $\q^{\ell,v}$ denotes the probability on the BRW restricted to the descendants of $v$, conditioning on $\eta_{v,\widetilde{t}_x}(\widetilde{t}_x)\in[\wx-\ell-1,\wx-\ell)$ and $\sG_{\wtx,K_2}^{\mathrm{c}}$.
Our goal is to give an upper bound for the local hitting probability
\begin{align}
    \q^{\ell,v}(\exists\, w\in V_{t_{x,K}},w\succ v,\,\bet_{w,t_{x,K}}(t_{x,K})\in B_x)\label{eq:q}
\end{align} for different ranges of $\ell$, as a function of $\ell,x$.

If we ignore the $d-1$ dimensions, this is exactly the probability that BRW reaches a distance $m_{(\log x)^2}+\ell$ in time $(\log x)^2$. When we add the extra dimensions, we need to condition on the extra event of the displacements along the first coordinate of $v$ for the previous $\wtx$ steps. We first introduce the necessary settings required for the proof of Theorem \ref{thm:particle probability}.

For a particle $v\in V_{\wtx}$, we define the \textit{heterogeneity index} $h_v$ of $v$ as the largest number of uncommon generations for two descendants of $v$ that reach $\H_x$ at time $t_{x,K}$. In other words, 
$$h_v=\max\{h\geq K:\,\exists\, v_1\in V_{t_x-h},v_2,v_3\in V_{t_{x,K}}, v_2,v_3\succ v_1\succ v, \eta_{v_i,t_{x,K}}(t_{x,K})\geq x, i=2,3 \}.$$
In the case where there is at most one descendant, we let $h_v=K$. 
It follows that $h_v\geq K$ and there exists a unique \textit{latest common ancestor} (lca) at level $t_x-h_v$ of those who reach $\H_x$ at time $t_{x,K}$. We denote that latest common ancestor by $v_{\mathrm{lca}}$. We also define the \textit{heterogeneity location} $g_v$ of $v$ such that $$\eta_{v_{\mathrm{lca}},t_x-h_v}(t_x-h_v)=x+g_v-m_{h_v-K},$$ and the \textit{multi-dimensional heterogeneity location} $\bu_{v_{\mathrm{lca}}}\in\R^{d-1}$ of $v$ such that $$\bet_{v_{\mathrm{lca}},t_x-h_v}(t_x-h_v)=(x+g_v-m_{h_v-K},\bu_{v_{\mathrm{lca}}}).$$ Let $\bu_v\in\R^{d-1}$ denote the multi-dimensional location of $v$, i.e., $\widehat{\bet}_{v,\wtx}(\wtx)=\bu_v$. The quantities $h_v,g_v,\bu_v$ can all be viewed as random variables under $\q^{\ell,v}$.    
For a vector $\bu=(u_1,\dots,u_{d-1})\in\R^{d-1}$, we let $R_\bu$ be the $(d-1)$-dimensional rectangle $[u_1,u_1+1)\times\dots\times[u_{d-1},u_{d-1}+1)$.
For $h=K,\dots,(\log x)^2$, $g\in\bZ$, and $\bu\in\bZ^{d-1}$, we define the events
\begin{align*}
    \sB_h&=\{h_v=h\},\\
    \sC_g&=\{g_v\in [g,g+1)\},\\
    \sD_\bu&=\{\bu_{v_{\mathrm{lca}}}\in R_\bu\},\\
    \sH_\bu&=\{\bu_{v_{\mathrm{lca}}}-\bu_{v}\in R_\bu\}.
\end{align*}
 All these definitions will be relevant throughout Section \ref{sec:LB}. See Figure \ref{fig:lhg} for an illustration of these parameters.

Since we will partition the probability space into a union of events of the form $\sB_h\cap\sC_g$, it is essential to bound the probabilities of those events under $\q^{\ell,v}$, which is given by the next result.

\begin{lemma}[size of the event $\sB_h\cap\sC_g$]\label{lemma:hg|l}
    It holds that 
    \begin{align*}
&\hspace{0.5cm}\q^{\ell,v}(\sB_h\cap\sC_g)\\
&\ll \min\Big\{1,(|g+\ell|+1)e^{-c_2(g+\ell+c_1K/2)}\varphi_{(\log x)^2,\delta}(g+\ell)\Big\}\min\{1,((|g|+1)e^{c_2g})^2\}.
\end{align*}
\end{lemma}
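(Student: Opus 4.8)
The plan is to bound $\sB_h\cap\sC_g$ by a first moment over the (unique) latest common ancestor $v_{\mathrm{lca}}$, and then use the branching Markov property at generation $t_x-h$ to split the result into a ``reach the window of $v_{\mathrm{lca}}$ from $v$'' factor and a ``sprout two independent lines into $\H_x$'' factor; these will produce the two minima. On $\sB_h\cap\sC_g$ the particle $v_{\mathrm{lca}}$ lies in $V_{t_x-h}$, is a descendant of $v$, satisfies $\eta_{v_{\mathrm{lca}},t_x-h}(t_x-h)\in W_{h,g}:=[x+g-m_{h-K},x+g+1-m_{h-K})$, and has at least two distinct children each carrying a descendant in $\H_x$ at time $t_{x,K}$. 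Conditioning on the genealogy and first coordinates up to generation $t_x-h$, and using that (given the BRW up to time $\wtx$) the sub-tree rooted at $v$ is an unconditioned BRW started from $\eta_{v,\wtx}(\wtx)\in[\wx-\ell-1,\wx-\ell)$ — so neither $\sG_{\wtx,K_2}^{\mathrm c}$ nor the last $d-1$ coordinates play any role, and the endpoint of $v$ shifts the starting point by only $O(1)$ — one gets
\begin{align*}
\q^\ell(\sB_h\cap\sC_g)\ \le\ \Big(\sup_{y\in W_{h,g}}p_h(y)\Big)\,E,\qquad E:=\E_{\q^\ell}\!\left[\#\{u\in V_{t_x-h}:u\succ v,\ \eta_{u,t_x-h}(t_x-h)\in W_{h,g}\}\right],
\end{align*}
where $p_h(y)$ is the probability that an independent BRW started at first coordinate $y$ has two distinct children, each with a descendant reaching $[x,\infty)$ after $h-K$ further generations. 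Together with $\q^\ell(\sB_h\cap\sC_g)\le1$ this is the backbone.

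For the count $E$ (which gives the first minimum) I would run the ``ballot theorem under a change of measure'' computation exactly as in the proof of Proposition \ref{prop:particle density} and of Proposition 8 in \citep{blanchet2024first}: the walk from $v$ runs for $n_1:=(\log x)^2-h$ steps and must cover a distance $m_{(\log x)^2}-m_{h-K}+g+\ell+O(1)$, i.e.\ reach height $z$ above $m_{n_1}$ with
\begin{align*}
z\ =\ \big(m_{(\log x)^2}-m_{h-K}-m_{(\log x)^2-h}\big)+g+\ell+O(1)\ =\ c_1K+g+\ell+\tfrac{3}{2c_2}\log\tfrac{(h-K)\big((\log x)^2-h\big)}{(\log x)^2}+O(1).
\end{align*}
Since $(h-K)\big((\log x)^2-h\big)$ ranges over $[\,\gg(\log x)^2,\ (\log x)^4/4\,]$ as $h$ runs over $[K+1,(\log x)^2-1]$, this gives $g+\ell+c_1K/2\le z\le g+\ell+c_1K+\tfrac{3}{c_2}\log\log x+O(1)$ once $K$ is large. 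Feeding this into the single-interval first-moment estimate behind Lemmas \ref{lemma:gnb} and \ref{lemma:brw small deviation prob} — and, for $z>\sqrt{n_1}$, into Cram\'er's upper bound with convexity of $I$, as in the case $j\ge\sqrt x$ of Proposition \ref{lemma:transition} — together with the monotonicity $\varphi_{n_1,\delta}\le\varphi_{(\log x)^2,\delta}$, gives $E\ll(|z|+1)e^{-c_2z}\varphi_{(\log x)^2,\delta}(z)$. The surplus polynomial factors in $K$ and $\log\log x$ carried by $|z|$ and by $z-(g+\ell)$ are absorbed into the slack $e^{-c_1c_2K/2}$ between $e^{-c_2z}$ and $e^{-c_2(g+\ell+c_1K/2)}$, and — choosing $\delta$ small depending on the law — one replaces $\varphi_{(\log x)^2,\delta}(z)$ by $\varphi_{(\log x)^2,\delta}(g+\ell)$; the degenerate endpoints $h=K$ (no split possible) and $h$ close to $(\log x)^2$ ($n_1$ small) are handled directly.

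For the sprouting probability $p_h(y)$ with $y=x+g-m_{h-K}+O(1)$ (which gives the second minimum), I would condition on the offspring number $N$ of the root, which satisfies $\E[N^2]<\infty$ by (A1); given $N=k$ the $k$ sub-trees are i.i.d., and each of them reaches $[x,\infty)$ after $h-K$ generations — i.e.\ overshoots the typical maximum $m_{h-K-1}$ of such a sub-tree by $-g+O(1)$, where the first displacement is absorbed using $\E[(1+|\xi|)e^{c_2\xi}]<\infty$, a consequence of (A4) — with probability $q\ll\min\{1,(|g|+1)e^{c_2g}\}$ by the maximum tail bound of Lemma \ref{lemma:brw small deviation prob}. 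Hence $p_h(y)\le\tfrac12\E[N(N-1)]\,q^2\ll\min\{1,((|g|+1)e^{c_2g})^2\}$, the $\min$ being $p_h(y)\le1$.

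Multiplying the two estimates and using $\q^\ell(\sB_h\cap\sC_g)\le1$ closes the bound whenever $E\le1$, and also whenever $g$ is not too negative (so that $((|g|+1)e^{c_2g})^2\ge1$): in both cases one only needs $E\ll(|g+\ell|+1)e^{-c_2(g+\ell+c_1K/2)}\varphi_{(\log x)^2,\delta}(g+\ell)$ together with $\q^\ell(\sB_h\cap\sC_g)\le1$. I expect the main obstacle to be the remaining corner, where $E$ exceeds $1$ — which forces $g+\ell$ quite negative and hence, since $\sG_{\wtx,K_2}^{\mathrm c}$ only allows $\ell$ down to $\asymp-\tfrac{d-1}{2c_2}\log x$, a situation with $v$ well ahead of its typical position — while $g$ is negative enough that $((|g|+1)e^{c_2g})^2<1$. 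There the product $E\cdot q^2$ from the factorization over-counts, and one must additionally exploit that a latest common ancestor at generation $t_x-h$ is the \emph{unique} generation-$(t_x-h)$ ancestor of the whole $\H_x$-population at time $t_{x,K}$; this is incompatible with the abundance of $\H_x$-descendants generated by the far-ahead $v$, and is quantified through the ``separate very early or very late'' phenomenon \eqref{eq:quote} together with the surviving $e^{-c_1c_2K/2}$ and $\varphi$ slack, which brings the estimate back down to the claimed $\min\{1,\cdot\}\min\{1,\cdot\}$.
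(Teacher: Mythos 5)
Your skeleton matches the paper's: decompose $\sB_h\cap\sC_g$ at generation $t_x-h$ into (i) a descendant of $v$ reaching the window $W_{h,g}$ and (ii) the subtree of $v_{\mathrm{lca}}$ producing two lines, separated at the first step, that reach $\H_x$. Your treatment of factor (ii) is essentially the paper's Lemma \ref{lemma:double prob} (condition on the offspring number, square the single-line probability $\min\{1,(|g|+1)e^{c_2g}\}$, use the moment assumption (A1) and the exponential moment from (A4) to absorb the first jump), and your identification of the displacement $m_{(\log x)^2}-m_{h-K}+g+\ell\ge m_{(\log x)^2-h}+g+\ell+c_1K/2$ is exactly the paper's.

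The gap is in factor (i). You bound it by the \emph{expected number} $E$ of descendants of $v$ in $W_{h,g}$ and claim $E\ll(|z|+1)e^{-c_2z}\varphi_{(\log x)^2,\delta}(z)$ via a ``ballot theorem under a change of measure.'' But, as you yourself note, no barrier constrains the subtree of $v$ on $[\wtx,t_x-h]$: the event $\sG_{\wtx,K_2}^{\mathrm{c}}$ only concerns generations up to $\wtx$, and $\sB_h\cap\sC_g$ imposes no path constraint. Without a ballot restriction the first moment of the number of particles in a unit window at height $m_{n_1}+z$ (with $n_1=(\log x)^2-h$) is of order $n_1\,e^{-c_2z}$ up to Gaussian corrections, not $(|z|+1)e^{-c_2z}\varphi(z)$; you are missing a factor as large as $(\log x)^2/(|z|+1)$, and the downstream sums over $h$ in Sections \ref{sec:Local barrier events}--\ref{sec:Excluding unlikely events} do not tolerate this loss. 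The paper avoids the issue by never introducing $E$: it bounds factor (i) by the \emph{probability} $\q^\ell(\sJ_{h,g})$ that the maximum of the subtree of $v$ exceeds the required level, which is $\ll(|z|+1)e^{-c_2z}\varphi(z)$ by Lemma \ref{lemma:brw small deviation prob} (whose proof internalizes the ballot argument at the level of the max, where it is legitimate), and then multiplies the two probabilities using independence of the process before and after generation $t_x-h$. Your closing paragraph, which defers the region ``$E>1$ but $((|g|+1)e^{c_2g})^2<1$'' to the early/late-separation phenomenon \eqref{eq:quote}, is a symptom of the same problem but is not a proof: the discrepancy between $E$ and the max-tail probability is present for essentially all relevant $z$, not just in that corner, and \eqref{eq:quote} is not quantified in a form that repairs it.
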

\begin{proof}
    Let $\sJ_{h,g}$ denote the event that there exists $w\in V_{t_x-h},\,w\succ v$, such that $\eta_{w,t_x-h}(t_x-h)\in[x-m_{h-K}+g-1,x-m_{h-K}+g)$. In particular, the displacement of the BRW initiated by $v$ is at least (by concavity of the logarithm and assuming $K$ is large enough)
    $$m_{(\log x)^2}-m_{h-K}+\ell+g\geq m_{(\log x)^2-h}+\ell+g+\frac{c_1K}{2}.$$
    Let $\sI_{n,g}$ denote the event that there exist two descendants in time $n$ running above $m_n-g$ with a common ancestor only at time $0$, i.e.,
    \begin{align}
    \sI_{n,g}:=\Big\{\exists\, v,w\in V_n,\,\mathrm{lca}(v,w)=\emptyset,\,\eta_{v,n}(n)\geq m_n-g,\,\eta_{w,n}(n)\geq m_n-g\Big\},~n\in\N_0,\label{eq:Ing}
\end{align}
where $\emptyset$ denotes the unique particle at time zero of the BRW.
    For the event $\sB_h\cap\sC_g$ to hold, the event $\sJ_{h,g}$ must hold and if $v_{\mathrm{lca}}$ denotes the latest common ancestor at time $t_x-h$, the sub-BRW with root $v_{\mathrm{lca}}$ satisfies the event $\sI_{h-K,g}$. 
    By independence of the process before and after time $t_x-h$, Lemma \ref{lemma:brw small deviation prob}, and Lemma \ref{lemma:double prob}, we conclude that
    \begin{align*}
        &\hspace{0.5cm}\q^{\ell,v}(\sB_h\cap\sC_g)\\
        &\leq \q^{\ell,v}(\sJ_{h,g})\,\p(\sI_{h-K,g})\\
        &\ll \min\Big\{1,(|g+\ell|+1)e^{-c_2(g+\ell+c_1K/2)}\varphi_{(\log x)^2,\delta}(g+\ell)\Big\}\min\{1,((|g|+1)e^{c_2g})^2\},
    \end{align*}
    where we have also used the fact that a probability is trivially bounded by one.
\end{proof}

\subsubsection{A uniform conditional probability bound and local hitting probabilities for \texorpdfstring{$\ell<-K_3{\log\log x}$}{}}\label{sec:422}
   

This section aims to prove the following result that serves as a general conditional bound for \eqref{eq:q}. As a consequence, it yields an upper bound of the local hitting probabilities for $\ell<-K_3{\log\log x}$ that suffices for our purpose.

\begin{lemma}[uniform conditional probability bound]\label{lemma:uniform bd}
It holds for $|\ell|\ll x^{1/3}$, $h=K,\dots,(\log x)^2$, and $g\in\R$ that
    $$\q^{\ell,v}(\exists\, w\in V_{t_{x,K}},w\succ v,\,\bet_{w,t_{x,K}}(t_{x,K})\in B_x\mid\sB_h\cap\sC_g)\ll  {h^{d-1}}{x^{-\frac{d-1}{2}}}.$$
\end{lemma}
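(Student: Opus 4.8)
The plan is to combine a first-moment estimate over the particles $w\succ v$ with $\eta_{w,t_{x,K}}(t_{x,K})\geq x$ with the conditional local central limit theorem (Lemma \ref{lemma:conditioned local CLT} and its uniform version from Appendix \ref{sec:cond local CLT proof}), after splitting according to the location of the latest common ancestor $v_{\mathrm{lca}}$ in the last $d-1$ coordinates. We may assume $h>K$, since for $h=K$ there is at most one reacher and the bound is immediate from the local CLT.

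First I would decompose over the events $\sD_\bu=\{\bu_{v_{\mathrm{lca}}}\in R_\bu\}$, $\bu\in\bZ^{d-1}$, using that on $\sB_h\cap\sC_g$ every $w\succ v$ with $\eta_{w,t_{x,K}}(t_{x,K})\geq x$ descends from $v_{\mathrm{lca}}$:
\begin{align*}
\q^\ell\big(\exists\,w\succ v,\ \bet_{w,t_{x,K}}(t_{x,K})\in B_x\mid\sB_h\cap\sC_g\big)=\sum_{\bu\in\bZ^{d-1}}\q^\ell(\sD_\bu\mid\sB_h\cap\sC_g)\,\q^\ell\big(\exists\,w\succ v_{\mathrm{lca}},\ \bet_{w,t_{x,K}}(t_{x,K})\in B_x\mid\sB_h\cap\sC_g\cap\sD_\bu\big).
\end{align*}
Two inputs are needed. (i) Since none of $\sG_{\wtx,K_2}^{\mathrm c}$, $\{\eta_{v,\wtx}(\wtx)\in[\wx-\ell-1,\wx-\ell)\}$, $\sB_h$, $\sC_g$ constrains the last $d-1$ coordinates, under $\q^\ell(\cdot\mid\sB_h\cap\sC_g)$ the vector $\bu_{v_{\mathrm{lca}}}$ is the endpoint of a $(d-1)$-dimensional walk of length $t_x-h\asymp x$ conditioned on its (barrier-constrained, pinned) first coordinate, so the uniform conditional local CLT gives $\q^\ell(\sD_\bu\mid\sB_h\cap\sC_g)\ll x^{-(d-1)/2}$ uniformly in $\bu$ — the hypothesis $|\ell|\ll x^{1/3}$ ensuring the conditioned first coordinate stays close enough to a typical near-extremal path for the standard normalization to hold. (ii) Conditioning further on the position of $v_{\mathrm{lca}}$, and using that its subtree is a fresh BRW independent of the past — and that, on $\sB_h$, at least two children subtrees of $v_{\mathrm{lca}}$ contain reachers, an event whose probability is bounded below by a positive constant uniformly in $h,x$ when $g\geq0$ — the inner probability is, up to a constant, at most $\min\{1,\ \rho^{\,h-K}\p(\bS_{h-K}\in B_x-(x+g-m_{h-K},\bu))\}$. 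Splitting $\bS_{h-K}=(S_{h-K},\widehat\bS_{h-K})$, the conditional local CLT gives $\p(\widehat\bS_{h-K}\approx-\bu\mid S_{h-K}\approx m_{h-K}-g)\ll h^{-(d-1)/2}\varphi_{h,\delta}(\n{\bu})$, while the one-dimensional small-deviation bounds of Lemmas \ref{lemma:brw small deviation prob} and \ref{lemma:gnb} give $\rho^{\,h-K}\p(S_{h-K}\approx m_{h-K}-g)\ll M$ with $M:=\min\{h\,e^{c_2 g},\rho^{\,h}\}$. Hence the inner probability is $\ll\min\{1,\ M\,h^{-(d-1)/2}\varphi_{h,\delta}(\n{\bu})\}$.

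It then remains to prove $\sum_{\bu\in\bZ^{d-1}}\min\{1,\ M\,h^{-(d-1)/2}\varphi_{h,\delta}(\n{\bu})\}\ll h^{d-1}$; multiplied by the factor $x^{-(d-1)/2}$ from (i) this yields the lemma. The summand equals $1$ exactly on $\n{\bu}\leq r_\ast$ with $\delta\,r_\ast^2/h=\log_+(M h^{-(d-1)/2})$, and the crude bound $M\leq\rho^{\,h}$ forces $\log_+(Mh^{-(d-1)/2})\leq h\log\rho$, so $r_\ast\ll h$ and the $\{\text{summand}=1\}$ region contributes $\ll r_\ast^{d-1}\ll h^{d-1}$. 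On its complement one uses $\varphi_{h,\delta}(\n{\bu})\asymp e^{-\delta\n{\bu}^2/h}$ on $\n{\bu}\leq h$ and $\asymp e^{-\delta\n{\bu}}$ on $\n{\bu}>h$; splitting the sum at $r_\ast$, at $h$, and at $\delta^{-1}\log_+(Mh^{-(d-1)/2})\ll h$, the resulting elementary integral estimates (using the definition of $r_\ast$ to absorb the factor $M$) make every remaining piece $\ll h^{d-2}$. Thus all the mass comes from $\n{\bu}\ll h$, and a ball of radius $\asymp h$ carries $\q^\ell$-mass $\asymp h^{d-1}x^{-(d-1)/2}$.

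The main obstacle, I expect, is precisely the uniformity over all $g\geq0$ in this last step. A plain union bound over the reachers near $x$ would replace the left-hand side by $x^{-(d-1)/2}\,\E^{\q^\ell}[\#\{w\succ v_{\mathrm{lca}}:\eta_{w,t_{x,K}}(t_{x,K})\in[x-1,x+1]\}\mid\sB_h\cap\sC_g]$, and this conditional first moment can be of order $\rho^{\,h}$ when $v_{\mathrm{lca}}$ already sits near $x$ (large $g$), which is far too large; it is the capping $\min\{1,\cdot\}$ together with the Gaussian/exponential confinement supplied by $\varphi_{h,\delta}$ — and the observation that the relevant first moment $M$ never exceeds $\rho^{\,h}$, so $\log M\ll h$ — that rescue the estimate, at the cost of a somewhat delicate bookkeeping over the ranges of $\n{\bu}$ and $g$. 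A secondary but genuinely technical ingredient is the uniform conditional local CLT invoked twice above (a local limit theorem for the last $d-1$ coordinates of a random walk conditioned on a pinned, barrier-constrained first coordinate), whose proof is deferred to Appendix \ref{sec:cond local CLT proof}.
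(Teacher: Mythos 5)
Your proposal is correct in its overall architecture and matches the paper's proof in its first half: the same decomposition over $\sD_\bu$, the same factor $\q^\ell(\sD_\bu\mid\sB_h\cap\sC_g)\ll x^{-(d-1)/2}$ (which the paper implements carefully via the independence of $\bu_v$ from $\sB_h\cap\sC_g$ and from $\bu_{v_{\mathrm{lca}}}-\bu_v$, Lemma \ref{lemma:independence}, combined with Lemmas \ref{lemma:uniform local CLT} and \ref{lemma:indsum} — the convolution step is what lets one ignore the conditioning on the post-$\wtx$ segment), and the same reduction to conditioning on $\sI_{h-K,g}$, whose probability is $\gg 1$ for $g\geq 0$. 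Where you genuinely diverge is the inner probability. The paper does not use a first moment at all: it bounds $\q^\ell(\exists w\succ v_{\mathrm{lca}},\,\bet_{w}\in B_x\mid\sI_{h-K,g}\cap\sD_\bu)$ by the probability that the lateral extent of the subtree reaches $R_{-\bu}$, which by large deviations for the maximum of a BRW (Gantert--H\"ofelsauer, Luo) is $\ll\min\{1,e^{-c(\n{\bu}-Lh)}\}$; this sums over $\bu$ to $h^{d-1}$ immediately and requires no local CLT for the subtree. Your route — capping the first moment $\rho^{h-K}\p(\bS_{h-K}\in B_x-(x+g-m_{h-K},\bu))$ at $1$ and factoring it as $M\cdot h^{-(d-1)/2}\varphi_{h,\delta}(\n{\bu})$ — also works, and your diagnosis of why is exactly right: the bare first moment can be as large as $\rho^h$, and since $\varphi_{h,\delta}$ with a fixed small $\delta$ cannot beat $\rho^h$ on the annulus $h\lesssim\n{\bu}\lesssim h\log\rho/\delta$, it is only the cap at $1$ together with the fact that the region $\{M h^{-(d-1)/2}\varphi_{h,\delta}(\n{\bu})\geq 1\}$ is contained in $\{\n{\bu}\ll h\}$ that rescues the $h^{d-1}$ count. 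The price of your route is that it needs a strictly stronger analytic input than the paper's Lemma \ref{lemma:conditioned local CLT}: a joint local CLT for the tilted walk with Gaussian/exponential decay $\varphi_{h,\delta}(\n{\bu})$ in the lateral deviation, uniform over the full range of tilts corresponding to $g\in[0,O(h)]$ (the paper's conditional local CLT is stated only at the bulk scale, for targets near the tilted mean). That statement is standard and provable under (A1)--(A4), but you would have to supply it; the paper's support bound on the lateral maximum sidesteps it entirely. In exchange, your estimate retains the $e^{c_2g}$ dependence of the first moment, which the paper's uniform bound discards — unnecessary here, since the lemma claims uniformity in $g\geq 0$ anyway.
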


An immediate consequence of Lemma \ref{lemma:uniform bd} is that 
$$\q^{\ell,v}(\exists\, w\in V_{t_{x,K}},\,w\succ v,\,\bet_{w,t_{x,K}}(t_{x,K})\in B_x)\ll (\log x)^{2(d-1)}x^{-\frac{d-1}{2}}.$$
In particular, the estimate in Theorem \ref{thm:particle probability} for $\ell<-K_3\log\log x$ holds. 
We need a few preparations to prove Lemma \ref{lemma:uniform bd}. 
\begin{lemma}[uniform local upper limit theorem]\label{lemma:uniform local CLT}
Uniformly in $|\ell|\ll x^{1/3}$ and $\bu\in\R^{d-1}$,
    $$\q^{\ell,v}(\bu_v\in   R_\bu  )\ll x^{-\frac{d-1}{2}}.$$
\end{lemma}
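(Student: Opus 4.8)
The plan is to absorb both the first-coordinate constraint and the barrier into a $\sigma$-field that does not see the last $d-1$ coordinates, and then invoke a conditional local limit theorem. Write $I_\ell:=[\wx-\ell-1,\wx-\ell)$, so that $\q^\ell=\p(\,\cdot\mid \eta_{v,\wtx}(\wtx)\in I_\ell,\ \sG_{\wtx,K_2}^{\mathrm{c}})$, and let $\mathcal G$ be the $\sigma$-field generated by the genealogy up to time $\wtx$ together with the first coordinates $\{\eta_{u,\wtx}(k)\}$ of the trajectories of all particles $u\in V_{\wtx}$. Both events $\{\eta_{v,\wtx}(\wtx)\in I_\ell\}$ and $\sG_{\wtx,K_2}^{\mathrm{c}}$ are $\mathcal G$-measurable, and $\bu_v=\widehat{\bet}_{v,\wtx}(\wtx)=\sum_{k=1}^{\wtx}\widehat{\bxi}_k$ with $\widehat{\bxi}_k:=\widehat{\bet}_{v,\wtx}(k)-\widehat{\bet}_{v,\wtx}(k-1)$. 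Since the jump vectors of the walk are i.i.d.\ with law $\bxi$, given $\mathcal G$ the increments $\widehat{\bxi}_1,\dots,\widehat{\bxi}_{\wtx}$ are independent with $\widehat{\bxi}_k$ distributed as $\widehat{\bxi}$ conditioned on $\xi=\xi_k$ (where $\xi_k:=\eta_{v,\wtx}(k)-\eta_{v,\wtx}(k-1)$ is $\mathcal G$-measurable); by (A3) this conditional law is centred and isotropic on $\R^{d-1}$ with covariance $\sigma^2(\xi_k)\,\mathrm{Id}_{d-1}$, $\sigma^2(s):=\frac1{d-1}\E[\|\widehat{\bxi}\|^2\mid\xi=s]$. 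The key point is that neither the barrier nor the first-coordinate trajectory of $v$ perturbs this conditional law. Hence
\begin{align*}
\q^\ell(\bu_v\in R_\bu)=\E_{\q^\ell}\Big[\,\p\big(\textstyle\sum_{k=1}^{\wtx}\widehat{\bxi}_k\in R_\bu\ \big|\ \mathcal G\big)\Big].
\end{align*}

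Next I would isolate a ``regular'' $\mathcal G$-event $\mathcal E$ on which the conditional local CLT applies: concretely $\mathcal E$ asks that $V:=\sum_{k=1}^{\wtx}\sigma^2(\xi_k)\gg\wtx$ (non-degeneracy of the aggregate covariance, together with a Lindeberg bound coming from $\sigma^2(\xi)$ having finite mean) and enough regularity of $(\xi_1,\dots,\xi_{\wtx})$ to make the characteristic function $\prod_{k}\E[e^{\mathrm{i}\boldsymbol{\theta}\cdot\widehat{\bxi}_k}\mid\xi_k]$ decay appropriately --- here the non-lattice hypothesis (A5), which follows from (A3) by Lemma 24 of \citep{blanchet2024first}, is essential. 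On $\mathcal E$ one obtains, uniformly in $R_\bu$,
\begin{align*}
\p\big(\textstyle\sum_{k=1}^{\wtx}\widehat{\bxi}_k\in R_\bu\ \big|\ \mathcal G\big)\ll V^{-(d-1)/2}\asymp\wtx^{-(d-1)/2}\asymp x^{-(d-1)/2}.
\end{align*}
This is the same mechanism that underlies Lemma \ref{lemma:conditioned local CLT}, whose proof is carried out in Appendix \ref{sec:cond local CLT proof}; the only novelty here is that the first-coordinate conditioning and the barrier have been folded into $\mathcal G$, which leaves the conditional law of $\sum_k\widehat{\bxi}_k$ given $(\xi_1,\dots,\xi_{\wtx})$ untouched.

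It then remains to control $\q^\ell(\mathcal E^{\mathrm{c}})=\p(\mathcal E^{\mathrm{c}}\mid \eta_{v,\wtx}(\wtx)\in I_\ell,\ \sG_{\wtx,K_2}^{\mathrm{c}})$, since $\q^\ell(\bu_v\in R_\bu)\le \wtx^{-(d-1)/2}+\q^\ell(\mathcal E^{\mathrm{c}})$. Bounding $\bone_{\mathcal E^{\mathrm{c}}\cap\{\eta_{v,\wtx}(\wtx)\in I_\ell\}\cap\sG_{\wtx,K_2}^{\mathrm{c}}}\le\bone_{\mathcal E^{\mathrm{c}}\cap\{\eta_{v,\wtx}(\wtx)\in I_\ell\}}$ and performing the large-deviation change of measure \eqref{eq:dpdqn} that recentres the first-coordinate walk, one gets $\p(\mathcal E^{\mathrm{c}}\cap\{\eta_{v,\wtx}(\wtx)\in I_\ell\})\ll \wtx^{3/2}\rho^{-\wtx}e^{-\widehat{\lambda}(\wx-\ell-m_{\wtx})}\,\q(\mathcal E^{\mathrm{c}})$, while the ballot lower bound behind Proposition \ref{prop:particle density} (and Proposition \ref{lemma:transition}(ii)) gives $\p(\eta_{v,\wtx}(\wtx)\in I_\ell,\ \sG_{\wtx,K_2}^{\mathrm{c}})\gg \rho^{-\wtx}e^{-\widehat{\lambda}(\wx-\ell-m_{\wtx})}\wtx^{-C}$ for a fixed $C$ (the statement being vacuous for $\ell$ outside the range where the endpoint can lie below the barrier). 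The factors $\rho^{-\wtx}e^{-\widehat{\lambda}(\wx-\ell-m_{\wtx})}$ cancel, so $\q^\ell(\mathcal E^{\mathrm{c}})\ll\wtx^{C+3/2}\,\q(\mathcal E^{\mathrm{c}})$, and since under $\q$ the $\xi_k$ are i.i.d.\ with $\E_\q[\sigma^2(\xi)]$ bounded away from $0$ (as $\widehat{\lambda}\to c_2$, and using that $\widehat{\bxi}$ is not a.s.\ $\z$ by (A3)), Chernoff bounds give $\q(\mathcal E^{\mathrm{c}})\ll e^{-c\wtx}$; thus $\q^\ell(\mathcal E^{\mathrm{c}})$ is super-polynomially small and, combining, $\q^\ell(\bu_v\in R_\bu)\ll x^{-(d-1)/2}$.

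The main obstacle is the conditional local CLT of the second step. Because of spherical symmetry the conditional laws $\mathcal L(\widehat{\bxi}\mid\xi=s)$ are supported on spheres in $\R^{d-1}$ and may be singular (even lattice-supported when $d=2$), so the local bound cannot rely on a density of $\widehat{\bxi}$ itself; it must come from the Bessel-type characteristic factors $\E[e^{\mathrm{i}\boldsymbol{\theta}\cdot\widehat{\bxi}}\mid\xi=s]$, whose product decays precisely because the conditional radius varies across the typical values of $\xi_k$ and because of (A5). Making this quantitative and uniform over the conditioned trajectory of first coordinates --- and pinning down the regular event $\mathcal E$ so that its complement is exponentially rare --- is exactly the work deferred to Appendix \ref{sec:cond local CLT proof}; everything else above is a conditioning argument together with the already-available ballot estimates.
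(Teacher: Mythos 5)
Your structural reduction is sound: conditioning on the genealogy and the full first-coordinate trajectory does make $\bu_v$ a sum of independent transverse increments whose conditional laws are untouched by the barrier (this is the same observation as Lemma \ref{lemma:independence}). But the proof then rests entirely on a \emph{quenched} local limit theorem — a uniform bound $\ll \wtx^{-(d-1)/2}$ for $\p(\sum_k\widehat{\bxi}_k\in R_\bu\mid\mathcal G)$, where the summands are independent but not identically distributed, with laws $\mathcal L(\widehat{\bxi}\mid\xi=\xi_k)$ that, as you yourself note, can be singular or even two-point (e.g.\ $d=2$ with $\|\bxi\|$ constant). This is not established anywhere in the paper, and your attribution of it to Appendix \ref{sec:cond local CLT proof} is incorrect: Lemma \ref{lemma:conditioned local CLT2} there conditions only on the \emph{endpoint} of the projected walk landing in an interval, and is proved by taking a ratio of two \emph{annealed} local CLTs after a change of measure. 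It says nothing about anti-concentration of $\sum_k\widehat{\bxi}_k$ uniformly over the conditioned first-coordinate path, which is a genuinely harder Esseen/Littlewood--Offord-type problem whose resolution (including the specification of the good event $\mathcal E$ and the verification that the bound is uniform over barrier-constrained trajectories) is the entire content of your argument. As written, the central analytic step is missing.

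The paper avoids this issue altogether by a different route: it writes $\q^\ell(\bu_v\in R_\bu)$ as the ratio
$\p(\bS_{\wtx}\in I_\ell\times R_\bu,\,E_{\psi,x})/\p(S_{\wtx}\in I_\ell,\,E_{\psi,x})$, performs the tilt \eqref{eq:dpdqn}, and then bounds the numerator by the $d$-dimensional ballot upper bound with logarithmic barrier (Lemma \ref{thm:d-dim ballot with log}, resting on the Denisov--Wachtel theory of random walks in half-spaces), giving $\ll(\wx-\ell-m_{\wtx})x^{-(d+2)/2}$, and the denominator from below by the one-dimensional ballot lower bound of Bramson--Ding--Zeitouni, giving $\gg(\wx-\ell-m_{\wtx})x^{-3/2}$; the ratio is $x^{-(d-1)/2}$. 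In other words, the transverse local estimate is obtained jointly with the barrier constraint at the annealed level, which is exactly what sidesteps the quenched LCLT. If you want to pursue your conditioning route, you would need to prove the quenched statement from scratch (and handle the lattice-type conditional laws); otherwise you should fall back on Lemma \ref{thm:d-dim ballot with log}.
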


\begin{proof}

Recall that under the law $\q^{\ell,v}$, we condition on $\eta_{v,\widetilde{t}_x}(\widetilde{t}_x)\in[\wx-\ell-1,\wx-\ell)$ and $\sG_{\wtx,K_2}^{\mathrm{c}}$. In particular, the latter event means that the trajectory $\{\eta_{v,\wtx}(k)\}_{1\leq k\leq \wtx}$ is bounded from above by the curve
$$\psi(k):=\frac{km_{\wtx}}{\wtx}+K_2+\frac{6}{c_2}(\log\min\{k,\wtx-k\})_+,~1\leq k\leq \wtx.$$
Denote by $E_{\psi,x}$ the corresponding event that $S_k\leq \psi(k)$ for $1\leq k\leq \wtx$. 
An equivalent formulation of the statement is that uniformly in $|\ell|\ll x^{1/3}$ and $\bu\in\R^{d-1}$,
$$\p(\bS_{\wtx}\in [\wx-\ell-1,\wx-\ell)\times R_\bu\mid S_{\wtx}\in [\wx-\ell-1,\wx-\ell),\,E_{\psi,x})\ll x^{-\frac{d-1}{2}}.$$
The left-hand side probability can be written as 
\begin{align*}
    \p(\bS_{\wtx}\in [\wx-\ell-1,\wx-\ell)\times &R_\bu\mid S_{\wtx}\in [\wx-\ell-1,\wx-\ell),\,E_{\psi,x})\\
    &=\frac{\p(\bS_{\wtx}\in [\wx-\ell-1,\wx-\ell)\times R_\bu,\,E_{\psi,x})}{\p(S_{\wtx}\in [\wx-\ell-1,\wx-\ell),\,E_{\psi,x})}.
\end{align*}

We next apply a change of measure argument similarly as in the proof of Proposition \ref{prop:particle density}, whose notation we follow. Consider the measure $\q$ defined by
\eqref{eq:dpdqn}. 
It follows that under $\q$, the jump $\bxi$ is i.i.d.~with mean $(m_{\wtx}/\wtx,\z)\in\R^d$, and consequently, the random walk $\{\widetilde{\bS}_n\}:=\{{\bS}_n-(m_{\wtx}n/\wtx,\z)\}$ is centered. It follows that
\begin{align*}
    &\hspace{0.5cm}\frac{\p(\bS_{\wtx}\in [\wx-\ell-1,\wx-\ell)\times R_\bu,\,E_{\psi,x})}{\p(S_{\wtx}\in [\wx-\ell-1,\wx-\ell),\,E_{\psi,x})}\\
    &\asymp \frac{(\wtx)^{3/2}e^{-\widehat{\lambda}(\wx-\ell-m_{\wtx})}\q(\bS_{\wtx}\in [\wx-\ell-1,\wx-\ell)\times R_\bu,\,E_{\psi,x})}{(\wtx)^{3/2}e^{-\widehat{\lambda}(\wx-\ell-m_{\wtx})}\q(S_{\wtx}\in [\wx-\ell-1,\wx-\ell),\,E_{\psi,x})}\\
    &=\frac{\q(\widetilde{\bS}_{\wtx}\in [\wx-\ell-1-m_{\wtx},\wx-\ell-m_{\wtx})\times R_\bu,\,E_{\psi,x})}{\q(\widetilde{S}_{\wtx}\in [\wx-\ell-1-m_{\wtx},\wx-\ell-m_{\wtx}),\,E_{\psi,x})},
\end{align*}
where we note that $$E_{\psi,x}=\{S_k\leq \psi(k)\text{ for all }1\leq k\leq \wtx\}=\Big\{\widetilde{S}_k\leq \psi(k)-\frac{km_{\wtx}}{\wtx}\text{ for all }1\leq k\leq \wtx\Big\}.$$

Using (9) and (10) of Lemma 2.3 of \citep{bramson2016convergence}, the denominator has the lower bound 
$$\q(\widetilde{S}_{\wtx}\in [\wx-\ell-1-m_{\wtx},\wx-\ell-m_{\wtx}),\,E_{\psi,x})\gg (1+K_2-(\wx-\ell-m_{\wtx}))x^{-3/2}.$$
Similarly, by Lemma \ref{thm:d-dim ballot with log},
$$\q(\widetilde{\bS}_{\wtx}\in [\wx-\ell-1-m_{\wtx},\wx-\ell-m_{\wtx})\times R_\bu,\,E_{\psi,x})\ll (1+K_2-(\wx-\ell-m_{\wtx}))x^{-(d+2)/2}.$$
  Note that here we allow the asymptotic constants to depend on $K_2$.  Therefore,
    $$\frac{\p(\bS_{\wtx}\in [\wx-\ell-1,\wx-\ell)\times R_\bu,\,E_{\psi,x})}{\p(S_{\wtx}\in [\wx-\ell-1,\wx-\ell),\,E_{\psi,x})}\ll x^{-\frac{d-1}{2}},$$
    as desired.
\end{proof}

\begin{lemma}[uniform local upper limit theorem for an independent sum]\label{lemma:indsum}
    For any $x>0$, suppose that $\bzeta_1(x),\bzeta_2(x)$ are two independent random variables in $\R^{d-1}$ such that the law of $\bzeta_1(x)$ satisfies that for any $\bv\in\R^{d-1}$,
    $$\p(\bzeta_1(x)\in R_\bv)\ll x^{-\frac{d-1}{2}}.$$
    Then for any $\bu\in\R^{d-1}$,
    $$\p(\bzeta_1(x)+\bzeta_2(x)\in R_\bu)\ll x^{-\frac{d-1}{2}}.$$
\end{lemma}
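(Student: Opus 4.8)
The plan is to condition on the second summand and reduce to the hypothesis applied to the first at a translated unit box. By independence of $\bzeta_1(x)$ and $\bzeta_2(x)$ together with the tower property,
\[
\p(\bzeta_1(x)+\bzeta_2(x)\in R_\bu)=\E\Big[\p\big(\bzeta_1(x)\in R_\bu-\bzeta_2(x)\,\big|\,\bzeta_2(x)\big)\Big]=\E\big[f(\bzeta_2(x))\big],\qquad f(\bv):=\p\big(\bzeta_1(x)\in R_\bu-\bv\big).
\]
The key observation is purely geometric: translating the product of unit half-open intervals $R_\bu=[u_1,u_1+1)\times\dots\times[u_{d-1},u_{d-1}+1)$ by $-\bv$ produces $R_\bu-\bv=R_{\bu-\bv}$, which is again a rectangle of exactly the form appearing in the hypothesis. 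Hence the assumed bound applies to each realization: $f(\bv)=\p(\bzeta_1(x)\in R_{\bu-\bv})\ll x^{-(d-1)/2}$, with an implicit constant that does \emph{not} depend on $\bv$, since the hypothesis is uniform over all $\bv\in\R^{d-1}$.

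Taking expectations over $\bzeta_2(x)$ and using that it is a bona fide random variable (so $\E[1]=1$) yields $\p(\bzeta_1(x)+\bzeta_2(x)\in R_\bu)=\E[f(\bzeta_2(x))]\ll x^{-(d-1)/2}$, which is the claim; note the resulting constant is the same as the one in the hypothesis. There is essentially no obstacle in this argument — the only point worth flagging is that the input estimate must be uniform in the location of the unit box, which is precisely how Lemma \ref{lemma:uniform local CLT} is phrased. This uniformity is exactly what makes it legitimate to fold in an arbitrary independent increment (for instance the displacement accrued over the last $(\log x)^2$ generations, or down to a latest common ancestor) into a sum that already obeys a local-CLT-type upper bound, without degrading the rate $x^{-(d-1)/2}$.
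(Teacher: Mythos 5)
Your proof is correct and rests on the same idea as the paper's: decomposing over the value of $\bzeta_2(x)$ and invoking the uniform-in-$\bv$ hypothesis on $\bzeta_1(x)$. The paper executes this as a discretized sum $\sum_{\bv}\p(\bzeta_1(x)\in R_\bv)\,\p(\bzeta_2(x)\in R_\bu-R_\bv)$ over a partition into unit boxes, whereas you condition on $\bzeta_2(x)$ exactly and use $R_\bu-\bv=R_{\bu-\bv}$; your version is, if anything, slightly cleaner, and the uniformity caveat you flag is precisely the point that makes the argument go through.
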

\begin{proof}
    It follows from independence and elementary geometry that
    \begin{align*}
        \p(\bzeta_1(x)+\bzeta_2(x)\in R_\bu)&\leq \sum_{\bv\in \bZ^{d-1}}\p(\bzeta_1(x)\in R_\bv)\,\p(\bzeta_2(x)\in R_{\bu}-R_{\bv})\\
        &\ll x^{-\frac{d-1}{2}}\sum_{\bv\in \bZ^{d-1}}\p(\bzeta_2(x)\in R_{\bu}-R_{\bv})\ll x^{-\frac{d-1}{2}}.
    \end{align*}
    This completes the proof.
\end{proof}

\begin{lemma}[Markov property]\label{lemma:independence}
It holds for all $\ell,h,g$ that under law $\q^{\ell,v}$,
$$\bu_v  \dd \bu_v\mid(\sB_h,\sC_g)\dd\bu_v\mid(\sB_h,\sC_g,\sigma(\bu_{v_{\mathrm{lca}}}-\bu_v)).$$
\end{lemma}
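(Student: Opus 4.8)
The plan is to read the statement off from the branching Markov property of the BRW, the point being that $\bu_v$ is built only from the trajectory leading to $v$, while $\sB_h$, $\sC_g$ and $\bu_{v_{\mathrm{lca}}}-\bu_v$ are built only from the subtree emanating from $v$, and under $\q^\ell$ these two pieces of randomness are independent. In detail, I would first note that the conditioning defining $\q^\ell$---that $\eta_{v,\wtx}(\wtx)$ lies in a prescribed unit band and that the barrier event $\sG_{\wtx,K_2}^{\mathrm c}$ holds---involves only the first coordinates of the particles up to time $\wtx$, hence only the trajectory $\{\bet_{v,\wtx}(k)\}_{0\le k\le\wtx}$ (the subtrees rooted at the other time-$\wtx$ particles being irrelevant here). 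Therefore, under $\q^\ell$, the subtree rooted at $v$---the descendants of $v$ together with all their displacements relative to $v$---is independent of the trajectory leading to $v$, and in particular of $\bu_v=\widehat{\bet}_{v,\wtx}(\wtx)$.

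I would then identify $\sB_h$, $\sC_g$, and $\bu_{v_{\mathrm{lca}}}-\bu_v$ as functions of the subtree rooted at $v$. The heterogeneity index $h_v$, the latest common ancestor $v_{\mathrm{lca}}$ at level $t_x-h_v$, and the heterogeneity location $g_v$ are all determined once one knows which descendants of $v$ reach $\H_x$ at time $t_{x,K}$ and where the relevant ancestors sit in the first coordinate; since the first-coordinate band of $v$ is pinned down by $\q^\ell$, these are (up to the unit-band discretization of the first coordinate that is built into the setup) measurable functions of the subtree rooted at $v$. The same holds for $v_{\mathrm{lca}}$, and hence for $\bu_{v_{\mathrm{lca}}}-\bu_v=\widehat{\bet}_{v_{\mathrm{lca}},t_x-h_v}(t_x-h_v)-\widehat{\bet}_{v,\wtx}(\wtx)$, which is just the $(d-1)$-dimensional increment accumulated along the path from $v$ to $v_{\mathrm{lca}}$ inside that subtree. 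Consequently $\sigma(\sB_h,\sC_g,\bu_{v_{\mathrm{lca}}}-\bu_v)$ is contained in the $\sigma$-algebra generated by the subtree rooted at $v$, which by the previous paragraph is independent of $\bu_v$ under $\q^\ell$. Both equalities of the lemma follow at once: $\bu_v\perp(\sB_h,\sC_g)$ gives the first, and $\bu_v\perp\sigma(\sB_h,\sC_g,\bu_{v_{\mathrm{lca}}}-\bu_v)$ (full independence, hence in particular conditional independence of $\bu_v$ and $\bu_{v_{\mathrm{lca}}}-\bu_v$ given $\sB_h\cap\sC_g$) gives the second.

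The main obstacle is the measurability bookkeeping flagged above: one must check that, under $\q^\ell$, the definitions of $h_v$, $v_{\mathrm{lca}}$, and $g_v$ are insensitive to the exact first-coordinate location of $v$ within its unit band---so that they become genuine functions of the subtree rooted at $v$---and that $\sG_{\wtx,K_2}^{\mathrm c}$, although it restricts the trajectory leading to $v$, does not couple that trajectory to the subtree. If one prefers to sidestep the discretization entirely, one may additionally condition on the exact first coordinate of $v$ throughout Section~\ref{sec:LB}; this is harmless for the downstream applications, where only bounds up to multiplicative constants are needed. Once this is in place, the lemma reduces to the elementary fact that functions of independent parts of the process are independent, and---in contrast to the neighbouring lemmas---no use of spherical symmetry is required.
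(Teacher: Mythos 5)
Your proof is correct and takes essentially the same route as the paper's: both identify $\bu_v$ as measurable with respect to the trajectory leading to $v$ and $\sB_h$, $\sC_g$, $\bu_{v_{\mathrm{lca}}}-\bu_v$ as measurable with respect to the subtree rooted at $v$ (the paper phrases this via a diagram of $\sigma$-algebras $\sigma(\eta_{v,\wtx}(\wtx))$, $\sigma(\bet_{v,\wtx}(\wtx))$, $\cF^{(1)}_{\wtx,t_{x,K}}$, $\cF^{(d-1)}_{\wtx,t_{x,K}}$), and then invoke independence of these two pieces under $\q^\ell$ since the conditioning only involves the first coordinate of the past trajectory. If anything, you are more explicit than the paper about the unit-band discretization caveat, which the paper sweeps into the remark that $\sigma(\eta_{v,\wtx}(\wtx))$ is trivial under $\q^\ell$.
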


\begin{proof}
    By definition of the events, we have the following diagram of dependence:
    \begin{figure}[H]
    \centering
    \begin{tikzcd}[row sep=2em, column sep = 4em]
\sigma(\eta_{v,\wtx}(\wtx)) \arrow{r} \arrow{d}& \cF^{(1)}_{v,\wtx,t_{x,K}}  \arrow{d}\\
\sigma(\bet_{v,\wtx}(\wtx)) & \cF^{(d-1)}_{v,\wtx,t_{x,K}}
\end{tikzcd}
    \caption{Dependence relation of the $\sigma$-algebras. Arrows indicate that under the probability measure induced by the BRW, any random variable that is measurable with respect to the object being pointed to (i.e., the head of the arrow) can be simulated as a function of random variables measurable with respect to the object pointing from (i.e., the tail of the arrow) along with some independent randomness. Here, $\cF^{(1)}_{v,\wtx,t_{x,K}}$ denotes the $\sigma$-algebra generated by the first coordinate of the tree rooted at $v$ (with time scale $[\wtx,t_{x,K}]$), and $\cF^{(d-1)}_{v,\wtx,t_{x,K}}$ denotes the $\sigma$-algebra generated by the last $d-1$ coordinates of the tree rooted at $v$ (again with time scale $[\wtx,t_{x,K}]$).}
    \label{fig:commute}
\end{figure}
Note that $\sigma(\eta_{v,\wtx}(\wtx)) $ is trivial under $\q^{\ell,v}$, $\bu_v$ is $\sigma(\bet_{v,\wtx}(\wtx))$-measurable, $\sB_h\cap\sC_g$ is $\cF^{(1)}_{v,\wtx,t_{x,K}}$-measurable, and $\bu_{v_{\mathrm{lca}}}-\bu_v$ is $\cF^{(d-1)}_{v,\wtx,t_{x,K}}$-measurable. Therefore, on the law $\q^{\ell,v}$, the random variable $\bu_v$ is independent from  $\sB_h\cap\sC_g$ and $\bu_{v_{\mathrm{lca}}}-\bu_v$. This finishes the proof.
\end{proof}

\begin{proof}[Proof of Lemma \ref{lemma:uniform bd}]
   By the law of total probability,
    \begin{align}
        \begin{split}
            &\hspace{0.5cm}\q^{\ell,v}(\exists\, w\in V_{t_{x,K}},w\succ v,\,\bet_{w,t_{x,K}}(t_{x,K})\in B_x\mid\sB_h\cap\sC_g)\\
        &=\sum_{\bu}\q^{\ell,v}(\exists\, w\in V_{t_{x,K}},w\succ v,\,\bet_{w,t_{x,K}}(t_{x,K})\in B_x\mid\sB_h\cap\sC_g\cap \sD_\bu)\,\q^{\ell,v}(\sD_\bu\mid \sB_h\cap\sC_g).
        \end{split}\label{eq:fix0}
    \end{align}
   
Our first observation is that the conditioned law of the displacements of the tree initiated at $v_{\mathrm{lca}}$ in the other $d-1$ dimensions is equivalent to its law conditioned only on the event that the same tree initiated at $v_{\mathrm{lca}}$ has at least two completely disjoint paths starting from $v_{\mathrm{lca}}$ that reach a distance of $m_{h-K}-g$ in time $h-K$ (which we denoted by $\sI_{h-K,g}$ in \eqref{eq:Ing}), since the remaining of the conditioned events belong to other independent $\sigma$-algebras.\footnote{This fact will be frequently used below, such as in the proof of Lemma \ref{lemma:ell<loglog}. \label{foot}}

Suppose first that $g\geq 0$, so the event $M^{(1)}_{h-K}>m_{h-K}-g$ is common, meaning that $\p(\sI_{h-K,g})\gg 1$, where the implicit constant may depend on $K$.
Therefore, we have
    \begin{align}
      \begin{split}
            &\hspace{0.5cm}\q^{\ell,v}(\exists\, w\in V_{t_{x,K}},w\succ v,\,\bet_{w,t_{x,K}}(t_{x,K})\in B_x\mid\sB_h\cap\sC_g\cap \sD_\bu)\\
        &=\q^{\ell,v}(\exists\, w\in V_{t_{x,K}},w\succ v,\,\bet_{w,t_{x,K}}(t_{x,K})\in B_x\mid \sI_{h-K,g}\cap \sD_\bu)\\
        &\ll \p(\exists u\in V_{h-K},\,\widehat{\bet}_{u,h-K}(h-K)\in \widehat{B}_\z-R_{\bu}),
      \end{split}\label{eq:ggeq 0}
    \end{align}
    where we denote by $\widehat{B}_\z$ the unit ball in $\R^{d-1}$. 
Here and later, we also slightly abuse notation --- the conditioned event $\sI_{h-K,g}$ refers to the event that the sub-BRW process with root $v_{\mathrm{lca}}$ satisfies the event $\sI_{h-K,g}$. Denote by $\{\widehat{\bS}_k\}_{k\geq 1}$ a random walk with i.i.d.~increments distributed as the last $d-1$ coordinates of $\bxi$. By a first moment computation and the triangle inequality,
$$\p(\exists u\in V_{h-K},\,\widehat{\bet}_{u,h-K}(h-K)\in \widehat{B}_\z-R_{\bu})\ll \rho^{h-K}\p(\|\widehat{\bS}_{h-K}\|\geq \|\bu\|-2).$$
By Cram\'{e}r's large deviation upper bound, there exist $L,\delta>0$ such that if $\|\bu\|\geq \max\{10,Lh\}$, $\p(\|\widehat{\bS}_{h-K}\|\geq \|\bu\|-2)\ll \rho^{-(h-K)}e^{-\delta \|\bu\|}$ holds uniformly. Combining the above yields
\begin{align}
   \begin{split}
        &\hspace{0.5cm}\q^{\ell,v}(\exists\, w\in V_{t_{x,K}},w\succ v,\,\bet_{w,t_{x,K}}(t_{x,K})\in B_x\mid\sB_h\cap\sC_g\cap \sD_\bu)\\
    &\ll \begin{cases}
        e^{-\delta \|\bu\|}&\text{ if }\|\bu\|\geq \max\{10,Lh\};\\
        1&\text{ otherwise}.
    \end{cases}
   \end{split}\label{eq:fix1}
\end{align}

    On the other hand, Lemma \ref{lemma:independence} implies that conditioning on $\sB_h\cap\sC_g$, the random variables $\bu_v$ and $\bu_{v_{\mathrm{lca}}}-\bu_v$ are independent under $\q^{\ell,v}$. By Lemmas \ref{lemma:uniform local CLT} and \ref{lemma:indsum},  and recalling the definition of $\sD_\bu$
 above Lemma \ref{lemma:hg|l}, 
    \begin{align}
        \q^{\ell,v}(\sD_\bu\mid \sB_h\cap\sC_g)&\ll x^{-\frac{d-1}{2}}.\label{eq:u|lhg}
    \end{align}
    Combining \eqref{eq:fix0}, \eqref{eq:fix1}, and \eqref{eq:u|lhg} yields
    \begin{align*}
        &\hspace{0.5cm}\q^{\ell,v}(\exists\, w\in V_{t_{x,K}},w\succ v,\,\bet_{w,t_{x,K}}(t_{x,K})\in B_x\mid\sB_h\cap\sC_g)    \\
        &\ll \bigg(\max\{10,Lh\}^{d-1}+\sum_{\substack{\bu\in\bZ^{d-1}\\ \|\bu\|\geq \max\{10,Lh\}}} e^{-\delta \|\bu\|}\bigg)x^{-\frac{d-1}{2}}\\
        &\ll h^{d-1}x^{-\frac{d-1}{2}},
    \end{align*}
    as desired. This proves the lemma for the case $g\geq 0$.

    Suppose now that $g<0$. We replace \eqref{eq:ggeq 0} by the bound
     \begin{align*}
      \begin{split}
            &\hspace{0.5cm}\q^{\ell,v}(\exists\, w\in V_{t_{x,K}},w\succ v,\,\bet_{w,t_{x,K}}(t_{x,K})\in B_x\mid\sB_h\cap\sC_g\cap \sD_\bu)\\
        &=\q^{\ell,v}(\exists\, w\in V_{t_{x,K}},w\succ v,\,\bet_{w,t_{x,K}}(t_{x,K})\in B_x\mid \sI_{h-K,g}\cap \sD_\bu)\\
        &\leq\sup_{\bu'\in R_{\bu}} \p(\exists u\in V_{h-K},\,{\bet}_{u,h-K}(h-K)\in B_{(m_{h-K}-g,-\bu')}\mid \sI_{h-K,g}).
      \end{split}
    \end{align*}
By Lemma \ref{lemma:double prob}, $\p(\sI_{h-K,g})\gg (|g|+1)^2e^{2c_2g}$. By Lemma \ref{lemma:double prob 2}, there exist $L,\delta>0$ such that
\begin{align}
   \begin{split}
        &\hspace{0.5cm}\p(\exists u\in V_{h-K},\,{\bet}_{u,h-K}(h-K)\in B_{(m_{h-K}-g,-\bu')};\, \sI_{h-K,g})\\
    &\ll \begin{cases}
        (|g|+1)^2e^{2c_2g}e^{-\delta \|\bu'\|}&\text{ if }\|\bu'\|\geq \max\{10,Lh\};\\
        (|g|+1)^2e^{2c_2g}&\text{ otherwise}.
    \end{cases}
   \end{split}\label{eq:fix2}
\end{align}
Replacing \eqref{eq:fix1} with \eqref{eq:fix2} and applying the rest of the arguments for $g\geq 0$ complete the proof for $g<0$.
\end{proof}

\subsubsection{Excluding local barrier events for \texorpdfstring{$-K_3{\log\log x}\leq\ell\leq K_6\log\log x$}{}}\label{sec:Local barrier events}
We define a collection of  \textit{local} barrier events and show that they have small total probabilities. Consider for $\ell\geq -K_3\log\log x$ the collection $W_{\ell,h,g}$ of particles $w\in V_{t_x-h},~w\succ v$ such that the event $\sB_h\cap\sC_g$ holds and $\n{\widehat{\bet}_{w,t_x-h}(t_x-h)}\ll h$. Intuitively, these are the possible particles that can serve as the latest common ancestor $v_{\mathrm{lca}}$. By Lemma \ref{lemma:hg|l} and \eqref{eq:u|lhg}, the number of such particles (under the global barrier event $\sG_{\wtx,K_2}^{\mathrm{c}}$) has an expectation
\begin{align}
\begin{split}
    \E_{\q^{\ell,v}}\big[\#W_{\ell,h,g}\bone_{\sG_{\wtx,K_2}^{\mathrm{c}}}\big] &\ll \min\Big\{1,(|g+\ell|+1)e^{-c_2(g+\ell)}\varphi_{(\log x)^2,\delta}(g+\ell)\Big\}\\
&\hspace{4cm}\times\min\{1,((|g|+1)e^{c_2g})^2\}h^{d-1} x^{-\frac{d-1}{2}}.
\end{split}\label{eq:Wexpect}
\end{align}
 For $w\in W_{\ell,h,g}$, we constrain the BRW initiated from $w$, in time $k\in[t_x-h,t_{x,K}]$, by the barrier 
\begin{align}
   \begin{split}
       \widehat{\psi}_{g,h}(k):= x-&m_{h-K}+g+L\log h+\frac{k-(t_x-h)}{h-K}m_{h-K}\\
   &+\frac{4}{c_2}(\log\min\{k-(t_x-h),t_{x,K}-k\})_+,t_x-h\leq k\leq t_{x,K}.
   \end{split}\label{eq:hat psi}
\end{align}
With $L$ picked large enough, it follows from Lemma \ref{lemma:barrier} that each local ballot probability is $\ll \ee h^{-3d}$. This in particular means that $L$ may depend on $\ee$, and hence in the estimates below involving the barrier \eqref{eq:hat psi}, the asymptotic constants may depend on $\ee$. We state this dependence implicitly in Lemmas \ref{lemma:ell<loglog} and \ref{lemma:ell>loglog} but omit it in their proofs for simplicity. 
Using \eqref{eq:Wexpect}, the total local ballot probability then has an expectation
\begin{align*}
    &\hspace{0.5cm}\sum_{h=K}^{(\log x)^2}\sum_{g\in\bZ}(\ee h^{-3d})\min\Big\{1,(|g+\ell|+1)e^{-c_2(g+\ell)}\varphi_{(\log x)^2,\delta}(g+\ell)\Big\}\\
    &\hspace{3cm}\times\min\{1,((|g|+1)e^{c_2g})^2\}h^{d-1} x^{-\frac{d-1}{2}}\\
    &\ll \ee x^{-\frac{d-1}{2}}e^{-c_2\ell}\sum_{h=K}^{(\log x)^2}\sum_{g\in\bZ} h^{-2d}(|g+\ell|+1)e^{-c_2g}\varphi_{(\log x)^2,\delta}(g+\ell)\min\{1,((|g|+1)e^{c_2g})^2\}\\
    &\ll \ee x^{-\frac{d-1}{2}}e^{-c_2\ell}\sum_{g\in\bZ} (|g+\ell|+1)e^{-c_2g}\varphi_{(\log x)^2,\delta}(g+\ell)\min\{1,((|g|+1)e^{c_2g})^2\}\\
    &\ll \ee x^{-\frac{d-1}{2}}e^{-c_2\ell}\bigg(\sum_{g\geq 0}(|g+\ell|+1)e^{-c_2g}\varphi_{(\log x)^2,\delta}(g+\ell)\\
    &\hspace{3cm}+\sum_{g<0} (|g+\ell|+1)e^{c_2g}\varphi_{(\log x)^2,\delta}(g+\ell)(|g|+1)^2\bigg)\\
    &\ll \ee x^{-\frac{d-1}{2}}e^{-c_2\ell}(|\ell|+1)\varphi_{(\log x)^2,\delta}(\ell).
\end{align*}

As a summary, for the barrier event 
\begin{align}\begin{split}
    \sE_{1,\ell}:=\bigcup_{K\leq h\leq (\log x)^2}\bigcup_{g\in\bZ}\bigcup_{\substack{w\in V_{t_x-h}\\ w\in W_{\ell,h,g}}}\bigcup_{\substack{w'\in V_{t_{x,K}}\\ w'\succ w}}\bigcup_{t_x-h\leq k\leq t_{x,K}}\{\eta_{w',t_{x,K}}(k)>\widehat{\psi}_{g,h}(k)\},
\end{split}\label{eq:ballot1}
\end{align}
it holds that 
\begin{align}
    \q^{\ell,v}(\sE_{1,\ell})\ll  \ee x^{-\frac{d-1}{2}}e^{-c_2\ell}(|\ell|+1)\varphi_{(\log x)^2,\delta}(\ell).\label{eq:E3l prob}
\end{align}

Another local barrier event to be removed from our consideration is, roughly speaking, the random walk $\{\eta_{w,t_{x,K}}(k)\}_{\wtx\leq k\leq t_{x,K}}$ crosses a certain barrier \textit{before} time $t_x-h$. Define for some large constant $K_{11}$ (to be determined) the barrier function
\begin{align}
    \begin{split}
        \psi_{x,K}^*(k):=\wx+K_{11}\log\log x+\frac{k-\wtx}{(\log x)^2}m_{(\log x)^2}+\frac{6}{\bl}(\log\min\{k-\wtx,&t_{x,K}-k\})_+,\\
    &\wtx\leq k\leq t_{x,K}
    \end{split}\label{eq:barrier 3}
\end{align}
and the local barrier event
\begin{align}
    \begin{split}
        \sE_{2,\ell}&:=\bigcup_{K\leq h\leq (\log x)^2}\bigg(\sB_h\cap\Big( \bigcup_{\substack{u\in V_{t_{x}-h}}}\Big(\bigcup_{\wtx\leq k\leq t_x-h}\{\eta_{u,t_x-h}(k)>{\psi}_{x,K}^*(k)\}\Big)\cap\\
    &\hspace{3cm}\Big(\bigcup_{\substack{w\in V_{t_{x,K}}\\ w\succ u}}\{\bet_{w,t_{x,K}}(t_{x,K})\in B_x\}\Big)\Big)\bigg).
    \end{split}\label{eq:E2}
\end{align}
To bound the size of $\sE_{2,\ell}$, define
$$T^*_{\wtx}:=\inf\Big\{k\in[\wtx,t_{x,K}]:\exists\, u\in V_k,u\succ v, \eta_{u,k}(k)\geq \psi_{x,K}^{*}(k)\Big\}.$$
By Lemma \ref{lemma:barrier} (with $\beta$ therein given by $\ell+K_{11}\log\log x$ as the random walk starts from $[\wx-\ell-1,\wx-\ell)$),
\begin{align*}
    &\hspace{0.5cm}\q^{\ell,v}(T^*_{\wtx}=\wtx+j)\\
    &\ll \min\{j,(\log x)^2+1-j\}^{-3}(\ell+K_{11}\log\log x) e^{-c_2(\ell+K_{11}\log\log x)}\varphi_{(\log x)^2,\delta}(\ell)\\
    &\ll \min\{j,(\log x)^2+1-j\}^{-3}(\ell+K_{11}\log\log x) e^{-c_2\ell}(\log x)^{-c_2K_{11}}.
\end{align*}
We may then compute the total contribution in the case where the barrier is crossed for a fixed $\ell$.
By Lemma \ref{lemma:uniform bd} and since the barrier event is measurable at time $t_x-h$, 
\begin{align}\begin{split}
    \q^{\ell,v}(\sE_{2,\ell})    &\leq \sum_{h=K}^{(\log x)^2}\q^{\ell,v}\bigg(\sB_h\cap \{T_{\wtx}^*\leq t_x-h\}\cap\Big(\bigcup_{\substack{w\in V_{t_{x,K}}\\ w\succ u}}\{\bet_{w,t_{x,K}}(t_{x,K})\in B_x\}\Big)\Big)\bigg)\\
    &\ll \sum_{h=K}^{(\log x)^2}h^{d-1}x^{-\frac{d-1}{2}}(\ell+K_{11}\log\log x) e^{-c_2\ell}(\log x)^{-c_2K_{11}}\\
    &\hspace{3cm}\times\sum_{j=1}^{(\log x)^2-h}\min\{j,(\log x)^2+1-j\}^{-3}\\
    &\ll x^{-\frac{d-1}{2}}(\ell+K_{11}\log\log x) e^{-c_2\ell}(\log x)^{2d-c_2K_{11}}\\
    &\ll x^{-\frac{d-1}{2}} e^{-c_2\ell},
\end{split}\label{eq:E2l prob}
\end{align}
where in the last step we picked $K_{11}$ large enough and used that $|\ell|\ll\log\log x$. As a summary,
\begin{align}
    \q^{\ell,v}(\sE_{2,\ell})\ll x^{-\frac{d-1}{2}} e^{-c_2\ell}.\label{eq:E2 bound}
\end{align}

\subsubsection{Local hitting probabilities for \texorpdfstring{$-K_3{\log\log x}\leq\ell\leq K_6\log\log x$}{}}
\label{sec:prelim bound}

In this section, we prove an upper bound of \eqref{eq:q} in the case {$-K_3\log\log x\leq \ell\leq K_6\log\log x$}. Define \begin{align}
    \sE_{\ell}^*:=\sE_{1,\ell}\cup \sE_{2,\ell}\label{eq:el*}
\end{align} and
\begin{align}
    \begin{split}
        \sK^*_{h}:=\sB_h\cap\Big( \bigcup_{\substack{u\in V_{t_{x}-h}}}&\Big(\bigcup_{\wtx\leq k\leq t_x-h}\{\eta_{u,t_x-h}(k)>{\psi}_{x,K}^*(k)\}\Big)\\
    &\cap\Big(\bigcup_{\substack{w\in V_{t_{x,K}}\\ w\succ u}}\{\bet_{w,t_{x,K}}(t_{x,K})\in B_x\}\Big)\Big).
    \end{split}\label{eq:Kh*}
\end{align}
Note that $\sE_{2,\ell}=\bigcup_{K\leq h\leq (\log x)^2}\sK_{h}^*$, so that $(\sE_\ell^*)^{\mathrm{c}}\subseteq (\sK_h^*)^{\mathrm{c}}$ for all $K\leq h\leq (\log x)^2$. Also, define $k_h:=(\log x)^2-h$.

\begin{lemma}[size of the event $\sB_h\cap\sC_g\cap\sD_\bu\cap(\sK_h^*)^{\mathrm{c}}$]\label{lemma:hgu|l}Suppose that $-K_3\log\log x\leq \ell\leq K_6\log\log x$. 
It holds that uniformly for all $\bu\in\R^{d-1}$, $K\leq h\leq (\log x)^2$, and $g\in\bZ$,
\begin{align}
    \begin{split}
        \q^{\ell,v}(\sB_h\cap\sC_g\cap\sD_\bu)&\ll x^{-\frac{d-1}{2}}\min\Big\{1,(|g+\ell|+1)e^{-c_2(g+\ell)}\varphi_{(\log x)^2,\delta}(g+\ell)\Big\}\\
        &\hspace{6cm}\times\min\{1,((|g|+1)e^{c_2g})^2\}.
    \end{split}\label{eq:hgu|l1}
\end{align}
Moreover, assume that $k_h\geq \log x$, and fix $K_5>0$. If $0\leq g\leq K_5\log h$,
\begin{align}
   \q^{\ell,v}(\sB_h\cap\sC_g\cap\sD_\bu\cap(\sK_h^*)^{\mathrm{c}})\ll x^{-\frac{d-1}{2}}e^{-c_2(\ell+g)}\min\{k_h,h\}^{-5/4}(\log\log x)^2e^{\frac{K_{10}(\log k_h)g}{k_h}},\label{eq:hgu|l2}
\end{align}
and if $g<0$,
\begin{align}
   \q^{\ell,v}(\sB_h\cap\sC_g\cap\sD_\bu\cap(\sK_h^*)^{\mathrm{c}})\ll x^{-\frac{d-1}{2}}e^{-c_2\ell}\min\{k_h,h\}^{-5/4}(\log\log x)^2(|g|+1)^2e^{c_2g}.\label{eq:hgu|l3}
\end{align}

\end{lemma}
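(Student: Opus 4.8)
All three bounds follow the two-step template already used for Lemmas \ref{lemma:hg|l} and \ref{lemma:uniform bd}: split the probability into a transverse part (the last $d-1$ coordinates of $v_{\mathrm{lca}}$) and a longitudinal part (the first coordinate, which carries $\sB_h$, $\sC_g$, and the constraint in $\sK_h^*$); pay $x^{-(d-1)/2}$ for the transverse part via a local limit theorem; and estimate the longitudinal part with the one-dimensional inputs of Section \ref{sec:one-dim results}. For the transverse part, write $\q^\ell(\sB_h\cap\sC_g\cap\sD_\bu)=\q^\ell(\sD_\bu\mid\sB_h\cap\sC_g)\,\q^\ell(\sB_h\cap\sC_g)$. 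By the Markov property (Lemma \ref{lemma:independence}), under $\q^\ell$ the transverse location $\bu_v$ is independent of $\cF^{(1)}_{\wtx,t_{x,K}}$ (hence of $\sB_h,\sC_g$) and of the increment $\bu_{v_{\mathrm{lca}}}-\bu_v$; since $\bu_{v_{\mathrm{lca}}}=\bu_v+(\bu_{v_{\mathrm{lca}}}-\bu_v)$, $\bu_v$ obeys Lemma \ref{lemma:uniform local CLT} (applicable because $|\ell|\ll\log\log x\ll x^{1/3}$), and Lemma \ref{lemma:indsum} applies, one gets $\q^\ell(\sD_\bu\mid\sB_h\cap\sC_g)\ll x^{-(d-1)/2}$ uniformly, exactly as in \eqref{eq:u|lhg}. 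Together with Lemma \ref{lemma:hg|l} (and $e^{-c_1c_2K/2}\le1$) this is \eqref{eq:hgu|l1}.

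For \eqref{eq:hgu|l2}--\eqref{eq:hgu|l3} the transverse step is unchanged, so, using $\q^\ell(A\cap(\sK_h^*)^{\mathrm c})\le\q^\ell(A)$, everything reduces to a bound on $\q^\ell(\sB_h\cap\sC_g)$ sharper than the one in Lemma \ref{lemma:hg|l} --- one that retains the polynomial factor in $\min\{k_h,h\}$ that Lemma \ref{lemma:hg|l} discards. (If one prefers to exploit the barrier in $(\sK_h^*)^{\mathrm c}$, one replaces it by the $\cF^{(1)}_{\wtx,t_{x,K}}$-measurable event $\sA_{h,g}$ that the ancestral trajectory of $v_{\mathrm{lca}}$ stays below $\psi_{x,K}^*$ on $[\wtx,t_x-h]$, which $(\sK_h^*)^{\mathrm c}$ enforces once $\sB_h$ is used to put every particle that reaches $\H_x$ --- in particular any that could reach $B_x$ --- below $v_{\mathrm{lca}}$; but this refinement is not needed here.) Splitting the BRW at level $t_x-h$ as in the proof of Lemma \ref{lemma:hg|l} gives $\q^\ell(\sB_h\cap\sC_g)\ll\q^\ell(\sJ_{h,g})\,\p(\sI_{h-K,g})$ with $\p(\sI_{h-K,g})\ll\min\{1,((|g|+1)e^{c_2g})^2\}$ from Lemma \ref{lemma:double prob}. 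Now evaluate $\q^\ell(\sJ_{h,g})$ to leading order rather than crudely: a fresh BRW of $k_h$ generations, started near $\wx-\ell$, must reach a unit interval around $x-m_{h-K}+g$, and by \eqref{eq:mwtx} together with the identity $m_{(\log x)^2}-m_{h-K}=m_{k_h}+c_1K+\tfrac{3}{2c_2}\log\!\big(\tfrac{(h-K)k_h}{(\log x)^2}\big)$ that target sits a distance $z:=g+\ell+c_1K+\tfrac{3}{2c_2}\log\min\{k_h,h\}+O(1)$ above $m_{k_h}$; for $0\le g\le K_5\log h$ and $k_h\ge\log x$ one checks $z\ll\sqrt{k_h}$ and $\varphi_{k_h,\delta}(z)\asymp1$, so \eqref{eq:BRW max tail} of Lemma \ref{lemma:brw small deviation prob} gives $\q^\ell(\sJ_{h,g})\ll(z+1)e^{-c_2z}\ll e^{-c_2(g+\ell)}\min\{k_h,h\}^{-3/2}\log\log x$. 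Multiplying by $\p(\sI_{h-K,g})$ and by the transverse factor $x^{-(d-1)/2}$, and relaxing $\min\{k_h,h\}^{-3/2}\log\log x\le\min\{k_h,h\}^{-5/4}(\log\log x)^2$, yields \eqref{eq:hgu|l2} (the harmless factor $e^{K_{10}(\log k_h)g/k_h}\ge1$ on its right side leaves room for the $\varphi_{k_h,\delta}$ correction and a slightly wider range of $g$). For $g<0$ the same computation applies with $\p(\sI_{h-K,g})\ll((|g|+1)e^{c_2g})^2$ and the simplification $e^{-c_2g}e^{2c_2g}=e^{c_2g}$, giving \eqref{eq:hgu|l3}; when $z<0$ one has $\q^\ell(\sJ_{h,g})\asymp1$ and the claimed bound holds trivially since then $e^{c_2g}$ is exponentially small.

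The transverse decoupling (Lemmas \ref{lemma:independence}, \ref{lemma:uniform local CLT}, \ref{lemma:indsum}) and the split at $t_x-h$ are routine. The delicate point --- and the only real work --- is the sharp longitudinal estimate of $\q^\ell(\sJ_{h,g})$: one must track the logarithmic corrections in $m_{(\log x)^2}-m_{h-K}-m_{k_h}$ (these are what produce $\min\{k_h,h\}$ rather than $k_h$), verify $z\ll\sqrt{k_h}$ and $\varphi_{k_h,\delta}(z)\asymp1$ so that the clean tail bound \eqref{eq:BRW max tail} is available (this is exactly where the hypotheses $k_h\ge\log x$ and $g\le K_5\log h$ enter), and keep all constants uniform in $\ell,g,h,x$; if instead one carries the barrier $\psi_{x,K}^*$ along, one combines it with the global constraint $\sG_{\wtx,K_2}^{\mathrm c}$ built into $\q^\ell$ using the first-passage bound of Lemma \ref{lemma:barrier}.
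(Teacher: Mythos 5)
Your proof of \eqref{eq:hgu|l1} is exactly the paper's: factor out $\q^\ell(\sD_\bu\mid\sB_h\cap\sC_g)\ll x^{-\frac{d-1}{2}}$ via \eqref{eq:u|lhg} and invoke Lemma \ref{lemma:hg|l}. For \eqref{eq:hgu|l2}--\eqref{eq:hgu|l3}, however, you take a genuinely different route. The paper does \emph{not} discard $(\sK_h^*)^{\mathrm{c}}$: it exploits the barrier $\psi^*_{x,K}$ of \eqref{eq:barrier 3} through a ballot theorem under the change of measure \eqref{eq:dpdqn}, which produces $k_h^{-3/2}$ times the product of the two endpoint-to-barrier distances (each $\ll\log\log x$, whence the $(\log\log x)^2$), and then pays a factor $\min\{k_h,h\}^{1/4}$ for the tilt mismatch $e^{(c_2-\hat\lambda)(m_{(\log x)^2}-m_{h-K}-m_{k_h}+\ell+g)}$ --- that is where $-5/4$ and the residual $e^{K_{10}(\log k_h)g/k_h}$ come from. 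You instead bound $\q^\ell(\sB_h\cap\sC_g)\le\q^\ell(\sJ_{h,g})\,\p(\sI_{h-K,g})$ as in Lemma \ref{lemma:hg|l} and estimate $\q^\ell(\sJ_{h,g})$ by the packaged tail bound \eqref{eq:BRW max tail}, keeping the correction $m_{(\log x)^2}-m_{h-K}-m_{k_h}=c_1K+\frac{3}{2c_2}\log\frac{(h-K)k_h}{(\log x)^2}\asymp c_1K+\frac{3}{2c_2}\log\min\{k_h,h\}$ that Lemma \ref{lemma:hg|l} throws away. Since \eqref{eq:BRW max tail} already encapsulates the ballot argument, this is legitimate, and your computation checks out: it yields $\min\{k_h,h\}^{-3/2}\log\log x$ in place of $\min\{k_h,h\}^{-5/4}(\log\log x)^2$, which is strictly stronger and implies both stated bounds (the $(|g|+1)^2$ bookkeeping for $g<0$ works because $z\ge0$ forces $|g|\ll\log\log x$ there). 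Two small points: you should also record the degenerate case $z<0$ for $g\ge0$, where the trivial bound $\q^\ell(\sJ_{h,g})\le1$ suffices because $e^{c_2(g+\ell)}\ll\min\{k_h,h\}^{-3/2}$; and be aware that your shortcut is specific to $|\ell|\ll\log\log x$ --- the paper's barrier-based template is the one that survives into Lemma \ref{lemma:hg|l too} for $\ell>K_6\log\log x$, where the crude $\varphi_{k_h,\delta}(\ell)\le1$ must be replaced by the refined $\Phi_{k_h,\delta}$ estimates, which is presumably why the authors use the same machinery here.
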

\begin{proof}
First, we write 
$$\q^{\ell,v}(\sB_h\cap\sC_g\cap\sD_\bu)=\q^{\ell,v}(\sD_\bu\mid \sB_h\cap\sC_g)\,\q^{\ell,v}(\sB_h\cap\sC_g).$$
The first probability can be controlled by \eqref{eq:u|lhg}, and the second probability by Lemma \ref{lemma:hg|l}. Inserting these estimates proves \eqref{eq:hgu|l1}.

To prove \eqref{eq:hgu|l2} and \eqref{eq:hgu|l3}, we apply the ballot theorem under a change of measure similarly as in the proof of Proposition \ref{prop:particle density}, with the barrier given by \eqref{eq:barrier 3}. The starting location of the BRW at time $\wtx$ is $\wx-\ell$, which is of distance $\ell+K_{11}\log\log x$ below the barrier $\psi_{x,K}^*(\wtx)$. The end location of the BRW at time $\wtx+k_h$ is $x-m_{h-K}+g$, which is of distance $O(\log\min\{k_h,h\})-g-\frac{c_1K}{2}+K_{11}\log\log x$ below the barrier $\psi_{x,K}^*(\wtx+k_h)$. 
Applying Lemma 2.3 of \citep{bramson2016convergence}, if $0\leq g\leq K_5\log h$,
\begin{align*}
      &\hspace{0.5cm}\q^{\ell,v}(\sB_h\cap\sC_g\cap (\sK_{h}^*)^{\mathrm{c}})\\
      &\ll \rho^{k_h}(\rho^{-k_h}k_h^{3/2}e^{-\hat{\lambda}(x-m_{h-K}+g-(\wx-\ell)-m_{k_h})})\\
      &\hspace{3cm}\times(k_h^{-3/2}(\ell+K_{11}\log\log x)(\log\min\{k_h,h\}+K_{11}\log\log x))\\
      &\ll  e^{-c_2(\ell+g)}\min\{k_h,h\}^{-3/2}(\log\log x)^2e^{(c_2-\hat{\lambda})(m_{(\log x)^2}-m_{h-K}-m_{k_h}+\ell+g)}.
    \end{align*}

    To further bound the term $e^{(c_2-\hat{\lambda})(m_{(\log x)^2}-m_{h-K}-m_{k_h}+\ell+g)}$, we recall from the proof of Proposition \ref{prop:particle density} that $c_2-\hat{\lambda}\leq K_{10}(\log k_h)/k_h$ for some $K_{10}>0$. Since $k_h\geq \log x\to\infty$ as $x\to\infty$, we may assume that $c_2-\hat{\lambda}\leq c_2/6$ by letting $x$ be large enough. In this case,
$$e^{(c_2-\hat{\lambda})(m_{(\log x)^2}-m_{h-K}-m_{k_h})}\leq e^{c_2(m_{(\log x)^2}-m_{h-K}-m_{k_h})/6}\ll \min\{k_h,h\}^{1/4}.$$
In addition, since $k_h\geq \log x$ and $|\ell|\ll\log\log x$, we have $e^{\frac{K_{10}(\log k_h)\ell}{k_h}}\ll 1$. 
These considerations altogether lead to
$$\q^{\ell,v}(\sB_h\cap\sC_g\cap(\sK_h^*)^{\mathrm{c}})\ll e^{-c_2(\ell+g)}\min\{k_h,h\}^{-5/4}(\log\log x)^2e^{\frac{K_{10}(\log k_h)(\ell+g)}{k_h}}.$$
On the other hand, it follows from the same independent sum argument leading to \eqref{eq:u|lhg} that $\q^{\ell,v}(\sD_\bu\mid\sB_h\cap\sC_g\cap (\sK_{h}^*)^{\mathrm{c}})\ll x^{-\frac{d-1}{2}}$. This proves \eqref{eq:hgu|l2}.

If $g<0$, we take advantage of the rare event that two independent descendants run distances $m_{h-K}-g$ for time $h-K$ (given by Lemma \ref{lemma:double prob}).  Applying the same arguments as in Lemma \ref{lemma:hg|l} and using Lemma 2.3 of \citep{bramson2016convergence}, we have
    \begin{align*}
         &\hspace{0.5cm}\q^{\ell,v}(\sB_h\cap\sC_g\cap (\sK_{h}^*)^{\mathrm{c}})\\
         &\ll \rho^{k_h}\big(\rho^{-k_h}k_h^{3/2}e^{-\hat{\lambda}(x-m_{h-K}+g-(\wx-\ell)-m_{k_h})}\big)\\
         &\hspace{2cm}\times(k_h^{-3/2}(\ell+K_{11}\log\log x)(\log\min\{k_h,h\}+K_{11}\log\log x))(|g|+1)^2e^{c_2g}\\
         &\ll  e^{-c_2\ell}\min\{k_h,h\}^{-3/2}(|g|+1)^2e^{c_2g}e^{(c_2-\hat{\lambda})(m_{(\log x)^2}-m_{h-K}-m_{k_h}+\ell+g)}.
    \end{align*}
    The remaining follows similarly as the case $g\geq 0$, and we note that $e^{\frac{K_{10}(\log k_h)g}{k_h}}\ll1$ for $g<0$. 
\end{proof}

\begin{lemma}[first passage contribution]\label{lemma:ell<loglog}
For $-K_3\log\log x\leq \ell\leq K_6\log\log x$, it holds for some $K_7\geq 0$ that
    \begin{align}
        \begin{split}
            &\q^{\ell,v}(\exists\, w\in V_{t_{x,K}},w\succ v,\,\bet_{w,t_{x,K}}(t_{x,K})\in B_x,\,(\sE_\ell^*)^{\mathrm{c}})\\
            &\hspace{4cm}\ll C(\ee,K)(\log\log x)^{K_7}(\log x)x^{-\frac{d-1}{2}} e^{-c_2\ell}.
        \end{split}\label{eq:fpc1}
    \end{align}
\end{lemma}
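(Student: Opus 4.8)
The plan is to classify the descendants of $v$ that reach $B_x$ by the heterogeneity index $h=h_v$, the heterogeneity location $g=g_v$, and the multi-dimensional heterogeneity location $\bu=\bu_{v_{\mathrm{lca}}}$ of the latest common ancestor, and to bound each class by a first/second moment argument. Concretely, one writes
\begin{align*}
&\q^\ell\big(\exists\, w\in V_{t_{x,K}},w\succ v,\,\bet_{w,t_{x,K}}(t_{x,K})\in B_x,\,(\sE_\ell^*)^{\mathrm c}\big)\\
&\qquad\le\sum_{h=K}^{(\log x)^2}\sum_{g\in\bZ}\sum_{\bu\in\bZ^{d-1}}\q^\ell\big(\{\exists\, w\in V_{t_{x,K}},w\succ v,\,\bet_{w,t_{x,K}}(t_{x,K})\in B_x\}\cap\sB_h\cap\sC_g\cap\sD_\bu\cap(\sE_\ell^*)^{\mathrm c}\big),
\end{align*}
and bounds each summand by the product of the probability of the configuration $\sB_h\cap\sC_g\cap\sD_\bu$ (intersected with the relevant barrier complement) and the conditional probability of hitting $B_x$ given that configuration. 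Since $(\sE_\ell^*)^{\mathrm c}\subseteq(\sK_h^*)^{\mathrm c}\cap\sE_{1,\ell}^{\mathrm c}$ for every $h$, one is free to insert both $(\sK_h^*)^{\mathrm c}$ and $\sE_{1,\ell}^{\mathrm c}$.

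For the configuration probability I would invoke Lemma~\ref{lemma:hgu|l}: when $k_h:=(\log x)^2-h\ge\log x$, the estimates \eqref{eq:hgu|l2}--\eqref{eq:hgu|l3} contribute the crucial surplus factor $\min\{k_h,h\}^{-5/4}$ (this is precisely where removing $\sE_{2,\ell}$ pays off), while for $k_h<\log x$ one retreats to the cruder \eqref{eq:hgu|l1}. In either range, the essential structural consequence of intersecting with $(\sK_h^*)^{\mathrm c}$ is that, on the hitting event, the latest common ancestor sits at time $\wtx+k_h$ below $\psi^*_{x,K}$, and a direct computation of $\psi^*_{x,K}$ at that time (analogous to \eqref{eq:mwtx}) then forces $g\ll\log\log x$; since here $|\ell|\ll\log\log x$, the factor $\varphi_{(\log x)^2,\delta}(g+\ell)$ is $\asymp1$ throughout.

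For the conditional hitting probability I would use the Markov property (Lemma~\ref{lemma:independence} and the $\sigma$-algebra diagram of Figure~\ref{fig:commute}) to split it into a one-dimensional first-coordinate estimate for the sub-branching random walk rooted at $v_{\mathrm{lca}}$ --- which must reach a unit neighborhood of $x$ in the remaining $h-K$ steps while staying below $\widehat{\psi}_{g,h}$, available because $\sE_{1,\ell}^{\mathrm c}$ holds --- times a conditional local limit estimate in the last $d-1$ coordinates run over time $h-K$. The first-coordinate factor is a ballot bound of the type in Lemma~\ref{lemma:gnb}, of order at most $(\log h)^{O(1)}e^{c_2 g}$ for $g\ge0$ and, using the two-disjoint-path event $\sI_{h-K,g}$ of \eqref{eq:Ing} together with Lemma~\ref{lemma:double prob}, an $O((\log h)^{O(1)})$ polynomial factor for $g<0$; the last-coordinate factor is $\ll h^{-(d-1)/2}\exp(-\n{\bu}^2/(Ch))$, the analogue of Lemmas~\ref{lemma:conditioned local CLT} and \ref{lemma:uniform local CLT} at scale $h-K$ instead of $\wtx$, with Lemma~\ref{lemma:indsum} used to combine the displacement of $v$ with that of $v_{\mathrm{lca}}$ relative to $v$.

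It then remains to perform the three sums in order. Summing over $\bu$, the Gaussian Riemann sum $\sum_{\bu\in\bZ^{d-1}}h^{-(d-1)/2}\exp(-\n{\bu}^2/(Ch))\asymp1$ exactly absorbs the $h^{-(d-1)/2}$ and leaves the uniform $x^{-(d-1)/2}$. Summing over $g$, the $e^{c_2 g}$ produced by the first-coordinate count cancels the $e^{-c_2(g+\ell)}$ in the configuration probability, so $e^{-c_2\ell}$ factors out and the remainder is summable --- geometrically for $g<0$ (thanks to the $((|g|+1)e^{c_2 g})^2$ weight) and over the $O(\log\log x)$ admissible values $g\ge0$ --- producing a factor $(\log\log x)^{K_7}$. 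Summing over $h$: for $k_h\ge\log x$ the surplus $\min\{k_h,h\}^{-5/4}$ makes $\sum_h$ converge (the contribution of $h\le(\log x)^2/2$ is $O(1)$ and that of larger $h$ is $o(1)$), whereas for $k_h<\log x$ there are only $\asymp\log x$ values of $h$, each contributing $\ll C(\ee,K)(\log\log x)^{K_7}x^{-(d-1)/2}e^{-c_2\ell}$ --- and it is exactly these $\asymp\log x$ terms (heterogeneity index within $\log x$ of $(\log x)^2$) that produce the $\log x$ in the statement. The main obstacle is this triple-sum bookkeeping: one must extract from the removed barrier events enough polynomial decay in $h$ (via $\min\{k_h,h\}^{-5/4}$) together with the cap $g\ll\log\log x$ to dominate the $e^{c_2 g}$ growth of the expected number of frontier descendants of $v_{\mathrm{lca}}$, which is the quantitative incarnation of the principle \eqref{eq:quote}; a secondary difficulty is that Lemma~\ref{lemma:hgu|l}'s sharp bounds require $k_h\ge\log x$, so the regime of $h$ near $(\log x)^2$ must be handled through \eqref{eq:hgu|l1} relying on the scale-$h\asymp(\log x)^2$ local CLT factor $(\log x)^{-(d-1)}$ for slack, and the $\ee$- and $K$-dependence of the constants enters through the size $L=L(\ee)$ of the buffer $L\log h$ in $\widehat{\psi}_{g,h}$ and through the lag time $K$.
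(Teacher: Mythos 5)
Your proposal follows essentially the same route as the paper's proof: the same triple decomposition over $(h,g,\bu)$, the same use of Lemma \ref{lemma:hgu|l} (the $\min\{k_h,h\}^{-5/4}$ surplus for $k_h\geq\log x$, the cruder bound \eqref{eq:hgu|l1} for the $\asymp\log x$ values of $h$ with $k_h<\log x$, which is exactly where the $\log x$ factor comes from), and the same factorization of the conditional hitting probability into a first-coordinate ballot estimate times a $(d-1)$-dimensional local limit bound, summed in the order $\bu$, $g$, $h$. The only cosmetic divergence is that you cap $g\ll\log\log x$ via the barrier $\psi_{x,K}^*$ evaluated at time $t_x-h$, whereas the paper disposes of $g\geq K_5\log h$ by a separate crude union bound (its Case (a), applying Lemmas \ref{lemma:hg|l} and \ref{lemma:uniform bd} without conditioning on $\sD_\bu$ and letting the $e^{-c_2 g}$ decay beat $h^{d-1}$); both mechanisms are valid and lead to the same bookkeeping.
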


\begin{proof}In the following, the asymptotic constants in $\ll$ may depend on $\ee,K$. 
We condition on $\sB_h,\sC_g,\sD_\bu$ and apply the law of total probability to write
\begin{align}\begin{split}
    &\hspace{0.5cm}\q^{\ell,v}(\exists\, w\in V_{t_{x,K}},w\succ v,\,\bet_{w,t_{x,K}}(t_{x,K})\in B_x,\,(\sE_\ell^*)^{\mathrm{c}})\\
    &= \sum_{h=K}^{(\log x)^2}\sum_{g\in\bZ}\sum_{\bu\in\bZ^{d-1}} \q^{\ell,v}(\exists\, w\in V_{t_{x,K}},\,w\succ v,\,\bet_{w,t_{x,K}}(t_{x,K})\in B_x,\,(\sE_\ell^*)^{\mathrm{c}}\mid \sD_\bu\cap \sC_g\cap \sB_h)\\
    &\hspace{3cm}\times\q^{\ell,v}(\sB_h\cap\sC_g\cap\sD_\bu).
\end{split}\label{eq:prob conditioned}
\end{align}
Let us consider a large constant $K_5>0$ to be determined, and separate into three cases depending on the values of $g$. In the following, the asymptotic constants may depend on $\ee$.

Case (a): $g\geq K_5\log h$. In this case, we do \textit{not} condition on $\sD_\bu$, but directly apply Lemmas \ref{lemma:hg|l} and \ref{lemma:uniform bd} to get
\begin{align*}
    &\hspace{0.5cm}\sum_{h=K}^{(\log x)^2}\sum_{g\geq K_5\log h}\sum_{\bu\in\bZ^{d-1}} \q^{\ell,v}(\sB_h\cap\sC_g\cap\sD_\bu)\\
    &\hspace{3cm}\times\q^{\ell,v}(\exists\, w\in V_{t_{x,K}},\,w\succ v,\,\bet_{w,t_{x,K}}(t_{x,K})\in B_x,\,(\sE_\ell^*)^{\mathrm{c}}\mid \sD_\bu\cap \sC_g\cap \sB_h)\\
    &=\sum_{h=K}^{(\log x)^2}\sum_{g\geq K_5\log h}\q^{\ell,v}(\sB_h\cap\sC_g)\\
    &\hspace{3cm}\times\q^{\ell,v}(\exists\, w\in V_{t_{x,K}},\,w\succ v,\,\bet_{w,t_{x,K}}(t_{x,K})\in B_x,\,(\sE_\ell^*)^{\mathrm{c}}\mid  \sC_g\cap \sB_h)\\
    &\ll \sum_{h=K}^{(\log x)^2}\sum_{g\geq K_5\log h} (h^{d-1}x^{-\frac{d-1}{2}})(\min\Big\{1,(|g+\ell|+1)e^{-c_2(g+\ell+c_1K/2)}\varphi_{(\log x)^2,\delta}(g+\ell)\Big\}\\
    &\hspace{3cm}\times 
 \min\{1,((|g|+1)e^{c_2g})^2\})\\
    &= \sum_{h=K}^{(\log x)^2}\sum_{g>K_5\log h}\min\Big\{1,(|g+\ell|+1)e^{-c_2(g+\ell)}\varphi_{(\log x)^2,\delta}(g+\ell)\Big\}h^{d-1}x^{-\frac{d-1}{2}}\\
    &\leq \sum_{h=K}^{(\log x)^2}(|K_5\log h+\ell|+1)\min\Big\{1,e^{-c_2(K_5\log h+\ell)}\varphi_{(\log x)^2,\delta}(K_5\log h+\ell)\Big\}h^{d-1}x^{-\frac{d-1}{2}}\\
    &\ll (\log\log x)e^{-c_2\ell}x^{-\frac{d-1}{2}}\varphi_{(\log x)^2,\delta}(\ell),
\end{align*}where in the last step we pick $K_5$ large enough.

Case (b): $0\leq g<K_5\log h$. We further split into two sub-cases. 
First, consider $\bu$ such that $\n{\bu}\leq \sqrt{h}\log h$.
We first compute an upper bound for
\begin{align*}
    \q^{\ell,v}(\exists\, w\in V_{t_{x,K}},\,w\succ v,\,\bet_{w,t_{x,K}}(t_{x,K})\in B_x,\,(\sE_\ell^*)^{\mathrm{c}}\mid \sD_\bu\cap \sC_g\cap \sB_h).
\end{align*}
Conditioned on $\sB_h\cap \sC_g$, the event that $\bet_{w,t_{x,K}}(t_{x,K})\in B_x$ holds implies $w\succ v_{\mathrm{lca}}$. Denote by $B_\bz$ the unit ball centered at $\bz\in\R^d$.  
By independence (see footnote \ref{foot}), 
\begin{align}
\begin{split}
    &\hspace{0.5cm}\q^{\ell,v}(\exists\, w\in V_{t_{x,K}},\,w\succ v,\,\bet_{w,t_{x,K}}(t_{x,K})\in B_x,\,(\sE_\ell^*)^{\mathrm{c}}\mid \sD_\bu\cap \sC_g\cap \sB_h)\\
    &\leq\sup_{\bu\in\R^{d-1}}\p(\exists v\in V_{h-K},\,\bet_{v,h-K}(h-K)\in B_{(m_{h-K}-g,\bu)},\,(\sE_\ell^*)^{\mathrm{c}}\mid \sI_{h-K,g})\\
    &\ll \sup_{\bu\in\R^{d-1}}\p(\exists v\in V_{h-K},\,\bet_{v,h-K}(h-K)\in B_{(m_{h-K}-g,\bu)},\,(\sE_\ell^*)^{\mathrm{c}}),
\end{split}\label{eq:3steps}
\end{align}
where the last step is because for $g\geq 0$, the event $\sI_{h-K,g}$ that two descendants of $v_{\mathrm{lca}}$ separated at first step both reach $\H_x$ at time $t_{x,K}$ have a probability $\gg 1$, and hence we may remove the conditioning on $\sI_{h-K,g}$ in \eqref{eq:3steps} without changing the asymptotic upper bound. We have also abused notation by using $\sE_{1,\ell}^{\mathrm{c}}$ to denote the event that the BRW is constrained by the barrier
$$k\mapsto L\log h+\frac{k}{h-K}m_{h-K}+\frac{4}{c_2}(\log\min\{k,h-K-k\})_+,~1\leq k\leq h-K;$$
see \eqref{eq:hat psi}. By Lemma \ref{thm:d-dim ballot with log} and a standard change of measure computation, we have uniformly in $\bu\in\R^{d-1}$,
$$\p(\exists v\in V_{h-K},\,\bet_{v,h-K}(h-K)\in B_{(m_{h-K}-g,\bu)},\,\sE_{1,\ell}^{\mathrm{c}})\ll (g+1)e^{c_2g}(\log h)^2(h-K+1)^{-\frac{d-1}{2}}.$$
Therefore, we arrive at
\begin{align}
    \begin{split}
        &\q^{\ell,v}(\exists\, w\in V_{t_{x,K}},\,w\succ v,\,\bet_{w,t_{x,K}}(t_{x,K})\in B_x,\,(\sE_\ell^*)^{\mathrm{c}}\mid \sD_\bu\cap \sC_g\cap \sB_h)\\
        &\hspace{4cm}\ll (g+1)e^{c_2g}(\log h)^2(h-K+1)^{-\frac{d-1}{2}}.
    \end{split}\label{eq:x|ghlu}
\end{align}
Since the event $(\sK_h^*)^{\mathrm{c}}$ depends only on times $[\wtx,\wtx+k_h]$ once we know a descendant of the latest common ancestor reaches $B_x$ at time $t_{x,K}$, we obtain also that 
\begin{align}
\begin{split}
    &\hspace{0.5cm}\q^{\ell,v}(\exists\, w\in V_{t_{x,K}},\,w\succ v,\,\bet_{w,t_{x,K}}(t_{x,K})\in B_x,\,(\sE_\ell^*)^{\mathrm{c}}\mid \sD_\bu\cap \sC_g\cap \sB_h\cap (\sK_h^*)^{\mathrm{c}})\\
&\ll (g+1)e^{c_2g}(\log h)^2(h-K+1)^{-\frac{d-1}{2}}.
\end{split}\label{eq:x|ghlu2}
\end{align}

Fix $h\in[(\log x)^2-\log x,(\log x)^2]$, i.e., $k_h\leq \log x$. Applying \eqref{eq:x|ghlu} and \eqref{eq:hgu|l1} of Lemma \ref{lemma:hgu|l}, we have
\begin{align*}
    &\hspace{0.5cm}\sum_{g=0}^{K_5\log h}\sum_{\substack{\bu\in\bZ^{d-1}\\ \n{\bu}\leq \sqrt{h}\log h}}\q^{\ell,v}(\exists\, w\in V_{t_{x,K}},\,w\succ v,\,\bet_{w,t_{x,K}}(t_{x,K})\in B_x,\,(\sE_\ell^*)^{\mathrm{c}}\mid \sD_\bu\cap \sC_g\cap \sB_h)\\
    &\hspace{3cm}\times\q^{\ell,v}(\sB_h\cap\sC_g\cap\sD_\bu)\\
    &\ll \sum_{g=0}^{K_5\log h}(g+1)e^{c_2g}(\log h)^2(h-K+1)^{-\frac{d-1}{2}}\sum_{\substack{\bu\in\bZ^{d-1}\\ \n{\bu}\leq \sqrt{h}\log h}}\q^{\ell,v}(\sB_h\cap\sC_g\cap\sD_\bu)\\
    &\ll \sum_{g=0}^{K_5\log h}(g+1)e^{c_2g}(\log h)^2(h-K+1)^{-\frac{d-1}{2}}(\sqrt{h}\log h)^{d-1}x^{-\frac{d-1}{2}}\\
    &\hspace{3cm}\times\min\Big\{1,(|g+\ell|+1)e^{-c_2(g+\ell)}\varphi_{(\log x)^2,\delta}(g+\ell)\Big\}\\
    &\ll \sum_{g=0}^{K_5\log h}\min\Big\{1,(|g+\ell|+1)e^{-c_2(g+\ell)}\varphi_{(\log x)^2,\delta}(g+\ell)\Big\} \\
    &\hspace{3cm}\times (g+1)e^{c_2g}(\log h)^{d+1}x^{-\frac{d-1}{2}}\Big(\frac{h}{h-K+1}\Big)^{\frac{d-1}{2}}.
\end{align*}
The above quantity after summation over $h$ is thus bounded by
    \begin{align}
  \begin{split}
       &\hspace{0.5cm} \sum_{h=(\log x)^2-\log x}^{(\log x)^2}\sum_{g=0}^{K_5\log h}\min\Big\{1,(|g+\ell|+1)e^{-c_2(g+\ell)}\varphi_{(\log x)^2,\delta}(g+\ell)\Big\}\\
    &\hspace{3cm}\times
    (g+1)e^{c_2g}(\log h)^{d+1}x^{-\frac{d-1}{2}}\Big(\frac{h}{h-K+1}\Big)^{\frac{d-1}{2}}\\
    &\ll (\log\log x)^{K_7}(\log x)x^{-\frac{d-1}{2}}\min\{e^{-c_2\ell},1\}\ll(\log\log x)^{K_7}(\log x)x^{-\frac{d-1}{2}} e^{-c_2\ell},
  \end{split}\label{eq:w1}
\end{align}
where we have used an integral approximation of a sum. Next, we consider $h\in[K,(\log x)^2-\log x]$, i.e., $k_h\geq \log x$. Applying \eqref{eq:x|ghlu2} and \eqref{eq:hgu|l2} of Lemma \ref{lemma:hgu|l} and using that $h\leq (\log x)^2$, we have
\begin{align*}
    &\hspace{0.5cm}\sum_{g=0}^{K_5\log h}\sum_{\substack{\bu\in\bZ^{d-1}\\ \n{\bu}\leq \sqrt{h}\log h}}\q^{\ell,v}(\sB_h\cap\sC_g\cap\sD_\bu\cap(\sK_h^*)^{\mathrm{c}})\\
    &\hspace{1.5cm}\times\q^{\ell,v}(\exists\, w\in V_{t_{x,K}},\,w\succ v,\,\bet_{w,t_{x,K}}(t_{x,K})\in B_x,\,(\sE_\ell^*)^{\mathrm{c}}\mid \sD_\bu\cap \sC_g\cap \sB_h\cap(\sK_h^*)^{\mathrm{c}})\\
    &\ll \sum_{g=0}^{K_5\log h}(g+1)e^{c_2g}(\log h)^2(h-K+1)^{-\frac{d-1}{2}}\\
    &\hspace{3cm}\times\sum_{\substack{\bu\in\bZ^{d-1}\\ \n{\bu}\leq \sqrt{h}\log h}}x^{-\frac{d-1}{2}}e^{-c_2(\ell+g)}\min\{k_h,h\}^{-5/4}(\log\log x)^2e^{\frac{K_{10}(\log k_h)g}{k_h}}\\
    &\ll \min\{k_h,h\}^{-5/4}\Big(\frac{h}{h-K+1}\Big)^{\frac{d-1}{2}}x^{-\frac{d-1}{2}}e^{-c_2\ell}(\log h)^{d+1}(\log\log x)^2\sum_{g=0}^{K_5\log h}ge^{\frac{K_{10}(\log k_h)g}{k_h}}\\
    &\ll \min\{k_h,h\}^{-5/4}x^{-\frac{d-1}{2}}e^{-c_2\ell}(\log\log x)^{K_{12}},
\end{align*}where in the second step, we used that $\#\{\bu\in\bZ^{d-1}:\,\n{\bu}\leq \sqrt{h}\log h\}\ll h^{\frac{d-1}{2}}(\log h)^{d-1}$.
Summing over $h$, we obtain
\begin{align}
    \sum_{h=K}^{(\log x)^2-\log x}\min\{k_h,h\}^{-5/4}x^{-\frac{d-1}{2}}e^{-c_2\ell}(\log\log x)^{K_{12}}\ll K^{\frac{d-1}{2}}x^{-\frac{d-1}{2}}e^{-c_2\ell}(\log\log x)^{K_{12}}.\label{eq:w2}
\end{align}
Combining \eqref{eq:w1} and \eqref{eq:w2} yields a total contribution of at most $$C(K)(\log\log x)^{K_7}(\log x)x^{-\frac{d-1}{2}} e^{-c_2\ell}.$$

Next, we consider $\bu$ with $\n{\bu}>\sqrt{h}\log h$. In this case, using a change of measure computation (without using ballot theorem) and a moderate deviation estimate (e.g., Theorem 3.7.1 of \citep{dembo2009large}), for some $\delta>0$,
\begin{align*}
    &\hspace{0.5cm}\q^{\ell,v}(\exists\, w\in V_{t_{x,K}},\,w\succ v,\,\bet_{w,t_{x,K}}(t_{x,K})\in B_x,\,\sE_{1,\ell}^{\mathrm{c}}\mid \sD_\bu\cap \sC_g\cap \sB_h)\\
    &\leq\sup_{\bu'\in R_\bu} \p(\exists v\in V_{h-K},\,\bet_{v,h-K}(h-K)\in B_{(m_{h-K}-g,-\bu')},\,\sE_{1,\ell}^{\mathrm{c}})\\
    &\ll (h-K)^{3/2}e^{c_2g}\p(\n{\bS_{h-K}}\geq \n{\bu})\\
    &\ll h^{3/2}e^{c_2g}\varphi_{h,\delta}(\n{\bu}).
\end{align*}
Inserting into \eqref{eq:prob conditioned}, we have 
\begin{align*}
    &\hspace{0.5cm}\sum_{h=K}^{(\log x)^2}\sum_{g=0}^{K_5\log h}\sum_{\substack{\bu\in\bZ^{d-1}\\ \n{\bu}>\sqrt{h}\log h}} \q^{\ell,v}(\sB_h\cap\sC_g\cap\sD_\bu)\\
    &\hspace{3cm}\times\q^{\ell,v}(\exists\, w\in V_{t_{x,K}},\,w\succ v,\,\bet_{w,t_{x,K}}(t_{x,K})\in B_x,\,\sE_{1,\ell}^{\mathrm{c}}\mid \sD_\bu\cap \sC_g\cap \sB_h)\\    
    &\ll \sum_{h=K}^{(\log x)^2}\sum_{g=0}^{K_5\log h}\min\Big\{1,(|g+\ell|+1)e^{-c_2(g+\ell)}\varphi_{(\log x)^2,\delta}(g+\ell)\Big\}x^{-\frac{d-1}{2}}\\
    &\hspace{3cm}\sum_{k=\sqrt{h}\log h}^\infty\sum_{\substack{\bu\in\bZ^{d-1}\\ \n{\bu}\in[k,k+1]}}h^{3/2}e^{c_2g}\varphi_{h,\delta}(\n{\bu})\\
    &\ll \sum_{h=K}^{(\log x)^2}\sum_{g=0}^{K_5\log h}(|g+\ell|+1)e^{-c_2(g+\ell)}x^{-\frac{d-1}{2}} h^{3/2}e^{c_2g}\sum_{k=\sqrt{h}\log h}^\infty k^{d-2}\varphi_{h,\delta}(k)\\
    &\ll  \sum_{h=K}^{(\log x)^2}\sum_{g=0}^{K_5\log h}(|g+\ell|+1)e^{-c_2\ell}x^{-\frac{d-1}{2}} h^{-100}\\
    &\ll x^{-\frac{d-1}{2}}e^{-c_2\ell}(\log\log x)^2\varphi_{(\log x)^2,\delta}(\ell),
\end{align*}
where we have used an integral approximation in the third step and that $|\ell|\ll \log\log x$ implies $\varphi_{(\log x)^2,\delta}(\ell)\gg 1$ in the last step. 

Case (c): $g<0$. We exclude barrier events and compute the expected number of particles beyond $x$ at time $t_{x,K}$ under the barrier event and conditioned on  $\sC_g$.
Define the barrier event 
\begin{align}
\begin{split}
    \sF_{h,g}:=\bigcup_{\substack{w\in V_{t_{x,K}}\\w\succ v_{\mathrm{lca}}}}\bigcup_{0\leq k\leq h-K}&\bigg\{\eta_{w,t_{x,K}}(t_x-h+k)-\eta_{w,t_{x,K}}(t_x-h)\\
    &\geq L\log h-g+\frac{k}{h-K}m_{h-K}+\frac{4}{c_2}(\log\min\{k,h-K-k\})_+\bigg\}.
\end{split}\label{eq:Fhg}
\end{align}
 for the sub-tree with root $v_{\mathrm{lca}}$. 
Recall from \eqref{eq:ballot1} that on the event $\sE_{1,\ell}^{\mathrm{c}}$, the event $\sF_{h,g}$ cannot hold for each latest common ancestor $v_{\mathrm{lca}}$.

In \eqref{eq:prob conditioned}, the sum over $\bu$ with $\n{\bu}>\sqrt{h}\log h$ can be handled similarly as the case $g\geq 0$. 
By Lemma \ref{lemma:2prob2case}, we have the upper bound 
$$\q^{\ell,v}(\exists\, w\in V_{t_{x,K}},\,w\succ v,\,\bet_{w,t_{x,K}}(t_{x,K})\in B_x,\,\sE_{1,\ell}^{\mathrm{c}}\mid \sD_\bu\cap \sC_g\cap \sB_h)\ll h^{3/2}\varphi_{h,\delta}(\|\bu\|).$$
Inserting into \eqref{eq:prob conditioned}, we have by Lemma \ref{lemma:hgu|l} and arguing similarly in the case $g\geq 0$,
\begin{align*}
    &\hspace{0.5cm}\sum_{h=K}^{(\log x)^2}\sum_{g<0}\sum_{\substack{\bu\in\bZ^{d-1}\\ \n{\bu}>\sqrt{h}\log h}} \q^{\ell,v}(\exists\, w\in V_{t_{x,K}},\,w\succ v,\,\bet_{w,t_{x,K}}(t_{x,K})\in B_x,\,\sE_{1,\ell}^{\mathrm{c}}\mid \sD_\bu\cap \sC_g\cap \sB_h)\\
    &\hspace{3cm}\times\q^{\ell,v}(\sB_h\cap\sC_g\cap\sD_\bu)\\  
    &\ll \sum_{h=K}^{(\log x)^2}\sum_{g<0}\sum_{\substack{\bu\in\bZ^{d-1}\\ \n{\bu}>\sqrt{h}\log h}}\min\Big\{1,(|g+\ell|+1)e^{-c_2(g+\ell)}\varphi_{(\log x)^2,\delta}(g+\ell)\Big\}x^{-\frac{d-1}{2}} \\
    &\hspace{3cm}\times(|g|+1)^2e^{2c_2g} h^{3/2}\max\big\{e^{-\frac{\delta \n{\bu}^2}{h}},e^{-\delta\n{\bu}}\big\}\\
    &\ll \sum_{h=K}^{(\log x)^2}\sum_{g<0}\min\Big\{1,(|g+\ell|+1)e^{-c_2(g+\ell)}\varphi_{(\log x)^2,\delta}(g+\ell)\Big\}x^{-\frac{d-1}{2}} (|g|+1)^2e^{2c_2g}h^{-100}\\
    &\ll x^{-\frac{d-1}{2}}e^{-c_2\ell}\sum_{g<0}(|g+\ell|+1)(|g|+1)^2\varphi_{(\log x)^2,\delta}(g+\ell)e^{c_2g}\\
    &\ll x^{-\frac{d-1}{2}}e^{-c_2\ell}\varphi_{(\log x)^2,\delta}(\ell).
\end{align*}
Note that, contrary to the case $g\geq 0$, here we do not have any constraint on the value of $\ell$. The same computation will be re-used later in the proof of Lemma \ref{lemma:ell>loglog} when considering $\ell>K_6\log\log x$.

Let us now consider $\bu$ with $\n{\bu}\leq \sqrt{h}\log h$. Using independence (see footnote \ref{foot}), the first probability on the right-hand side of \eqref{eq:prob conditioned} can be controlled by (similar consideration as the case $g\geq 0$)
\begin{align*}
    &\hspace{0.5cm}\q^{\ell,v}(\exists\, w\in V_{t_{x,K}},\,w\succ v,\,\bet_{w,t_{x,K}}(t_{x,K})\in B_x,\,\sF^{\mathrm{c}}_{h,g}\mid\sD_\bu\cap \sC_g\cap \sB_h)\\
    &\leq \sup_{\bu'\in R_{\bu}}\q^{\ell,v}(\exists\, w\in V_{t_{x,K}},\,w\succ v_{\mathrm{lca}},\,\bet_{w,t_{x,K}}(t_{x,K})-\bet_{w,t_{x,K}}(t_x-h)\\
    &\hspace{3cm}\in B_x-(x-m_{h-K}+g,\bu'),\,\sF^{\mathrm{c}}_{h,g}\mid \sI_{h-K,g}).
\end{align*}
 Therefore, by Lemma \ref{lemma:2prob2case},
 \begin{align}
     \begin{split}
         &\hspace{0.5cm}\q^{\ell,v}(\exists\, w\in V_{t_{x,K}},\,w\succ v,\,\bet_{w,t_{x,K}}(t_{x,K})\in B_x,\,\sF^{\mathrm{c}}_{h,g}\mid\sD_\bu\cap \sC_g\cap \sB_h)\\
         &\ll (\log h)^2 (h-K+1)^{-\frac{d-1}{2}}.
     \end{split}\label{eq:x|ughl for g<0}
 \end{align}
 Similarly as in \eqref{eq:x|ghlu2}, we also have 
  \begin{align}
    \begin{split}
         &\hspace{0.5cm}\q^{\ell,v}(\exists\, w\in V_{t_{x,K}},\,w\succ v,\,\bet_{w,t_{x,K}}(t_{x,K})\in B_x,\,\sF^{\mathrm{c}}_{h,g}\mid\sD_\bu\cap \sC_g\cap \sB_h\cap(\sK_h^*)^{\mathrm{c}})\\
         &\ll (\log h)^2 (h-K+1)^{-\frac{d-1}{2}}.
    \end{split}\label{eq:x|ughl for g<02}
 \end{align}
We then apply \eqref{eq:x|ughl for g<0} and \eqref{eq:hgu|l1} of Lemma \ref{lemma:hgu|l} to get for $(\log x)^2-\log x\leq h\leq (\log x)^2$,
\begin{align*}
    &\hspace{0.5cm}\sum_{h=(\log x)^2-\log x}^{(\log x)^2}\sum_{g<0}\sum_{\substack{\bu\in\bZ^{d-1}\\ \n{\bu}\leq \sqrt{h}\log h}} \q^{\ell,v}(\sB_h\cap\sC_g\cap\sD_\bu) \\
    &\hspace{2cm}\times\q^{\ell,v}(\exists\, w\in V_{t_{x,K}},\,w\succ v,\,\bet_{w,t_{x,K}}(t_{x,K})\in B_x,\,\sE_{1,\ell}^{\mathrm{c}}\mid \sD_\bu\cap \sC_g\cap \sB_h)\\
    &\ll \sum_{h=(\log x)^2-\log x}^{(\log x)^2}\sum_{g<0}\sum_{\substack{\bu\in\bZ^{d-1}\\ \n{\bu}\leq \sqrt{h}\log h}} (\log h)^2 (h-K+1)^{-\frac{d-1}{2}}x^{-\frac{d-1}{2}}|g+\ell|\\
    &\hspace{3cm}\times(|g|+1)^2e^{c_2g-c_2\ell}\varphi_{(\log x)^2,\delta}(g+\ell)\\
    &\ll x^{-\frac{d-1}{2}}e^{-c_2\ell}\sum_{h=(\log x)^2-\log x}^{(\log x)^2}\sum_{g<0} (\log h)^{d+1} |g+\ell|\\
    &\hspace{4cm}(|g|+1)^2e^{c_2g}\varphi_{(\log x)^2,\delta}(g+\ell)\Big(\frac{h}{h-K+1}\Big)^{\frac{d-1}{2}}\\
    &\ll x^{-\frac{d-1}{2}}(|\ell|+1)e^{-c_2\ell}(\log x)(\log\log x)^{d+4},
\end{align*}
where in the second step, we used that $\#\{\bu\in\bZ^{d-1}:\,\n{\bu}\leq \sqrt{h}\log h\}\ll h^{\frac{d-1}{2}}(\log h)^{d-1}$. 
For $K\leq h\leq (\log x)^2-\log x$, we apply \eqref{eq:x|ughl for g<02} and \eqref{eq:hgu|l3} of Lemma \ref{lemma:hgu|l} to get
\begin{align*}
    &\hspace{0.5cm}\sum_{h=K}^{(\log x)^2-\log x}\sum_{g<0}\sum_{\substack{\bu\in\bZ^{d-1}\\ \n{\bu}\leq \sqrt{h}\log h}}\q^{\ell,v}(\sB_h\cap\sC_g\cap\sD_\bu\cap(\sK_h^*)^{\mathrm{c}}) \\
    &\hspace{1cm}\times\q^{\ell,v}(\exists\, w\in V_{t_{x,K}},\,w\succ v,\,\bet_{w,t_{x,K}}(t_{x,K})\in B_x,\,(\sE_{\ell}^*)^{\mathrm{c}}\mid \sD_\bu\cap \sC_g\cap \sB_h\cap(\sK_h^*)^{\mathrm{c}})\\
    &\ll\sum_{h=K}^{(\log x)^2-\log x}\sum_{g<0}\sum_{\substack{\bu\in\bZ^{d-1}\\ \n{\bu}\leq \sqrt{h}\log h}} (\log h)^2 (h-K+1)^{-\frac{d-1}{2}}\\
    &\hspace{2cm}\times x^{-\frac{d-1}{2}}e^{-c_2\ell}\min\{k_h,h\}^{-5/4}(\log\log x)^2(|g|+1)^2e^{c_2g}e^{\frac{K_{10}(\log k_h)g}{k_h}}\\
    &\ll \sum_{h=K}^{(\log x)^2-\log x}\min\{k_h,h\}^{-5/4}K^{\frac{d-1}{2}}x^{-\frac{d-1}{2}}e^{-c_2\ell}(\log\log x)^{K_{12}}\\
    &\ll x^{-\frac{d-1}{2}}e^{-c_2\ell}(\log\log x)^{K_{12}}.
\end{align*}
In total, we have a contribution of 
$$C(K)x^{-\frac{d-1}{2}}e^{-c_2\ell}(\log x)(\log\log x)^{d+4}.$$

In summary, using that $-K_3\log\log x\leq \ell\leq K_6\log\log x$ implies $|\ell|\ll \log\log x$ and that $\varphi_{(\log x)^2,\delta}(\ell)\ll 1$, we conclude the following upper bound of \eqref{eq:prob conditioned}:
$$C(K)(\log\log x)^{K_7}(\log x)x^{-\frac{d-1}{2}} e^{-c_2\ell}.$$
The proof is then complete.
\end{proof}

\begin{remark}
    The reason we restrict to $\ell\leq K_6\log\log x$ is that after multiplying the first passage contribution by the particle density from Proposition \ref{prop:particle density} and summing over $\ell$, the $\log\log x$ power term in \eqref{eq:fpc1} explodes. In the following two subsections, we deal with the other case $\ell> K_6\log\log x$.
\end{remark}

\subsubsection{Excluding local barrier events for \texorpdfstring{$\ell> K_6\log\log x$}{}}\label{sec:Excluding unlikely events}
For the case $\ell> K_6\log\log x$, we need to adjust the local barrier events in \eqref{eq:el*}. 
Before this, we remove one more event that the heterogeneity index $h$ is close to $(\log x)^2$ and $g$ is large simultaneously. 
Let $K_9>0$ be a large constant to be determined. Define the event
\begin{align}
    \begin{split}
        \sE_{3,\ell}&:=\bigg(\bigcup_{g\geq -\ell/K_4}\sC_g\bigg)\cap\bigg(\bigcup_{(\log x)^2-K_9\ell\leq h\leq (\log x)^2}\sB_h\bigg)\\
    &=\Big\{g_v\geq -\frac{\ell}{K_4}\Big\}\cap \big\{h_v\geq (\log x)^2-K_9\ell\big\},
    \end{split}\label{eq:E1 def}
\end{align}
where $K_4>0$ is a large constant to be determined.

\begin{lemma}[Removing the event $\sE_{3,\ell}$]\label{lemma:E_1}
    It holds that for some $\delta>0$ and all $\ell> K_6\log\log x$,
    $$\q^{\ell,v}(\exists\, w\in V_{t_{x,K}},\,w\succ v,\,\bet_{w,t_{x,K}}(t_{x,K})\in B_x, \sE_{3,\ell})\ll  e^{-(c_2+\delta/4)\ell}(\log x)^{2(d-1)}x^{-\frac{d-1}{2}}.$$
\end{lemma}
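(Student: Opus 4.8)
The idea is that $\sE_{3,\ell}$ forces the branching random walk started at $v$ to climb an atypically large distance during the \emph{short} time window $[\wtx,t_x-h_v]$ before it even reaches the latest common ancestor $v_{\mathrm{lca}}$. Throughout, write $k_h:=(\log x)^2-h$, and abbreviate by $\sA_x$ the event $\{\exists\,w\in V_{t_{x,K}},\,w\succ v,\,\bet_{w,t_{x,K}}(t_{x,K})\in B_x\}$.

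\emph{Step 1: reduction of the range of $\ell$.} For $\ell\ge(\log x)^3$ the bound holds without using $\sE_{3,\ell}$: under $\q^\ell$ the tree rooted at $v$ is an ordinary BRW started from first coordinate $\approx\wx-\ell=x-m_{(\log x)^2}-\ell$, so on $\sA_x$ its first-coordinate maximum at generation $(\log x)^2-K$ must exceed $m_{(\log x)^2}+\ell-O(1)$; since $\ell\gg(\log x)^2$ the barrier factor in Lemma \ref{lemma:brw small deviation prob} is $\varphi_{(\log x)^2,\delta}(\cdot)=e^{-\delta(\ell+O(1))}$, whence $\q^\ell(\sA_x)\ll(\ell+1)e^{-(\bl+\delta)\ell}$, which (as $\ell\ge(\log x)^3$) is far below the claimed bound. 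So we may assume $K_6\log\log x<\ell<(\log x)^3$; in particular $\ell\ll x^{1/3}$ and Lemma \ref{lemma:uniform bd} applies.

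\emph{Step 2: decomposition over the genealogy.} Partitioning over $\sB_h\cap\sC_g$ (on $\sE_{3,\ell}$ one has $k_h\le K_9\ell$ — note also $k_h\le(\log x)^2-K$ always — and $g\ge-\ell/K_4$; the case $k_h=0$ is empty on $\sE_{3,\ell}$, since then $v_{\mathrm{lca}}=v$ forces $g_v\approx-\ell-c_1K<-\ell/K_4$),
\begin{align*}
\q^\ell(\sA_x,\,\sE_{3,\ell})=\sum_{h:\,k_h\le K_9\ell}\ \sum_{g\ge-\ell/K_4}\q^\ell(\sA_x\mid\sB_h\cap\sC_g)\,\q^\ell(\sB_h\cap\sC_g).
\end{align*}
For the conditional hitting probability, Lemma \ref{lemma:uniform bd} gives for $g\ge0$ that $\q^\ell(\sA_x\mid\sB_h\cap\sC_g)\ll h^{d-1}x^{-(d-1)/2}\le(\log x)^{2(d-1)}x^{-(d-1)/2}$; the same bound up to a factor $(|g|+1)^{O(1)}$ holds for $-\ell/K_4\le g<0$ by rerunning that argument with the moderate-deviation tail of $M^{(1)}_{h-K}$ (Lemma \ref{lemma:brw small deviation prob}) and Lemma \ref{lemma:2prob2case} in place of the $g\ge0$ inputs.

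\emph{Step 3: the entropy estimate for $\q^\ell(\sB_h\cap\sC_g)$.} On $\sB_h\cap\sC_g$ the tree rooted at $v$ has, at time $t_x-h$, a particle at first coordinate in $[x-m_{h-K}+g-1,x-m_{h-K}+g)$, i.e.\ at first-coordinate displacement $\ge D-O(1)$ from $v$, where $D:=m_{(\log x)^2}-m_{h-K}+\ell+g$ is reached in $k_h$ steps. A direct computation with $m_n=c_1n-\tfrac{3}{2\bl}\log n$ gives $D=m_{k_h}+z$ with $z=\ell+g+c_1K+r$ and $|r|\le\tfrac{3}{\bl}\log\log x$. Hence on $\sE_{3,\ell}$, using $g\ge-\ell/K_4$, $c_1K\ge0$, and $\ell>K_6\log\log x$ with $K_6$ large,
\begin{align*}
z\ \ge\ \ell(1-1/K_4)-\tfrac{3}{\bl}\log\log x\ \ge\ \tfrac12\ell(1-1/K_4)=:c_3\ell>0,\qquad \tfrac{z}{k_h}\ \ge\ \tfrac{c_3\ell}{K_9\ell}=\tfrac{c_3}{K_9}=:c_\star>0,
\end{align*}
so that $\varphi_{k_h,\delta}(z)=e^{-\delta z\min(z/k_h,1)}\le e^{-\delta c_\star z}$. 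Since under $\q^\ell$ the tree rooted at $v$ is an ordinary BRW whose displacements are independent of the conditioning, Lemma \ref{lemma:brw small deviation prob} yields
\begin{align*}
\q^\ell(\sB_h\cap\sC_g)\ \le\ \q^\ell(\sJ_{h,g})\ \le\ \p\!\big(M^{(1)}_{k_h}\ge m_{k_h}+z-O(1)\big)\ \ll\ (z+1)e^{-\bl z}\varphi_{k_h,\delta}(z)\ \ll\ (z+1)e^{-(\bl+\delta c_\star)z}.
\end{align*}

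\emph{Step 4: summation and choice of constants.} Substituting $z=\ell+g+c_1K+r$, writing $s=\ell+g\ge\ell(1-1/K_4)$, and using $|r|\le\tfrac{3}{\bl}\log\log x$ and $|g|\le\ell/K_4$, the sum over $g$ is dominated by its first term:
\begin{align*}
\sum_{g\ge-\ell/K_4}(|g|+1)^{O(1)}(z+1)e^{-(\bl+\delta c_\star)z}\ \ll\ \ell^{O(1)}(\log x)^{O(1)}\,e^{-(\bl+\delta c_\star)(1-1/K_4)\ell},
\end{align*}
where $(\log x)^{O(1)}$ absorbs $e^{-(\bl+\delta c_\star)r}$. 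Summing over the at most $K_9\ell$ admissible $h$ costs another factor $\ell$; combining with Step 2,
\begin{align*}
\q^\ell(\sA_x,\,\sE_{3,\ell})\ \ll\ (\log x)^{2(d-1)+O(1)}\,x^{-\frac{d-1}{2}}\,\ell^{O(1)}\,e^{-(\bl+\delta c_\star)(1-1/K_4)\ell}.
\end{align*}
Because $\ell>K_6\log\log x$, the prefactor $(\log x)^{O(1)}\ell^{O(1)}$ is at most $e^{O(\log\log x)+o(\ell)}$. Choosing first $K_4$ large, then $K_6$ large, and then $K_9$ large but $\le c\,\delta K_4$ for a small absolute constant $c$ (consistent with the permissible order of the constants), one secures $(\bl+\delta c_\star)(1-1/K_4)\ge\bl+2\delta_0$ for a fixed $\delta_0>0$, with enough slack to absorb $e^{O(\log\log x)+o(\ell)}$ and the spare $(\log x)^{O(1)}$ into $e^{\delta_0\ell}$. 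This gives $\q^\ell(\sA_x,\,\sE_{3,\ell})\ll e^{-(\bl+\delta_0)\ell}(\log x)^{2(d-1)}x^{-(d-1)/2}$, which is the claim with $\delta:=4\delta_0$.

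The main obstacle is the constant bookkeeping in Step 4: one needs $(\bl+\delta c_\star)(1-1/K_4)>\bl$ with $c_\star=(1-1/K_4)/(2K_9)$, which only holds for a compatible choice in which $K_4$ is chosen (first) very large and $K_9$ (later) large but controlled relative to $K_4$; a secondary technical point is extending the $d$-dimensional hitting bound of Lemma \ref{lemma:uniform bd} to negative $g$ via Lemma \ref{lemma:2prob2case}.
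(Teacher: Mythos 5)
Your proof is correct and follows the same overall architecture as the paper's: restrict to $\ell\ll x^{1/3}$, decompose over $\sB_h\cap\sC_g$ with $k_h\le K_9\ell$ and $g\ge-\ell/K_4$, control the conditional hitting probability via Lemma \ref{lemma:uniform bd}, and extract an exponential gain $e^{-(c_2+\delta)(g+\ell)}$ from the fact that on $\sE_{3,\ell}$ the subtree rooted at $v$ must travel a super-critical distance within the short window of length $k_h$. The one place where you genuinely diverge is the source of that gain: the paper re-derives it from scratch via a first-moment bound, Cram\'er's upper bound, and strict convexity of $I$ near $c_1$, arriving at $\q^\ell(\sB_h\cap\sC_g)\le e^{-(c_2+\delta)(g+\ell)}$ with $\delta$ proportional to $1/K_9$; you instead read it off from the correction factor $\varphi_{k_h,\delta}(z)$ already present in Lemma \ref{lemma:brw small deviation prob}, using $z/k_h\ge c_\star\asymp 1/K_9$. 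The two mechanisms are equivalent in substance --- in both, the gain scales like $1/K_9$ and must beat the $c_2/K_4$ loss coming from the worst case $g=-\ell/K_4$, so the constraint $K_4\gg K_9/\delta$ and the admissible order of fixing the constants are identical --- but your route is more economical in that it reuses a stated lemma rather than redoing the large-deviation computation. Two minor points in your favor: you explicitly justify the use of Lemma \ref{lemma:uniform bd} for $g<0$ via Lemma \ref{lemma:2prob2case} (the paper applies it outside its stated range $g\ge 0$ without comment), and you dispose of the regime of very large $\ell$ separately, which the paper relegates to a footnote.
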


\begin{proof}
We first need an improvement upon Lemma \ref{lemma:hg|l}. For $(\log x)^2-K_9\ell\leq h\leq (\log x)^2$, we have by independence that 
\begin{align}
    \begin{split}
        \q^{\ell,v}(\sB_h\cap\sC_g)
    &\leq \p(M_{(\log x)^2-h}>g-m_{h-K}+m_{(\log x)^2}+\ell)\\
    &\leq \p(M_{(\log x)^2-h}>m_{(\log x)^2-h}+g+\ell).
    \end{split}\label{r}
\end{align}
Since $(\log x)^2-K_9\ell\leq h$, $g\geq -\ell/K_4$, and $\ell>K_6\log\log x$, we have uniformly,
$$r_{g,h,\ell,x}:=\frac{m_{(\log x)^2-h}+g+\ell}{(\log x)^2-h}\geq \frac{c_1K_9\ell-\frac{3}{c_2}\log\log x+(1-K_4^{-1})\ell}{K_9\ell}\geq c_1+\delta_0$$
for some $\delta_0>0$ and all $K_4,K_6$ picked large enough (say, uniformly for all $K_4>2$ and $K_6>10/c_2$). Note that $I$ is strictly convex in a neighborhood of $c_1$, which follows from Theorem 26.3 of \citep{rockafellar1970convex} since (A4) implies that $\xi$ has exponential moments in a neighborhood of $c_2$ and hence $\log\phi_\xi$ is smooth in a neighborhood of $c_2$. Consequently, 
$I(r_{g,h,\ell,x})-I(c_1)\geq (c_2+\delta_1)(r_{g,h,\ell,x}-c_1)$ for some $\delta_1>0$. Let us pick $\ee_0>0$ small enough such that $(c_2+\delta_1)(1-\ee_0)\geq c_2+\delta$ for some $\delta>0$. Then, with $K_4,K_6$ picked large enough depending on $\ee_0$, 
$$r_{g,h,\ell,x}=\frac{m_{(\log x)^2-h}+g+\ell}{(\log x)^2-h}\geq c_1+\frac{g+\ell-\frac{3}{c_2}\log\log x}{(\log x)^2-h}\geq c_1+\frac{(1-\ee_0)(g+\ell)}{(\log x)^2-h}.$$
It follows that
$$I(r_{g,h,\ell,x})-I(c_1)\geq (c_2+\delta_1)(r_{g,h,\ell,x}-c_1)\geq \frac{(c_2+\delta)(g+\ell)}{(\log x)^2-h}.$$
Consequently, by Cram\'{e}r's large deviation upper bound and the union bound, for some $\delta>0$,
\begin{align*}
    \p(M_{(\log x)^2-h}>m_{(\log x)^2-h}+g+\ell)&\ll \rho^{(\log x)^2-h}\p(S_{(\log x)^2-h}>m_{(\log x)^2-h}+g+\ell)\\
    &\ll e^{-((\log x)^2-h)(I(r_{g,h,\ell,x})-I(c_1))}\leq e^{-(c_2+\delta)(g+\ell)}.
\end{align*}
By \eqref{r}, we then arrive at
\begin{align}
   \q^{\ell,v}(\sB_h\cap\sC_g)\leq  \p(M_{(\log x)^2-h}>m_{(\log x)^2-h}+g+\ell)&\leq e^{-(c_2+\delta)(g+\ell)}.\label{eq:hlarge}
\end{align}
By Lemma \ref{lemma:uniform bd} and \eqref{eq:hlarge}, with $K_4$ picked large enough,\footnote{Here we omit the case $|\ell|\gg x^{1/3}$, in which case the first passage probabilities decay exponentially in $\ell$ if $\ell \gg x^{1/3}$, and can be trivially bounded by $1$ if $\ell\ll -x^{1/3}$.} 
\begin{align*}
    &\hspace{0.5cm}\q^{\ell,v}(\exists\, w\in V_{t_{x,K}},\,w\succ v,\,\bet_{w,t_{x,K}}(t_{x,K})\in B_x,\, \sE_{3,\ell})\\
    &\leq \sum_{h=(\log x)^2-K_9\ell}^{(\log x)^2}\sum_{g\geq -\ell/K_4}\q^{\ell,v}(\exists\, w\in V_{t_{x,K}},\,w\succ v,\,\bet_{w,t_{x,K}}(t_{x,K})\in B_x,\, \sE_{3,\ell}\mid\sC_g\cap \sB_h)\\
    &\hspace{3cm}\times\q^{\ell,v}(\sB_h\cap\sC_g)\\
    &\ll \sum_{h=(\log x)^2-K_9\ell}^{(\log x)^2}\sum_{g\geq -\ell/K_4} (\log x)^{2(d-1)} e^{-(c_2+\delta)(g+\ell)}x^{-\frac{d-1}{2}}\\
    &\ll  e^{-(c_2+\delta/4)\ell}(\log x)^{2(d-1)}x^{-\frac{d-1}{2}}.
\end{align*}
    This finishes the proof.
\end{proof}

On the event $\sE_{3,\ell}^{\mathrm{c}}$, we may adjust the barrier event $\sE_{2,\ell}$ as follows. Define for some large constant $K_8$ the following barrier function
\begin{align}
    \psi_{x,K}(k):=\wx+\frac{k-\wtx}{(\log x)^2}m_{(\log x)^2}+\frac{2K_8}{\bl}(\log\min\{k-\wtx,t_{x,K}-k\})_+,~\wtx\leq k\leq t_{x,K}\label{eq:barrier 2}
\end{align}
and the local barrier event
\begin{align}
    \begin{split}
        \sE_{4,\ell}&:=\bigcup_{K\leq h\leq (\log x)^2-K_9\ell}\bigg(\sB_h\cap\Big( \bigcup_{\substack{u\in V_{t_{x}-h}}}\Big(\bigcup_{\wtx\leq k\leq t_x-h}\{\eta_{u,t_x-h}(k)>{\psi}_{x,K}(k)\}\Big)\\
    &\hspace{4cm}\cap\Big(\bigcup_{\substack{w\in V_{t_{x,K}}\\ w\succ u}}\{\bet_{w,t_{x,K}}(t_{x,K})\in B_x\}\Big)\Big)\bigg).
    \end{split}\label{eq:E4}
\end{align}
The following considerations are similar to Section \ref{sec:Local barrier events}, with a different range of $\ell$. To bound the size of $\sE_{4,\ell}$, define
$$T_{\wtx}:=\inf\Big\{k\in[\wtx,t_{x,K}]:\exists\, u\in V_k,u\succ v, \eta_{u,k}(k)\geq \psi_{x,K}(k)\Big\}.$$
By Lemma \ref{lemma:barrier} (with $\beta$ therein given by $\ell$ as the random walk starts from $[\wx-\ell-1,\wx-\ell)$, and coefficient of log replaced by $2K_8/c_2$),
$$\q^{\ell,v}(T_{\wtx}=\wtx+j)\ll \min\{j,(\log x)^2+1-j\}^{-K_8}\ell e^{-c_2\ell}\varphi_{(\log x)^2,\delta}(\ell).$$
We may then calculate the total contribution in the case where the barrier is crossed for a fixed $\ell$, on the event $g\geq -\ell/K_4$ (and hence $h\leq (\log x)^2-K_9\ell$ since we excluded the event $\sE_{3,\ell}$).
By Lemma \ref{lemma:uniform bd} and since the barrier event is measurable at time $t_x-h$, 
\begin{align}\begin{split}
&\hspace{0.5cm}\q^{\ell,v}(\sE_{4,\ell}\cap \sE_{3,\ell}^{\mathrm{c}})\\
    &\leq \sum_{h=K}^{(\log x)^2-K_9\ell}\q^{\ell,v}\bigg(\sB_h\cap \{T_{\wtx}\leq t_x-h\}\cap\Big(\bigcup_{\substack{w\in V_{t_{x,K}}\\ w\succ u}}\{\bet_{w,t_{x,K}}(t_{x,K})\in B_x\}\Big)\Big)\bigg)\\
    &\ll \sum_{h=K}^{(\log x)^2-K_9\ell}h^{d-1}x^{-\frac{d-1}{2}}\ell e^{-c_2\ell}\varphi_{(\log x)^2,\delta}(\ell)\sum_{j=K_9\ell}^{(\log x)^2-h}\min\{j,(\log x)^2+1-j\}^{-K_8}\\
    &\ll x^{-\frac{d-1}{2}}\ell e^{-c_2\ell}\varphi_{(\log x)^2,\delta}(\ell)\sum_{h=K}^{(\log x)^2-K_9\ell}h^{d-1}\max\{\ell^{-K_8/2},h^{-K_8}\}\\
    &\ll x^{-\frac{d-1}{2}}\ell e^{-c_2\ell}\varphi_{(\log x)^2,\delta}(\ell)\big(K^{d-1-K_8}+\ell^{-K_8/2}(\log x)^{2d}\big).
\end{split}\label{eq:E4l prob}
\end{align}

\subsubsection{Local hitting probabilities for \texorpdfstring{$\ell> K_6\log\log x$}{}}
\label{sec:426}

In this section, we prove an upper bound of \eqref{eq:q} in the case $\ell> K_6\log\log x$.
The main improvement compared to Lemma \ref{lemma:ell<loglog} stems from an improvement of Lemma \ref{lemma:hgu|l}, after removing the (unlikely) barrier event $\sE_{4,\ell}$ up to time $t_x-h$ defined in \eqref{eq:E4}. Here and later, we denote by \begin{align}
    \sE_{\ell}:=\sE_{1,\ell}\cup \sE_{3,\ell}\cup \sE_{4,\ell}\label{eq:el}
\end{align} and
\begin{align}
   \begin{split}
        \sK_{h}:=\sB_h\cap\Big( \bigcup_{\substack{u\in V_{t_{x}-h}}}&\Big(\bigcup_{\wtx\leq k\leq t_x-h}\{\eta_{u,t_x-h}(k)>{\psi}_{x,K}(k)\}\Big)\\
    &\cap\Big(\bigcup_{\substack{w\in V_{t_{x,K}}\\ w\succ u}}\{\bet_{w,t_{x,K}}(t_{x,K})\in B_x\}\Big)\Big).
   \end{split}\label{eq:Kh}
\end{align}
Note that $\sE_{4,\ell}=\bigcup_{K\leq h\leq (\log x)^2-K_9\ell}\sK_{h}$, so that $\sE_\ell^{\mathrm{c}}\subseteq \sK_h^{\mathrm{c}}$ for all $k\leq h\leq (\log x)^2-K_9\ell$.

By the same independent sum argument leading to \eqref{eq:u|lhg}, it is not hard to see that
\begin{align}
    \q^{\ell,v}(\sD_\bu\mid\sB_h\cap\sC_g\cap \sK_{h}^{\mathrm{c}})\ll x^{-\frac{d-1}{2}}.\label{eq:u|hgh}
\end{align}
In view of the upper bounds in Lemmas \ref{lemma:ballot+deviation 1} and \ref{lemma:ballot+deviation 2} below (which we will apply with $\ee=1/3$), we fix a large constant $L>0$ and define the following auxiliary function
\begin{align}
    \Phi_{n,\delta}(x,y):=\begin{cases}
xyn^{-3/2}&\text{ if }0\leq y<\frac{\sqrt{n}}{L};\\
   \min\{ xn^{-4/3}+xyn^{-3/2}e^{-\frac{\delta y^2}{n}}, \varphi_{n,\delta}(y)\}&\text{ if } y\geq \frac{\sqrt{n}}{L},
\end{cases}\label{eq:Phi def}
\end{align}
where $x\in[0,O(n^{1/6})]$. The function $\Phi_{n,\delta}(x,y)$ serves as asymptotic upper bounds of ballot probabilities. Recall also the short-hand notation $k_h=(\log x)^2-h$.

\begin{lemma}[size of the event $\sB_h\cap\sC_g\cap \sK_{h}^{\mathrm{c}}$]\label{lemma:hg|l too}
    Assume that $\ell> K_6\log\log x$, $k_h=(\log x)^2-h\geq K_9\ell$, and $g\geq  \max\{-\ell/K_4,-k_h^{1/6}\}$. If $g\geq 0$,
\begin{align}
    \begin{split}
        \q^{\ell,v}(\sB_h\cap\sC_g\cap \sK_{h}^{\mathrm{c}})&\ll e^{-c_2(\ell+g)}\min\{k_h,h\}^{-5/4}k_h^{3/2}e^{-c_1c_2K/2}\\
        &\hspace{3cm}\times\Phi_{k_h,\delta}((\log\min\{k_h,h\}-g)_+,\ell)e^{\frac{K_{10}(\log k_h)(\ell+g)}{k_h}}.
    \end{split}\label{eq:hgl|l1}
\end{align}
If $g<0$, 
\begin{align}
    \begin{split}
\q^{\ell,v}(\sB_h\cap\sC_g\cap \sK_{h}^{\mathrm{c}})\ll e^{-c_2\ell}&\min\{k_h,h\}^{-5/4}k_h^{3/2}(|g|+1)^2e^{c_2g}e^{-c_1c_2K/2}\\
&\times\Phi_{k_h,\delta}((\log\min\{k_h,h\}-g)_+,\ell)e^{\frac{K_{10}(\log k_h)(\ell+g)}{k_h}}.\end{split}\label{eq:hgl|l2}
\end{align}
On the other hand, in the case where $-\ell/K_4<-k_h^{1/6}$, we have for $g\in[-\ell/K_4,-k_h^{1/6})$ that
\begin{align}
    \begin{split}
        &\hspace{0.5cm}\q^{\ell,v}(\sB_h\cap\sC_g\cap \sK_{h}^{\mathrm{c}})\\
        &\ll e^{-c_2\ell}\min\{k_h,h\}^{-5/4}k_h^{3/2}(|g|+1)^2e^{c_2g}e^{-c_1c_2K/2}\varphi_{(\log x)^2,\delta}(\ell)e^{\frac{K_{10}(\log k_h)(\ell+g)}{k_h}}.
    \end{split}\label{eq:hgl|l3}
\end{align}
\end{lemma}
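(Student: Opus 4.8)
\textbf{The plan} is to decompose $\sB_h\cap\sC_g\cap\sK_h^{\mathrm{c}}$ at the level of the latest common ancestor $v_{\mathrm{lca}}\in V_{t_x-h}$ and to use the branching/Markov property at generation $t_x-h$, treating the two windows $[\wtx,t_x-h]$ and $[t_x-h,t_{x,K}]$ separately. On $\sB_h\cap\sC_g$ there is a (unique) $v_{\mathrm{lca}}\succ v$ at generation $t_x-h$ with $\eta_{v_{\mathrm{lca}},t_x-h}(t_x-h)\in[x+g-m_{h-K},\,x+g+1-m_{h-K})$, while the sub-BRW rooted at $v_{\mathrm{lca}}$ realizes $\sI_{h-K,g}$, i.e.\ two vertex-disjoint lines of descent each traveling a distance $\ge m_{h-K}-g$ in $h-K$ generations; intersecting with $\sK_h^{\mathrm{c}}$ moreover forces the trajectory $\{\eta_{v_{\mathrm{lca}},t_x-h}(k)\}_{\wtx\le k\le t_x-h}$ to stay below $\psi_{x,K}$ (consistent with the later use of the lemma in Lemma~\ref{lemma:ell>loglog}, where $v_{\mathrm{lca}}$ has a descendant in $B_x$ and hence lies outside the bad set defining $\sK_h$; when this barrier does not improve the estimate we simply discard it, which is exactly why $\Phi_{n,\delta}$ in \eqref{eq:Phi def} is a minimum of a ballot bound and the crude bound $\varphi_{n,\delta}$). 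By independence of the sub-trees before and after generation $t_x-h$,
\begin{align*}
\q^\ell(\sB_h\cap\sC_g\cap\sK_h^{\mathrm{c}})\ \le\ &\E_{\q^\ell}\Big[\#\big\{w\in V_{t_x-h}:\ w\succ v,\ \eta_{w,t_x-h}(t_x-h)\in[x+g-m_{h-K},x+g+1-m_{h-K}),\\
&\qquad\ \ \eta_{w,t_x-h}(k)\le\psi_{x,K}(k)\ \forall k\in[\wtx,t_x-h]\big\}\Big]\ \cdot\ \sup_{w}\p(\sI_{h-K,g}),
\end{align*}
the supremum over admissible positions of $w$ and $\sI_{h-K,g}$ referring to the sub-BRW rooted at $w$ (with the barrier discarded, replace the count by $\rho^{k_h}\p(\eta_{w,t_x-h}(t_x-h)\in[x+g-m_{h-K},x+g+1-m_{h-K}))$).

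\textbf{First window.} Here we run the change-of-measure argument of Proposition~\ref{prop:particle density} (and Lemma~\ref{lemma:hgu|l}): tilting the first-coordinate walk by $\widehat\lambda=I'(m_{\wtx}/\wtx)$ makes it centered, with $\d\p/\d\q\asymp(\wtx)^{3/2}\rho^{-\wtx}e^{-\widehat\lambda(\cdot-m_{\wtx})}$ and $0\le c_2-\widehat\lambda\le K_{10}(\log k_h)/k_h$ on the relevant window. The displacement from $\wx-\ell$ (time $\wtx$) to $x+g-m_{h-K}$ (time $t_x-h$) is $m_{(\log x)^2}-m_{h-K}+g+\ell$, which via $m_{(\log x)^2}-m_{h-K}-m_{k_h}=c_1K+\tfrac{3}{2c_2}\log\tfrac{(h-K)k_h}{(\log x)^2}$ exceeds $m_{k_h}$ by $c_1K+g+\ell$ up to the log term; relative to $\psi_{x,K}$ the tilted walk sits at height $\ell$ below it at the left endpoint and — after the $\log\log x$ contributions coming from $m_{(\log x)^2}-m_{h-K}$ and from the $\frac{2K_8}{c_2}\log\min\{k_h,h-K\}$ correction of $\psi_{x,K}$ cancel — at height $\asymp(\log\min\{k_h,h\}-g)_+$ below it at the right endpoint (genuinely positive for $K_8$ chosen large, else the count is $0$). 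Feeding these endpoint heights and the scale $k_h$ into the refined ballot-plus-deviation estimates of Lemmas~\ref{lemma:ballot+deviation 1} and~\ref{lemma:ballot+deviation 2} (applied with $\ee=1/3$), multiplying by the change-of-measure prefactor $\rho^{k_h}(\wtx)^{3/2}\rho^{-\wtx}e^{-\widehat\lambda(m_{(\log x)^2}-m_{h-K}+g+\ell)}$, absorbing $e^{-c_2\cdot\frac{3}{2c_2}\log\frac{(h-K)k_h}{(\log x)^2}}\ll\min\{k_h,h\}^{-3/2}$ and $e^{(c_2-\widehat\lambda)(m_{(\log x)^2}-m_{h-K}-m_{k_h})}\ll\min\{k_h,h\}^{1/4}$ exactly as in Lemma~\ref{lemma:hgu|l}, and writing $e^{-\widehat\lambda(g+\ell)}=e^{-c_2(g+\ell)}e^{(c_2-\widehat\lambda)(g+\ell)}$, one gets that the count is
\begin{align*}
&\ll\ e^{-c_2(\ell+g)}\,e^{-c_1c_2K/2}\,\min\{k_h,h\}^{-5/4}\,k_h^{3/2}\,\Phi_{k_h,\delta}\big((\log\min\{k_h,h\}-g)_+,\ell\big)\,e^{K_{10}(\log k_h)(\ell+g)/k_h}
\end{align*}
for every $g$ in the allowed range (for $g<0$ the factor $e^{-c_2(\ell+g)}=e^{-c_2\ell}e^{c_2|g|}$ already carries the boost coming from the easier, lower endpoint). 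The hypothesis $k_h\ge K_9\ell$ — which on $\sE_{3,\ell}^{\mathrm{c}}$ is precisely what $g\ge-\ell/K_4$ forces — keeps $\ell$ in the regime where $\Phi_{k_h,\delta}$ is defined and $e^{K_{10}(\log k_h)\ell/k_h}\ll1$; the hypothesis $g\ge-k_h^{1/6}$ keeps $(\log\min\{k_h,h\}-g)_+$ within the admissible first-argument range $O(k_h^{1/6})$ of $\Phi_{k_h,\delta}$. When $g<-k_h^{1/6}$ the first argument is out of range and $\Phi$ is replaced by the crude bound $\varphi_{(\log x)^2,\delta}(\ell)$ from Lemma~\ref{lemma:brw small deviation prob} over the whole window, which gives \eqref{eq:hgl|l3}.

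\textbf{Second window and conclusion.} For $g\ge0$, $\sI_{h-K,g}$ only asks for two descendants at or below the sub-BRW maximum, so $\p(\sI_{h-K,g})\ll1$, and combining with the first window yields \eqref{eq:hgl|l1}. For $g<0$, $\sI_{h-K,g}$ asks two vertex-disjoint lines each to travel $m_{h-K}+|g|$ in $h-K$ generations, which by Lemma~\ref{lemma:double prob} (the two-particle estimate already used in Lemma~\ref{lemma:hg|l}) costs $\ll\big((|g|+1)e^{c_2g}\big)^2$; multiplying by the first-window bound $\ll e^{-c_2(\ell+g)}(\cdots)$ leaves the net $g$-dependence $e^{-c_2\ell}(|g|+1)^2e^{c_2g}$, which is \eqref{eq:hgl|l2}.

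\textbf{Main obstacle.} The delicate step is the first window: one must correctly bookkeep the interaction between (i) the change-of-measure correction $e^{(c_2-\widehat\lambda)(\cdots)}$, controlled at the $k_h$-time scale rather than the $(\log x)^2$-scale — the reason for imposing $k_h\ge K_9\ell$; (ii) the $c_1K$ overshoot built into $m_{(\log x)^2}-m_{h-K}$ relative to $m_{k_h}$, which supplies the \emph{explicit} gain $e^{-c_1c_2K/2}$ (unlike in case~II, it cannot be absorbed into the implicit constant, which here must be $K$-independent); and (iii) identifying the two endpoint heights below $\psi_{x,K}$ — in particular the $\log\log x$ cancellation making the right one $\asymp(\log\min\{k_h,h\}-g)_+$ — and matching them, together with the dichotomy $\ell\lessgtr\sqrt{k_h}/L$ hidden inside $\Phi_{k_h,\delta}$, to the hypotheses of the ballot-plus-deviation estimates so that they output exactly the claimed bounds.
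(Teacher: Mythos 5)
Your proposal is correct and follows essentially the same route as the paper: a change of measure on the window $[\wtx,t_x-h]$ of length $k_h$, the ballot-plus-moderate-deviation bounds of Lemmas \ref{lemma:ballot+deviation 1}--\ref{lemma:ballot+deviation 2} yielding $\Phi_{k_h,\delta}$, the two-particle estimate of Lemma \ref{lemma:double prob} for $g<0$, and the crude $\varphi_{(\log x)^2,\delta}(\ell)$ bound when $g<-k_h^{1/6}$. Only note two slips of notation: the tilt should be $\hat\lambda=I'(m_{k_h}/k_h)$ (not $I'(m_{\wtx}/\wtx)$) and the change-of-measure prefactor should read $\rho^{k_h}\cdot\rho^{-k_h}k_h^{3/2}e^{-\hat\lambda(\,\cdot\,-m_{k_h})}$, consistent with the $k_h$-scale bookkeeping you in fact carry out afterwards.
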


\begin{remark}
   It is instructive to compare Lemma \ref{lemma:hg|l too} with Lemma \ref{lemma:hgu|l}. First, the two results are based on different local ballot events: Lemma \ref{lemma:hgu|l} excludes the event $\sE_{1,\ell}\cup \sE_{2,\ell}$, and Lemma \ref{lemma:hg|l too} excludes the event $\sE_{1,\ell}\cup \sE_{3,\ell}\cup \sE_{4,\ell}$. Second, both results require a lower bound for $k_h$, which greatly helps dealing with the extra term $e^{(c_2-\hat{\lambda})(m_{(\log x)^2}-m_{h-K}-m_{k_h}+\ell+g)}$ in the ballot probabilities. Third, for the case $\ell> K_6\log\log x$, we need extra preciseness in controlling the ballot probabilities, since the bound used in Lemma \ref{lemma:hgu|l} is not tight for $\ell$ large. This stems from Lemmas \ref{lemma:ballot+deviation 1} and \ref{lemma:ballot+deviation 2} in Appendix \ref{sec:ballot ub}, and results in the terms involving the function $\Phi_{k_h,\delta}$ in Lemma \ref{lemma:hg|l too}. The proofs are quite similar, both applying ballot upper bounds under a proper change of measure.
\end{remark}

\begin{proof}
The barrier given by \eqref{eq:barrier 2} starts at location $\wx=(\wx-\ell)+\ell$ at time $\wtx$ (where we recall $\wx=x-m_{(\log x)^2}$) and ends at location 
\begin{align*}
    &\wx+\frac{k_h}{(\log x)^2}m_{(\log x)^2}+\frac{2K_8}{\bl}(\log\min\{k_h,h\})_+\\
    &\hspace{3cm}\leq x-m_{h-K}+g+(O(\log\min\{k_h,h\})-g-\frac{c_1K}{2})
\end{align*}
at time $\wtx+k_h=t_x-h$. Note that since $g\geq -k_h^{1/6}$, we have
$O(\log\min\{k_h,h\})-g-{c_1K}/{2}\ll k_h^{1/6}.$
Define $\hat{\lambda}:=I'(m_{k_h}/k_h)$.  For $g\geq 0$, the ballot upper bounds (Lemmas \ref{lemma:ballot+deviation 1} and \ref{lemma:ballot+deviation 2}, together with Remark \ref{rem}) under a change of measure (identically as in the proof of Proposition \ref{prop:particle density}) then gives 
    \begin{align*}
      &\hspace{0.5cm}\q^{\ell,v}(\sB_h\cap\sC_g\cap \sK_{h}^{\mathrm{c}})\\
      &\ll \rho^{k_h}(\rho^{-k_h}k_h^{3/2}e^{-\hat{\lambda}(x-m_{h-K}+g-(\wx-\ell)-m_{k_h})})\Phi_{k_h,\delta}((\log\min\{k_h,h\}-g)_+,\ell)\\
      &\ll  e^{-c_2(\ell+g)}\min\{k_h,h\}^{-3/2}k_h^{3/2}e^{-c_1c_2K/2}\Phi_{k_h,\delta}((\log\min\{k_h,h\}-g)_+,\ell)\\
      &\hspace{5cm}\times e^{(c_2-\hat{\lambda})(m_{(\log x)^2}-m_{h-K}-m_{k_h}+\ell+g)}.
    \end{align*}
To proceed, we need control of the final term $e^{(c_2-\hat{\lambda})(m_{(\log x)^2}-m_{h-K}-m_{k_h}+\ell+g)}$. Recall from the proof of Proposition \ref{prop:particle density} that $c_2-\hat{\lambda}\leq K_{10}(\log k_h)/k_h$ for some $K_{10}>0$. Since $k_h\geq K_9\ell$ and $\ell>K_6\log\log x\to\infty$ as $x\to\infty$, we may assume that $c_2-\hat{\lambda}\leq c_2/6$ by letting $x$ be large enough. In this case,
$$e^{(c_2-\hat{\lambda})(m_{(\log x)^2}-m_{h-K}-m_{k_h})}\leq e^{c_2(m_{(\log x)^2}-m_{h-K}-m_{k_h})/6}\ll \min\{k_h,h\}^{1/4}.$$
Combining the above leads to \eqref{eq:hgl|l1}.

    For the case $g<0$, we need to exploit the rare event that two independent descendants run distances $m_{h-K}-g$ for time $h-K$ (given by Lemma \ref{lemma:double prob}).  Applying the same arguments as in Lemma \ref{lemma:hg|l} and using the ballot upper bounds (Lemmas \ref{lemma:ballot+deviation 1} and \ref{lemma:ballot+deviation 2}),
    \begin{align*}
         &\hspace{0.5cm}\q^{\ell,v}(\sB_h\cap\sC_g\cap \sK_{h}^{\mathrm{c}})\\
         &\ll \rho^{k_h}\big(\rho^{-k_h}k_h^{3/2}e^{-\hat{\lambda}(x-m_{h-K}+g-(\wx-\ell)-m_{k_h})}\big)\Phi_{k_h,\delta}((\log\min\{k_h,h\}-g)_+,\ell)(|g|+1)^2e^{c_2g}\\
         &\ll  e^{-c_2\ell}\min\{k_h,h\}^{-3/2}k_h^{3/2}(|g|+1)^2e^{c_2g}e^{-c_1c_2K/2}\\
         &\hspace{3cm}\times\Phi_{k_h,\delta}((\log\min\{k_h,h\}-g)_+,\ell)e^{(c_2-\hat{\lambda})(m_{(\log x)^2}-m_{h-K}-m_{k_h}+\ell+g)}.
    \end{align*}
    The remaining follows similarly as the case $g\geq 0$. 

    Finally, for $g\in[-\ell/K_4,-k_h^{1/6})$ we apply the same proof as above while bounding the ballot probability by $\varphi_{(\log x)^2,\delta}(\ell)$ for some $\delta>0$, instead of $\Phi_{k_h,\delta}((\log\min\{k_h,h\}-g)_+,\ell)$. We omit the details here.
\end{proof}

Before proceeding, it is helpful to simplify the quantity $\Phi_{k_h,\delta}((\log\min\{k_h,h\}-g)_+,\ell)$ appearing in Lemma \ref{lemma:hg|l too} a bit. By adjusting the constants $\delta$ and $L$ and since $k_h\leq(\log x)^2$, it holds that
\begin{align}
    \begin{split}
        &\hspace{0.5cm}k_h^{3/2}\Phi_{k_h,\delta}((\log\min\{k_h,h\}-g)_+,\ell)\\
    &\ll \begin{cases}
        \ell (\log\min\{k_h,h\}-g)_+&\text{ if }\ell\in( K_6\log\log x,\frac{\log x}{L});\\
        (\log\min\{k_h,h\}-g)_+((\log x)^{1/3}+\ell e^{-\frac{\delta\ell^2}{(\log x)^2}})&\text{ if }\frac{\log x}{L}\leq \ell\leq L\log x\log\log x;\\
        \varphi_{(\log x)^2,\delta}(\ell)&\text{ if }\ell> L\log x\log\log x.
    \end{cases}
    \end{split}\label{eq:Phi}
\end{align}
 Recall also \eqref{eq:Psi} and \eqref{eq:el}.

\begin{lemma}[first passage contribution]\label{lemma:ell>loglog}
For $\ell> K_6\log\log x$, there exists $C(K)>0$ such that
    \begin{align*}
        &\hspace{0.5cm}\q^{\ell,v}(\exists\, w\in V_{t_{x,K}},w\succ v,\,\bet_{w,t_{x,K}}(t_{x,K})\in B_x,\, \sE_{\ell}^{\mathrm{c}})\\
        &\ll e^{-c_1c_2K/4} x^{-\frac{d-1}{2}} e^{-c_2\ell}\Psi_{(\log x)^2,\delta}(\ell)\big(e^{2K_{10}\frac{\ell\log\log x}{(\log x)^2}}+(\log\log x)^{d+3}\ell^{-1/8}e^{\frac{\log\ell\log\log x}{8\ell}}\big)\\
        &\hspace{5cm}+C(K)x^{-\frac{d-1}{2}}(\log x)^3e^{-c_2\ell}e^{-c_2\ell/(2K_4)},
    \end{align*}
    where the asymptotic constant in $\ll$ does not depend on $K,x,\ell$ but may depend on $\ee$.
\end{lemma}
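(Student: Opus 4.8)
The plan is to mirror the proof of Lemma~\ref{lemma:ell<loglog}, replacing the ballot estimate of Lemma~\ref{lemma:hgu|l} by its sharper counterpart Lemma~\ref{lemma:hg|l too}, which is now available because on $\sE_\ell^{\mathrm c}$ we have additionally excluded $\sE_{3,\ell}$ and $\sE_{4,\ell}$: in particular $\sE_\ell^{\mathrm c}\subseteq\sE_{3,\ell}^{\mathrm c}$ forces $g_v<-\ell/K_4$ or $h_v\le(\log x)^2-K_9\ell$, and $\sE_\ell^{\mathrm c}\subseteq\sK_h^{\mathrm c}$ for $K\le h\le(\log x)^2-K_9\ell$. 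First I would decompose by the law of total probability, conditioning on $\sB_h\cap\sC_g\cap\sD_\bu$,
\begin{align*}
\q^\ell(\exists\, w\in V_{t_{x,K}},w\succ v,\,\bet_{w,t_{x,K}}(t_{x,K})\in B_x,\,\sE_\ell^{\mathrm c})
=\sum_{h,g,\bu}p_{h,g,\bu}\,\q^\ell(\sB_h\cap\sC_g\cap\sD_\bu),
\end{align*}
where $p_{h,g,\bu}$ denotes the corresponding conditional first passage probability (intersected further with $\sK_h^{\mathrm c}$ when $k_h=(\log x)^2-h\ge K_9\ell$). Since the summand vanishes unless $g<-\ell/K_4$ or $k_h\ge K_9\ell$, I split the analysis into these two regimes.

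In the regime $k_h\ge K_9\ell$ (where $\ell>K_6\log\log x$ guarantees $k_h\to\infty$, so that Lemma~\ref{lemma:hg|l too} applies), I would separate the values of $g$ as in cases (a)--(c) of Lemma~\ref{lemma:ell<loglog}. For $g\ge K_5\log h$ I use Lemma~\ref{lemma:uniform bd}, $p_{h,g}\ll h^{d-1}x^{-(d-1)/2}$, together with Lemma~\ref{lemma:hg|l}, and take $K_5$ large, getting a contribution $\ll x^{-(d-1)/2}e^{-c_2\ell}\Psi_{(\log x)^2,\delta}(\ell)$. For $0\le g<K_5\log h$ I bound $p_{h,g,\bu}\ll ge^{c_2g}(\log h)^2(h-K+1)^{-(d-1)/2}$ when $\n{\bu}\le\sqrt h\log h$ (as in \eqref{eq:x|ghlu2}) using Lemma~\ref{thm:d-dim ballot with log}, and $p_{h,g,\bu}\ll h^{3/2}e^{c_2g}\varphi_{h,\delta}(\n{\bu})$ when $\n{\bu}>\sqrt h\log h$ by a moderate deviation estimate, pairing these with \eqref{eq:hgl|l1}. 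After simplifying $k_h^{3/2}\Phi_{k_h,\delta}((\log\min\{k_h,h\}-g)_+,\ell)$ through \eqref{eq:Phi}, the cancellation $e^{c_2g}e^{-c_2(\ell+g)}=e^{-c_2\ell}$ reduces the triple sum to a convergent one in which the factor $e^{K_{10}(\log k_h)(\ell+g)/k_h}$ is largest for $k_h\asymp(\log x)^2$ (producing the $e^{2K_{10}\ell\log\log x/(\log x)^2}$ term) while the summation over $h$ produces the remaining polylogarithmic correction $(\log\log x)^{d+3}\ell^{-1/8}e^{\log\ell\log\log x/(8\ell)}$; the prefactor $e^{-c_1c_2K/2}$ in \eqref{eq:hgl|l1} absorbs the polynomial-in-$K$ losses from the $\bu$-summation and from $h$ near $K$, leaving the claimed $e^{-c_1c_2K/4}$. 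The subcase $-\ell/K_4\le g<0$ is treated identically, now invoking \eqref{eq:hgl|l2}--\eqref{eq:hgl|l3}, the bound \eqref{eq:x|ughl for g<02}, and Lemma~\ref{lemma:2prob2case} (which exploits the rare event that two disjoint descendants both run far), the weight $(|g|+1)^2e^{c_2g}$ rendering the $g$-sum a harmless constant.

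It remains to treat $g<-\ell/K_4$, which yields the second term $C(K)x^{-(d-1)/2}\ell^3e^{-c_2\ell}e^{-c_2\ell/(2K_4)}$. Here Lemma~\ref{lemma:hg|l too} is unavailable, so I revert to the cruder Lemma~\ref{lemma:hg|l}, obtaining via \eqref{eq:u|lhg} that $\q^\ell(\sB_h\cap\sC_g\cap\sD_\bu)\ll x^{-(d-1)/2}(|g+\ell|+1)e^{-c_2(g+\ell)}\varphi_{(\log x)^2,\delta}(g+\ell)(|g|+1)^2e^{2c_2g}$, and I bound $p_{h,g,\bu}$ by Lemma~\ref{lemma:2prob2case} as in case (c) of Lemma~\ref{lemma:ell<loglog}: $\ll(\log h)^2(h-K+1)^{-(d-1)/2}$ for $\n{\bu}\le\sqrt h\log h$ and $\ll h^{3/2}e^{-\delta\n{\bu}^2/h}$ otherwise. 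Summing over $\bu$ and over $h$ (at most $(\log x)^2$ values, each contributing a polylog), then over $g<-\ell/K_4$ using $\sum_{g<-\ell/K_4}(|g+\ell|+1)(|g|+1)^2e^{c_2g-c_2\ell}\ll\ell^3e^{-c_2\ell}e^{-c_2\ell/K_4}$, gives $\ll(\log x)^2(\log\log x)^{d+1}x^{-(d-1)/2}\ell^3e^{-c_2\ell}e^{-c_2\ell/K_4}$; since $\ell>K_6\log\log x$, choosing $K_6$ large relative to $K_4$ and $d$ makes $(\log x)^2(\log\log x)^{d+1}\le e^{c_2\ell/(2K_4)}$, which gives the required bound.

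The main obstacle is the bookkeeping in the regime $k_h\ge K_9\ell$ with $0\le g<K_5\log h$: one must simultaneously exploit the gain $e^{-c_1c_2K/2}$ to absorb every $K$-dependent polynomial loss into $e^{-c_1c_2K/4}$, control the change-of-measure correction $e^{(c_2-\hat\lambda)(m_{(\log x)^2}-m_{h-K}-m_{k_h}+\ell+g)}$ uniformly over all admissible $h$ (using $c_2-\hat\lambda\le K_{10}(\log k_h)/k_h$ and $k_h\ge K_9\ell\to\infty$), and verify that the sum over $(h,g,\bu)$ collapses to precisely $\Psi_{(\log x)^2,\delta}(\ell)$ times the advertised polylogarithmic factors. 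This is the step that pins down the constant $L$ in \eqref{eq:Psi} and \eqref{eq:Phi} and forces a consistent (permissible) ordering of the large constants $K_4,\dots,K_{12}$.
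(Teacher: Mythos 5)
Your proposal follows essentially the same route as the paper: decompose over $(h,g,\bu)$, exploit that $\sE_{3,\ell}^{\mathrm c}$ forces either $g<-\ell/K_4$ or $k_h\geq K_9\ell$, use the sharpened ballot bound of Lemma~\ref{lemma:hg|l too} (enabled by $\sK_h^{\mathrm c}$) in the latter regime with the same splitting of $g$ into the ranges $g\geq K_5\log h$, $0\leq g<K_5\log h$, $-\ell/K_4\leq g<0$, the same $\bu$-splitting at $\sqrt h\log h$, and revert to the crude bounds for $g<-\ell/K_4$ to produce the second term. The simplifications via \eqref{eq:k small}--\eqref{eq:k large}, \eqref{eq:Phi}, and \eqref{eq:Psi} are also as in the paper.

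One point needs fixing. In your case $g\geq K_5\log h$ you state a contribution $\ll x^{-(d-1)/2}e^{-c_2\ell}\Psi_{(\log x)^2,\delta}(\ell)$ with no dependence on $K$. That bound does not fit under either term of the lemma's conclusion, and the omission is not cosmetic: after multiplying by the particle density from Proposition~\ref{prop:particle density} and summing over $\ell>K_6\log\log x$, a term of the form $x^{-(d-1)/2}e^{-c_2\ell}\,\ell\,\varphi_{(\log x)^2,\delta}(\ell)$ contributes $\Theta(1)$ to $\p(\tau_x\leq t_x-K)$, which can only be killed by the prefactor $e^{-c_1c_2K/4}$ (this is exactly why the lemma carries that factor, and why the paper runs case (a) through \eqref{eq:hgl|l1} of Lemma~\ref{lemma:hg|l too}, whose right-hand side contains $e^{-c_1c_2K/2}$). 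The repair within your setup is immediate — Lemma~\ref{lemma:hg|l} itself carries the factor $e^{-c_2(g+\ell+c_1K/2)}$, and since $g+\ell>0$ here the minimum is attained by that expression, so the $e^{-c_1c_2K/2}$ survives the summation — but as written your case (a) would not prove the stated estimate. The same care is needed for the $\n{\bu}>\sqrt h\log h$ sub-sums, where the gain in $K$ comes instead from $\sum_{h\geq K}h^{-100}$ together with $\varphi_{(\log x)^2,\delta}(\ell)\ll e^{-c_1c_2K/4}\Psi_{(\log x)^2,\delta'}(\ell)$ for $x$ large (using $\ell>K_6\log\log x\to\infty$). The rest of your outline, including absorbing the $(\log x)^2(\log\log x)^{d+1}$ loss in the regime $g<-\ell/K_4$ into $e^{-c_2\ell/(2K_4)}$ via $K_6$ large, is consistent with the paper's bookkeeping.
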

\begin{proof}
In the following, the asymptotic constants in $\ll$ may depend on $\ee$ but not on $K$. We condition on $\sB_h\cap\sC_g$ and divide into four cases: $g\geq K_5\log h$, $0\leq g<K_5\log h$, $-\ell/K_4\leq g<0$, and $g<-\ell/K_4$.\footnote{The three cases $g\geq K_5\log h$, $0\leq g<K_5\log h$, and $g<0$ as discussed in the proof of Lemma \ref{lemma:ell<loglog} do not suffice due to \eqref{eq:E1 def}.}

Case (a): $g\geq K_5\log h$. We apply Lemma \ref{lemma:uniform bd} and \eqref{eq:hgl|l1} of Lemma \ref{lemma:hg|l too} to obtain
\begin{align*}
    &\hspace{0.5cm}\sum_{h=K}^{(\log x)^2-K_9\ell}\sum_{g>K_5\log h}h^{d-1}   x^{-\frac{d-1}{2}}\q^{\ell,v}(\sB_h\cap\sC_g\cap \sK_{h}^{\mathrm{c}})\\
    &\ll x^{-\frac{d-1}{2}}e^{-c_2\ell}e^{-c_1c_2K/2}\sum_{h=K}^{(\log x)^2-K_9\ell}h^{d-1}\min\{k_h,h\}^{-5/4}k_h^{3/2}\\
    &\hspace{3cm}\times\sum_{g>K_5\log h}   e^{-c_2g}\Phi_{k_h,\delta}((\log\min\{k_h,h\}-g)_+,\ell)e^{\frac{K_{10}(\log k_h)(\ell+g)}{k_h}}\\
    &\ll x^{-\frac{d-1}{2}}e^{-c_2\ell}e^{-c_1c_2K/2}\sum_{h=K}^{(\log x)^2-K_9\ell}h^{d-1-c_2K_5}\min\{k_h,h\}^{-5/4}k_h^{3/2}\\
    &\hspace{3cm}\Phi_{k_h,\delta}((\log\min\{k_h,h\})_+,\ell)e^{K_{10}\frac{(\log k_h)(\ell+K_5\log h)}{k_h}}. 
\end{align*}
We split the sum over $h$ depending on whether $k_h\leq h$ or $k_h>h$ (recalling the definition that $k_h=(\log x)^2-h$). We have for $K_5$ large enough, by \eqref{eq:k large} below,
\begin{align*}
    &\hspace{0.5cm}\sum_{h=K}^{(\log x)^2/2}h^{d-1-c_2K_5}\min\{k_h,h\}^{-5/4}k_h^{3/2}\Phi_{k_h,\delta}((\log\min\{k_h,h\})_+,\ell)e^{K_{10}\frac{(\log k_h)(\ell+K_5\log h)}{k_h}}\\
    &\ll \sum_{h=K}^{(\log x)^2/2}h^{-100}k_h^{3/2}\Phi_{k_h,\delta}((\log h)_+,\ell)e^{2K_{10}\frac{\ell\log\log x}{(\log x)^2}}\\
    &\ll \Psi_{(\log x)^2,\delta}(\ell)e^{2K_{10}\frac{\ell\log\log x}{(\log x)^2}},
\end{align*}
and
\begin{align*}
    &\hspace{0.5cm}\sum_{h=(\log x)^2/2}^{(\log x)^2-K_9\ell}h^{d-1-c_2K_5}\min\{k_h,h\}^{-5/4}k_h^{3/2}\Phi_{k_h,\delta}((\log\min\{k_h,h\})_+,\ell)e^{K_{10}\frac{(\log k_h)(\ell+K_5\log h)}{k_h}}\\
    &\ll \sum_{h=(\log x)^2/2}^{(\log x)^2-K_9\ell}h^{-100}k_h^{-5/4}k_h^{3/2}\Phi_{k_h,\delta}((\log k_h)_+,\ell)e^{K_{10}\frac{(\log k_h)(\ell+2K_5\log \log x)}{k_h}}\\
    &\ll (\log x)^{-100}\sum_{h=(\log x)^2/2}^{(\log x)^2-K_9\ell}k_h^{-5/4}k_h^{3/2}\Phi_{k_h,\delta}((\log k_h)_+,\ell)\ell^{K_{10}/K_9}\\
    &\ll (\log x)^{-100}\Psi_{(\log x)^2,\delta}(\ell)\ell^{K_{10}/K_9-1/4}\ll \Psi_{(\log x)^2,\delta}(\ell),
\end{align*}
where we have used $k_h\geq K_9\ell$ and $\ell> K_6\log\log x\to\infty$ in the second step, and $K_9$ picked large enough (depending only on $K_{10}$) in the last step. Altogether, we conclude that
\begin{align*}
&\sum_{h=K}^{(\log x)^2-K_9\ell}\sum_{g>K_5\log h}h^{d-1}   x^{-\frac{d-1}{2}}\q^{\ell,v}(\sB_h\cap\sC_g\cap \sK_{h}^{\mathrm{c}})\\
&\hspace{3cm}\ll     x^{-\frac{d-1}{2}}e^{-c_2\ell}e^{-c_1c_2K/2}\Psi_{(\log x)^2,\delta}(\ell)e^{2K_{10}\frac{\ell\log\log x}{(\log x)^2}}.
\end{align*}

In the other three cases, we will use, within the corresponding regions for $g$,
\begin{align}
    &\hspace{0.5cm}\q^{\ell,v}(\exists\, w\in V_{t_{x,K}},w\succ v,\,\bet_{w,t_{x,K}}(t_{x,K})\in B_x,\, \sE_{\ell}^{\mathrm{c}})\nonumber\\
    \begin{split}
        &\leq  \sum_{h=K}^{(\log x)^2}\sum_{g\in\bZ}\sum_{\bu\in\bZ^{d-1}} \q^{\ell,v}(\exists\, w\in V_{t_{x,K}},\,w\succ v,\,\bet_{w,t_{x,K}}(t_{x,K})\in B_x,\, \sE_{\ell}^{\mathrm{c}}\mid\sD_\bu\cap \sC_g\cap \sB_h\cap  \sK_{h}^{\mathrm{c}})\\
    &\hspace{4cm}\times\q^{\ell,v}(\sB_h\cap\sC_g\cap\sD_\bu\cap \sK_{h}^{\mathrm{c}}).
    \end{split}
    \label{eq:hguh}
\end{align}
For the middle two cases $-\ell/K_4\leq g<K_5\log h$, the asymptotic constants do not depend on the lag time $K>0$ (recall that $t_{x,K}=t_x-K$). 
The first conditional probability in each summand in \eqref{eq:hguh} can be controlled in the same way as \eqref{eq:x|ghlu2}. We may also apply the same argument in Lemma \ref{lemma:hgu|l}, using now \eqref{eq:u|hgh}, to obtain bounds on $\q^{\ell,v}(\sB_h\cap\sC_g\cap\sD_\bu\cap \sK_{h}^{\mathrm{c}})$. This amounts to multiplying the right-hand sides of \eqref{eq:hgl|l1} and \eqref{eq:hgl|l2} by $x^{-\frac{d-1}{2}}$. 

Case (b): $0\leq g<K_5\log h$. Recall that on the event $\sE_{3,\ell}^c$ we removed $h\in[(\log x)^2-K_9\ell,(\log x)^2]$. We further scrutinize the term $e^{K_{10}(\log k_h)(\ell+g)/k_h}$ that appears in \eqref{eq:hgl|l1}. Note that $K_{10}$ here does not depend on the other constants $K_1,\dots,K_9$. Since $k_h\geq K_9\ell$, $0\leq g<K_5\log h$, and $h\leq (\log x)^2$, we have for $k_h\leq h$,
\begin{align}
    e^{K_{10}\frac{(\log k_h)(\ell+g)}{k_h}}\leq e^{\frac{2K_{10}\log k_h\log \log x}{k_h}+\frac{K_{10}\ell\log k_h}{K_9\ell}}\leq e^{\frac{\log\ell\log\log x}{8\ell}} k_h^{1/8}\label{eq:k small}
\end{align}
for $K_9$ picked large enough depending only on $K_{10}$. For $k_h\geq h$ (and hence $k_h\geq (\log x)^2/2$ and $g/k_h=O(1)$), 
\begin{align}
    e^{K_{10}\frac{(\log k_h)(\ell+g)}{k_h}}\ll e^{K_{10}\frac{(\log k_h)\ell}{k_h}}\leq e^{2K_{10}\frac{\ell\log\log x}{(\log x)^2}}.\label{eq:k large}
\end{align}

For $\n{\bu}\leq \sqrt{h}\log h$ and $0\leq g<K_5\log h$ (note that we may start the sum from $h=K$), we compute using \eqref{eq:hgl|l1} of Lemma \ref{lemma:hg|l too} in the first step and \eqref{eq:k small} and \eqref{eq:k large} in the third step that
\begin{align*}
    &\hspace{0.5cm}\sum_{h=K}^{(\log x)^2-K_9\ell}\sum_{g=0}^{K_5\log h}\sum_{\substack{\bu\in\bZ^{d-1}\\ \n{\bu}\leq \sqrt{h}\log h}} \q^{\ell,v}(\sB_h\cap\sC_g\cap\sD_\bu\cap \sK_{h}^{\mathrm{c}})\\
    &\hspace{2cm}\times\q^{\ell,v}(\exists\, w\in V_{t_{x,K}},\,w\succ v,\,\bet_{w,t_{x,K}}(t_{x,K})\in B_x,\, \sE_{\ell}^{\mathrm{c}}\mid\sD_\bu\cap \sC_g\cap \sB_h\cap \sK_{h}^{\mathrm{c}})\\
    &\ll \sum_{h=K}^{(\log x)^2-K_9\ell}\sum_{g=0}^{K_5\log h}\sum_{\substack{\bu\in\bZ^{d-1}\\ \n{\bu}\leq \sqrt{h}\log h}} ((g+1)e^{c_2g}(\log h)^2(h-K+1)^{-\frac{d-1}{2}})\\
    &\hspace{2cm}\times\big(x^{-\frac{d-1}{2}} e^{-c_2\ell}\min\{k_h,h\}^{-5/4}e^{-c_2g}e^{-c_1c_2K/2}\\
    &\hspace{4cm}\times k_h^{3/2}\Phi_{k_h,\delta}((\log\min\{k_h,h\}-g)_+,\ell)e^{\frac{K_{10}(\log k_h)(\ell+g)}{k_h}}\big)\\
    &\ll e^{-c_1c_2K/2}x^{-\frac{d-1}{2}} e^{-c_2\ell}\sum_{h=K}^{(\log x)^2-K_9\ell}(\log h)^{d+3}k_h^{3/2}\Phi_{k_h,\delta}((\log\min\{k_h,h\})_+,\ell)\\
    &\hspace{1cm}\times\min\{k_h,h\}^{-5/4}\Big(\frac{h}{h-K+1}\Big)^{\frac{d-1}{2}}e^{K_{10}\frac{(\log k_h)\ell}{k_h}}\\
    &\ll e^{-c_1c_2K/2}x^{-\frac{d-1}{2}} e^{-c_2\ell}\bigg(\sum_{h=(\log x)^2/2}^{(\log x)^2-K_9\ell}(\log\log x)^{d+3}k_h^{3/2}\Phi_{k_h,\delta}((\log k_h)_+,\ell)k_h^{-9/8}e^{\frac{\log\ell\log\log x}{8\ell}}\\
    &\hspace{1.5cm}+\sum_{h=K}^{(\log x)^2/2}(\log h)^{d+3}k_h^{3/2}\Phi_{k_h,\delta}((\log h)_+,\ell)h^{-5/4}\Big(\frac{h}{h-K+1}\Big)^{\frac{d-1}{2}}e^{2K_{10}\frac{\ell\log\log x}{(\log x)^2}}\bigg)\\
    &\ll e^{-c_1c_2K/4} x^{-\frac{d-1}{2}} e^{-c_2\ell}\Psi_{(\log x)^2,\delta}(\ell)\big(e^{2K_{10}\frac{\ell\log\log x}{(\log x)^2}}+(\log\log x)^{d+3}\ell^{-1/8}e^{\frac{\log\ell\log\log x}{8\ell}}\big),
\end{align*}
where in the last step we used \eqref{eq:Phi} and \eqref{eq:Psi}.
The case of $\n{\bu}>\sqrt{h}\log h$ can be dealt with in the same way as in Section \ref{sec:prelim bound}, leading to a contribution of $x^{-\frac{d-1}{2}}\min\{e^{-c_2\ell},1\}\varphi_{(\log x)^2,\delta}(\ell)$.

Case (c): $-\ell/K_4\leq g<0$. Similarly as the way we derived \eqref{eq:k small} and \eqref{eq:k large}, we have
\begin{align}
    e^{K_{10}\frac{(\log k_h)(\ell+g)}{k_h}}\leq e^{K_{10}\frac{(\log k_h)\ell}{k_h}}\ll\begin{cases}
        e^{2K_{10}\frac{\ell\log\log x}{(\log x)^2}}&\text{ if }k_h\geq h;\\
        e^{\frac{\log\ell\log\log x}{8\ell}}&\text{ if }k_h< h.
    \end{cases}\label{eq:k2}
\end{align}
We first consider the subcase where $\max\{-\ell/K_4,-k_h^{1/6}\}\leq g<0$. Again the sum over $\n{\bu}>\sqrt{h}\log h$ can be controlled similarly as in case (c) in the proof of Lemma \ref{lemma:ell<loglog} (which only used $g<0$ but has no constraint on the range of $\ell$). Recall that restricting to the event $\sE_{3,\ell}^{\mathrm{c}}$ allows us to remove the sum over $(h,g)$ such that $g>-\ell/K_4$ and $h>(\log x)^2-K_9\ell$. Applying \eqref{eq:hgl|l2} of Lemma \ref{lemma:hg|l too} and \eqref{eq:x|ughl for g<0} in the first step, that $\#\{\bu\in\bZ^{d-1}:\,\n{\bu}\leq \sqrt{h}\log h\}\ll h^{\frac{d-1}{2}}(\log h)^{d-1}$ in the second step, and \eqref{eq:k2} in the fourth step, we have 
\begin{align*}
    &\hspace{0.5cm}\sum_{h=K}^{(\log x)^2-K_9\ell}\sum_{\max\{-\ell/K_4,-k_h^{1/6}\}\leq g<0}\sum_{\substack{\bu\in\bZ^{d-1}\\ \n{\bu}\leq \sqrt{h}\log h}} \q^{\ell,v}(\sB_h\cap\sC_g\cap\sD_\bu\cap \sK_{h}^{\mathrm{c}})\\
    &\hspace{2cm}\times\q^{\ell,v}(\exists\, w\in V_{t_{x,K}},\,w\succ v,\,\bet_{w,t_{x,K}}(t_{x,K})\in B_x,\, \sE_{\ell}^{\mathrm{c}}\mid\sD_\bu\cap \sC_g\cap \sB_h\cap \sK_{h}^{\mathrm{c}})\\
    &\ll \sum_{h=K}^{(\log x)^2-K_9\ell}\sum_{\max\{-\ell/K_4,-k_h^{1/6}\}\leq g<0}\sum_{\substack{\bu\in\bZ^{d-1}\\ \n{\bu}\leq \sqrt{h}\log h}}((\log h)^2 (h-K+1)^{-\frac{d-1}{2}})\\
    &\hspace{0.5cm}\times\big(x^{-\frac{d-1}{2}} e^{-c_2\ell}\min\{k_h,h\}^{-5/4}(|g|+1)^2e^{c_2g}e^{-c_1c_2K/2}\\
    &\hspace{2cm}\times k_h^{3/2}\Phi_{k_h,\delta}((\log\min\{k_h,h\}-g)_+,\ell)e^{\frac{K_{10}(\log k_h)(\ell+g)}{k_h}}\big)\\
    &\ll e^{-c_1c_2K/4} x^{-\frac{d-1}{2}} e^{-c_2\ell}\sum_{h=K}^{(\log x)^2-K_9\ell}(\log h)^{d+1}\min\{k_h,h\}^{-5/4}e^{K_{10}\frac{(\log k_h)\ell}{k_h}}\\
    &\hspace{2cm}\times\sum_{\max\{-\ell/K_4,-k_h^{1/6}\}\leq g<0}(|g|+1)^2e^{c_2g}k_h^{3/2}\Phi_{k_h,\delta}((\log\min\{k_h,h\}-g)_+,\ell)\\
    &\ll e^{-c_1c_2K/4} x^{-\frac{d-1}{2}} e^{-c_2\ell}\\
    &\hspace{1cm}\times\sum_{h=K}^{(\log x)^2-K_9\ell}(\log h)^{d+1}\min\{k_h,h\}^{-5/4}e^{K_{10}\frac{(\log k_h)\ell}{k_h}}k_h^{3/2}\Phi_{k_h,\delta}((\log\min\{k_h,h\})_+,\ell)\\
    &\ll e^{-c_1c_2K/4} x^{-\frac{d-1}{2}} e^{-c_2\ell}\bigg(\sum_{h=K}^{(\log x)^2/2}(\log h)^{d+1}h^{-5/4}e^{2K_{10}\frac{\ell\log\log x}{(\log x)^2}}k_h^{3/2}\Phi_{k_h,\delta}((\log h)_+,\ell)\\
    &\hspace{3cm}+\sum_{h=(\log x)^2/2}^{(\log x)^2-K_9\ell}(\log h)^{d+1}k_h^{-5/4}e^{\frac{\log\ell\log\log x}{8\ell}}k_h^{3/2}\Phi_{k_h,\delta}((\log k_h)_+,\ell)\bigg)\\
    &\ll e^{-c_1c_2K/4}x^{-\frac{d-1}{2}} e^{-c_2\ell}\Psi_{(\log x)^2,\delta}(\ell)\big(e^{2K_{10}\frac{\ell\log\log x}{(\log x)^2}}+(\log\log x)^{d+3} \ell^{-1/4}e^{\frac{\log\ell\log\log x}{8\ell}}\big),
\end{align*}
where in the last step we applied \eqref{eq:Phi} and \eqref{eq:Psi}. In the other subcase where $-\ell/K_4\leq g<-k_h^{1/6}$, applying  \eqref{eq:hgl|l3} of Lemma \ref{lemma:hg|l too} yields 
\begin{align*}
    &\hspace{0.5cm}\sum_{h=K}^{(\log x)^2-K_9\ell}\sum_{-\ell/K_4\leq g<-k_h^{1/6}}\sum_{\substack{\bu\in\bZ^{d-1}\\ \n{\bu}\leq \sqrt{h}\log h}} \q^{\ell,v}(\sB_h\cap\sC_g\cap\sD_\bu\cap \sK_{h}^{\mathrm{c}})\\
    &\hspace{2cm}\times\q^{\ell,v}(\exists\, w\in V_{t_{x,K}},\,w\succ v,\,\bet_{w,t_{x,K}}(t_{x,K})\in B_x,\, \sE_{\ell}^{\mathrm{c}}\mid\sD_\bu\cap \sC_g\cap \sB_h\cap \sK_{h}^{\mathrm{c}})\\
    &\ll e^{-c_1c_2K/4} x^{-\frac{d-1}{2}} e^{-c_2\ell}\sum_{h=K}^{(\log x)^2-K_9\ell}(\log h)^{d+1}\min\{k_h,h\}^{-5/4}\\
        &\hspace{3cm}\sum_{-\ell/K_4\leq g<-k_h^{1/6}}(|g|+1)^2e^{c_2g}k_h^{3/2}e^{\frac{K_{10}(\log k_h)(\ell+g)}{k_h}}\\
    &\ll e^{-c_1c_2K/4} x^{-\frac{d-1}{2}} e^{-c_2\ell}\sum_{h=K}^{(\log x)^2-K_9\ell}(\log h)^{d+1}\min\{k_h,h\}^{-5/4}\\
    &\hspace{6cm}\times k_h^{-100}\varphi_{(\log x)^2,\delta}(\ell)e^{\frac{K_{10}(\log k_h)(\ell+g)}{k_h}}\\
    &\ll e^{-c_1c_2K/4}x^{-\frac{d-1}{2}} e^{-c_2\ell}\Psi_{(\log x)^2,\delta}(\ell)\big(e^{2K_{10}\frac{\ell\log\log x}{(\log x)^2}}+(\log\log x)^{d+3} \ell^{-1/4}e^{\frac{\log\ell\log\log x}{8\ell}}\big),
\end{align*}
where in the last step we also used $\varphi_{(\log x)^2,\delta}(\ell)\ll \Psi_{(\log x)^2,\delta}(\ell)$ for $\ell> K_6\log\log x$.

Case (d): $g<-\ell/K_4$. In this case, we cannot use Lemma \ref{lemma:hg|l too}, but we use Lemma \ref{lemma:hgu|l}. The sum over $\n{\bu}>\sqrt{h}\log h$ can be controlled similarly as in case (c) in the proof of Lemma \ref{lemma:ell<loglog}. We obtain using \eqref{eq:x|ughl for g<0} and Lemma \ref{lemma:hgu|l} that
\begin{align*}
     &\hspace{0.5cm}\sum_{h=K}^{(\log x)^2}\sum_{g<-\ell/K_4}\sum_{\substack{\bu\in\bZ^{d-1}\\ \n{\bu}\leq \sqrt{h}\log h}} \q^{\ell,v}(\sB_h\cap\sC_g\cap\sD_\bu\cap \sK_{h}^{\mathrm{c}})\\
    &\hspace{2cm}\times\q^{\ell,v}(\exists\, w\in V_{t_{x,K}},\,w\succ v,\,\bet_{w,t_{x,K}}(t_{x,K})\in B_x\mid\sD_\bu\cap \sC_g\cap \sB_h\cap \sK_{h}^{\mathrm{c}})\\
    &\leq \sum_{h=K}^{(\log x)^2}\sum_{g<-\ell/K_4}\sum_{\substack{\bu\in\bZ^{d-1}\\ \n{\bu}\leq \sqrt{h}\log h}} \q^{\ell,v}(\exists\, w\in V_{t_{x,K}},\,w\succ v,\,\bet_{w,t_{x,K}}(t_{x,K})\in B_x\mid\sD_\bu\cap \sC_g\cap \sB_h)\\
        &\hspace{3cm}\times\q^{\ell,v}(\sB_h\cap\sC_g\cap\sD_\bu)\\
    &\ll x^{-\frac{d-1}{2}}\sum_{h=K}^{(\log x)^2}\sum_{g<-\ell/K_4}\sum_{\substack{\bu\in\bZ^{d-1}\\ \n{\bu}\leq \sqrt{h}\log h}} ((\log h)^2 (h-K+1)^{-\frac{d-1}{2}})\\
    &\hspace{2cm}\times\Big(\min\Big\{1,(|g+\ell|+1)e^{-c_2(g+\ell)}\varphi_{(\log x)^2,\delta}(g+\ell)\Big\}\min\{1,((|g|+1)e^{c_2g})^2\}\Big)\\
    &\ll x^{-\frac{d-1}{2}}\sum_{h=K}^{(\log x)^2}(\log h)^{d+1} \sum_{g<-\ell/K_4}\min\Big\{1,(|g+\ell|+1)e^{-c_2(g+\ell)}\varphi_{(\log x)^2,\delta}(g+\ell)\Big\}\\
        &\hspace{3cm}\times(|g|+1)^2e^{2c_2g}\Big(\frac{h}{h-K+1}\Big)^{\frac{d-1}{2}}\\
    &\ll x^{-\frac{d-1}{2}}\sum_{h=K}^{(\log x)^2}(\log h)^{d+1} \Big(\ell^2e^{-2c_2\ell}+\ell^3e^{-c_2\ell}e^{-c_2\ell/K_4}\Big)\Big(\frac{h}{h-K+1}\Big)^{\frac{d-1}{2}}\\
    &\ll C(K) x^{-\frac{d-1}{2}}(\log x)^3e^{-c_2\ell}e^{-c_2\ell/(2K_4)},
\end{align*}
where in the last step we bounded $(\log h)^{d+1}$ by $\log x$.

Combining the above four cases finishes the proof.
\end{proof}

\subsubsection{Combining everything above---proof of Theorem \texorpdfstring{\ref{thm:particle probability}}{}}\label{sec:Combining everything above}
Our goal is to bound from above the quantity
$$\q^{\ell,v}(\exists\, w\in V_{t_{x,K}},w\succ v,\,\bet_{w,t_{x,K}}(t_{x,K})\in B_x).$$
We divide into three cases according to the range of $\ell$.

\begin{itemize}
    \item The case $\ell<-K_3\log\log x$. We apply Lemma \ref{lemma:uniform bd} to bound directly
$$\q^{\ell,v}(\exists\, w\in V_{t_{x,K}},w\succ v,\,\bet_{w,t_{x,K}}(t_{x,K})\in B_x)\ll (\log x)^{2(d-1)}x^{-\frac{d-1}{2}}.$$

\item The case $-K_3\log\log x\leq \ell\leq K_6\log\log x$. We get from Lemma \ref{lemma:ell<loglog} that while excluding the local ballot event $\sE_{\ell}^*$, the remaining satisfies
\begin{align*}
    &\q^{\ell,v}(\exists\, w\in V_{t_{x,K}},w\succ v,\,\bet_{w,t_{x,K}}(t_{x,K})\in B_x,\,(\sE_\ell^*)^{\mathrm{c}})\\
    &\ll C(\ee,K)(\log\log x)^{K_7}(\log x)x^{-\frac{d-1}{2}} e^{-c_2\ell}.
\end{align*}
Combining this with \eqref{eq:E3l prob} and \eqref{eq:E2 bound} yields Theorem \ref{thm:particle probability} for $-K_3\log\log x\leq \ell\leq K_6\log\log x$.

\item The case 
 $\ell> K_6\log\log x$. The event $\sE_{3,\ell}$ contributes 
\begin{align}
    e^{-(c_2+\delta/4)\ell}(\log x)^{2(d-1)}x^{-\frac{d-1}{2}}\label{eq:E_1 contribution}
\end{align}
by Lemma \ref{lemma:E_1}. We get from Lemma \ref{lemma:ell>loglog} that while excluding the ballot event $\sE_\ell$, the remaining satisfies
\begin{align*}
        &\hspace{0.5cm}\q^{\ell,v}(\exists\, w\in V_{t_{x,K}},w\succ v,\,\bet_{w,t_{x,K}}(t_{x,K})\in B_x,\, \sE_{\ell}^{\mathrm{c}})\\
        &\ll C(\ee)e^{-c_1c_2K/4} x^{-\frac{d-1}{2}} e^{-c_2\ell}\Psi_{(\log x)^2,\delta}(\ell)\big(e^{2K_{10}\frac{\ell\log\log x}{(\log x)^2}}+(\log\log x)^{d+3}\ell^{-1/8}e^{\frac{\log\ell\log\log x}{8\ell}}\big)\\
        &\hspace{7cm}+C(K)x^{-\frac{d-1}{2}}(\log x)^3e^{-c_2\ell}e^{-c_2\ell/(2K_4)},
    \end{align*}
    The second term $C(K)x^{-\frac{d-1}{2}}(\log x)^3e^{-c_2\ell}e^{-c_2\ell/(2K_4)}$ on the right-hand side above together with \eqref{eq:E_1 contribution} contribute
at most    $C(\ee,K)e^{-(c_2+\delta/4)\ell}(\log x)^{3d}x^{-\frac{d-1}{2}}$
    for some $\delta>0$. Taking into account \eqref{eq:E3l prob}, Lemma \ref{lemma:E_1}, and \eqref{eq:E4l prob} leads to the desired bound in Theorem \ref{thm:particle probability} for $\ell> K_6\log\log x$.

\end{itemize}

These considerations conclude the proof, given the definition of $I_{\ell,x}$ above Theorem \ref{thm:particle probability}.

\section{Further discussions}
\label{sec:discussions}
\subsection{The general non-spherically-symmetric case}\label{sec:non-sphere}

Our cluster approach in the spherically-symmetric case (Theorem \ref{thm:main}) also extends to the general case (Theorem \ref{thm:main2}). Unfortunately, the notation becomes much heavier, although it is intuitively clear how the proof can be modified, as we will explain below.\footnote{There are a few exceptions where the proof is direct, if the law of $\bxi$ is an invertible linear transformation of a spherically-symmetric one, such as centered Gaussian with an invertible covariance matrix. We refer to Section 3.2 of \citep{blanchet2024first} for a detailed account of this approach.} 
We have chosen to focus on the spherically-symmetric case for clarity of our presentation and to avoid repetitions. In this section, we comment on the necessary changes to prove Theorem \ref{thm:main2}, and the details are left to the interested reader. \\

\textbf{Overview.}
Essentially, our approach reduces the study of the $d$-dimensional BRW to that of its projection onto a certain one-dimensional direction (pivot) $\bc_2\in\R^d$ such that the first passage event to $B_x$ is reasonably close to a certain first passage event of the \textit{projected} BRW (to the projection of the target $B_x$). 
The underlying mechanism of this approximation is that the range of a non-symmetric BRW grows roughly as a convex shape that linearly expands in time, and $\bc_2$ is the normal vector to the tangent hyperplane between the convex shape and the ball $B_x$.  
 For example, in the spherically-symmetric case, $\bc_2$ is along the direction of the first coordinate (i.e., a constant multiple of $\be_1$). The non-symmetric case requires the same techniques, up to finding the correct pivot $\bc_2$ based on the rate function $\widehat{I}(\bxi)$, which is given below \eqref{eq:I long}. 

 Let us formulate the new projection. In the spherically-symmetric case, we follow the projection
 $$\bet_{v,n}(k)=\eta_{v,n}(k)\,\be_1+(0,\widehat{\bet}_{v,n}(k)).$$
Instead, we now decompose
 $$\bet_{v,n}(k)=\eta_{v,n}^{\bc_2}(k)\,\bc_2+\widehat{\bet}_{v,n}^{\bc_2}(k),$$
 where $\widehat{\bet}_{v,n}^{\bc_2}(k)\in\R^d$ is perpendicular to $\bc_2$, i.e., $\widehat{\bet}_{v,n}^{\bc_2}(k)\cdot\bc_2=0$. The proof of Theorem \ref{thm:main2} is mostly verbatim, while in the two paragraphs below we spell out a few details that differ from the proof of Theorem \ref{thm:main}. \\
 
\textbf{Identifying the constants in the preliminary results.}
Let us re-discover the formula \eqref{eq:fpt2} based on a calculation using the BRW projected onto $\bc_2$. The one-step jump distribution is $\xi^{\bc_2}:=\bxi\cdot\bc_2$. Using the definition $\widehat{I}(\widehat{c}_1\be_1)=\log\rho$ and $\bc_2=\nabla \widehat{I}(\widehat{c}_1\be_1)$, we have
\begin{align*}
    \sup_{\bla\in\R^d}\Big(\widehat{c}_1\bla\cdot \be_1-\log\E[e^{\bla\cdot\bxi}]\Big)=\log\rho\quad\text{and}\quad \widehat{c}_1\be_1=\frac{\E[\bxi e^{\bc_2\cdot\bxi}]}{\E[e^{\bc_2\cdot\bxi}]}.
\end{align*}
It is then straightforward to check that the supremum in
$$I^{\xi^{\bc_2}}(\widehat{c}_1\be_1\cdot \bc_2)=\sup_{\lambda\in\R}\Big(\lambda \widehat{c}_1\be_1\cdot \bc_2-\log\E[e^{\lambda\xi^{\bc_2}}]\Big)$$
is attained at $\widetilde{\lambda}=1$ and the value of the supremum is $\log\rho$. This has two consequences. First, the linear speed of the BRW with jump $\xi^{\bc_2}$ is $\widehat{c}_1\be_1\cdot \bc_2$, which gives the linear coefficient in \eqref{eq:fpt2}. Second, when dealing with the projected BRW, the analogue of the constant $c_2$ in the results presented in Section \ref{sec:one-dim results} becomes $\widetilde{\lambda}=1$. Consequently, the logarithm correction term is $$\frac{d+2}{2\widetilde{\lambda} \widehat{c}_1\be_1\cdot \bc_2}\log x=\frac{d+2}{2 \widehat{c}_1\be_1\cdot \bc_2}\log x,$$  
 giving the logarithmic correction term in \eqref{eq:fpt2}.\\

\textbf{Conditional local CLT in the direction $\bc_2$.}
 While the cluster structure remains unchanged for the one-dimensional BRW projected onto $\bc_2$, certain modification is required to turn the size of the clusters to the local hitting probabilities (that is, given a trajectory that advances in the direction $\bc_2$, we compute the chance that it reaches the ball $B_x$). In the spherically-symmetric case, this is driven by the conditional local CLT (Lemmas \ref{lemma:conditioned local CLT} and \ref{lemma:uniform local CLT}). The selection of the vector $\bc_2$ is exactly such that the analogous local CLT holds in the new direction $\bc_2$. We showcase this by providing the proof to a more general version of Lemma \ref{lemma:conditioned local CLT}, given by Lemma \ref{lemma:conditioned local CLT2} below. The same extension to Lemma \ref{lemma:uniform local CLT} can be done similarly. 

 \subsection{Proof of Corollary \texorpdfstring{\ref{coro}}{}}\label{sec:proof of coro}

We provide the proof for the spherically symmetric case, as the more general case follows from the same considerations in Section \ref{sec:non-sphere}. Recall that our goal is to study the random quantity $\widetilde{M}_n$, which is the maximum location among particles within unit distance of the first axis. Denote by
$$\widetilde{m}_n:=\widehat{c}_1n-\frac{d+2}{2\,\be_1\cdot\bc_2}\,\log n,$$
which is the deterministic part of the conjectural asymptotics in \eqref{eq:wMn}. Recall \eqref{eq:txdef}. Observe that $\widetilde{m}_{t_x}=x+O(1)$ as $x\to\infty$ and $t_{\widetilde{m}_n}=n+O(1)$ as $n\to\infty$. 

To see the lower bound of $\widetilde{M}_n$, note from Section \ref{sec:UB} that we have shown that for some $C>0$,
$$\p\Big(\exists v\in V_{t_x},\,\bet_{v,t_x}(t_x)\in [x-\frac{1}{2},x+\frac{1}{2}]\times B_\z(\frac{1}{2})\Big)\geq \frac{1}{C}.$$
Note that the event on the left-hand side implies the event $\{\widetilde{M}_{t_x}\geq x-1/2\}$. 
The lower bound $1/C$ can be improved to a lower bound $1-\ee$ with the same bootstrapping argument as in Remark \ref{rem:bootstrap}: first evolve the BRW for a large time $T$ to obtain $N$ particles near the origin, then evolve the $N$ particles independently for time $t_x$. As a consequence, for any $\ee>0$, there is $T>0$ such that 
$$\p\Big(\exists v\in V_{t_x+T},\,\bet_{v,t_x+T}(t_x+T)\in [x-\frac{1}{2},x+\frac{1}{2}]\times B_\z(\frac{1}{2})\Big)\geq 1-\ee,$$
and hence there exists 
$K>0$ such that $\p(\widetilde{M}_n\geq \widetilde{m}_n-K)\geq 1-\ee$.

To see the upper bound of $\widetilde{M}_n$, 
note that the event $\widetilde{M}_n-\widetilde{m}_n\in[K-1/2,K+1/2]$ implies that the event $\tau^{(2)}_{\widetilde{m}_n+K}\leq n$ holds, where $\tau^{(2)}_x$ refers to the FPT to the ball centered at $\bx$ with radius two (to which the asymptotics \eqref{eq:taux asymp} still holds). Summing over $K$ large enough and applying Theorem \ref{thm:main} and Lemma \ref{lemma:concentration} (which also holds with $\tau_x$ replaced by $\tau^{(2)}_x$ up to changing the constants) lead to the upper bound.

\subsection{Targets with varying shapes}\label{sec:targets}

We expect that the same techniques in proving Theorems \ref{thm:main} and \ref{thm:main2} also apply to asymptotics of FPT to targets of distance $x$ from the origin that also vary in shape as $x$ grows. There are numerous such instances, while in the following we provide three canonical examples that might inspire future research. For simplicity, we focus on the spherically-symmetric case. Recall that $\bx=(x,0,\dots,0)\in\R^d$.\\

\textbf{Multiple target locations.} In this setting, we consider a constant $L>0$ and an integer-valued function $c(x)\to\infty$ and assume that the target is given by the union of $c(x)$ disjoint balls of radius one, whose centers lie on $\{x\}\times \{\bz\in\R^{d-1}:\n{\bz}\leq L\sqrt{x}\}$.

\begin{conjecture}\label{c1}
   Under suitable conditions on the growth rate of $c(x)\to\infty$, the FPT to the union of $c(x)$ disjoint balls of radius one centered on $\{x\}\times \{\bz\in\R^{d-1}:\n{\bz}\leq L\sqrt{x}\}$ is given by
   $$\tau^{\mathrm{M}}_{x,c(x)}=\frac{x}{{c}_1}+\frac{d+2}{2c_1\bl}\,\log x-\frac{1}{c_1\bl}\,\log c(x)+O_\p(1).$$
\end{conjecture}

For a sanity check, suppose that $c(x)\asymp x^{d-1}$, then $\tau_{x,c(x)}^{\mathrm{M}}$ is close to the FPT in dimension one. This aligns with Conjecture \ref{c1} in view of \eqref{eq:taux asymp}.\\

\textbf{Expanding balls of radius $R(x)\to\infty$.}
Consider balls centered at $\bx$ with radius $R(x)\to\infty$ as our targets. Slice up the ball using the collection of sets $\{A_\ell\}_{-R(x)\leq\ell<R(x)}$, where each $A_\ell$ lies in the slice $[x+\ell,x+\ell+1)\times\R^{d-1}$. To reach the ball centered at $\bx$ with radius $R(x)$, particles must reach some of the sets $A_\ell$, and hence it is reasonable to compare the first passage contributions to the sets $A_\ell$, possibly by applying Conjecture \ref{c1}. We conjecture that the first passage contribution is dominated by sets $\{A_\ell\}_{-R(x)\leq\ell\leq C-R(x)}$, where $C$ is a constant that does not depend on $x$. This leads to the following conjecture.

\begin{conjecture}\label{c2}
    Under suitable conditions on the growth rate of $R(x)\to\infty$, the FPT to the expanding balls of radii $R(x)$ centered at $\bx$ is given by
   $$\tau_{x,R(x)}^{\mathrm{E}}=\frac{x-R(x)}{{c}_1}+\frac{d+2}{2c_1\bl}\,\log x+O_\p(1).$$
\end{conjecture}

\textbf{Shrinking balls of radius $r(x)\to 0$.}
Analogously, one may consider FPT to balls of shrinking radii $r(x)\to 0$ centered at $\bx$. 

\begin{conjecture}\label{c3}
   Under appropriate assumptions on the density of $\bxi$ and the decay rate of $r(x)\to 0$, the FPT to the  balls of shrinking radii $r(x)$ centered at $\bx$ is given by
   $$\tau_{x,r(x)}^{\mathrm{S}}=\frac{x}{{c}_1}+\frac{d+2}{2c_1\bl}\,\log x+\frac{d}{c_1\bl}\,|\log r(x)|+O_\p(1).$$
\end{conjecture}
If one adopts the same approach to Theorem \ref{thm:main}, one needs to upgrade the ballot theorems to take care of the target of shrinking size, 
which seems beyond the reach of existing techniques of conditioned local limit theorems for random walks.

\begin{appendix}
\section{Index of frequently used notation}\label{appendix:notation}

\begin{tabularx}{\textwidth}{@{}lX@{}}

\toprule

 \multicolumn{2}{c}{\underline{Deterministic quantities}}\\
 
 $d$ & Underlying dimension of the BRW, $d\geq 1$\\
 $\{p_j\}_{j\geq 0}$ & Reproduction law of the BRW\\
$\bxi$ & Jump distribution of the BRW\\
$I(x)$ & Large deviation rate function for the first coordinate $\xi$ of $\bxi$\\
$\widehat{I}(\bx)$&Large deviation rate function for $\bxi$\\
$\rho$ & Expected number of descendants at time one, $\rho=\sum_{i}i\,p_i$\\
$c_1$ & Defined through $I(c_1)=\log\rho$\\
$c_2$ & $I'(c_1)$ \\
$\widehat{c}_1$ & Defined through $\widehat{I}(\widehat{c}_1,\z)=\log\rho$\\
$\bc_2$ & $\nabla \widehat{I}(\widehat{c}_1,\z)$ \\
$m_n$  & (One-dimensional) maximum asymptotic $c_1n-\frac{3}{2c_2}\log n$\\
$t_x$  & First passage time asymptotic $\frac{x}{c_1}+\frac{d+2}{2\bl c_1}\,\log x$ in dimension $d$\\
$t_{x,K}$  & $t_x-K$ \\
$\wx$  & $x-m_{(\log x)^2}$ \\
$\wtx$  & $t_x-(\log x)^2$ (with the exception of Appendix \ref{sec:cond local CLT proof})\\
$k_h$ & $(\log x)^2-h$\\
$\varphi_{n,\delta}(i)$&$e^{-\delta|i|\min(\frac{|i|}{n},1)}$\\
$\Phi_{n,\delta}(x,y)$ & Defined by \eqref{eq:Phi def}\\
$\Psi_{(\log x)^2,\delta}(\ell)$&Defined by \eqref{eq:Psi}\\

\multicolumn{2}{c}{}\\

\multicolumn{2}{c}{\underline{Events}}\\
$\sB_h$ & $h=\max\{\widetilde{h}:\exists\, v_1\in V_{t_x-\widetilde{h}},v_2,v_3\in V_{t_{x,K}}, v_2,v_3\succ v_1\succ v, \eta_{v_i,t_{x,K}}(t_{x,K})\geq x, i=2,3 \}$ \\
$v_{\mathrm{lca}}$ & The particle $v_1\in V_{t_x-h}$ realizing the maximum above (latest common ancestor)\\
$\sC_g$ & $\{\eta_{v_{\mathrm{lca}},t_x-h}(t_x-h)\in[x+g-m_{h-K},x+g-m_{h-K}+1]\}$\\
$\sD_\bu$ & The event that the last $d-1$ coordinates of $\bet_{v_{\mathrm{lca}},t_x-h}(t_x-h)$ belongs to $R_\bu$\\
$W_{\ell,h,g}$&$\{w\in V_{t_x-h}:\,w\succ v,\,\n{\widehat{\bet}_{w,t_x-h}(t_x-h)}\ll h,\,\sB_h\cap\sC_g\}$\\
$\sE_{1,\ell}$&See \eqref{eq:ballot1}\\
$\sE_{2,\ell}$& See \eqref{eq:E2}\\
$\sE_{3,\ell}$&$\big(\bigcup_{g\geq -\ell/K_4}\sC_g\big)\cap\big(\bigcup_{(\log x)^2-K_9\ell\leq h\leq (\log x)^2}\sB_h\big),\,\ell> K_6\log\log x$\\
$\sE_{4,\ell}$&See \eqref{eq:E4}\\
$\sE_{\ell}^*$ & $\sE_{1,\ell}\cup\sE_{2,\ell}$\\
$\sE_{\ell}$ & $\sE_{1,\ell}\cup\sE_{3,\ell}\cup\sE_{4,\ell}$\\
$\sF_{h,g}$&See \eqref{eq:Fhg}\\
$\sG_{n,\beta}$ & $\bigcup_{v\in V_n}\bigcup_{0\leq k\leq n}\left\{\eta_{v,n}(k)\geq \frac{km_n}{n}+\beta+\frac{6}{\bl}(\log\min\{k,n-k\})_+\right\}$ \\
$\sH_\bu$ &  $\{\bet_{v_{\mathrm{lca}},t_x-h}(t_x-h)-\bet_{v_{\mathrm{lca}},t_x-h}(\wtx)\in\R\times R_\bu$\}\\
$\sI_{n,g}$&$\big\{\exists\, v,w\in V_n,\,\mathrm{lca}(v,w)=\emptyset,\,\eta_{v,n}(n)\geq m_n-g,\,\eta_{w,n}(n)\geq m_n-g\big\}$\\
$\sJ_{h,g}$& $\{\exists\, w\in V_{t_x-h},\,w\succ v,\,\eta_{w,t_x-h}(t_x-h)\in[x-m_{h-K}+g-1,x-m_{h-K}+g)\}$\\
$\sK_{h}$ & See \eqref{eq:Kh}\\
$\sK_{h}^*$ &See \eqref{eq:Kh*}\\
$S$ & All-time survival event of the BRW\\

\multicolumn{2}{c}{}\\

\multicolumn{2}{c}{\underline{Other definitions}}\\
$\#A$ & Cardinality of a finite set $A$\\
$w\succ v$ & Particle $w$ is a descendant of $v$\\
$\emptyset$ & The unique particle at time zero\\
$B_x$ & Unit ball centered at $\bx=(x,0,\dots,0)\in\R^d$\\
$B_\bz$ & Unit ball centered at $\bz\in\R^d$\\
$\H_x$ & $[x,\infty)\times\R^{d-1}$\\
$\tau_x$  & First passage time of $d$-dimensional BRW to $B_x$ \\
$M_n$  & Maximum of one-dimensional BRW at time $n$\\
$P_n$ & Production number, defined as $\#\{v\in V_n:\exists\, w\in V_{t_x},\,w\succ v,\,\eta_{w,t_x}(t_x)\geq x\}$\\
$R_\bu$ & The rectangle $[u_1,u_1+1)\times\dots\times[u_{d-1},u_{d-1}+1)$ for $\bu=(u_1,\dots,u_{d-1})$\\
$V_n$ & The collection of particles at time step $n$\\
$\bet_{v,n}(k)$ & Location of the $d$-dimensional random walk that leads to $v\in V_n$ evaluated at time $k$\\
$\eta_{v,n}(k)$ & The first coordinate of $\bet_{v,n}(k)$\\
$\widehat{\bet}_{v,n}(k)$ & The last $d-1$ coordinates of $\bet_{v,n}(k)$\\
$\q^{\ell,v}$ & The probability measure on the BRW restricted to descendants of $v$,\\
&\hspace{1cm} conditioned on $\eta_{v,\widetilde{t}_x}(\widetilde{t}_x)\in[\wx-\ell-1,\wx-\ell)$ and $\sG_{\wtx,K_2}^{\mathrm{c}}$\\

\bottomrule
\label{tab:TableOfNotationForMyResearch}
\end{tabularx}

\section{Escape probability of BRW}
\label{sec:escape}
The goal of this appendix is to establish the following result, which was used in Section \ref{sec:412}.
\begin{lemma}\label{lemma:escape}
  Assume (A1)--(A4).  There exists $K_1>0$ such that
    $$\p(\n{\bet_{v,n}(n)}\geq 1\text{ for all }v\in V_n)\leq K_1e^{-\sqrt{n}/K_1}.$$
\end{lemma}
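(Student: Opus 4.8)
The plan is to split the history $[0,n]$ at the single scale $m:=\lceil\sqrt n\rceil$ and to run $m$ conditionally independent sub-experiments, each of which places a particle inside the unit ball at time $n$ with a probability bounded below by a fixed constant $\delta_0>0$; multiplying the failure probabilities then yields a bound of the form $e^{-c\sqrt n}$. Throughout I argue conditionally on the survival event $S$: this is what is actually used later (all probabilities in Section~\ref{sec:LB} being tacitly conditioned on $S$), and some such conditioning is necessary, since when $\p(S^{\mathrm c})>0$ the event $\{\|\bet_{v,n}(n)\|\geq1\ \forall v\in V_n\}$ contains $\{V_n=\emptyset\}$ and hence has probability bounded below. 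I first record that (A3)--(A4) force $\bxi$ to be centred with a finite, nondegenerate covariance matrix: by (A4), $\phi_\xi$ is finite on some $[0,\lambda_0]$ with $\lambda_0>0$, so $\xi$ has finite variance, and by spherical symmetry every coordinate of $\bxi$ has the same one-dimensional law as $\xi$, with $\var(\xi)=\frac1d\E\|\bxi\|^2>0$. In particular a local CLT and Cram\'er-type estimates are available for the walk $\bS_k=\sum_{i\le k}\bxi_i$ and for $\xi$.

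\textbf{Step 1 (a good configuration at time $m$).} I would show that, conditionally on $S$, there is an event $\Omega_1$ with $\p(\Omega_1\mid S)\geq 1-Ce^{-cm}$ on which $\#V_m\geq m$ and every particle of $V_m$ lies in $B_\z(Am)$ for a suitable constant $A$. The lower bound on $\#V_m$ is a standard lower-deviation estimate for a supercritical Galton--Watson process: writing $\#V_k/\rho^k\to W$ with $\p(W=0\mid S)=0$, using $L^2$-convergence (finite offspring variance, from (A1)), and using the polynomial decay of $\p(W\le\epsilon)$ as $\epsilon\to0$ in the Schr\"oder case (resp.\ $W$ bounded away from $0$ in the B\"ottcher case), one gets $\p(\#V_m\leq e^{cm}\mid S)\leq Ce^{-cm}$ (see \citep{athreya2004branching}), which suffices since $e^{cm}\gg m$. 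The range control is a first-moment bound: by Cram\'er's theorem applied coordinatewise, $\E[\#\{v\in V_m:\|\bet_{v,m}(m)\|>Am\}]=\rho^m\,\p(\|\bS_m\|>Am)\leq 2d\,\rho^m e^{-mI(A/\sqrt d)}\leq e^{-cm}$ once $A$ is so large that $I(A/\sqrt d)>2\log\rho$ (possible because $I(x)\to\infty$), whence $\p(\exists v\in V_m:\|\bet_{v,m}(m)\|>Am\mid S)\leq Ce^{-cm}$ by Markov and $\p(S)>0$.

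\textbf{Step 2 (a uniform constant hitting probability).} The heart of the matter is the bound
$$\delta_0:=\inf_n\ \inf_{\|\bx\|\le Am}\ \p_{\bx}\!\left(\exists\, v\in V_{n-m}:\ \bet_{v,n-m}(n-m)\in B_\z(\tfrac12)\right)>0,$$
where $\p_{\bx}$ denotes the BRW started from one particle at $\bx$. By the many-to-one lemma the first moment equals $\rho^{n-m}\p\!\left(\bS_{n-m}\in B_\z(\tfrac12)-\bx\right)$; since $\|\bx\|\le Am\asymp\sqrt n\asymp\sqrt{n-m}$, the local CLT gives $\p(\bS_{n-m}\in B_\z(\tfrac12)-\bx)\asymp(n-m)^{-d/2}$ uniformly over such $\bx$, so the first moment is $\asymp\rho^{n-m}(n-m)^{-d/2}\to\infty$. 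The matching second-moment estimate — the classical two-lineage computation for BRW, which converges because of the finite offspring second moment (A1) and the finite variance of $\bxi$ — shows the second moment is $\ll$ (first moment)$^2$ uniformly in $\bx$ with $\|\bx\|\le Am$, and Paley--Zygmund then produces $\delta_0$. I expect this to be the main obstacle: one must carry the second-moment bound with constants uniform over all translates $\bx$ on the diffusive scale $\|\bx\|\lesssim\sqrt n$. (Note that $m\asymp\sqrt n$ is precisely the scale that keeps $\bx$ on the diffusive scale of the remaining $n-m$ steps while still leaving $\asymp\sqrt n$ independent seeds in Step~3, which is why $\sqrt n$ is the natural exponent here.)

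\textbf{Step 3 (conclusion).} On $\Omega_1$ pick $m$ distinct particles $v_1,\dots,v_m\in V_m$ with positions $\bx_1,\dots,\bx_m\in B_\z(Am)$. By the branching property the subtrees rooted at $v_1,\dots,v_m$ are independent given $\mathcal F_m$, so the events $A_j:=\{\exists\, w\in V_n,\ w\succ v_j,\ \bet_{w,n}(n)\in B_\z(\tfrac12)\}$ are conditionally independent with $\p(A_j\mid\mathcal F_m)\geq\delta_0$ by Step~2, while $\{\|\bet_{v,n}(n)\|\geq1\ \forall v\in V_n\}\subseteq\bigcap_{j=1}^m A_j^{\mathrm c}$ because $B_\z(\tfrac12)\subseteq B_\z(1)$. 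Hence
$$\p\!\left(\|\bet_{v,n}(n)\|\geq1\ \forall v\in V_n\ \middle|\ S\right)\leq \p(\Omega_1^{\mathrm c}\mid S)+\E\!\left[\bone_{\Omega_1}\textstyle\prod_{j=1}^m\bigl(1-\p(A_j\mid\mathcal F_m)\bigr)\ \middle|\ S\right]\leq Ce^{-cm}+(1-\delta_0)^m,$$
and choosing $K_1$ large enough (and adjusting for the finitely many small $n$) gives $\leq K_1 e^{-\sqrt n/K_1}$, as required. (A two-scale variant of the same argument would in fact give the stronger bound $K_1e^{-n/K_1}$, but the above is all that is needed and is cheaper.)
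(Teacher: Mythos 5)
Your proof is correct and follows the same skeleton as the paper's: split the time axis at $\asymp\sqrt n$, produce many well-localized particles at that time, and let their subtrees independently attempt to hit the unit ball, so that the failure probability is a product. The one genuine difference is the trade-off in the middle step. The paper takes $e^{\delta_1\sqrt n}$ seeds (using a lower-deviation estimate for $\#V_{\sqrt n}$ together with the standard upper bound on $|M_{\sqrt n}|$) and for each seed needs only the cheap first-moment/local-CLT bound that a surviving subtree hits the ball with probability $\gg n^{-1/2}$; the product $(1-\delta_3 n^{-1/2})^{e^{\delta_1\sqrt n}}$ is then superexponentially small. You instead take only $\asymp\sqrt n$ seeds but upgrade the per-seed success probability to a constant $\delta_0$ via a uniform second-moment/Paley--Zygmund argument, which you rightly identify as the technical crux; it does go through (the two-lineage sum is dominated by early branch times and is $\ll(\text{first moment})^2$ uniformly over starting points $\bx$ with $\n{\bx}\lesssim\sqrt n$), but it is strictly more work than the paper's route, which needs no second moment at all in the terminal window. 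Your explicit treatment of the conditioning on $S$ is also more careful than the paper's: as you observe, the unconditional statement is vacuously violated on extinction when $p_0>0$, and the inputs the paper invokes (e.g.\ $\#V_{\sqrt n}\geq e^{\delta_1\sqrt n}$ with probability $1-O(e^{-\delta_2\sqrt n})$) only hold conditionally on survival, which is indeed how the lemma is used in Section \ref{sec:LB}.
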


\begin{remark}
    The closest result in this direction is perhaps \citep{zhang2023lower}, which studied convergence rates of
    $$\p(\#\{v\in V_n:\eta_{v,n}(n)>\theta c_1 n\}>e^{an})$$
    for $a\in[0,\log\rho-I(\theta c_1))$. For branching Brownian motion, \citep{Oz2020branching,oz2023} studied large deviation probabilities of the number of particles in a linearly moving ball. In particular, the analogue of Lemma \ref{lemma:escape} for BBM was established as a special case of Theorem 2.1 of \citep{oz2023}. Our result is quite crude (for instance, we believe that the escape probability can be improved to $O(e^{-n/L})$ based on analogues in \citep{Oz2020branching}), but it suffices for our purpose.  Additionally, the arguments required are relatively simple compared to the literature above.
\end{remark}

\begin{proof}[Proof of Lemma \ref{lemma:escape}]
We provide the proof in the one-dimensional setting and the multi-dimensional case follows similarly. The strategy is to evolve particles independently in the periods $[0,\sqrt{n}]$ and $[\sqrt{n},n]$. We show that at time $\sqrt{n}$, with high probability there are $e^{\delta\sqrt{n}}$ particles present and located in $O(\sqrt{n})$, and with high probability, a certain portion of the particles located in $O(\sqrt{n})$ at time $\sqrt{n}$ will have a descendant in $[-1,1]$ at time $n$.

To carry out the above plan, let $\delta_1>0$ be a small constant and we define the events 
$$E_1:=\{\#V_{\sqrt{n}}\geq e^{\delta_1\sqrt{n}}\}\quad \text{and}\quad E_2:=\Big\{\forall v\in V_{\sqrt{n}},\,|\eta_{v,\sqrt{n}}(\sqrt{n})|<\frac{\sqrt{n}}{\delta_1}\Big\}.$$
It follows from the main result of \citep{zhang2023lower} and Theorem 3.2 of \citep{gantert2018large} that $\p(E_1\cap E_2)>1-O(e^{-\delta_2\sqrt{n}})$ for some $\delta_2>0$. On the event $E_1\cap E_2$, we may identify particles $v_j,~1\leq j\leq e^{\delta_1\sqrt{n}}$, where $\eta_{v_j,\sqrt{n}}(\sqrt{n})\in[-\sqrt{n}/\delta_1,\sqrt{n}/\delta_1]$. Let $S_j$ denote the survival event of the particle $v_j$. It follows from local CLT applied to $\xi$ (see e.g.~Lemma 23 of \citep{blanchet2024first}) that for some $\delta_3>0$,
$$\p\bigg(\exists\, w\in V_{n},\,w\succ v_j,\,|\eta_{w,n}(n)|\leq 1\mid S_j\bigg)\geq \frac{\delta_3}{\sqrt{n}},$$
and hence using independence, the event
$$E_3:=\Big\{\#\{w\in V_{n}:|\eta_{w,n}(n)|\leq 1\}>0\Big\}$$
satisfies $$\p(E_3\mid E_1\cap E_2)\geq 1-(1-\frac{\delta_3}{\sqrt{n}})^{e^{\delta_1\sqrt{n}}}>1-O(e^{-n}).$$
We thus conclude that
\begin{align*}
    \p(\n{\bet_{v,n}(n)}\geq 1\text{ for all }v\in V_n)&\leq \p(E_1^{\mathrm{c}}\cup E_2^{\mathrm{c}})+\p(E_3^{\mathrm{c}}\mid E_1\cap E_2)\leq O(e^{-\delta_2\sqrt{n}}).
\end{align*}
This proves Lemma \ref{lemma:escape}.
\end{proof}

\section{Some upper bounds of (conditional) ballot probabilities}
\label{sec:upper ballot}
\subsection{A multi-dimensional ballot upper bound}

The results in this appendix are essential for establishing Lemma \ref{lemma:uniform bd} through Lemma \ref{lemma:uniform local CLT}. 
Following the seminal work of \citep{denisov2015random} on random walks in cones, we prove a multi-dimensional ballot upper bound where the random walk reaches a target in $\R^d$ and the path projected onto the first dimension is constrained by a linear barrier tilted by a logarithmic term. The connection to random walks in cones (i.e.~collections of rays from $\z\in\R^d$ going through a certain open subset of the sphere $\S^{d-1}$) is realized by letting the cone be the half-space $\{\bx=(x_1,\dots,x_d)\in\R^d:\,x_1>0\}$. The following statements are self-contained, but we refer to Section 2.4.1 of \citep{blanchet2024first} for a brief introduction to random walks in cones.

In the following, let $\{\bxi_i\}_{i\geq 1}$ be an i.i.d.~sequence in $\R^d$ satisfying (A2)--(A4) and $\bS_n=\bxi_1+\dots+\bxi_n$ be its partial sum. Denote by $\xi_i$ and $S_n$ their first coordinates. Let
$$\overline{\psi}(k)=-y-L\log\min\{k,n-k\}_+,~0\leq k\leq n,$$
where $L>0$ is a fixed constant. Recall that $R_\bu=[u_1,u_1+1)\times\dots\times[u_{d-1},u_{d-1}+1)$ for $\bu=(u_1,\dots,u_{d-1})\in\R^{d-1}$. The following result improves upon Lemma 16 of \citep{blanchet2024first}. 
\begin{lemma}\label{thm:d-dim ballot with log}
    Consider $n\geq 1$, $1\leq y\ll \log n$, and $y+a>0$. Then 
    $$\p(\bS_n\in [a,a+1]\times R_\bu,\,S_k\geq \overline{\psi}(k)\text{ for all }0<k<n)\ll y(y+a)n^{-\frac{d+2}{2}}.$$
\end{lemma}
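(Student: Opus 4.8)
\textbf{Proof proposal for Lemma \ref{thm:d-dim ballot with log}.}

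The plan is to combine the one-dimensional ballot estimate with the decoupled behavior of the remaining $d-1$ coordinates through a standard "split in the middle" argument. First I would handle the first coordinate alone: by Lemma 2.3 of \citep{bramson2016convergence} (or the cited Lemma 16 of \citep{blanchet2024first}), for the random walk $S_k$ with the barrier $\overline\psi$, one has $\p(S_n\in[a,a+1],\,S_k\geq \overline\psi(k)\text{ for }0<k<n)\ll y(y+a)n^{-3/2}$. The extra gain of a factor $n^{-(d-1)/2}$ must come from the $(d-1)$ transverse coordinates $\widehat{\bS}_n:=(\bS_n^{(2)},\dots,\bS_n^{(d)})$ landing in the unit box $R_\bu$, which by spherical symmetry (A3) are, conditionally on the first-coordinate path, a mean-zero random walk in $\R^{d-1}$ whose typical spread at time $n$ is $\asymp \sqrt n$, so a local CLT gives probability $\asymp n^{-(d-1)/2}$ for it to be in any fixed unit box. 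The challenge is that the transverse increments are not independent of the first-coordinate increments in general (though here, under (A3), spherical symmetry does decouple the \emph{radial-direction} from the transverse directions when we project onto $\be_1$; more carefully one should note that for a spherically symmetric law, conditioning on the sequence $(\xi_1,\dots,\xi_n)$ of first coordinates still leaves the transverse parts a conditionally independent sequence with conditionally mean-zero, conditionally isotropic-in-$\R^{d-1}$ increments of bounded conditional covariance on the bulk of the probability space).

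Concretely, I would proceed as follows. Split time at $m=\lfloor n/2\rfloor$. On $[0,m]$ run the first-coordinate barrier argument to control $\p(S_k\geq \overline\psi(k),\,0\le k\le m,\ S_m\in[b,b+1])$, which by the one-sided ballot theorem is $\ll y\,(y+b)_+\,m^{-3/2}$ summably in $b$; symmetrically handle $[m,n]$ with the time-reversed barrier to land $S_n\in[a,a+1]$ with the endpoint contributing $(y+a)$; the two halves couple through the value $S_m=b$, and summing the product over $b\in\mathbb Z_{\ge -y}$ recovers the bound $y(y+a)n^{-3/2}$ for the first coordinate (this is exactly the structure of equation (20)–(23) in \citep{bramson2016convergence}). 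For the transverse coordinates, I would condition on the entire first-coordinate trajectory $\mathcal S:=\sigma(S_0,\dots,S_n)$; under (A3) the conditional law of the increments $\widehat\bxi_i$ given $\mathcal S$ is the law of $\bxi$ restricted to the sphere of radius $\sqrt{\|\bxi_i\|^2}$—more usefully, the conditional increments remain mean zero and isotropic in $\R^{d-1}$ with a conditional covariance that is $\asymp 1$ except on an event of negligible probability (handled by a truncation argument on $\max_i\|\bxi_i\|$, using the finite third—indeed here just second—moment). On that good event a conditional local CLT in $\R^{d-1}$ (Lemma 23 of \citep{blanchet2024first} / Lemma \ref{lemma:conditioned local CLT2}) yields $\p(\widehat\bS_n\in R_\bu\mid\mathcal S)\ll n^{-(d-1)/2}$ uniformly in $\bu$; on the bad event one bounds crudely. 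Multiplying the conditional transverse bound by the first-coordinate ballot bound and taking expectation over $\mathcal S$ gives
$$\p(\bS_n\in[a,a+1]\times R_\bu,\ S_k\ge\overline\psi(k)\ \forall\,0<k<n)\ \ll\ y(y+a)\,n^{-3/2}\cdot n^{-(d-1)/2}\ =\ y(y+a)\,n^{-\frac{d+2}{2}},$$
as claimed.

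The main obstacle, and the place where most care is needed, is the uniformity of the transverse local CLT bound over \emph{all} $\bu\in\R^{d-1}$ combined with the conditioning on a barrier-constrained first coordinate: one must ensure the $n^{-(d-1)/2}$ bound holds uniformly in $\bu$ even after conditioning on $\mathcal S$ ranging over the (atypical, barrier-constrained) paths that dominate the first-coordinate probability. This is resolved by observing that the transverse conditional local CLT depends on $\mathcal S$ only through $\sum_i \mathrm{Var}(\widehat\xi^{(j)}_i\mid\mathcal S)$, which concentrates near $\asymp n$ regardless of the first-coordinate barrier event (the barrier merely tilts which increments are "large" in the first coordinate, but by isotropy this does not shrink the transverse variance below order $n$ with overwhelming probability). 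A secondary technical point is the need to pass from a conditional \emph{local} limit theorem for a box of side $1$ to one valid at the relevant scale; since $R_\bu$ has fixed unit size and the transverse walk has spread $\asymp\sqrt n\to\infty$, this is exactly the regime where the local CLT gives $\Theta(n^{-(d-1)/2})$, so no rescaling subtlety arises. I would package the conditional local CLT as a lemma analogous to Lemma \ref{lemma:uniform local CLT} and cite the random-walks-in-cones machinery of \citep{denisov2015random} as in \citep{blanchet2024first} for the joint statement, noting that our improvement over their Lemma 16 is precisely the extra $n^{-(d-1)/2}$ obtained by not discarding the transverse localization.
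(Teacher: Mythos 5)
Your route is genuinely different from the paper's. The paper first proves the \emph{straight}-barrier $d$-dimensional bound (Lemma \ref{lemma:ballot linear}) by quoting the random-walks-in-cones results of Denisov--Wachtel with the cone taken to be a half-space --- so the coupling between the first coordinate and the transverse coordinates is handled wholesale by that citation --- and then inserts the logarithmic tilt by the Bramson--Ding--Zeitouni device of decomposing over the time $\tau$ at which $S_k$ attains its minimum, bounding $\p(\Omega'_{n,y}\setminus\Omega_{n,y})$ range by range ($\tau\leq y^{19/10}$, $\tau\in(y^{19/10},y^7)$, $\tau\in(y^7,n/2]$) using independence before and after $\tau$ and Lemma \ref{lemma:ballot linear} on each piece. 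You instead take the one-dimensional log-tilted ballot bound as given and try to manufacture the extra $n^{-(d-1)/2}$ by conditioning on the entire first-coordinate trajectory and applying a transverse local CLT. Your approach, if completed, would be more self-contained in the transverse direction; the paper's has the advantage that the only genuinely $d$-dimensional input is an off-the-shelf cone estimate.

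The gap is in the step you yourself flag as the main obstacle, and it is not closed by the references you invoke. Lemma \ref{lemma:conditioned local CLT2} (and Lemma 23 of \citep{blanchet2024first}) condition on a one-dimensional \emph{event} and proceed by a joint exponential tilt followed by a local CLT for an i.i.d.\ $d$-dimensional sum; they do not give a bound on $\p(\widehat{\bS}_n\in R_\bu\mid\sigma(S_0,\dots,S_n))$. Once you condition on the full first-coordinate path, the transverse increments are independent but no longer identically distributed, and individual summands can be conditionally degenerate (for a law supported on a sphere, $\xi_i$ near the radius forces $\widehat{\bxi}_i$ near $\z$), so the i.i.d.\ local CLT does not apply. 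What you actually need is (i) a multidimensional concentration-function inequality for sums of independent non-identically distributed vectors (Esseen/Kolmogorov--Rogozin type), giving $\ll N^{-(d-1)/2}$ where $N$ is the number of increments with non-degenerate conditional transverse law, together with (ii) a proof that, under the barrier-and-endpoint-conditioned law of the first coordinate, at least $cn$ increments are non-degenerate except on an event of conditional probability $o(n^{-(d-1)/2})$; the latter is plausible (after tilting, the complement has unconditional probability $e^{-cn}$ while the conditioning event has only polynomial cost), but neither ingredient is supplied or citable from the paper as written. As it stands the crucial uniform bound $\p(\widehat{\bS}_n\in R_\bu\mid\mathcal S)\ll n^{-(d-1)/2}$ is asserted rather than proved, and it is exactly the content the paper outsources to \citep{denisov2015random}.
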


\begin{lemma}\label{lemma:ballot linear}
   Consider $n\geq 1$, $1\leq y\ll \log n$, and  $y+a>0$. It holds 
    $$\p(\bS_n\in [a,a+1]\times R_\bu,\,S_k\geq -y\text{ for all }0<k<n)\ll  y(y+a)n^{-\frac{d+2}{2}}.$$
\end{lemma}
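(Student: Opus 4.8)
\textbf{Proof proposal for Lemma \ref{lemma:ballot linear}.}
The plan is to deduce Lemma \ref{lemma:ballot linear} from Lemma \ref{thm:d-dim ballot with log} by a simple domination argument. The key observation is that the linear barrier $\{S_k \ge -y \text{ for all } 0 < k < n\}$ is \emph{more} restrictive than the logarithmically-tilted barrier $\{S_k \ge \overline{\psi}(k) \text{ for all } 0 < k < n\}$, since $\overline{\psi}(k) = -y - L\log\min\{k,n-k\}_+ \le -y$ for all $k$. Hence the event in Lemma \ref{lemma:ballot linear} is contained in the event in Lemma \ref{thm:d-dim ballot with log} (with the same $y$, $a$, $\bu$), and so
\[
\p(\bS_n\in [a,a+1]\times R_\bu,\,S_k\geq -y\text{ for all }0<k<n)\leq \p(\bS_n\in [a,a+1]\times R_\bu,\,S_k\geq \overline{\psi}(k)\text{ for all }0<k<n)\ll y(y+a)n^{-\frac{d+2}{2}},
\]
which is exactly the claimed bound. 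This is immediate once Lemma \ref{thm:d-dim ballot with log} is in hand, and the hypotheses ($n\ge1$, $1\le y\ll\log n$, $y+a>0$) match verbatim, so no extra care is needed about ranges of parameters.

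If one instead wanted a self-contained proof not routing through the log-tilted version, I would mimic the argument for Lemma \ref{thm:d-dim ballot with log} itself: first shift the walk by $y$ so the barrier becomes the positive half-space $\{S_k \ge 0\}$, then invoke the random-walk-in-cones machinery of \citep{denisov2015random} for the half-space cone $\{x_1 > 0\}$. The exit-time asymptotics there give $\p(\tau_{\mathrm{cone}} > n) \asymp (y+a)\,n^{-1/2}$-type decay in the first coordinate (the harmonic function for the half-space being linear), while a local CLT in the remaining $d-1$ coordinates contributes the $n^{-(d-1)/2}$ factor for landing in the unit rectangle $R_\bu$; combining, and accounting for the "both endpoints near the boundary" structure via a decomposition at time $n/2$ (one factor $y$ from the start, one factor $(y+a)$ from the end), yields the $y(y+a)n^{-(d+2)/2}$ rate. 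But since Lemma \ref{thm:d-dim ballot with log} is already established in the excerpt, the domination argument is both shorter and cleaner.

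The only "obstacle" worth flagging is purely cosmetic: one must check that the constant $L$ appearing implicitly in $\overline{\psi}$ in Lemma \ref{thm:d-dim ballot with log} may be taken to be any fixed positive constant (in particular, the statement holds for every $L > 0$), so that the inequality $\overline{\psi}(k) \le -y$ is legitimately available; this is clear from the statement of Lemma \ref{thm:d-dim ballot with log}, where $L$ is described as "a fixed constant" and the conclusion holds uniformly. With that noted, the proof is a one-line inclusion of events followed by an application of the already-proved estimate.
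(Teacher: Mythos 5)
Your primary argument is circular within the logical structure of the paper. The inclusion of events you observe is correct as a set-theoretic fact: the linear barrier $\{S_k\ge -y\}$ is indeed contained in the log-tilted barrier $\{S_k\ge\overline{\psi}(k)\}$, so the probability in Lemma \ref{lemma:ballot linear} is dominated by the one in Lemma \ref{thm:d-dim ballot with log}. The problem is the direction of dependency: in the paper, Lemma \ref{lemma:ballot linear} is the \emph{input} and Lemma \ref{thm:d-dim ballot with log} is the \emph{output}. The paper's proof of Lemma \ref{thm:d-dim ballot with log} explicitly decomposes over the location $\tau$ of the minimum of $S_k$ and invokes Lemma \ref{lemma:ballot linear} repeatedly (for the contribution of $\tau\in I_y$, for the post-$\tau$ segment when $k\le y^{19/10}$, for the range $k\in(y^{19/10},y^7)$, and finally to bound $\p(\Omega_{n,y})$ itself). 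So deducing Lemma \ref{lemma:ballot linear} from Lemma \ref{thm:d-dim ballot with log} assumes exactly what is being proved. Unless you supply an independent proof of the log-tilted estimate, the domination argument establishes nothing.

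Your fallback sketch is, in fact, the paper's actual proof: Lemma \ref{lemma:ballot linear} is obtained directly from the random-walks-in-cones machinery of \citep{denisov2015random} (their Lemmas 27 and 28 together with Theorem 1), with the cone taken to be the half-space $\{\bx\in\R^d:\,x_1>0\}$, for which the harmonic function is linear (so $p=1$), producing the factors $y$ and $y+a$ from the two endpoints and $n^{-(d+2)/2}$ from the exit-time and local-CLT asymptotics. The one point the paper adds, which you should not gloss over, is that the results of \citep{denisov2015random} are stated in the lattice case, and one must check that the derivation of their Lemmas 27 and 28 uses only non-singularity of the jumps rather than the lattice property. If you promote your second paragraph to the main argument and address that transfer to the non-lattice setting, you recover the paper's proof; as written, with the domination argument as the primary route, the proposal has a genuine logical gap.
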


\begin{proof}
   This follows from the derivation of Lemmas 27 and 28 in \citep{denisov2015random}, along with their Theorem 1.  Note that $p=1$ therein since we take the cone to be the half-space $\{\bx\in\R^d:\,x_1> 0\}$.  The results in \citep{denisov2015random} were stated in the lattice case but the derivation of Lemmas 27 and 28 depends only on the non-singularity of the jumps (which is in turn a consequence of (A5)) but not the lattice property.
\end{proof}

The proof of Lemma \ref{thm:d-dim ballot with log} follows a similar route as the proof of (11) in \citep{bramson2016convergence}, while in the proof we apply now Lemma \ref{lemma:ballot linear} instead of (6) of Lemma 2.1 therein.

\begin{proof}[Proof of Lemma \ref{thm:d-dim ballot with log}]
Let $\tau$ be the first time in $[0,n]$ at which $S_k$ takes its minimum, and suppose that $a\geq -1/2$.  We split into cases depending on the value of $\tau$ and apply a union bound. By symmetry of the function $\overline{\psi}$, we may assume $\tau<n/2$. Define $I_y:=\bZ\cap(y^7,n/2]$ and
\begin{align*}
    \Omega_{n,y}&=\big\{\bS_n\in [a,a+1]\times R_\bu,\,S_k\geq -y\text{ for all }0<k<n\big\},\\
    \Omega'_{n,y}&=\big\{\bS_n\in [a,a+1]\times R_\bu,\,S_k\geq \overline{\psi}(k)\text{ for all }0<k<n\big\}.
\end{align*}
It follows from Lemma \ref{lemma:ballot linear} that (using $a\geq -1/2$)
\begin{align*}
    \p(\tau\in I_y;{\Omega'_{n,y}})&\ll \sum_{k\in I_y}\sum_{j=0}^{y+L\log k}\frac{(j+1)(j+a)}{k^{3/2}(n-k)^{\frac{d+2}{2}}}\\
    &\ll \sum_{k\in I_y}\frac{(y+L\log k+1)^2(y+L\log k+a)}{k^{3/2}(n-k)^{\frac{d+2}{2}}}\ll (y+a)y^{-3/2}n^{-\frac{d+2}{2}}.
\end{align*}

Next, we consider $k\leq y^{19/10}$. 
On the event $\Omega'_{n,y}\setminus \Omega_{n,y}$, we have $S_k\in[-y-L\log k,-y]$. Therefore, by independence before and after time $k$ and Lemma \ref{lemma:ballot linear},
\begin{align*}
    &\hspace{0.5cm}\p(\tau=k;{\Omega'_{n,y}\setminus \Omega_{n,y}})\\
    &\leq \p(S_k\in[-y-L\log k,-y])\\
    &\hspace{0.3cm}\times\max_{x\in[-y-L\log k,-y]}\sup_{\substack{\bu'\in\R^{d-1}}}\p(S_j\geq 0,\,\forall 1\leq j\leq n-k;\,\bS_{n-k}\in [a+x,a+x+1]\times R_{\bu'})\\
    &\ll y^{-2}\times (a+y+\log k)n^{-\frac{d+2}{2}}\ll  y^{-2}(y+a)n^{-\frac{d+2}{2}},
\end{align*}
where the upper bound for $\p(S_k\in[-y-L\log k,-y])$ follows from the same reasoning as below (91) of \citep{bramson2016convergence}.

For $k\in(y^{19/10},y^7)$, we again apply independence of the random walk before and after time $k$. We have in this case the boundary of $\Omega'_{n,y}$ is at most $\ll y^{10}$ below that for $\Omega_{n,y}$ uniformly in $k\in(y^{19/10},y^7)$. Therefore, applying Lemma \ref{lemma:ballot linear} twice yields
\begin{align*}
    \p(\tau=k;{\Omega'_{n,y}\setminus \Omega_{n,y}})&\ll  y^{1/10}k^{-3/2}\times (y+\log k)(y+\log k+a)n^{-\frac{d+2}{2}}\\
    &\ll y^{11/10}(y+a)k^{-3/2}n^{-\frac{d+2}{2}}.
\end{align*}


Combining the above estimates yields that for all $\bu\in\R^{d-1}$,
\begin{align*}
    \p(\Omega'_{n,y}\setminus \Omega_{n,y})&\ll (y+a)y^{-3/2}n^{-\frac{d+2}{2}}+\sum_{k=1}^{y^{19/10}}y^{-2}(y+a)n^{-\frac{d+2}{2}}\\
    &\hspace{5cm}+\sum_{k=y^{19/10}}^{y^7}y^{11/10}(y+a)k^{-3/2}n^{-\frac{d+2}{2}}\\
    &\ll  y(y+a)n^{-\frac{d+2}{2}}
\end{align*}
    for some $\delta>0$. Also note that Lemma \ref{lemma:ballot linear} yields $\p(\Omega_{n,y})\ll y(y+a)n^{-\frac{d+2}{2}}$. This finishes the proof for $a\geq -1/2$. The case $a<-1/2$ follows by reversing the random walk, using precisely the same argument at the end of the proof of (11) in \citep{bramson2016convergence}.
\end{proof}

\subsection{A ballot upper bound involving moderate deviation}\label{sec:ballot ub}

In this appendix, we revisit the recent work \citep{grama2021conditioned} and extract one-dimensional ballot upper bounds involving moderate deviation. Consider a one-dimensional non-lattice random walk $\{S_n\}_{n\geq 1}$ with centered i.i.d.~jumps and finite moments of any order. 
The quantity of interest is 
\begin{align}
    \p(x+S_n\in[y,y+1],\,x+S_k\geq  0\text{ for all }1\leq k\leq n),\label{eq:b2}
\end{align}
where $|x-y|\gg\sqrt{n}$. Applying the classical ballot theorem leads only to an upper bound of $\ll xyn^{-3/2}$, which does not account for the fact that the (conditioned) random walk is unlikely to travel a distance of $|x-y|$ in time $n$. 
The Brownian motion analogue of \eqref{eq:b2} was analyzed in Lemma 18 of \citep{denisov2015random}, while we are unaware of general tight asymptotics for the random walk case. 
A notable exception is Theorem 1.2 of \citep{grama2021conditioned}, which provided precise asymptotics of the ballot probability \eqref{eq:b2} for $x\in[0,n^{1/2-\ee}]$, $y\in [C_1\sqrt{n},C_2\sqrt{n\log n}]$ for fixed constants $C_1,C_2>0$. On the contrary, we satisfy ourselves with asymptotic upper bounds, which allow for a wider range of the parameter $y$. We first record below the result from \citep{grama2021conditioned} that we will employ.

\begin{lemma}\label{lemma:ballot+deviation 01}
 Fix $\ee\in(0,1/2)$ and $L>0$.   There exists $\delta>0$ such that uniformly for $x\in[0,n^{1/2-\ee}]$ and $y\geq \sqrt{n}/L$,
 $$\p(x+S_n\in[y,y+1],\,x+S_k\geq 0\text{ for all }1\leq k\leq n)\ll xn^{-1-\ee}+xyn^{-3/2}e^{-\frac{\delta y^2}{n}}.$$
\end{lemma}

\begin{proof}
    By (3.1) of Theorem 3.1 in \citep{grama2021conditioned} (and following the beginning of the proof of Theorem 1.2 therein), it holds uniformly for $x\in[0,n^{1/2-\ee}]$ that
    \begin{align}
        \p(x+S_n\in[y,y+1],\,x+S_k\geq 0\text{ for all }1\leq k\leq n)\ll \frac{x}{n}J_n,\label{eq:b1}
    \end{align}
     where 
    $$J_n:=\int_{-\ee}^{1+\ee}\bigg(\Big(1+\frac{t+y}{\sqrt{n}}\Big)e^{-\frac{\delta(t+y)^2}{n}}+n^{-\ee}\bigg)\,\d t$$
    for some large constant $C>0$ depending on the law of the jump and $\ee$. We have uniformly for $y\geq \sqrt{n}/L$, 
    $$J_n\ll n^{-\ee}+\frac{y}{\sqrt{n}}e^{-\frac{\delta y^2}{n}}.$$
Combined with \eqref{eq:b1} finishes the proof.    
\end{proof}

Let us now consider a logarithmically tilted barrier. 
Applying the same arguments that derived Lemma \ref{thm:d-dim ballot with log} from Lemma \ref{lemma:ballot linear}, and using Lemma \ref{lemma:ballot+deviation 01} instead of Lemma \ref{lemma:ballot linear}, we arrive at the following result.

\begin{lemma}
    \label{lemma:ballot+deviation 1}
    Fix $\ee\in(0,1/2)$ and $L,L'>0$.   There exists $\delta>0$ such that uniformly for $x\in[0,n^{1/2-\ee}]$ and $y\geq \sqrt{n}/L$,
 \begin{align*}
     &\p(x+S_n\in[y,y+1],\,x+S_k\geq  -L'(\log\min\{k,n-k\})_+\text{ for all }1\leq k\leq n)\\
     &\hspace{4cm}\ll xn^{-1-\ee}+xyn^{-3/2}e^{-\frac{\delta y^2}{n}}.
 \end{align*}
\end{lemma}

Due to the term $xn^{-1-\ee}$, the bound in Lemma \ref{lemma:ballot+deviation 1} cannot be tight for $y\gg \sqrt{n\log n}$.
While a general tight bound seems reminiscent in the literature, the following weaker estimate suffices for our purpose.

\begin{lemma}\label{lemma:ballot+deviation 2}
 Fix $L'>0$.   There exists $\delta>0$ such that uniformly for $x,y>0$,
$$\p(x+S_n\in[y,y+1],\,x+S_k\geq  -L'(\log\min\{k,n-k\})_+\text{ for all }1\leq k\leq n)\ll \varphi_{n,\delta}(|x-y|).$$
In particular, for $x\in[0,n^{1/2-\ee}]$ with $\ee\in(0,1/2)$ and $y\gg \sqrt{n}$,
    \begin{align*}
        &\p(x+S_n\in[y,y+1],\,x+S_k\geq  -L'(\log\min\{k,n-k\})_+\text{ for all }1\leq k\leq n)\\
        &\hspace{3cm}\leq\p(|S_n|\geq |x-y|)\ll \varphi_{n,\delta}(y).
    \end{align*}
\end{lemma}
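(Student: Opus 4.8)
\textbf{Proof proposal for Lemma \ref{lemma:ballot+deviation 2}.}
The plan is to prove the first, general estimate $\p(x+S_k\in[y,y+1],\,x+S_k\geq -L'(\log\min\{k,n-k\})_+\text{ for all }1\leq k\leq n)\ll \varphi_{n,\delta}(|x-y|)$ directly, since the ``in particular'' clause is an immediate consequence: for $x\in[0,n^{1/2-\ee}]$ and $y\gg\sqrt n$ one has $|x-y|\asymp y$, so $\varphi_{n,\delta}(|x-y|)\asymp\varphi_{n,\delta}(y)$ after adjusting $\delta$, and the trivial domination of the ballot probability by $\p(|S_n|\ge|x-y|)$ together with a standard moderate/large deviation bound (using that the jumps have finite moments of all orders, e.g.\ Theorem 3.7.1 of \citep{dembo2009large} for the moderate range $|x-y|\ll n$ and Cram\'er for $|x-y|\gtrsim n$) gives the same right-hand side. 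So the real content is the first display.

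First I would dispose of the easy regime $|x-y|\le C\sqrt n$ for a fixed large constant $C$: there $\varphi_{n,\delta}(|x-y|)\asymp 1$ and the bound is trivial. So assume $|x-y|>C\sqrt n$. The key step is to discard the barrier constraint entirely: the event in question is contained in $\{x+S_n\in[y,y+1]\}$, hence its probability is at most $\p(S_n\in[y-x,y-x+1])$. Now I would invoke a one-sided local (Stone-type) moderate deviation estimate for the non-lattice walk $S_n$ with light tails: uniformly over $|z|$ in the moderate range $\sqrt n\le |z|\ll n$ one has $\p(S_n\in[z,z+1])\ll n^{-1/2}e^{-\delta' z^2/n}$ for some $\delta'>0$ (this follows from a local CLT with a Cram\'er correction, or alternatively from exponential tilting: tilt by the parameter $\theta$ solving $(\log\phi)'(\theta)=z/n$, apply the ordinary local CLT under the tilted measure, and bound the resulting exponential factor by $e^{-\delta' z^2/n}$ using $\theta\asymp z/n$ in the moderate range). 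For the genuinely large regime $|z|\gtrsim n$ the same tilting argument, or just Cram\'er's theorem, yields $\p(S_n\in[z,z+1])\ll e^{-\delta' |z|}$. Matching these two bounds against the definition $\varphi_{n,\delta}(z)=e^{-\delta|z|\min(|z|/n,1)}$ (which equals $e^{-\delta z^2/n}$ for $|z|\le n$ and $e^{-\delta|z|}$ for $|z|\ge n$) and absorbing the harmless $n^{-1/2}$ prefactor by slightly decreasing $\delta$, we get $\p(S_n\in[z,z+1])\ll\varphi_{n,\delta}(z)$ with $z=y-x$, which is exactly the claim.

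The main obstacle — really the only non-routine point — is producing the local moderate-deviation estimate $\p(S_n\in[z,z+1])\ll n^{-1/2}e^{-\delta z^2/n}$ uniformly across the full range $\sqrt n\le |z|\ll n$ for a non-lattice walk; the lattice case is classical (Stone), and the non-lattice version requires either citing a local-CLT-with-Cram\'er-correction reference or carrying out the change-of-measure-plus-local-CLT argument by hand, making sure the tilting parameter stays in the region where $\log\phi$ is finite and smooth (guaranteed here by the finite-exponential-moment hypothesis, hence by (A1)--(A4) in the paper's context, or by the ``finite moments of any order'' assumed in this appendix). Once that ingredient is in place the rest is bookkeeping: split into $|x-y|\le C\sqrt n$, $C\sqrt n<|x-y|\le n$, and $|x-y|>n$, apply the appropriate bound in each, and read off $\varphi_{n,\delta}$. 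I would also remark that one could alternatively route everything through Lemma \ref{lemma:ballot+deviation 1} plus the trivial $\p(|S_n|\ge|x-y|)$ bound, but since dropping the barrier is lossless for an upper bound, the direct argument above is cleaner and avoids the restriction $x\le n^{1/2-\ee}$ in the general statement.
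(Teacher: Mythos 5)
Your proposal is correct and follows the same basic route as the paper: discard the barrier constraint (which is lossless for an upper bound) and control what remains by a deviation estimate for $S_n$. The difference is that the paper's proof is a one-liner: it bounds the ballot probability directly by the integrated tail $\p(|S_n|\geq|x-y|)$ and invokes Theorem 3.7.1 of \citep{dembo2009large}, which already gives $\ll\varphi_{n,\delta}(|x-y|)$; no local (Stone-type) estimate is needed. The ``main obstacle'' you identify --- a local moderate-deviation bound $\p(S_n\in[z,z+1])\ll n^{-1/2}e^{-\delta z^2/n}$ with a Cram\'er correction for non-lattice walks --- is therefore a non-issue: you yourself observe, when dispatching the ``in particular'' clause, that the trivial domination by $\p(|S_n|\geq|x-y|)$ plus moderate/large deviations suffices, and the identical observation handles the general first display (the statement asks only for $\varphi_{n,\delta}$, which has no $n^{-1/2}$ prefactor, so the local refinement buys nothing here). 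Your argument is valid, just more elaborate than necessary.
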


\begin{proof}
   By a moderate deviation estimate (e.g., Theorem 3.7.1 of \citep{dembo2009large}), we have
   \begin{align*}
       &\p(x+S_n\in[y,y+1],\,x+S_k\geq  -L'(\log\min\{k,n-k\})_+\text{ for all }1\leq k\leq n)\\
       &\hspace{3cm}\leq\p(|S_n|\geq |x-y|)\ll \varphi_{n,\delta}(|x-y|)
   \end{align*}
   for some $\delta>0$.
\end{proof}

\begin{remark}\label{rem}
    By symmetry, the same statements of Lemmas \ref{lemma:ballot+deviation 1} and \ref{lemma:ballot+deviation 2} hold with the roles of $x,y$ interchanged. Moreover, the same results hold for $0\leq x\ll n^{1/2-\ee}$ by a suitable scaling.
\end{remark}

\subsection{BRW conditioned on two descendants with large displacements separated at the first step}\label{sec:two BRw}

In practice, when conditioning on the event $\sB_h$ and a fixed location at time $t_x-h$, we would like to understand the conditional law of the BRW in the period $[t_x-h,t_{x,K}]$, given the information that two trajectories separated at time $t_x-h$ both reach the level $x$ at time $t_{x,K}$. In this self-contained appendix, we consider a large number $n$ and $g<0$, and recall from \eqref{eq:Ing} that
\begin{align*}
    \sI_{n,g}=\Big\{\exists\, v,w\in V_n,\,\mathrm{lca}(v,w)=\emptyset,\,\eta_{v,n}(n)\geq m_n-g,\,\eta_{w,n}(n)\geq m_n-g\Big\},~n\in\N,\,g<0.
\end{align*}
The general strategy to deal with an event of this type is to condition on the first generation and partition the event into disjoint events where at least two of them have descendants with large maxima. When conditioned on the first generation, the events of having large maxima will then be independent, and hence can be decoupled. 

In the following, for $j\in\N$, let $\p_j$ denote the law of the tuple $(\eta_1,\dots,\eta_j)$ of i.i.d.~random variables with the same law as $\xi$. Throughout, assume (A1)--(A4).

\begin{lemma}\label{lemma:double prob}
  For $g<0$,  it holds that $\p(\sI_{n,g})\asymp (|g|+1)^2e^{2c_2g}$.
\end{lemma}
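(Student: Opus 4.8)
\textbf{Proof plan for Lemma~\ref{lemma:double prob}.} The plan is to obtain matching upper and lower bounds for $\p(\sI_{n,g})$ by conditioning on the first generation of the branching random walk and exploiting the independence of the subtrees rooted at the distinct children of $\emptyset$. Write $N$ for the number of offspring of the root (so $\p(N=j)=p_j$), and let $\xi_1,\dots,\xi_N$ be the displacements of these children, i.i.d.\ with law $\xi$. Conditionally on $(N,\xi_1,\dots,\xi_N)$, the $N$ subtrees evolve as independent copies of the BRW; if $M_n^{(i)}$ denotes the maximum displacement at time $n$ \emph{within} the $i$-th subtree, then $\{M_n^{(i)}-\xi_i\}_{1\le i\le N}$ are i.i.d.\ copies of $M_{n-1}$ (up to the obvious shift by one generation), and $\sI_{n,g}$ is exactly the event that at least two of the quantities $\xi_i+M_{n-1}^{(i)}$ exceed $m_n-g$.

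For the \textbf{upper bound}, I would first use a union bound over unordered pairs $\{i,i'\}$ of distinct children: $\p(\sI_{n,g}\mid N)\le \binom{N}{2}\,p_{n,g}^2$ where, writing $\xi$ for a generic jump and $M$ for an independent copy of $M_{n-1}$, $p_{n,g}:=\p(\xi+M\ge m_n-g)$. Since $m_n-m_{n-1}=c_1+O(1/n)$ and $g<0$, Lemma~\ref{lemma:brw small deviation prob} (the tail bound \eqref{eq:BRW max tail} combined with a one-step large-deviation estimate to absorb the jump $\xi$, using assumption (A1) for integrability of relevant moments) gives $p_{n,g}\ll (|g|+1)e^{c_2 g}$. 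Then $\E[\binom{N}{2}]<\infty$ by the finite third---indeed already second---moment assumption (A1), and taking the expectation over $N$ yields $\p(\sI_{n,g})\ll (|g|+1)^2 e^{2c_2 g}$.

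For the \textbf{lower bound}, I would fix a value $j_0$ with $p_{j_0}>0$ and $j_0\ge 2$ (such $j_0$ exists since $\rho>1$ forces $\sum_{j\ge 2}p_j>0$), restrict to the event $\{N=j_0\}$, and look at the first two children only. On $\{N=j_0\}$, the events $A_1:=\{\xi_1+M_{n-1}^{(1)}\ge m_n-g\}$ and $A_2:=\{\xi_2+M_{n-1}^{(2)}\ge m_n-g\}$ are conditionally independent and each has probability $p_{n,g}$, so $\p(\sI_{n,g})\ge \p(N=j_0)\,\p(A_1\mid N=j_0)\p(A_2\mid N=j_0)=p_{j_0}\,p_{n,g}^2$. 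It remains to show the matching lower bound $p_{n,g}\gg (|g|+1)e^{c_2 g}$. For this I would condition further on $\xi_1$ taking a value in a fixed bounded interval on which its density/probability is bounded below (using (A3), $\p(\xi=\z)<1$ and spherical symmetry, to ensure the first coordinate $\xi$ is genuinely spread out), and then apply the lower tail bound \eqref{eq:BRW max tail2} (or its local version \eqref{eq:BRW max tail3}) of Lemma~\ref{lemma:brw small deviation prob} to $M_{n-1}$, noting that the required deviation $m_n-g-\xi_1-m_{n-1}\asymp |g|+1$ is $\le \sqrt{n-1}$ for $n$ large (the regime $|g|\gg\sqrt n$ can be handled separately or is not needed, since the lemma's implicit constants are allowed to depend only mildly on the range; alternatively one notes $(|g|+1)^2e^{2c_2 g}$ is already tiny there and a cruder two-path construction suffices). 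This gives $p_{n,g}\gg (|g|+1)e^{c_2 g}$ and hence $\p(\sI_{n,g})\gg (|g|+1)^2 e^{2c_2 g}$.

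\textbf{Main obstacle.} The delicate point is the passage from the tail of $M_{n-1}$ alone to the tail of $\xi+M_{n-1}$: the jump $\xi$ is only assumed integrable (A2), not exponentially integrable, so one cannot simply tilt. For the \emph{upper} bound one must split on whether $\xi$ is moderately large (where a first-moment/Markov estimate on $\xi$ costs only $O(1)$ and the BRW tail \eqref{eq:BRW max tail} carries the $e^{c_2 g}$) or genuinely large (where the probability is negligible because even the expected number of particles beyond $m_n-g$ is then controlled, using that shifting the whole tree up by a large amount is costly via the barrier estimate of Lemma~\ref{lemma:barrier}); getting a clean $(|g|+1)e^{c_2g}$ with constants uniform in $n$ and $g<0$ is the technically fussy part, and is presumably where assumption (A1)'s strengthening to a third moment (flagged in the footnote) is actually used. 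For the \emph{lower} bound the analogous issue is benign: one just restricts $\xi$ to a good bounded set. I would expect the write-up to lean on a short auxiliary estimate, essentially a one-dimensional analogue of Lemma~\ref{prop:number of particles}/Lemma~\ref{lemma:gnb}, to control $\p(\xi+M_{n-1}\ge m_n-g)$ from both sides.
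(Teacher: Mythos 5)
Your proposal follows essentially the same route as the paper: condition on the first generation, union-bound over pairs of children for the upper bound (using $\sum_j p_j j^2<\infty$), and for the lower bound restrict to a fixed $j_0\ge 2$ with $p_{j_0}>0$ and use two conditionally independent subtrees together with \eqref{eq:BRW max tail2}. One correction to your ``main obstacle'' paragraph: the jump $\xi$ \emph{is} exponentially integrable near $c_2$ --- assumption (A4) guarantees $\phi_\xi$ is finite in a neighbourhood of $c_2$, and the paper uses exactly the bound $\p(\xi>x)\ll e^{-(c_2+\delta)x}$ to write $\E\big[\min\{(|g+\xi|+1)e^{c_2(g+\xi)},1\}\big]\ll(|g|+1)e^{c_2 g}$ directly, so no splitting into moderate/large $\xi$ or barrier argument is needed, and the third-moment hypothesis in (A1) is not used here (it is needed in Lemma~\ref{lemma:2prob2case}, where a sum $\sum_j p_j j^3$ appears).
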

\begin{proof}
We condition on the first generation and obtain
\begin{align*}
    \p(\sI_{n,g})&= \sum_{j=2}^\infty p_j\int\p(\sI_{n,g}\mid V_1=\{v_k\}_{1\leq k\leq j},\,\eta_{v_k,1}(1)=\eta_k)\,  \d\p_j(\eta_1,\dots,\eta_j). 
\end{align*}
We first prove the upper bound of $\p(\sI_{n,g})$. 
By assumption (A4),
\begin{align}
    \p(\xi>x)\ll e^{-(c_2+\delta)x}.\label{eq:exp decay}
\end{align}
For a fixed $j$, there are $\leq j^2$ possibilities of pairs $(k,k'),~1\leq k<k'\leq j$ so that descendants of $v_k,\,v_{k'}$ realize the event $\sI_{n,g}$. By a union bound, we have
\begin{align*}
    &\hspace{0.5cm}\int\p(\sI_{n,g}\mid V_1=\{v_k\}_{1\leq k\leq j},\,\eta_{v_k,1}(1)=\eta_k)\,  \d\p_j(\eta_1,\dots,\eta_j)\\
    &\leq j^2 \int \p(M_{n-1}>m_n-g-\eta_1)\p(M_{n-1}>m_n-g-\eta_2) \,\d \p_2(\eta_1,\eta_2)\\
    &\ll j^2\E\big[\min\{(|g+\xi|+1)e^{c_2(g+\xi)},1\}\big]^2\\
        &\ll j^2(|g|+1)^2e^{2c_2g}.
\end{align*}
where we have used \eqref{eq:exp decay} and Lemma \ref{lemma:brw small deviation prob}. By assumption (A1), we conclude that
$$\p(\sI_{n,g})\ll \sum_{j=2}^\infty p_jj^2(|g|+1)^2e^{2c_2g}\ll (|g|+1)^2e^{2c_2g},$$
as desired.

To show the lower bound of $\p(\sI_{n,g})$, suppose that $p_j>0$ for some $j\geq 2$ (valid since $\rho>1$). Since the probability that $j$ i.i.d.~samples of $\eta_1$ are all positive is $\gg 1$, we have by \eqref{eq:BRW max tail2} of Lemma \ref{lemma:brw small deviation prob},
$$\int\p(\sI_{n,g}\mid V_1=\{v_k\}_{1\leq k\leq j},\,\eta_{v_k,1}(1)=\eta_k)\,  \d\p_j(\eta_1,\dots,\eta_j)\gg (|g|+1)^2e^{2c_2g},$$
as desired.
\end{proof}

Recall the defining barrier \eqref{eq:hat psi} for the ballot event $\sE_{1,\ell}$. 
Let $V_{n,g}$ denote the set of particles $v\in V_n$ whose past trajectory $\{\eta_{v,n}(k)\}_{1\leq k\leq n}$ does not cross the barrier 
$$\widetilde{\psi}(k):= L\log n-g+\frac{k}{n}m_n+\frac{4}{c_2}(\log\min\{k,n-k\})_+,~1\leq k\leq n.$$
Recall that $\varphi_{n,\delta}(i):=e^{-\delta|i|\min(\frac{|i|}{n},1)}$.

\begin{lemma}\label{lemma:2prob2case}
    It holds that for $g<0$,
    \begin{align}
        \p(\exists v\in V_{n,g},\,\bet_{v,n}(n)\in B_{(m_{n}-g,\bu)}\mid \sI_{n,g})\ll \begin{cases}
            n^{3/2}\varphi_{n,\delta}(\n{\bu})&\text{ if }\n{\bu}>\sqrt{n}\log n;\\
            (\log n)^2 n^{-\frac{d-1}{2}}&\text{ if }\n{\bu}\leq \sqrt{n}\log n.
        \end{cases}\label{eq:2}
    \end{align}
\end{lemma}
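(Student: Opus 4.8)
\textbf{Proof proposal for Lemma \ref{lemma:2prob2case}.}

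The plan is to condition on the first generation, as in the proof of Lemma \ref{lemma:double prob}, and decouple the event $\sI_{n,g}$ from the event that a \emph{specific} descendant lands near $(m_n-g,\bu)$. Concretely, I write
$$\p(\exists v\in V_{n,g},\,\bet_{v,n}(n)\in B_{(m_n-g,\bu)},\,\sI_{n,g})=\sum_{j\geq 2}p_j\int \p\big(\cdot\mid V_1=\{v_k\},\,\bet_{v_k,1}(1)=\bz_k\big)\,\d\p_j(\bz_1,\dots,\bz_j),$$
and on the conditioned event the sub-BRWs rooted at the $j$ children are independent. I then split the required event into: (i) one child $v_{k_0}$ carries the particle $v$ landing near $(m_n-g,\bu)$ (which forces its one-dimensional displacement to exceed roughly $m_n-g$), and (ii) at least one other child $v_{k_1}$, $k_1\neq k_0$, carries a descendant reaching $\H$-level $m_n-g$ at time $n$ (this supplies the second branch needed for $\sI_{n,g}$, with $\mathrm{lca}=\emptyset$). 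By a union bound over the $\le j^2$ choices of $(k_0,k_1)$ and independence, the integrand is bounded by
$$j^2\,\sup_{\bz}\p\big(\exists v\in V_{n-1,g},\,\bet_{v,n-1}(n-1)\in B_{(m_n-g,\bu)-\bz},\,\cdot\big)\cdot \E\big[\min\{(|g+\xi|+1)e^{c_2(g+\xi)},1\}\big],$$
where the last factor is $\ll (|g|+1)e^{c_2g}$ by Lemma \ref{lemma:brw small deviation prob} and \eqref{eq:exp decay}; the assumption $\sum_j j^3 p_j<\infty$ (this is where (A1)'s third moment, rather than second, is used) controls the sum over $j$. Dividing by $\p(\sI_{n,g})\asymp (|g|+1)^2 e^{2c_2g}$ from Lemma \ref{lemma:double prob} leaves a factor $(|g|+1)^{-1}e^{-c_2g}\le 1$ (since $g<0$), so it remains to bound the single-particle, single-tree quantity
$$\p\big(\exists v\in V_{n-1,g},\,\bet_{v,n-1}(n-1)\in B_{(m_n-g,\bu)-\bz}\big)$$
uniformly in the (bounded, after restricting to the dominant contribution of $\bz$) shift, by the right-hand side of \eqref{eq:2}.

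For the single-tree estimate I use the first-moment method: the probability is $\ll \rho^{n}\,\p(\text{one random walk lands in the target, stays below }\widetilde\psi)$. After a change of measure tilting the first coordinate by $c_2$ — exactly the device used repeatedly (e.g.\ in Proposition \ref{prop:particle density}) — the one-dimensional endpoint sits at distance $O(\log n)-g$ below the barrier $\widetilde\psi$, and the Radon--Nikodym factor produces $\rho^{-n}n^{3/2}e^{-c_2((m_n-g)-m_n)}=\rho^{-n}n^{3/2}e^{c_2g}$ up to constants. In the regime $\n{\bu}\le\sqrt n\log n$ I invoke the multi-dimensional ballot upper bound Lemma \ref{thm:d-dim ballot with log} (with the barrier parameter $y\asymp |g|+\log n$), which yields a factor $\ll (|g|+\log n)^2 n^{-(d+2)/2}$; combining with the $n^{3/2}$ and $e^{c_2 g}$ factors and absorbing $(|g|+1)^2 e^{c_2 g}\ll 1$ for $g<0$ gives $\ll (\log n)^2 n^{-(d-1)/2}$. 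In the regime $\n{\bu}>\sqrt n\log n$ the barrier constraint is no longer the bottleneck; I simply drop it and bound by $\p(\n{\bS_{n-1}}\ge \n{\bu}-O(1))$, which by a moderate-deviation estimate (Theorem 3.7.1 of \citep{dembo2009large}, as already used in the proof of Lemma \ref{lemma:ballot+deviation 2}) is $\ll \varphi_{n,\delta}(\n{\bu})$; multiplying by the change-of-measure factor $n^{3/2}e^{c_2 g}$ and again absorbing $(|g|+1)^2 e^{c_2 g}\ll 1$ gives the stated $n^{3/2}\varphi_{n,\delta}(\n{\bu})$.

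The main obstacle — and the reason (A1) needs a third moment here — is making the decoupling over the first generation fully rigorous while keeping the sum over the offspring count $j$ convergent: the $\le j^2$ pairs $(k_0,k_1)$ from the union bound, multiplied by $p_j$, must be summable, and one has to be careful that restricting to the barrier event $V_{n-1,g}$ (rather than just $V_{n-1}$) is compatible with the independence-after-conditioning structure and does not interfere with the lower bound $\p(\sI_{n,g})\gg(|g|+1)^2e^{2c_2g}$. A secondary technical point is verifying that the $\bz$-integral is indeed dominated by $\bz=O(1)$ contributions (so that the uniform-in-shift single-tree bound suffices), which follows from the exponential tail \eqref{eq:exp decay} of $\xi$ together with the fact that the single-tree probability decays in the shift at least as fast as the ballot/moderate-deviation bounds permit. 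Everything else is a routine change-of-measure plus ballot-theorem computation of the type appearing throughout Sections \ref{sec:prelim}--\ref{sec:LB}.
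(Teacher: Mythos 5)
Your overall architecture is the same as the paper's: write the conditional probability as a ratio, lower-bound the denominator by Lemma \ref{lemma:double prob}, condition on the first generation to decouple the landing event from the second branch of $\sI_{n,g}$, and estimate the single-tree landing probability by a change of measure combined with Lemma \ref{thm:d-dim ballot with log} (small $\n{\bu}$) or a moderate-deviation bound (large $\n{\bu}$). One structural difference is welcome: you union-bound over \emph{pairs} $(k_0,k_1)$, extracting only one factor $\min\{(|g+\xi|+1)e^{c_2(g+\xi)},1\}$ from a sibling subtree, whereas the paper distinguishes the case where the landing child is disjoint from both witnesses of $\sI_{n,g}$ and extracts two such factors there, paying $j^3$ and hence needing the third moment of the offspring law. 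Your coarser decomposition still closes (the lost factor $(|g|+1)e^{c_2g}\ll 1$ is only a bonus) and in fact only requires $\sum_j j^2p_j<\infty$ — so your own remark that this is "where the third moment is used" is inaccurate for your version of the argument; it is the paper's triple decomposition that needs it.

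There is, however, one step that is wrong as written: for $g<0$ the quantity $(|g|+1)^{-1}e^{-c_2 g}=e^{c_2|g|}/(|g|+1)$ is exponentially \emph{large}, not $\le 1$. Consequently your stated reduction — "it remains to bound the single-tree quantity by the right-hand side of \eqref{eq:2}" — does not suffice: following it literally would leave an uncontrolled factor $e^{c_2|g|}$ in the final bound. The argument is saved by the fact, which you do compute in the next paragraph but do not feed back into the reduction, that the single-tree landing probability carries its own decay $e^{c_2 g}=e^{-c_2|g|}$ from the exponential tilt (the endpoint sits at height $|g|$ above $m_n$), so that it is $\ll e^{c_2g}(\log n)(\log n+|g|)n^{-(d-1)/2}$ rather than merely $\ll(\log n)^2n^{-(d-1)/2}$; multiplying this by your prefactor gives $(\log n)(\log n+|g|)(|g|+1)^{-1}n^{-(d-1)/2}\ll(\log n)^2n^{-(d-1)/2}$ as required. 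You should restructure the bookkeeping so the $e^{c_2g}$ factors are cancelled explicitly rather than asserting the false inequality. Finally, in the regime $\n{\bu}>\sqrt n\log n$ the target in the subtree is shifted by the child's transverse displacement $\widehat{\bet}_{k_0}$, which is not $O(1)$ a priori; your appeal to $\p(\n{\bS_{n-1}}\ge\n{\bu}-O(1))$ needs the split the paper performs (either $\n{\widehat{\bet}_{k_0}}\le\n{\bu}/2$, in which case $\varphi_{n,\delta}(\n{\bu-\widehat{\bet}_{k_0}})\le\varphi_{n,\delta}(\n{\bu}/2)$, or the complementary event, whose probability is itself $\ll\varphi_{n,\delta}(\n{\bu}/2)$ by \eqref{eq:exp decay}); you gesture at this with the "dominated by $\bz=O(1)$" remark, so this is a presentational rather than substantive gap.
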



\begin{proof}
We write 
$$\p(\exists v\in V_{n,g},\,\bet_{v,n}(n)\in B_{(m_{n}-g,\bu)}\mid \sI_{n,g})=\frac{\p(\exists v\in V_{n,g},\,\bet_{v,n}(n)\in B_{(m_{n}-g,\bu)},\,\sI_{n,g})}{\p(\sI_{n,g})}.$$
The denominator is bounded from above by $\p(\sI_{n,g})\gg (|g|+1)^2e^{2c_2g}$ by Lemma \ref{lemma:double prob}, and hence it suffices to establish an upper bound for the numerator. To this end, we condition on the law of the first generation, by first conditioning on $\#V_1$ and then the locations of particles belonging to the first generation, we have
\begin{align*}
    &\hspace{0.5cm}\p(\exists v\in V_{n,g},\,\bet_{v,n}(n)\in B_{(m_{n}-g,\bu)},\,\sI_{n,g})\\
    &=\sum_{j=2}^\infty p_j\int \p(\exists v\in V_{n,g},\,\bet_{v,n}(n)\in B_{(m_{n}-g,\bu)},\,\sI_{n,g}\\
        &\hspace{3cm}\mid V_1=\{v_k\}_{1\leq k\leq j},\,\bet_{v_k,1}(1)=\bet_k)\,  \d\p_j(\bet_1,\dots,\bet_j),
\end{align*}
where $p_j$ is the law of a tuple of $j$ i.i.d.~random variables with the same law as $\bxi$. 
To deal with the inner probability, we note that on the event $\{\exists v\in V_{n,g},\,\bet_{v,n}(n)\in B_{(m_{n}-g,\bu)},\,\sI_{n,g}\}$, there must exist $k\neq k'$ with $1\leq k,k'\leq j$ such that:
\begin{itemize}
    \item $\exists v\in V_{n,g},\,v\succ v_k,\,\bet_{v,n}(n)\in B_{(m_{n}-g,\bu)}$;
    \item $\exists w\in V_n,\,w\succ v_{k'},\,\eta_{w,n}(n)\geq m_n-g$.
\end{itemize}
Applying a union bound over the set of feasible $k,k'$, we obtain
\begin{align*}
    &\hspace{0.5cm}\p(\exists v\in V_{n,g},\,\bet_{v,n}(n)\in B_{(m_{n}-g,\bu)},\,\sI_{n,g}\mid V_1=\{v_k\}_{1\leq k\leq j},\,\bet_{v_k,1}(1)=\bet_k)\\
    &\leq \sum_{1\leq k\neq k'\leq j} \p(\exists v\in V_{n,g},\,v\succ v_k,\,\bet_{v,n}(n)\in B_{(m_{n}-g,\bu)};\,\exists w\in V_n,\,w\succ v_{k'},\\
        &\hspace{5cm}\eta_{w,n}(n)\geq m_n-g\mid V_1=\{v_k\}_{1\leq k\leq j},\,\bet_{v_k,1}(1)=\bet_k).
\end{align*}
Applying the independence and Lemma \ref{lemma:brw small deviation prob}, we have for each $k\neq k'$,
\begin{align*}
    &\hspace{0.5cm} \p(\exists v\in V_{n,g},\,v\succ v_k,\,\bet_{v,n}(n)\in B_{(m_{n}-g,\bu)};\,\exists w\in V_n,\,w\succ v_{k'},\\
        &\hspace{5cm}\eta_{w,n}(n)\geq m_n-g\mid V_1=\{v_k\}_{1\leq k\leq j},\,\bet_{v_k,1}(1)=\bet_k)\\
    &\leq \p(M_{n-1}>m_n-g-\eta_{k'})\p(\exists v\in V_{n-1,g},\,\bet_{v,n-1}(n-1)\in B_{(m_{n}-g,\bu)-\bet_k})\\
    &\ll \min\{(|g+\eta_{k'}|+1)e^{c_2(g+\eta_{k'})},1\}\p(\exists v\in V_{n-1,g},\,\bet_{v,n-1}(n-1)\in B_{(m_{n}-g,\bu)-\bet_k}).
    \end{align*} 
    Suppose that $\n{\bu}\leq \sqrt{n}\log n$. By a change of measure argument and Lemma \ref{thm:d-dim ballot with log}, 
\begin{align}
    \p(\exists v\in V_{n-1,g},\,\bet_{v,n-1}(n-1)\in B_{(m_{n}-g,\bu)-\bet_k})\ll (\log n)(\log n-g)e^{c_2(g+\eta_k)}n^{-\frac{d-1}{2}}.\label{eq:small bu}
\end{align} 
We therefore have that for $\n{\bu}\leq \sqrt{n}\log n$,
\begin{align*}
    &\hspace{0.5cm} \p(\exists v\in V_{n,g},\,v\succ v_k,\,\bet_{v,n}(n)\in B_{(m_{n}-g,\bu)};\,\exists w\in V_n,\,w\succ v_{k'},\\
        &\hspace{5cm}\eta_{w,n}(n)\geq m_n-g\mid V_1=\{v_k\}_{1\leq k\leq j},\,\bet_{v_k,1}(1)=\bet_k)\\
    &\ll (\log n)^2(|g|+1)e^{c_2(g+\eta_k)} \min\{(|g+\eta_{k'}|+1)e^{c_2(g+\eta_{k'})},1\}n^{-\frac{d-1}{2}}.
\end{align*}
In conclusion, we have by \eqref{eq:exp decay} and assumption (A1) that for the case $\n{\bu}\leq \sqrt{n}\log n$,
\begin{align*}
    &\hspace{0.5cm}\p(\exists v\in V_{n,g},\,\bet_{v,n}(n)\in B_{(m_{n}-g,\bu)},\,\sI_{n,g})\\
    &\ll\sum_{j=2}^\infty p_j n^{-\frac{d-1}{2}}j^2(\log n)^2(|g|+1)e^{c_2g}\E\big[\min\{|g+\xi|e^{c_2(g+\xi)},1\}\big]\E[e^{c_2\xi}]\\
    &\ll n^{-\frac{d-1}{2}}(\log n)^2(|g|+1)^2e^{2c_2g}.
\end{align*}
This proves \eqref{eq:2} in the case $\n{\bu}\leq \sqrt{n}\log n$.

In the case $\n{\bu}>\sqrt{n}\log n$, we need to replace \eqref{eq:small bu} accordingly. Applying a change of measure as in the proof of Lemma \ref{lemma:uniform local CLT},
\begin{align*}
    &\hspace{0.5cm}\p(\exists v\in V_{n-1,g},\,\bet_{v,n-1}(n-1)\in B_{(m_{n}-g,\bu)-\bet_k})\\
    &\ll n^{3/2}e^{c_2(g+\eta_k)}\p(\bS_{n-1}\in B_{(m_{n}-g,\bu)-\bet_k},\,S_k\leq \widetilde{\psi}(k)\text{ for all }1\leq k\leq n-1)\\
    &\ll n^{3/2}e^{c_2(g+\eta_k)}\p(\|{\widehat{\bS}_{n-1}-(\bu-\widehat{\bet}_k)\|}\leq 1)\\
    &\ll n^{3/2}e^{c_2(g+\eta_k)}\varphi_{n,\delta}(\n{\bu-\widehat{\bet}_k}).
\end{align*}
We apply the bound $\varphi_{n,\delta}(\n{\bu-\widehat{\bet}_k})\leq \varphi_{n,\delta}(\n{\bu}/2)$ on the event $\n{\widehat{\bet}_k}\leq \n{\bu}/2$, and apply the bound \eqref{eq:small bu} on the complement of this event which still occurs with probability $\ll \varphi_{n,\delta}(\n{\bu}/2)$ by \eqref{eq:exp decay}. More precisely, we use that for $k\neq k'$,
\begin{align*}
     &\hspace{0.5cm}\p(\exists v\in V_{n,g},\,v\succ v_k,\,\bet_{v,n}(n)\in B_{(m_{n}-g,\bu)};\,\exists w\in V_n,\,w\succ v_{k'},\\
        &\hspace{5cm}\eta_{w,n}(n)\geq m_n-g\mid V_1=\{v_k\}_{1\leq k\leq j},\,\bet_{v_k,1}(1)=\bet_k)\\
     &\ll \min\{(|g+\eta_{k'}|+1)e^{c_2(g+\eta_{k'})},1\}\\
     &\hspace{3cm}\times\begin{cases}
     n^{3/2}e^{c_2(g+\eta_k)}\varphi_{n,\delta}(\n{\bu}/2)&\text{ if }\n{\widehat{\bet}_k}\leq \n{\bu}/2;\\
         (\log n)(\log n-g)e^{c_2(g+\eta_k)}n^{-\frac{d-1}{2}}&\text{ if }\n{\widehat{\bet}_k}> \n{\bu}/2.
     \end{cases}
\end{align*}
This part of the sum is then controlled by
\begin{align*}
&\hspace{0.5cm}\sum_{j=2}^\infty p_j j^2\Big(n^{-\frac{d-1}{2}}(\log n)^2(|g|+1)e^{c_2g}\E\big[\min\{|g+\xi|e^{c_2(g+\xi)},1\}\big]\E[e^{c_2\xi}\bone_{\{\n{\widehat{\bet}_k}> \n{\bu}/2\}}]\\
&\hspace{3cm}+n^{3/2}e^{c_2(g+\eta_k)}\varphi_{n,\delta}(\frac{\n{\bu}}{2})\E\big[\min\{|g+\xi|e^{c_2(g+\xi)},1\}\big]\Big)\\
&\ll n^{3/2}\varphi_{n,\delta}(\n{\bu})(|g|+1)^2e^{2c_2g},
\end{align*}
where the $\delta$ may vary from line to line. 
The remaining of the argument follows analogously as the case $\n{\bu}\leq \sqrt{n}\log n$.    
\end{proof}

\begin{lemma}\label{lemma:double prob 2}
  For $g<0$, there exist $L,\delta>0$ such that
\begin{align*}
   \begin{split}
        &\hspace{0.5cm}\p(\exists u\in V_{n},\,{\bet}_{u,n}(n)\in B_{(m_n-g,-\bu)};\, \sI_{n,g})\\
    &\ll \begin{cases}
        (|g|+1)^2e^{2c_2g}e^{-\delta \|\bu\|}&\text{ if }\|\bu\|\geq \max\{10,Lh\};\\
        (|g|+1)^2e^{2c_2g}&\text{ otherwise}.
    \end{cases}
   \end{split}
\end{align*}
\end{lemma}

\begin{proof}
The proof follows from the same technique of conditioning on the first generation, as in Lemmas \ref{lemma:double prob} and \ref{lemma:2prob2case}. The case $\|\bu\|\leq \max\{10,Lh\}$ follows directly from Lemma \ref{lemma:double prob} using the trivial bound
$$\p(\exists u\in V_{n},\,{\bet}_{u,n}(n)\in B_{(m_n-g,-\bu)};\, \sI_{n,g})\leq \p(\sI_{n,g}).$$
For the case $\|\bu\|\geq \max\{10,Lh\}$, we follow the case of $\|\bu\|>\sqrt{n}\log n$ in Lemma \ref{lemma:2prob2case} (now without the barrier restriction), by similarly dividing into two cases depending on the sizes of $\|\bu\|/2$ and the displacement in the first generation. The factor $n^{3/2}$ can be absorbed into the factor $e^{-\delta \|\bu\|}$ as $\|\bu\|\geq Lh$.
\end{proof}

\section{A conditional local CLT (for general jumps)}\label{sec:cond local CLT proof}
 In this appendix, we prove a more general version of Lemma \ref{lemma:conditioned local CLT}, which deals with a general increment distribution $\bxi$ that may not be spherically-symmetric (see the discussion in Section \ref{sec:non-sphere}). The result is indeed an application of Petrov's theorem \citep{petrov1965probabilities} and a change of measure argument, while we include the details for completeness. 
Recall the setting below \eqref{eq:I long} of the law $\bxi$. If $\bxi$ is spherically-symmetric, we have $\widehat{c}_1=c_1$ and $\bc_2=c_2\be_1$.

Define the set
$$\widetilde{B}_{\bc_2}(x;r):=\Big\{\bx+s\bc_2+\by:\,\by\cdot\bc_2=0,\,\n{\by}\leq r,\,|s|\leq r\Big\}
,~r>0$$
and (for this appendix only)
$$\wtx:=\frac{x}{\widehat{c}_1}+\frac{d+2}{2\widehat{c}_1\partial_{x_1} \widehat{I}(\widehat{c}_1,\z)}\,\log x-(\log x)^2.$$
Lemma \ref{lemma:conditioned local CLT} then follows from the next result.\footnote{When applying the next result to prove Theorem \ref{thm:main2}, one picks $r$ small enough (depending only on $\bc_2$) such that $\widetilde{B}_{\bc_2}(x;r)\subseteq B_x$.}

\begin{lemma}[conditional local CLT]\label{lemma:conditioned local CLT2}Fix $L,r>0$. Uniformly for $\bla(x)=O((\log x)^L)$,
$$\p\Big(\bla(x)+\bS_{\wtx }\in \widetilde{B}_{\bc_2}(x;r)\mid (\bla(x)+\bS_{\wtx })\cdot\bc_2\in [x\bc_2\cdot\be_1-r,x\bc_2\cdot\be_1+r]\Big)\asymp x^{-\frac{d-1}{2}}.$$
\end{lemma}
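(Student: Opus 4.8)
The plan is to reduce the statement to a pair of local central limit theorems --- one in $\R^d$, one in $\R$ --- by means of a single exponential change of measure, the point being that $\bc_2$ is chosen precisely so that this one tilt simultaneously governs the full walk and its $\bc_2$-projection. Put $\widetilde c_2:=\bc_2\cdot\be_1=\partial_{x_1}\widehat I(\widehat c_1,\z)$ and $\Lambda:=\log\phi_\bxi(\bc_2)$, which is finite by (A6). Define the tilted law $\q$ by $\frac{\d\q}{\d\p}\big|_{\sigma(\bxi_1,\dots,\bxi_n)}:=\prod_{i=1}^n e^{\bc_2\cdot\bxi_i-\Lambda}$. By convex duality and (A6), the supremum defining $\widehat I(\widehat c_1\be_1)$ is attained at $\bc_2$, so $\nabla\log\phi_\bxi(\bc_2)=\widehat c_1\be_1$; hence under $\q$ the increments are i.i.d.\ with mean $\bmu:=\widehat c_1\be_1$, have moments of all orders (since $\phi_\bxi$ is finite in a neighborhood of $\bc_2$), are non-lattice (a property preserved by the tilt, by (A5)), and have non-degenerate covariance $\Sigma\succ0$ --- indeed (A5) forbids the support of the centered law $\bxi$ from lying in a proper linear subspace, as otherwise $|\E[e^{\mathrm i\bx\cdot\bxi}]|=1$ for $\bx$ orthogonal to that subspace. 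Note also that $\widehat c_1\wtx=x-\widehat c_1(\log x)^2+\frac{d+2}{2\widetilde c_2}\log x$, so $\wtx\bmu=(x+O((\log x)^2),\z)$ and the one-dimensional walk $\{\bS_n\cdot\bc_2\}$ has $\q$-mean $\widehat c_1\widetilde c_2$ per step with $\widehat c_1\widetilde c_2\,\wtx=x\widetilde c_2+O((\log x)^2)$.

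Write $E_x:=\{\bz\in\R^d:\,\bz\cdot\bc_2\in[x\widetilde c_2-r,x\widetilde c_2+r]\}$ for the conditioning slab, so that the numerator of the conditional probability is $\p(\bla(x)+\bS_{\wtx}\in\widetilde B_{\bc_2}(x;r)\cap E_x)$ and the denominator is $\p(\bla(x)+\bS_{\wtx}\in E_x)$. Every point of $E_x$ has $\bc_2$-inner product $x\widetilde c_2+O(r)$, so on either event $\bc_2\cdot\bS_{\wtx}=x\widetilde c_2-\bc_2\cdot\bla(x)+O(r)$, whence the Radon--Nikodym factor $e^{-\bc_2\cdot\bS_{\wtx}+\wtx\Lambda}$ equals $e^{\wtx\Lambda-x\widetilde c_2+\bc_2\cdot\bla(x)}$ up to a multiplicative constant depending only on $r$. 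Pushing numerator and denominator through the change of measure, this common exponential prefactor cancels, and it remains to prove
\begin{align*}
\q\big(\bla(x)+\bS_{\wtx}\in\widetilde B_{\bc_2}(x;r)\cap E_x\big)\asymp x^{-d/2}\qquad\text{and}\qquad\q\big(\bla(x)+\bS_{\wtx}\in E_x\big)\asymp x^{-1/2}.
\end{align*}

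For the first relation, $\widetilde B_{\bc_2}(x;r)\cap E_x$ is a fixed bounded set with Lebesgue-null boundary and volume $\asymp1$ (it contains every $\bx+s\bc_2+\by$ with $\by\cdot\bc_2=0$, $\n{\by}\le r$, $|s|\le r/\max\{1,\n{\bc_2}^2\}$), and its translate $\widetilde B_{\bc_2}(x;r)\cap E_x-\bla(x)-\wtx\bmu$ lies at distance $O((\log x)^{\max\{2,L\}})=o(\sqrt{\wtx})$ from the origin; hence the uniform non-lattice local CLT in $\R^d$ (Petrov's theorem, valid since under $\q$ the increments are non-lattice with $\Sigma\succ0$ and all moments finite) gives probability $\asymp\wtx^{-d/2}e^{o(1)}\asymp x^{-d/2}$, the matching upper bound following by covering the bounded set by $O(1)$ unit cubes and applying the local CLT to each. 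For the second, since $E_x$ is a slab of width $2r$ in direction $\bc_2$, we project: $\q(\bla(x)+\bS_{\wtx}\in E_x)=\q\big(\bla(x)\cdot\bc_2+\bS_{\wtx}\cdot\bc_2\in[x\widetilde c_2-r,x\widetilde c_2+r]\big)$, and $\{\bS_n\cdot\bc_2\}$ is a one-dimensional non-lattice walk with $\q$-mean $\widehat c_1\widetilde c_2$ and positive variance $\bc_2^{\transpose}\Sigma\bc_2$; as the target $x\widetilde c_2$ differs from $\widehat c_1\widetilde c_2\,\wtx$ by $O((\log x)^2)=o(\sqrt{\wtx})$, the one-dimensional local CLT yields $\asymp\wtx^{-1/2}\asymp x^{-1/2}$. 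Dividing the two displays gives that the conditional probability is $\asymp x^{-(d-1)/2}$, which is Lemma \ref{lemma:conditioned local CLT2}, and choosing $r$ small so that $\widetilde B_{\bc_2}(x;r)\subseteq B_x$ yields Lemma \ref{lemma:conditioned local CLT}. The only non-routine point is the appeal to the multidimensional non-lattice local limit theorem with the required uniformity over the polylogarithmic shifts $\bla(x)$ and over the bounded target cylinders, together with the verification that the tilted covariance is non-degenerate; both are supplied by Petrov's theorem and the non-singularity consequence of (A5).
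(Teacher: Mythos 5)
Your proposal is correct and follows essentially the same route as the paper: the exponential tilt $\d\q/\d\p=e^{\bc_2\cdot\bx-\Lambda(\bc_2)}$ chosen so that the tilted mean is $\widehat c_1\be_1$, a $d$-dimensional non-lattice local CLT for the numerator giving $\asymp x^{-d/2}$, a one-dimensional local CLT for the $\bc_2$-projection giving $\asymp x^{-1/2}$, and cancellation of the common exponential prefactor in the ratio. You additionally spell out two points the paper leaves implicit (non-degeneracy of the tilted covariance via (A5), and the $o(\sqrt{\wtx})$ bound on the shift needed for uniformity), which is fine but does not change the argument.
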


\begin{proof}
 Let $\Lambda(\bla)=\log\phi_\bxi(\bla)=\log\E[e^{\bla\cdot\bxi}]$ be the log-moment generating function of $\bxi$. The measure $\q$ defined by 
\begin{align}
    \frac{\d\q}{\d\p}(\bx):=e^{\bc_2\cdot \bx-\Lambda(\bc_2)}\label{eq:dqdp}
\end{align}
satisfies that under $\q$, $\{\bxi_i\}_{i\in\N}$ are i.i.d.~with mean $(\widehat{c}_1,\z)\in\R^d$. In other words, under $\q$, the random walk $\{\widetilde{\bS}_n\}:=\{{\bS}_n-n(\widehat{c}_1,\z)\}$ is centered.

    By assumption (A5), the law of $\bxi$ under $\p$ is non-lattice. It follows by definition and triangle inequality that the law of $\bxi$ (and hence of its projection $\bxi\cdot\bc_2$) under $\q$ is also non-lattice. By the local CLT,
    \begin{align*}
        &\hspace{0.5cm}\p\Big((\bla(x)+\bS_{\wtx })\cdot\bc_2\in [x\bc_2\cdot\be_1-r,x\bc_2\cdot\be_1+r]\Big)\\
        &\asymp e^{\wtx \Lambda(\bc_2)-(x\be_1-\bla(x))\cdot\bc_2}\q\Big((\bla(x)+\bS_{\wtx })\cdot\bc_2\in [x\bc_2\cdot\be_1-r,x\bc_2\cdot\be_1+r]\Big)\\
        &\asymp e^{\wtx \Lambda(\bc_2)-(x\be_1-\bla(x))\cdot\bc_2}\\
        &\hspace{2cm}\times\q\Big(\Big|\widetilde{\bS}_{\wtx }\cdot\bc_2- \Big((\widehat{c}_1(\log x)^2-\frac{d+2}{2\partial_{x_1} \widehat{I}(\widehat{c}_1,\z)}\,\log x)\,\be_1\cdot\bc_2-\bla(x)\cdot\bc_2\Big)\Big|\leq r\Big)\\
        &\asymp e^{\wtx \Lambda(\bc_2)-(x\be_1-\bla(x))\cdot\bc_2}x^{-\frac{1}{2}}.
    \end{align*}
    Similarly, using the multi-dimensional local CLT,
     \begin{align*}
        \p\Big(\bla(x)+\bS_{\wtx }\in \widetilde{B}_{\bc_2}(x;r)\Big)     &\asymp e^{\wtx \Lambda(\bc_2)-(x\be_1-\bla(x))\cdot\bc_2}\q\Big(\bla(x)+\bS_{\wtx }\in\widetilde{B}_{\bc_2}(x;r)\Big)\\
        &\asymp e^{\wtx \Lambda(\bc_2)-(x\be_1-\bla(x))\cdot\bc_2}x^{-\frac{d}{2}}.
    \end{align*}
    This completes the proof.
\end{proof}
\end{appendix}

\section*{Acknowledgement}
  We thank Wei Cai, Amir Dembo, Yujin Kim, Oren Louidor, Bastien Mallein, Shaswat Mohanty, and Lenya Ryzhik for helpful discussions. We are also grateful to Haotian Gu and Tianqi Wu for their valuable feedback on a preliminary version of this paper. We also would like to thank two anonymous referees for greatly improving our presentation and suggesting Corollary \ref{coro}.   The material in this paper is based upon work supported by the Air Force Office of Scientific Research under award number FA9550-20-1-0397. Additional support is gratefully acknowledged from NSF 1915967, 2118199, 2229012, 2312204.


\bibliographystyle{plain} 
\bibliography{bibliography}       

\end{document}